\documentclass[11pt]{amsart}

\usepackage[utf8]{inputenc}
\usepackage[T1]{fontenc}
\usepackage{lmodern}
\usepackage[a4paper,margin=1in]{geometry}
\usepackage{microtype}
\usepackage{amsmath,amssymb,amsthm,mathtools}
\usepackage{mathrsfs}
\usepackage{enumitem}
\usepackage{booktabs}
\usepackage{longtable}
\usepackage{array}
\usepackage{graphicx}
\usepackage[hidelinks]{hyperref}
\usepackage[nameinlink,noabbrev]{cleveref}
\usepackage{tikz}
\usetikzlibrary{matrix,positioning,calc}
\usepackage{caption}

\allowdisplaybreaks[2]
\numberwithin{equation}{section}

\hypersetup{
  pdftitle={Residue Constraints in the Rank-Three Lifting Problem for Projective-Plane Incidence Matrices},
  pdfauthor={Jaehwan Kim}
}

\theoremstyle{plain}
\newtheorem{theorem}{Theorem}[section]
\newtheorem{lemma}[theorem]{Lemma}
\newtheorem{proposition}[theorem]{Proposition}
\newtheorem{corollary}[theorem]{Corollary}

\theoremstyle{definition}
\newtheorem{definition}[theorem]{Definition}
\newtheorem{example}[theorem]{Example}
\newtheorem{conjecture}[theorem]{Conjecture}

\theoremstyle{remark}
\newtheorem{remark}[theorem]{Remark}

\newcommand{\Kapr}{\mathrm{Kapr}}

\newcommand{\rank}{\mathrm{rank}}
\newcommand{\val}{\mathrm{val}}

\newcommand{\RR}{\mathbb{R}}

\newcommand{\Lam}{\Lambda}

\title[Residue constraints for projective-plane lifts]{Residue Constraints in the Rank-Three Lifting Problem for Projective-Plane Incidence Matrices}
\author{Jaehwan Kim}
\address{Hankuk Academy of Foreign Studies, Yongin-si, Gyeonggi-do 17035, Republic of Korea}
\email{020080@hafs.hs.kr}
\subjclass[2020]{14T05, 15A80, 05B25}
\keywords{Kapranov rank, tropical rank, projective plane, incidence matrix, valued-field lift, residue constraint}
\date{March 30, 2026}

\begin{document}

\begin{abstract}
We study the rank-three lifting problem for incidence matrices of finite projective
planes through residue-level determinant constraints invisible to tropical valuations
alone. In residue characteristic $\neq 3$, any rank-$\le 3$ lift of the incidence
matrix of a projective plane of order $q \ge 3$ forces $\Omega(q^8)$ distinct
admissible $2 \times 2$ zero rectangles with nontrivial residue cross-ratio.
We further prove that for $q \ge 6$ no monomial rank-$\le 3$ lift exists; in
particular, any putative low-rank lift must already involve nontrivial first-order
corrections on valuation-$0$ entries. These results arise from a local analysis
of $4 \times 4$ identity-pattern minors, where we derive the leading derangement
equation together with its first-order companion and show that every vanished
identity-pattern minor contains a cross-ratio-defective admissible rectangle.
The unresolved part of the problem is therefore genuinely global: one must decide
whether a rank-$3$ residue model, together with a compatible first-order deformation,
can satisfy the full overlapping system of local residue constraints.
\end{abstract}

\maketitle

\tableofcontents

\section{Introduction}

\subsection{Problem statement and motivation}
We study Kapranov-rank asymptotics for incidence matrices of finite projective planes, in the sense of the problem highlighted by Guterman and Shitov \cite{gutermanShitov2016}. The central difficulty is that Kapranov rank depends on cancellation among leading terms in lifts, whereas tropical rank records only the valuation-minimizing combinatorics \cite{develinSantosSturmfels2005,richterGebertSturmfelsTheobald2005,maclaganSturmfels2015,speyer2008,mikhalkin2007,einsiedlerKapranovLind2006}. The present paper does not resolve the full asymptotic problem. Instead, it isolates a residue-level obstruction package that any hypothetical rank-$\le 3$ lift must satisfy and pushes the remaining gap to one explicitly global compatibility question.


\subsection{Main contributions}
\begin{itemize}[leftmargin=2em]
  \item We derive the leading derangement equation and its first-order companion for $4\times4$ identity-pattern minors, giving an explicit local residue model for rank-$\le 3$ lifts.
  \item In residue characteristic $\neq 3$, every vanished identity-pattern minor forces a nontrivial cross-ratio on an admissible $2\times2$ zero rectangle. Counting such minors yields an $\Omega(q^8)$ lower bound on the number of distinct defective rectangles in any rank-$\le 3$ lift of $I(\Pi)$.
  \item We prove that for $q\ge 6$ no monomial rank-$\le 3$ lift exists. More sharply, one canonical $(q+1)\times(q+1)$ line-choice block already forces quadratic first-order support on valuation-$0$ entries.
  \item We reformulate the unresolved part of the rank-$3$ problem as a global compatibility question for a rank-$3$ residue realization together with its first-order deformation.
\end{itemize}

\paragraph{Scope of the result.}
The paper establishes unconditional local and counting obstructions, but it does not yet prove that $\Kapr(I(\Pi))$ grows with $q$. What remains open is a globalization step: one must rule out the possibility that a single rank-$3$ residue model and a single admissible first-order direction simultaneously satisfy the large family of overlapping local constraints produced in Sections~\ref{sec:framework}--\ref{sec:main}.

\begin{table}[htbp]
\caption{Obstruction package isolated in this paper.}
\label{tab:obstruction-package}
\centering
\small
\begin{tabular}{@{}p{0.24\textwidth}p{0.68\textwidth}@{}}
\toprule
Component & Output \\
\midrule
Identity-pattern minors & Each vanished $4\times4$ identity block contributes a nine-term derangement equation together with its first-order companion. \\
Admissible rectangles & In residue characteristic $\neq 3$, every vanished identity block forces a cross-ratio-defective admissible $2\times2$ zero rectangle. \\
Counting step & Any rank-$\le 3$ lift of $I(\Pi_q)$ with $q\ge 3$ produces $\Omega(q^8)$ distinct defective rectangles. \\
Monomial regime & For $q\ge 6$, monomial rank-$\le 3$ lifts are impossible; a canonical $(q+1)\times(q+1)$ block already forces nontrivial first-order support. \\
Remaining gap & One must globalize a rank-$3$ residue model together with a compatible first-order deformation across the overlapping local constraints. \\
\bottomrule
\end{tabular}
\end{table}

\subsection{Roadmap}
Section~\ref{sec:background} reviews projective planes and tropical/Kapranov rank, and fixes the valuation convention used throughout.
Section~\ref{sec:framework} develops the valued-field setup together with the auxiliary residue tools needed later.
Section~\ref{sec:gadgets} extracts the identity-pattern determinant identities that drive the main obstruction package.
Section~\ref{sec:main} proves the global counting theorem, the no-monomial-lift theorem, and the quadratic first-order support bound.
Section~\ref{sec:discussion} identifies the remaining global compatibility step.
Appendix~\ref{sec:app:computations} records finite computational checks that serve only as sanity tests and are not used in the proofs of the main theorems.

Although the present paper is written entirely in ordinary valued-field language, its emphasis on residue-level cancellation is methodologically aligned with the cancellation-sensitive viewpoint developed in \cite{Kim2026Hypernumber}. No result from \cite{Kim2026Hypernumber} is used in the proofs below.



\section{Background}
\label{sec:background}


\subsection{Finite projective planes and incidence matrices}
\begin{definition}[Projective plane of order $q$]
A finite projective plane of order $q$ is an incidence structure $(\mathcal{P},\mathcal{L},\in)$ such that:
(i) any two distinct points lie on a unique line, (ii) any two distinct lines meet in a unique point,
(iii) there exist four points no three collinear, and (iv) each line contains $q+1$ points (equivalently each point lies on $q+1$ lines).
Consequently $|\mathcal P|=|\mathcal L|=q^2+q+1$ \cite{dembowski1968,hirschfeld1998}.
\end{definition}

\begin{definition}[Combinatorial incidence matrix]
Fix enumerations of points $\mathcal{P}$ and lines $\mathcal{L}$ with $|\mathcal{P}|=|\mathcal{L}|=q^2+q+1$.
The incidence matrix $I(\Pi)\in\{0,1\}^{\mathcal{P}\times\mathcal{L}}$ is defined by
\[
I(\Pi)_{p,\ell}=1 \iff p\in \ell.
\]
\end{definition}

\begin{remark}[Tropical working matrix]
For determinant calculations we also use the complementary valuation pattern
\[
M(\Pi):=\mathbf 1-I(\Pi),
\]
so that incident entries have tropical weight $0$ and nonincident entries have tropical weight $1$.
\end{remark}

\begin{remark}[Convention used later]
The combinatorial object of interest is always the incidence matrix $I(\Pi)$. The auxiliary matrix $M(\Pi)=\mathbf 1-I(\Pi)$ is used only when tropical determinant bookkeeping is clearer in the complementary convention; all lift statements in Sections~\ref{sec:framework}--\ref{sec:main} are phrased back in terms of $I(\Pi)$.
\end{remark}



\subsection{Tropical linear algebra: tropical rank and Kapranov rank}
\begin{definition}[Tropical semiring]
The tropical semiring in the min-plus convention is
\[
\RR_{\mathrm{trop}}:=\RR\cup\{\infty\}, \qquad a\oplus b:=\min(a,b), \qquad a\odot b:=a+b.
\]
\end{definition}

\begin{definition}[Tropical determinant, singularity, and tropical rank]
For a square matrix $M=(m_{ij})\in \RR_{\mathrm{trop}}^{r\times r}$, its tropical determinant is
\[
\operatorname{tropdet}(M):=\min_{\sigma\in S_r}\sum_{i=1}^r m_{i,\sigma(i)}.
\]
The matrix $M$ is \emph{tropically singular} if the minimum is attained by at least two permutations; otherwise it is
\emph{tropically nonsingular}. For a rectangular matrix $A$, the tropical rank of $A$ is the largest $r$ for which $A$ has a tropically nonsingular $r\times r$ minor \cite{develinSantosSturmfels2005,joswig2021}.
\end{definition}

\begin{definition}[Kapranov rank via lifts]
Fix a valued field $(K,\val)$ over a ground field $F$. For $A\in \RR_{\mathrm{trop}}^{m\times n}$, the Kapranov rank over $F$ is
\[
\Kapr_F(A):=\min\bigl\{\rank(\widetilde A): \widetilde A\in K^{m\times n},\ \val(\widetilde A_{ij})=A_{ij}\ \text{entrywise}\bigr\}.
\]
When the ground field is understood, we write $\Kapr(A)$.
\end{definition}

\begin{remark}[Equivalent geometric formulation]
By the determinantal-ideal description together with the Fundamental Theorem of tropical geometry, the condition $\Kapr(A)\le r$ is
 equivalent to the existence of a rank-$\le r$ lift of $A$, and also to the existence of a classical linear space of dimension $r$ whose
 tropicalization contains the columns of $A$ \cite{develinSantosSturmfels2005,maclaganSturmfels2015,speyer2008}.
\end{remark}

\begin{proposition}[Initial-form criterion for tropical hypersurfaces]
Let
\[
f=\sum_u c_u x^u \in K[x_1^{\pm1},\dots,x_n^{\pm1}],
\]
and let $w\in\Gamma_{\val}^n$. If
\[
m:=\min_u\bigl(\val(c_u)+\langle w,u\rangle\bigr),
\]
then the initial form $\operatorname{in}_w(f)$ is the sum of those terms whose $w$-weight attains $m$, reduced to the residue field after
normalizing by valuation $m$. The tropical hypersurface $\operatorname{Trop}(V(f))$ is exactly the set of $w$ for which $\operatorname{in}_w(f)$ is not a
monomial \cite{maclaganSturmfels2015}.
\end{proposition}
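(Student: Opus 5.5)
The plan is to prove the two directions of the criterion separately, after first reducing to a normalized situation. Fix $w\in\Gamma_{\val}^n$ and choose $t\in K$ with $\val(t)=m$. For each exponent $u$ choose an element $x_w^u$ of valuation $\langle w,u\rangle$, multiplicatively in $u$ (for instance via a splitting $\Gamma_{\val}\to K^\times$, or by restricting to the finitely many exponents in the support of $f$). The rescaled polynomial
\[
g(y):=t^{-1}f(x_w\cdot y)=\sum_u t^{-1}c_u x_w^u\, y^u
\]
then has all coefficients of valuation $\ge 0$. Its reduction $\bar g\in\kappa[y^{\pm1}]$ is, by definition, $\operatorname{in}_w(f)$. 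Under this substitution a point $y$ with all $\val(y_i)=0$ corresponds to a point $x=x_w\cdot y$ with $\val(x)=w$. So $w\in\operatorname{Trop}(V(f))$ exactly when $g$ has a zero in $(\mathcal O^\times)^n$. Here I take $\operatorname{Trop}(V(f))$ to be the closure of the valuations of zeros over an algebraically closed, nontrivially valued extension, as in \cite{maclaganSturmfels2015}.

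\emph{Necessity.} Suppose $\operatorname{in}_w(f)=\bar g$ is a monomial $\bar a\,y^{u_0}$. Then for every unit point $y$, the term $t^{-1}c_{u_0}x_w^{u_0}y^{u_0}$ has valuation exactly $0$. Every other term has strictly positive valuation. By the ultrametric inequality $\val(g(y))=0$, so $g(y)\neq0$ and no zero of $f$ has valuation $w$.

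The same estimate holds on a neighbourhood of $w$, because $f$ has finite support and the strict inequalities between the minimal weight and the others persist under small perturbation of $w$. This shows that $w$ is not in the closure either. Conversely, if $\bar g$ is not a monomial, it is a nonzero Laurent polynomial over $\kappa$ with at least two terms. I will note that over the algebraically closed residue field such a $\bar g$ has a zero $\bar y\in(\kappa^\times)^n$. To see this, multiply by a monomial so that $\bar g$ is a genuine polynomial not divisible by any variable. Then restrict to a generic line through a point of the torus, or reduce to one variable $y_1$ after fixing the others generically, so that $\bar g$ becomes a nonconstant polynomial with a nonzero root.

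\emph{Sufficiency, the main obstacle.} I must lift the residue zero $\bar y$ to an honest zero $y\in(\mathcal O^\times)^n$ of $g$. I would first try Hensel's lemma. Arrange, by choosing $\bar y$ generically as above, that $\bar y_1$ is a simple root of the one-variable polynomial $\bar g(\,\cdot\,,\bar y_2,\dots,\bar y_n)$. Then fix lifts $y_2,\dots,y_n$ and apply Hensel's lemma for the henselian (e.g.\ complete or algebraically closed) valued field to $g(\,\cdot\,,y_2,\dots,y_n)$. This produces $y_1$ with $\overline{y_1}=\bar y_1\neq0$, hence $y_1$ a unit.

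Simplicity of the root needs $\partial\bar g/\partial y_1\not\equiv0$ on the chosen fibre. Since $\bar g$ is not a monomial, there is a coordinate direction in which $\bar g$ genuinely varies. In positive residue characteristic one may need to replace $y_1$ by a suitable monomial change of coordinates, using a $\mathrm{GL}_n(\mathbb Z)$ substitution, to avoid inseparability. Alternatively, one can bypass separability entirely: when $K$ is algebraically closed, the Newton polygon of the one-variable polynomial $g(\,\cdot\,,y_2,\dots,y_n)$ has a segment of slope $0$ whose residue polynomial vanishes at $\bar y_1$, and this directly yields a root of valuation $0$ reducing to $\bar y_1$. I expect this lifting step, done carefully over a general valued field, to be the only non-formal part of the argument. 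Combining it with necessity gives the stated equality.
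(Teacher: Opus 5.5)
The paper gives no argument for this proposition; it is quoted from Maclagan--Sturmfels, where the tropical hypersurface is \emph{defined} as the set of $w$ at which $\min_u(\val(c_u)+\langle w,u\rangle)$ is attained at least twice. Under that definition the statement on $\Gamma_{\val}^n$ is immediate. Indeed, $\operatorname{in}_w(f)$ is the sum of the minimizing terms, whose residue coefficients are nonzero and whose exponents are pairwise distinct, so it is a monomial exactly when the minimum is attained once.

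You instead take $\operatorname{Trop}(V(f))$ to be the closure of the valuations of zeros over an algebraically closed extension $L$, so you are proving the substantive half of Kapranov's theorem. Your necessity step is correct: by the ultrametric inequality, a unique minimizing term dominates on a whole neighbourhood of $w$. Your Newton-polygon lifting is also correct. You choose $\bar y_2,\dots,\bar y_n$ generically in the torus so that the fibre of $\bar g$ in a coordinate direction where it varies keeps two terms, take unit lifts, factor the univariate polynomial over $L$, and read off a root of valuation $0$ reducing to $\bar y_1$. This is essentially the proof given in the cited reference. What your route buys is the statement actually needed for lifts, namely that zeros of prescribed valuation exist. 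The definitional route costs nothing but says nothing about zeros.

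Two repairs are needed.

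First, discard the Hensel branch. A simple residue root cannot be arranged in general, in any characteristic. If $\operatorname{in}_w(f)=(y_1-y_2)^2$, every coordinate fibre has only double roots. No $\mathrm{GL}_n(\mathbb Z)$ monomial substitution helps, since a square stays a square; likewise $y_1^p-y_2^p=(y_1-y_2)^p$ in characteristic $p$. Only the factorization argument over the algebraically closed field $L$ works, so make it the main argument. Also note that you find $\bar y$ in the residue field of $L$, which is algebraically closed, not in that of $K$.

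Second, your sentence claiming that $w\in\operatorname{Trop}(V(f))$ exactly when $g$ has a zero in $(\mathcal O^\times)^n$ conflates $\val(V(f))$ with its closure. For $w\in\Gamma_{\val}^n$ it is true only as a consequence of the finished proof, that is, necessity with the neighbourhood argument plus sufficiency. It does not follow from the substitution itself.
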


\begin{proposition}[Basic inequality and field sensitivity]
For every tropical matrix $A$, one has
\[
\text{tropical rank}(A)\le \Kapr(A).
\]
Moreover, Kapranov rank can depend on the ground field \cite{develinSantosSturmfels2005,gutermanShitov2016,shitov2011example,shitov2012kapranov,shitov2014}.
\end{proposition}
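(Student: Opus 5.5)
The plan is to prove the inequality by contraposition. Suppose $\Kapr(A)=r$, witnessed by a lift $\widetilde A\in K^{m\times n}$ with $\val(\widetilde A_{ij})=A_{ij}$ entrywise and $\rank(\widetilde A)=r$. It then suffices to show that every tropically nonsingular $s\times s$ minor of $A$ forces $s\le r$. So I would fix row and column index sets $R,C$ of size $s$ for which $A_{R,C}$ is tropically nonsingular, and argue that the corresponding minor $\det(\widetilde A_{R,C})$ is nonzero. That gives $\rank(\widetilde A)\ge s$, and taking $s$ to be the tropical rank gives the claim.

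The key step is a valuation computation. Write
\[
\det(\widetilde A_{R,C})=\sum_{\sigma}\operatorname{sgn}(\sigma)\prod_{i\in R}\widetilde A_{i,\sigma(i)}.
\]
The term indexed by $\sigma$ has valuation exactly $\sum_i A_{i,\sigma(i)}$, with the usual convention $\val(0)=\infty$. Tropical nonsingularity says this quantity has a unique minimizer $\sigma_0$ with finite value $\operatorname{tropdet}(A_{R,C})$. By the ultrametric inequality, the sum of all the other terms has valuation strictly larger than that of the $\sigma_0$-term. Hence the determinant has valuation exactly $\operatorname{tropdet}(A_{R,C})<\infty$ and is in particular nonzero. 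Equivalently, this is the initial-form criterion from the Proposition above applied to the determinant polynomial: the initial form at $w=A_{R,C}$ is a single monomial, so $A_{R,C}\notin\operatorname{Trop}(V(\det))$.

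One subtlety to handle is the case where some permutations have infinite weight; they simply contribute zero terms. The argument also needs $\operatorname{tropdet}$ to be finite, which I would build into the definition of nonsingular, since "attained by one permutation" with value $\infty$ would be degenerate. I expect no real obstacle here; the only care needed is in the bookkeeping of the strict ultrametric inequality.

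For the field-sensitivity clause, no proof is needed beyond citation. I would simply point to the explicit examples in the cited works of Develin–Santos–Sturmfels and Shitov, where a fixed tropical matrix has Kapranov rank depending on the characteristic of the ground field. Reproducing one of these is not necessary for the inequality, so I would refer to them rather than redo the computation.
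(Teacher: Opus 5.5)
Your proof is correct and matches what the paper relies on. The paper gives no argument of its own and defers both claims to the cited works; your unique-minimizer ultrametric computation is the standard Develin--Santos--Sturmfels argument, the same step the paper later records as the $|S_{\min}|=1$ clause of \cref{lem:leading-term-minor-test}, and you handle field sensitivity by citation just as the paper does.

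Your finiteness convention for nonsingular minors is genuinely needed rather than cosmetic. Under the paper's literal definition the $1\times 1$ matrix $(\infty)$ counts as tropically nonsingular, while its only lift is $0$.
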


\begin{remark}[Cancellation viewpoint]
Kapranov rank is controlled by whether the lowest-valuation determinant terms in a candidate lift can be made to cancel. Tropical data
records which permutations attain the minimum, but not whether the corresponding leading coefficients cancel in the residue field. This is
 the first point where our cancellation-layer formalism enters.
\end{remark}

\subsection{The asymptotic rank problem and known separations}
\begin{remark}[Guterman--Shitov Research Problem~5]
Fix a ground field $F$ and let $(\Pi_{q_k})$ be a sequence of projective planes of orders $q_k\to\infty$ with incidence matrices
$ I_{q_k}\in\{0,1\}^{n_k\times n_k}$, where $n_k=q_k^2+q_k+1$. The problem is to determine the asymptotic growth of
$\Kapr_F(I_{q_k})$ as $k\to\infty$ \cite{gutermanShitov2016}. In particular, one wants to know whether these ranks stay bounded,
grow polylogarithmically, or exhibit polynomial/linear growth in $q_k$ or $n_k$.
\end{remark}

\begin{remark}[Weighted versus unweighted separation]
The known constructions showing a large gap between tropical rank and Kapranov rank rely on weighted or otherwise non-projective-plane patterns; they do not by themselves settle the unweighted $0$--$1$ projective-plane incidence matrices considered here \cite{kimRoush2006,shitov2011example,shitov2012kapranov,shitov2023separation,gutermanShitov2016}.
\end{remark}

\begin{remark}[Field dependence as a structural warning]
Because Kapranov rank may depend on the ground field, any lower-bound mechanism for $I_q$ must separate valuation-level data from
 residue-level cancellation data. This is precisely the interface where a three-sign / cancellation-sensitive enrichment may add information
 beyond ordinary tropicalization \cite{maclaganSturmfels2015,joswig2021}.
\end{remark}


\subsection{Cocircuit matrices and the representability gate}
\begin{definition}[Cocircuit matrix]
Let $\mathcal M$ be a matroid on ground set $E$, and let $\mathcal C^*(\mathcal M)$ denote its set of cocircuits.
The cocircuit matrix $\mathcal C(\mathcal M)$ is the $\{0,1\}$-matrix with rows indexed by $E$, columns indexed by cocircuits,
and entries
\[
\mathcal C(\mathcal M)_{e,C^*}=
\begin{cases}
0,& e\in C^*,\\
1,& e\notin C^*.
\end{cases}
\]
\end{definition}

\begin{lemma}[Incidence matrix as cocircuit matrix]
\label{lem:incidence-as-cocircuit}
Let $\Pi=(\mathcal P,\mathcal L,\in)$ be a projective plane, and let $\mathcal M(\Pi)$ be the rank-$3$ matroid on ground set
$\mathcal P$ whose rank-$2$ flats are exactly the lines of $\Pi$. Then, up to column relabeling,
\[
I(\Pi)=\mathcal C(\mathcal M(\Pi)).
\]
\end{lemma}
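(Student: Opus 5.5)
The plan is to identify the cocircuits of $\mathcal M(\Pi)$ explicitly and match them bijectively with lines. I first check that $\mathcal M(\Pi)$ is a matroid of rank $3$ whose flats are $\emptyset$, the singletons, the lines, and $\mathcal P$. Singletons are closed because the matroid is simple. Any two distinct points span the unique line through them, by axiom (i). A line together with a point off it spans everything, by axiom (iii) and the fact that such a triple is non-collinear. This describes the lattice of flats completely. In particular, the hyperplanes (flats of rank $r-1=2$) are exactly the lines of $\Pi$.

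Next I use the standard fact that the cocircuits of a matroid are precisely the complements of its hyperplanes. So $\mathcal C^*(\mathcal M(\Pi))=\{\mathcal P\setminus \ell:\ell\in\mathcal L\}$. The map $\ell\mapsto \mathcal P\setminus\ell$ is injective, so it gives a bijection $\mathcal L\to\mathcal C^*$. This bijection is the column relabeling in the statement.

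Finally I compare entries. For $p\in\mathcal P$ and $C^*=\mathcal P\setminus\ell$, the definition gives $\mathcal C(\mathcal M)_{p,C^*}=1$ iff $p\notin C^*$ iff $p\in\ell$ iff $I(\Pi)_{p,\ell}=1$. The two matrices therefore agree entrywise after relabeling the columns.

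The only step needing care is the first, namely that lines really are the rank-$2$ flats, i.e.\ hyperplanes, of a well-defined matroid. If one wants to avoid invoking the lattice-of-flats axioms, I would instead define the matroid by its bases: triples of non-collinear points. I would verify basis exchange directly from axioms (i)--(ii), then observe that the closure of two points is their line. Everything else is a formal unwinding of definitions.
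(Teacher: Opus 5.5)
Your proposal is correct and follows the paper's own proof. Both arguments identify the hyperplanes of the rank-$3$ matroid with the lines, take cocircuits as their complements $\mathcal P\setminus\ell$, and compare entries via $p\notin\mathcal P\setminus\ell \iff p\in\ell$. Your extra checks that $\mathcal M(\Pi)$ is a well-defined matroid and that $\ell\mapsto\mathcal P\setminus\ell$ is injective are things the paper simply takes as given in the hypothesis.
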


\begin{proof}
In a rank-$3$ simple matroid, hyperplanes are exactly rank-$2$ flats, hence here exactly the lines $\ell\in\mathcal L$.
Cocircuits are complements of hyperplanes, so they are the sets $\mathcal P\setminus \ell$. For a point $p$ and a line $\ell$,
the cocircuit-matrix entry in the column indexed by $\mathcal P\setminus \ell$ is $1$ iff
$p\notin \mathcal P\setminus \ell$, i.e.\ iff $p\in \ell$. This is precisely the defining incidence condition for $I(\Pi)$; see also the standard oriented-matroid background in \cite{bjornerLasVergnasSturmfelsWhiteZiegler1999}.
\end{proof}

\begin{corollary}[Kapranov-rank-$3$ gate via representability]
\label{cor:kapr3-representability-gate}
Let $\Pi$ be a projective plane. Over an infinite field $k$,
\[
\Kapr_k(I(\Pi))=3
\qquad\Longleftrightarrow\qquad
\mathcal M(\Pi)\ \text{is representable over }k.
\]
In particular, if $\Pi$ is not coordinatizable over $k$, then
\[
\Kapr_k(I(\Pi))\ge 4.
\]
\end{corollary}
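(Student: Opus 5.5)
The plan is to deduce the corollary from Lemma~\ref{lem:incidence-as-cocircuit}, combined with the known characterization of Kapranov rank for cocircuit matrices. By Lemma~\ref{lem:incidence-as-cocircuit}, $I(\Pi)=\mathcal C(\mathcal M(\Pi))$ up to column relabeling. Kapranov rank is invariant under column permutation, so it suffices to show that $\Kapr_k(\mathcal C(\mathcal M))=\rank\mathcal M=3$ holds exactly when $\mathcal M$ is $k$-representable. I would take this as the rank-$3$ instance of the Develin--Santos--Sturmfels framework: the Kapranov rank of a cocircuit matrix equals the matroid rank if and only if the matroid is representable over the ground field.

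First I would verify $\Kapr_k(I(\Pi))\ge 3$. The tropical rank is already at least $3$, since a triangle gives $3$ points and $3$ lines whose $3\times3$ incidence submatrix has tropically nonsingular pattern: each line contains exactly two of the three points, and in the complementary weighting the unique optimum is the permutation matching each line to its point not on it. The basic inequality (tropical rank $\le$ Kapranov rank) then gives the bound.

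For ($\Leftarrow$), take a representation $p\mapsto v_p\in k^3$ of $\mathcal M(\Pi)$ and, for each line $\ell$, a normal vector $n_\ell$ with $\langle n_\ell,v_p\rangle=0$ exactly when $p\in\ell$. The matrix $(\langle n_\ell,v_p\rangle)$ has rank $\le3$, with zeros at incidences and nonzero constants elsewhere. This is a lift of the complementary pattern $M(\Pi)$ in the convention where $0$ stands for the entry $\infty$. To obtain a lift of $I(\Pi)$ with weights $1$ on incidences and $0$ elsewhere, I would use the field $k\{\{t\}\}$ and perturb: $\widetilde A_{p\ell}=\langle n_\ell+t\,m_\ell,v_p\rangle$ with generic $m_\ell\in k^3$. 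Here $k$ must be infinite so that a generic choice makes each incident entry $t\langle m_\ell,v_p\rangle$ have valuation exactly $1$. The rank stays $\le3$ because the matrix factors through $k\{\{t\}\}^3$.

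For ($\Rightarrow$), suppose $\widetilde A$ is a rank-$\le 3$ lift of $I(\Pi)$. Write $\widetilde A=VN^{T}$ with rows $v_p\in K^3$ and $n_\ell\in K^3$. Next, choose a rescaling $v_p\mapsto t^{a_p}v_p$ so the vectors have valuation $0$, and pass to residues $\bar v_p,\bar n_\ell\in k^3$. The entries have valuation $1$ on incidences and $0$ off incidences. Hence $\langle\bar n_\ell,\bar v_p\rangle=0$ iff $p\in\ell$, after arguing that the factorization can be normalized so that the residue inner products equal the leading coefficients. The residues $\bar v_p$ then represent a matroid whose hyperplanes are the lines, namely $\mathcal M(\Pi)$.

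The main obstacle is this normalization. Residues of a factorization need not preserve rank-$3$ nondegeneracy: $\bar V$ might have rank $<3$, or the product $\bar V\bar N^{T}$ might lose information. I would handle this by choosing the factorization via a tropical/valuative basis, i.e.\ a point of the tropicalized Grassmannian $\mathrm{Gr}(3,n)$ whose initial matroid is computed by a Plücker-vector argument. If this becomes too long, I would instead cite directly the cocircuit-matrix theorem of \cite{develinSantosSturmfels2005}. The final ``in particular'' clause is then immediate: non-coordinatizability means non-representability, so $\Kapr\neq3$, and combined with $\Kapr\ge3$ this gives $\Kapr\ge4$.
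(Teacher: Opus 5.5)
Your skeleton is the paper's proof: \cref{lem:incidence-as-cocircuit} plus the Develin--Santos--Sturmfels theorem for cocircuit matrices, which is all the paper writes. Your by-hand pieces for the lower bound and for $(\Leftarrow)$ are correct, and they make explicit what the citation hides.
\begin{itemize}
\item For the lower bound, order the triangle's lines opposite their points. The block of $I(\Pi)$ itself is then $J-I_3$, the identity is the unique weight-$0$ permutation, and so $\Kapr_k(I(\Pi))\ge 3$. This bound is needed both for equality in $(\Leftarrow)$ and for the ``in particular'' clause. Do this computation in $I(\Pi)$, not in $M(\Pi)$: there the same block becomes $I_3$, which is tropically singular.
\item For $(\Leftarrow)$, $V(N+tM)^{T}$ with generic $m_\ell$ is the right lift. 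Infiniteness of $k$ is used only to choose the $m_\ell$.
\end{itemize}

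Your by-hand $(\Rightarrow)$ sketch does not work as written, and the obstacle you flagged is real. Rescaling $v_p\mapsto t^{a_p}v_p$ and $n_\ell\mapsto t^{b_\ell}n_\ell$ multiplies the $(p,\ell)$ entry by $t^{a_p+b_\ell}$. So $\langle\bar n_\ell,\bar v_p\rangle$ vanishes whenever $a_p+b_\ell>0$, whatever the incidence. For instance, replace $(V,N)$ by $(VG,NG^{-T})$ with $G=\operatorname{diag}(t^{-5},1,1)$. This typically makes every normalized $\bar v_p$ proportional to $e_1$ and every $\bar n_\ell$ orthogonal to $e_1$, so the residue pairing vanishes identically.

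No Grassmannian or Pl\"ucker argument is needed: reduce first and factor afterwards.
\begin{itemize}
\item All entries of $\widetilde A$ lie in $R_K$. By \cref{lem:reduction-does-not-increase-rank,cor:kapr3-residue-zero-pattern}, the residue matrix $U$ has rank at most $3$ and vanishes exactly on incidences. Write $U=\bar V\bar N^{T}$ over the residue field $k$.
\item Every row and every column of $U$ has a nonzero entry, so $\bar v_p\neq 0\neq\bar n_\ell$. Moreover $\ell=\{p:\bar v_p\in\bar n_\ell^{\perp}\}$, and $\bar n_\ell^{\perp}$ is a plane.
\item Distinct points cannot have proportional $\bar v$'s, since they would then lie on exactly the same lines. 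So for $\ell=\overline{p_1p_2}$ the vectors $\bar v_{p_1},\bar v_{p_2}$ span $\bar n_\ell^{\perp}$, and $\bar v_{p_3}$ lies in their span only if $p_3\in\ell$.
\item Hence the dependent triples are exactly the collinear ones, and $p\mapsto\bar v_p$ represents $\mathcal M(\Pi)$ over $k$. This direction does not use that $k$ is infinite.
\end{itemize}
With this fix your proposal becomes a self-contained proof of the corollary rather than a restatement of the citation.
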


\begin{proof}
Combine the preceding lemma with the Develin--Santos--Sturmfels characterization of Kapranov rank for cocircuit matrices
\cite{develinSantosSturmfels2005}.
\end{proof}

\begin{remark}[Fano anchor case]
For the Fano plane, the associated rank-$3$ matroid is representable if and only if $\operatorname{char}(k)=2$.
Hence over characteristic $0$ one obtains the immediate lower bound
\[
\Kapr_k(I(F_7))\ge 4
\]
for every infinite field $k$ \cite{oxley2011Fano}.
\end{remark}

\begin{remark}[Asymptotic barrier]
The representability gate is a sharp first obstruction, but it does not yet address the asymptotic content of Research Problem~5:
it yields at best the fixed lower bound $4$ once coordinatizability fails, whereas the open question asks how $\Kapr(I(\Pi_q))$
grows with $q$.
\end{remark}


\section{Valued-field setup and auxiliary residue tools}
\label{sec:framework}


\subsection{Valued-field conventions, residues, and lift expansions}
\begin{definition}[Valued-field package and splitting]
Fix a non-archimedean valued field $(K,\val)$ with value group $\Gamma_{\val}\subseteq \RR$, valuation ring
\[
R_K:=\{x\in K: \val(x)\ge 0\},
\]
maximal ideal
\[
\mathfrak m_K:=\{x\in K: \val(x)>0\},
\]
and residue field $k:=R_K/\mathfrak m_K$. When needed, we also fix a splitting
\[
\phi: \Gamma_{\val}\to K^*, \qquad w\mapsto t^w,
\]
so that one can factor out a chosen valuation level and then reduce modulo $\mathfrak m_K$ \cite{maclaganSturmfels2015}.
\end{definition}

\begin{definition}[Leading residue]
Assume a splitting has been chosen. For $x\in K^*$ with $\val(x)=\alpha$, define its leading residue by
\[
\operatorname{lc}(x):=\overline{t^{-\alpha}x}\in k^*.
\]
If $x=0$, we set $\operatorname{lc}(0):=0$.
\end{definition}

\begin{lemma}[Residue criterion for raising valuation]
\label{lem:raise-valuation}
Let $x_1,\dots,x_m\in K$ and let
\[
\alpha:=\min_i \val(x_i), \qquad I:=\{i: \val(x_i)=\alpha\}.
\]
Assume a splitting exists. Then
\[
\val\Bigl(\sum_{i=1}^m x_i\Bigr)>\alpha
\qquad\Longleftrightarrow\qquad
\sum_{i\in I}\operatorname{lc}(x_i)=0
\quad\text{in }k.
\]
\end{lemma}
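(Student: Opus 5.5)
The plan is to normalize every summand by the common minimal valuation and then read off the leading behaviour in the residue field. Write $x_i = t^{\alpha}\,u_i$ with $u_i := t^{-\alpha}x_i$. We take $x_i=0$ to have valuation $\infty$, so zero summands simply never enter $I$; if all $x_i$ vanish, then $\alpha=\infty$ and the statement is vacuous, so assume $\alpha<\infty$. For $i\in I$ the element $u_i$ is a unit of $R_K$ with reduction $\overline{u_i}=\operatorname{lc}(x_i)$, since $\val(x_i)=\alpha$ and the leading residue is defined through exactly this normalization. For $i\notin I$ we have $\val(x_i)>\alpha$, hence $\val(u_i)>0$, i.e.\ $u_i\in\mathfrak m_K$ and $\overline{u_i}=0$.

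Summing, $\sum_i x_i = t^{\alpha}\sum_i u_i$. Since $\val(t^{\alpha})=\alpha$ (the splitting is a section of the valuation), this gives
\[
\val\Bigl(\sum_i x_i\Bigr)=\alpha+\val\Bigl(\sum_i u_i\Bigr).
\]
Now $\sum_i u_i\in R_K$, because each $u_i$ lies in $R_K$ and $R_K$ is a ring. Hence $\val(\sum_i x_i)>\alpha$ holds if and only if $\val(\sum_i u_i)>0$, i.e.\ if and only if $\sum_i u_i\in\mathfrak m_K$. The reduction map $R_K\to k$ is a ring homomorphism, so this is equivalent to
\[
\sum_i \overline{u_i}=\sum_{i\in I}\operatorname{lc}(x_i)+\sum_{i\notin I}0=0\quad\text{in }k,
\]
which is the claimed criterion. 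Both implications come from this single chain of equivalences.

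There is no real obstacle here. The only points needing care are bookkeeping ones: zero entries must be excluded from $I$ (or $\alpha=\infty$ treated as a trivial case), and we must use that $t^{\alpha}$ has valuation exactly $\alpha$, which is what makes $\val(t^{-\alpha}x)=\val(x)-\alpha$. The argument also needs $\alpha\in\Gamma_{\val}$ so that $t^{\alpha}$ is defined; this holds because $\alpha$ is the valuation of some $x_i\neq 0$.
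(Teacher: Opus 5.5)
Your argument is correct and is essentially the paper's proof: both factor out $t^{\alpha}$, observe that the non-minimal summands land in $\mathfrak m_K$ after normalization, and read off the residue of $t^{-\alpha}\sum_i x_i$ as $\sum_{i\in I}\operatorname{lc}(x_i)$. The paper first writes each $x_i=t^{\val(x_i)}u_i$, while you divide everything by $t^{\alpha}$ directly, which is slightly cleaner. One small correction: the all-zero case is not vacuous but a genuine exception, since there $\val(0)=\infty\not>\infty$ while the residue sum is $0$; it therefore has to be excluded by hypothesis, which both you and the paper implicitly do.
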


\begin{proof}
Write $x_i=t^{\val(x_i)}u_i$ with $\val(u_i)=0$ whenever $x_i\neq 0$. Then
\[
\sum_{i=1}^m x_i
=t^{\alpha}\Bigl(\sum_{i\in I} u_i + \sum_{j\notin I} t^{\val(x_j)-\alpha}u_j\Bigr).
\]
Every term in the second sum has positive valuation inside the parentheses, hence lies in $\mathfrak m_K$. Therefore the residue of
$t^{-\alpha}\sum_i x_i$ is exactly $\sum_{i\in I}\overline{u_i}=\sum_{i\in I}\operatorname{lc}(x_i)$, so the valuation rises above $\alpha$
if and only if that residue vanishes.
\end{proof}

\begin{lemma}[Leading-term cancellation criterion for minors]
\label{lem:leading-term-minor-test}
Let $A=(a_{ij})\in \RR^{k\times k}$ be a tropical matrix, and let $F=(f_{ij})\in K^{k\times k}$ be a lift with
\[
\val(f_{ij})=a_{ij}
\qquad\text{for all }i,j.
\]
Assume a splitting has been fixed, so each nonzero entry admits an expansion
\[
f_{ij}=c_{ij}t^{a_{ij}}+\text{(higher-valuation terms)}
\qquad\text{with }c_{ij}=\operatorname{lc}(f_{ij})\in k^*.
\]
For each permutation $\sigma\in S_k$, define its tropical weight and signed leading monomial coefficient by
\[
w(\sigma):=\sum_{i=1}^k a_{i,\sigma(i)},
\qquad
C(\sigma):=\operatorname{sgn}(\sigma)\prod_{i=1}^k c_{i,\sigma(i)}\in k^*.
\]
Let
\[
m:=\min_{\sigma\in S_k}w(\sigma),
\qquad
S_{\min}:=\{\sigma\in S_k:w(\sigma)=m\}.
\]
Then
\[
\det(F)=t^{m}\Bigl(\sum_{\sigma\in S_{\min}} C(\sigma)\Bigr)
+\text{(terms of valuation }>m\text{)}.
\]
In particular,
\[
\val(\det F)>m
\qquad\Longleftrightarrow\qquad
\sum_{\sigma\in S_{\min}} C(\sigma)=0
\quad\text{in }k,
\]
and if $|S_{\min}|=1$, then $\val(\det F)=m$ and $\det(F)\neq 0$.
\end{lemma}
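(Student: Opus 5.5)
The plan is to expand the determinant by the Leibniz formula and then group the terms according to their tropical weight. Afterward, Lemma~\ref{lem:raise-valuation} supplies the cancellation criterion. The steps below are carried out in this order.

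First, I factor each entry. By the definition of leading residue and the chosen splitting, each $f_{ij}$ can be written as
\[
f_{ij}=t^{a_{ij}}u_{ij}, \qquad \val(u_{ij})=0, \qquad \overline{u_{ij}}=c_{ij}.
\]
This writes $f_{ij}$ as $c_{ij}t^{a_{ij}}$ plus terms of higher valuation. Implicitly, all entries are nonzero, since the $a_{ij}$ are real.

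Second, I expand the determinant,
\[
\det F=\sum_{\sigma\in S_k}x_\sigma, \qquad x_\sigma:=\operatorname{sgn}(\sigma)\prod_i f_{i,\sigma(i)}.
\]
Each term has the form
\[
x_\sigma=t^{w(\sigma)}\operatorname{sgn}(\sigma)\prod_i u_{i,\sigma(i)}.
\]
A product of units is a unit, so $\val(x_\sigma)=w(\sigma)$ and $\operatorname{lc}(x_\sigma)=C(\sigma)$, which is nonzero in $k$. It follows that $\min_\sigma\val(x_\sigma)=m$, and the set of indices attaining this minimum is exactly $S_{\min}$.

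Third, I handle the displayed expansion. I pull out $t^m$ and write $\det F=t^m\bigl(\sum_{\sigma\in S_{\min}}\operatorname{sgn}(\sigma)\prod_i u_{i,\sigma(i)}+\sum_{\sigma\notin S_{\min}}t^{w(\sigma)-m}(\cdots)\bigr)$. The second sum lies in $\mathfrak m_K$. The first sum reduces modulo $\mathfrak m_K$ to $\sum_{S_{\min}}C(\sigma)$. This gives the form ``$t^m$ times residue plus higher valuation.'' To be precise, I read ``terms of valuation $>m$'' as $t^m$ times an element of $\mathfrak m_K$, together with the difference between a lift of the residue sum and the residue sum itself, which I identify with its image under a chosen lift. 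The equivalence $\val(\det F)>m\iff\sum_{S_{\min}}C(\sigma)=0$ is then exactly Lemma~\ref{lem:raise-valuation} applied to the family $(x_\sigma)$.

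Finally, suppose $|S_{\min}|=1$. Then the residue sum is a single $C(\sigma)\ne0$, so Lemma~\ref{lem:raise-valuation} says the valuation does not rise above $m$. Every term has valuation at least $m$, so $\val(\det F)\ge m$, and therefore $\val(\det F)=m$. In particular $\det F\neq0$. I expect no real obstacle. The only delicate point is the bookkeeping of the residue coefficient versus its lift in $K$ in the displayed expansion, which I handle with the $\mathfrak m_K$ formulation above.
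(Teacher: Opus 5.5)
Your proposal is correct and follows the paper's proof: expand $\det F$ by Leibniz, observe that each summand has valuation $w(\sigma)$ and leading residue $C(\sigma)$, factor out $t^{m}$, and apply \cref{lem:raise-valuation} with minimizing index set $S_{\min}$. Your extra bookkeeping only makes explicit what the paper leaves implicit. This includes writing $f_{ij}=t^{a_{ij}}u_{ij}$, reading $t^{m}\sum_{\sigma\in S_{\min}}C(\sigma)$ through a chosen lift of the residue sum, and spelling out the $|S_{\min}|=1$ case via the ultrametric inequality.
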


\begin{proof}
Expand the determinant by the Leibniz formula,
\[
\det(F)=\sum_{\sigma\in S_k}\operatorname{sgn}(\sigma)\prod_{i=1}^k f_{i,\sigma(i)}.
\]
Each summand has valuation $w(\sigma)$ and leading term $C(\sigma)t^{w(\sigma)}$. Factoring out $t^m$, only the terms with
$w(\sigma)=m$ contribute to the coefficient of $t^m$, and every remaining term has strictly larger valuation. Applying
\cref{lem:raise-valuation} to the resulting sum proves the claimed criterion.
\end{proof}

\begin{remark}[Kapranov bottleneck for incidence minors]
\label{rem:kapr-bottleneck-minors}
By the lift characterization of Kapranov rank \cite{develinSantosSturmfels2005}, the condition $\Kapr(A)\le r$ means that every
$(r+1)\times(r+1)$ minor of some lift must vanish. \Cref{lem:leading-term-minor-test} isolates the precise obstruction: for each
minor one must solve a residue-field cancellation equation among all valuation-minimal permutation monomials. In projective-plane
incidence matrices, the asymptotic difficulty of Research Problem~5 is therefore not visible at the valuation level alone; it sits in the
compatibility of these leading-term cancellation constraints across many overlapping minors \cite{gutermanShitov2016}.
\end{remark}

\begin{lemma}[Two-term determinant cancellation criterion]
\label{lem:two-term-det}
Let $A=(a_{ij})\in K^{r\times r}$ and let $M=(m_{ij})$ be its valuation matrix, $m_{ij}:=\val(a_{ij})$. For each permutation
$\sigma\in S_r$, define
\[
w(\sigma):=\sum_{i=1}^r m_{i,\sigma(i)}, \qquad w_{\min}:=\min_{\pi\in S_r} w(\pi).
\]
Assume that the minimum is attained by exactly two permutations $\sigma,\tau$, and that a splitting exists so that
\[
c_{ij}:=\operatorname{lc}(a_{ij})=\overline{t^{-m_{ij}}a_{ij}}\in k^*
\]
is defined. Then
\[
\val(\det A)>w_{\min}
\qquad\Longleftrightarrow\qquad
\operatorname{sgn}(\sigma)\prod_{i=1}^r c_{i,\sigma(i)}
+ \operatorname{sgn}(\tau)\prod_{i=1}^r c_{i,\tau(i)}=0
\quad\text{in }k.
\]
\end{lemma}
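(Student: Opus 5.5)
This is the special case $|S_{\min}|=2$ of \cref{lem:leading-term-minor-test}, so the plan is to apply that lemma directly. The only preliminary is to check that its hypotheses hold. The hypothesis that $c_{ij}\in k^*$ is defined for every entry forces each $a_{ij}\neq 0$. Hence $M$ is a real matrix, and setting $F:=A$ with tropical matrix $M$ puts us in the setting of \cref{lem:leading-term-minor-test}: the expansion $a_{ij}=c_{ij}t^{m_{ij}}+(\text{higher-valuation terms})$ is exactly the definition of the leading residue under the fixed splitting.

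Next, I would identify the data. The weight function $w$ and the minimum $w_{\min}$ in the present statement coincide with $w$ and $m$ in \cref{lem:leading-term-minor-test}. By hypothesis, $S_{\min}=\{\sigma,\tau\}$. The lemma then gives
\[
\det(A)=t^{w_{\min}}\bigl(C(\sigma)+C(\tau)\bigr)+(\text{terms of valuation }>w_{\min}),
\]
where $C(\pi)=\operatorname{sgn}(\pi)\prod_i c_{i,\pi(i)}$. Its stated equivalence, $\val(\det A)>w_{\min}$ if and only if $\sum_{\pi\in S_{\min}}C(\pi)=0$, is then literally the displayed two-term condition.

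Alternatively, the proof could be made self-contained by following the route of \cref{lem:raise-valuation}. Expand $\det A$ by the Leibniz formula, and note that each summand has valuation $w(\pi)$ with leading residue $C(\pi)$. Applying \cref{lem:raise-valuation} to these $r!$ summands, the index set $I$ of minimal-valuation summands is exactly $\{\sigma,\tau\}$.

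There is no real obstacle here. The only point needing care is that $\sigma\neq\tau$ are distinct permutations, so the index set has exactly two elements and neither term is double-counted. One also notes that if $\val(\det A)>w_{\min}$ this includes the case $\det A=0$, which is consistent with the convention $\val(0)=\infty$.
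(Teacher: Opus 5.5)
Your proposal is correct and is essentially the paper's argument. The paper proves this lemma exactly as in your ``alternative'' route: it expands $\det A$ by Leibniz, notes that exactly the two summands indexed by $\sigma,\tau$ have valuation $w_{\min}$, and applies \cref{lem:raise-valuation} after factoring out $t^{w_{\min}}$. Your primary route of specializing \cref{lem:leading-term-minor-test} to $S_{\min}=\{\sigma,\tau\}$ amounts to the same thing, since that lemma is itself proved by this Leibniz-plus-\cref{lem:raise-valuation} argument.
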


\begin{proof}
Expand
\[
\det A = \sum_{\pi\in S_r}\operatorname{sgn}(\pi)\prod_{i=1}^r a_{i,\pi(i)}.
\]
Each summand has valuation $w(\pi)$. By assumption, exactly two terms have valuation $w_{\min}$ and all other terms have strictly larger
valuation. Applying \cref{lem:raise-valuation} to this sum after factoring out $t^{w_{\min}}$ gives the claim, because the leading residues
of the two minimal terms are precisely
$\operatorname{sgn}(\sigma)\prod_i c_{i,\sigma(i)}$ and $\operatorname{sgn}(\tau)\prod_i c_{i,\tau(i)}$.
\end{proof}

\begin{corollary}[Initial form in the two-minimum determinant case]
Under the hypotheses of \cref{lem:two-term-det}, the determinant initial form at valuation level $w_{\min}$ is
\[
\operatorname{in}_{w_{\min}}(\det A)=
\operatorname{sgn}(\sigma)\prod_{i=1}^r c_{i,\sigma(i)}
+
\operatorname{sgn}(\tau)\prod_{i=1}^r c_{i,\tau(i)}
\quad\text{in }k.
\]
Hence $\val(\det A)>w_{\min}$ if and only if this binomial initial form vanishes.
\end{corollary}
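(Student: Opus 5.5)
This is a direct specialization of \cref{lem:leading-term-minor-test}, or equivalently of \cref{lem:two-term-det}, so I plan only to repackage the latter's conclusion in the language of initial forms.

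\textbf{Step 1: expand and factor.} Take the Leibniz expansion of $\det A$. Each summand $\operatorname{sgn}(\pi)\prod_i a_{i,\pi(i)}$ has valuation exactly $w(\pi)$. Its leading term is $\operatorname{sgn}(\pi)\prod_i c_{i,\pi(i)}\,t^{w(\pi)}$. By hypothesis, only $\sigma$ and $\tau$ attain $w_{\min}$. Factoring out $t^{w_{\min}}$, the formula of \cref{lem:leading-term-minor-test} applies with $S_{\min}=\{\sigma,\tau\}$. This shows that $t^{-w_{\min}}\det A$ lies in $R_K$, and that its residue is
\[
\operatorname{sgn}(\sigma)\prod_{i=1}^r c_{i,\sigma(i)}+\operatorname{sgn}(\tau)\prod_{i=1}^r c_{i,\tau(i)} .
\]

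\textbf{Step 2: identify this residue with the initial form.} I read the initial form at level $w_{\min}$ as the residue of $t^{-w_{\min}}\det A$. This matches the normalization described in the initial-form proposition: keep the terms of minimal weight, divide by $t^{m}$, and reduce. With that reading, the displayed identity is exactly Step 1. The phrase ``binomial'' is justified because each of the two summands is a unit times a sign. They need not be distinct after reduction, but both are nonzero elements of $k^*$.

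\textbf{Step 3: the equivalence.} $\val(\det A)>w_{\min}$ holds if and only if the residue of $t^{-w_{\min}}\det A$ is $0$. This is \cref{lem:raise-valuation}, or directly the equivalence in \cref{lem:two-term-det}.

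The only step that needs any care is the bookkeeping in Step 2. There I must make sure the convention for $\operatorname{in}_{w_{\min}}$ applied to an element of $K$ (rather than to a polynomial) is the normalized residue. Once that is fixed, the argument is immediate.
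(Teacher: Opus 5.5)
Your proposal is correct and follows essentially the paper's route. The paper gives no separate proof of this corollary; it treats it as an immediate repackaging of \cref{lem:two-term-det}, whose proof is exactly your Leibniz expansion followed by \cref{lem:raise-valuation}, and your Step~2 reading of $\operatorname{in}_{w_{\min}}(\det A)$ as the residue of $t^{-w_{\min}}\det A$ (equivalently, the sum of the two signed leading coefficients) is the right way to make the paper's informal use of ``initial form'' for an element of $K$ precise.
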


\begin{remark}[Connection to higher-order cancellation]
\cref{lem:two-term-det} is the first exact interface between ordinary tropical data and our cancellation-sensitive layer. Tropicalization
records that two determinant terms tie at valuation level; the residue equation records whether those tied terms actually collapse into a
single $\Lam$-type cancellation event rather than merely coexist.
\end{remark}

Given a lift $F\in K^{m\times n}$, we write its expansion (entrywise) as
\[
F = U + t V + O(t^2),
\]
where $U:=F \bmod t$ is the residue matrix and $V$ is the first correction layer.

\begin{definition}[Identity valuation pattern]
\label{def:identity-valuation-pattern}
For $k\ge 3$, let $A^{(k)}\in\{0,1\}^{k\times k}$ be the tropical matrix defined by
\[
A^{(k)}_{ii}=1,
\qquad
A^{(k)}_{ij}=0\quad (i\neq j).
\]
Thus diagonal entries have valuation $1$ and off-diagonal entries have valuation $0$.
\end{definition}

\begin{definition}[Rank-$1$ residue ansatz on the off-diagonal block]
\label{def:rank1-residue-ansatz}
Let $F=(f_{ij})$ be a lift of $A^{(k)}$. For $i\neq j$, write
\[
f_{ij}=u_{ij}+\text{(higher-valuation terms)},
\qquad u_{ij}\in k^*.
\]
We say that the off-diagonal residue data is \emph{rank-$1$} if there exist
$\alpha_1,\dots,\alpha_k\in k^*$ and $\beta_1,\dots,\beta_k\in k^*$ such that
\[u_{ij}=\alpha_i\beta_j
\qquad (i\neq j),
\]
where $u_{ij}$ denotes the valuation-$0$ leading coefficient on the off-diagonal block.
\end{definition}

\begin{lemma}[Rank-$1$ residues cannot cancel the leading term of an identity pattern]
\label{lem:rank1-residues-identity-obstruction}
Let $k\ge 3$ and let $F$ be a lift of $A^{(k)}$ over a valued field with residue field $k_0$ of characteristic not dividing $k-1$.
Assume the off-diagonal leading coefficients satisfy the rank-$1$ ansatz
\[u_{ij}=\alpha_i\beta_j\qquad (i\neq j).
\]
Then the leading coefficient of $\det(F)$ at the tropical minimum cannot vanish. Equivalently,
\[
\val(\det F)=0,
\]
so this identity-pattern minor cannot be killed by any choice of higher-order terms.
\end{lemma}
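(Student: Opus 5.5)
The plan is to apply \cref{lem:leading-term-minor-test} to the identity pattern $A^{(k)}$ and compute the residue sum explicitly. First I identify the valuation-minimal permutations. For $\sigma\in S_k$ the tropical weight $w(\sigma)$ is the number of fixed points of $\sigma$, because diagonal entries have valuation $1$ and off-diagonal entries have valuation $0$. Hence $m=0$, and $S_{\min}$ is exactly the set $D_k$ of derangements, which is nonempty since $k\ge 2$. By \cref{lem:leading-term-minor-test}, we have $\val(\det F)>0$ if and only if
\[
\sum_{\sigma\in D_k}\operatorname{sgn}(\sigma)\prod_{i=1}^k u_{i,\sigma(i)}=0 \quad\text{in } k_0,
\]
and otherwise $\val(\det F)=0$. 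This residue sum depends only on the leading coefficients $u_{ij}$, so no choice of higher-order terms can affect it. That gives the ``cannot be killed'' clause once the sum is shown to be nonzero.

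Next I substitute the rank-$1$ ansatz. For a derangement $\sigma$ every factor is off-diagonal, so
\[
\prod_i u_{i,\sigma(i)}=\prod_i\alpha_i\prod_i\beta_{\sigma(i)}=\prod_i\alpha_i\beta_i=:P\in k_0^*.
\]
The residue sum therefore equals $P\cdot\sum_{\sigma\in D_k}\operatorname{sgn}(\sigma)$.

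The key step is the classical identity for the signed derangement count:
\[
\sum_{\sigma\in D_k}\operatorname{sgn}(\sigma)=\det(J-I)=(-1)^{k-1}(k-1).
\]
Here $J$ is the all-ones matrix. The determinant $\det(J-I)$ is exactly this signed sum, since the Leibniz terms of $J-I$ vanish for every non-derangement and equal $\operatorname{sgn}(\sigma)$ for every derangement. The value of $\det(J-I)$ follows from the eigenvalues of $J$, namely $k$ once and $0$ with multiplicity $k-1$, so the eigenvalues of $J-I$ are $k-1$ and $-1$ ($k-1$ times). This identity holds over $\mathbb Z$, so it also holds in $k_0$ after reduction.

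It follows that the leading coefficient is $(-1)^{k-1}(k-1)P$. This is nonzero in $k_0$ because $P\neq 0$ and the characteristic of $k_0$ does not divide $k-1$. Hence $\val(\det F)=0$ and $\det F\neq 0$.

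The main obstacle I anticipate is bookkeeping rather than mathematics. One point is to keep the residue field (called $k_0$ in the statement) distinct from the matrix size $k$. The other is to check that the splitting convention in \cref{lem:leading-term-minor-test} makes $u_{ij}=\operatorname{lc}(f_{ij})$ for off-diagonal entries. The diagonal coefficients never enter, because derangements avoid the diagonal. An alternative route to the same conclusion is to observe that the residue matrix $\overline{F}$ equals $D_\alpha(J-I)D_\beta$, where $D_\alpha$ and $D_\beta$ are the diagonal matrices with entries $\alpha_i$ and $\beta_j$, and then take determinants directly.
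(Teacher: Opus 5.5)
Your proposal is correct and follows the paper's proof essentially step for step. You identify the valuation-minimal permutations as the derangements, use the rank-$1$ ansatz to make every derangement monomial equal to $(\prod_i\alpha_i)(\prod_i\beta_i)$, and evaluate the signed derangement sum as $\det(J-I)=(-1)^{k-1}(k-1)$, which is nonzero in $k_0$ by the characteristic hypothesis. Your added remarks — that the integer identity reduces to $k_0$, and the factorization of the residue matrix as $D_\alpha(J-I)D_\beta$ — are only minor refinements of the same argument.
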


\begin{proof}
For the identity pattern, every fixed point contributes $1$ to the tropical weight, so the valuation-minimal permutations are exactly the derangements $D_k\subset S_k$. Hence
\[
\operatorname{LC}(\det F)=\sum_{\sigma\in D_k}\operatorname{sgn}(\sigma)\prod_{i=1}^k u_{i,\sigma(i)}.
\]
Under the rank-$1$ ansatz,
\[
\prod_{i=1}^k u_{i,\sigma(i)}
=\prod_{i=1}^k (\alpha_i\beta_{\sigma(i)})
=\Bigl(\prod_{i=1}^k \alpha_i\Bigr)\Bigl(\prod_{i=1}^k \beta_i\Bigr),
\]
independently of $\sigma$. Therefore
\[
\operatorname{LC}(\det F)=\Bigl(\prod_{i=1}^k \alpha_i\Bigr)\Bigl(\prod_{i=1}^k \beta_i\Bigr)
\sum_{\sigma\in D_k}\operatorname{sgn}(\sigma).
\]
Now let $B=J-I_k$, where $J$ is the all-ones matrix. Expanding $\det(B)$ by Leibniz shows
\[
\det(B)=\sum_{\sigma\in D_k}\operatorname{sgn}(\sigma),
\]
because the product contributes $1$ exactly on derangements. Since $B$ has eigenvalues $k-1$ (once) and $-1$ (multiplicity $k-1$),
\[
\sum_{\sigma\in D_k}\operatorname{sgn}(\sigma)=\det(B)=(k-1)(-1)^{k-1}.
\]
By the characteristic assumption this scalar is nonzero in $k_0$, and the prefactor is nonzero because all $\alpha_i,\beta_i$ are nonzero. Thus the leading coefficient cannot vanish, so $\det(F)\neq 0$ and in particular $\val(\det F)=0$.
\end{proof}

\begin{corollary}[$4\times4$ identity-pattern obstruction]
\label{cor:identity4-rank1-obstruction}
For $k=4$ one has
\[
\sum_{\sigma\in D_4}\operatorname{sgn}(\sigma)=-3.
\]
Hence over residue characteristic $\neq 3$ (in particular over $\mathbb C$), rank-$1$ residue factorization on the off-diagonal entries cannot produce the leading cancellation required to kill a $4\times4$ identity-pattern minor.
\end{corollary}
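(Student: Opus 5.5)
The corollary follows by specializing \cref{lem:rank1-residues-identity-obstruction} to $k=4$. Only one new ingredient is needed: the explicit value of the signed derangement sum.

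The first step computes $\sum_{\sigma\in D_4}\operatorname{sgn}(\sigma)$ in two independent ways, as a consistency check. From the proof of the lemma, this sum equals $\det(J-I_4)$. The matrix $J-I_4$ has eigenvalue $3$ once and eigenvalue $-1$ three times, so
\[
\det(J-I_4)=3\cdot(-1)^3=-3.
\]
Directly, $D_4$ consists of the six $4$-cycles, each of sign $-1$, and the three double transpositions, each of sign $+1$. This again gives $-6+3=-3$.

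The second step notes that $k-1=3$, so the hypothesis ``characteristic not dividing $k-1$'' in \cref{lem:rank1-residues-identity-obstruction} says exactly that the residue characteristic is $\neq 3$. Under this hypothesis the lemma applies. The leading coefficient of $\det F$ at tropical weight $0$ is
\[
-3\Bigl(\prod_i\alpha_i\Bigr)\Bigl(\prod_i\beta_i\Bigr)\neq 0,
\]
so $\val(\det F)=0$. By \cref{lem:leading-term-minor-test}, no higher-order corrections can then produce the cancellation needed to kill the minor. The case of $\mathbb C$ is included because it has characteristic $0$.

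I expect no real obstacle here. The only point that needs care is the sign bookkeeping in the derangement count, and the eigenvalue computation cross-checks it.
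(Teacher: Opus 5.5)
Your proposal is correct and follows the paper's route. The corollary is the $k=4$ case of the rank-$1$ identity-pattern lemma: the signed derangement sum is $\det(J-I_4)=(k-1)(-1)^{k-1}=-3$, and the characteristic hypothesis reduces to $\operatorname{char}\neq 3$. Your direct cycle-type count (six $4$-cycles of sign $-1$ and three double transpositions of sign $+1$) is a harmless extra check that the paper does not include.
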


\begin{remark}[Kapranov relevance of the rank-$1$ obstruction]
\label{rem:identity-pattern-kapranov-relevance}
If one can exhibit many $4\times4$ submatrices of an incidence matrix whose valuation pattern is $A^{(4)}$, then any hypothetical lift of Kapranov rank at most $3$ must force every such minor to vanish. By \cref{lem:rank1-residues-identity-obstruction}, this is impossible whenever the off-diagonal residue data on such a minor is rank-$1$. Thus each identity-pattern minor forces non-factorized residue behavior, providing a basic local obstruction to rank-$3$ lifts.
\end{remark}



\subsection{Three-minimizer patterns and overlap elimination}
\begin{lemma}[Three-minimum determinant initial form]
\label{lem:three-min-initial-form}
Let $B\in\{0,1\}^{4\times4}$ and let $\widetilde B$ be a valued-field lift with $\val(\widetilde B)=B$.
Assume that $\operatorname{tropdet}(B)=0$ and that the minimum is attained by exactly three permutations
$\sigma_1,\sigma_2,\sigma_3\in S_4$.
For $j=1,2,3$, define
\[
U_{\sigma_j}:=\operatorname{sgn}(\sigma_j)\prod_{i=1}^4 \operatorname{lc}(\widetilde B_{i,\sigma_j(i)})\in k^*.
\]
Then
\[
\val(\det \widetilde B)>0
\qquad\Longleftrightarrow\qquad
U_{\sigma_1}+U_{\sigma_2}+U_{\sigma_3}=0
\quad\text{in }k.
\]
Equivalently, the determinant initial form at valuation level $0$ is the signed trimonial
\[
\operatorname{in}_{0}(\det \widetilde B)=U_{\sigma_1}+U_{\sigma_2}+U_{\sigma_3}.
\]
\end{lemma}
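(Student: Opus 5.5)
This is a direct specialization of \cref{lem:leading-term-minor-test}, in the same way that \cref{lem:two-term-det} handles two minimizers. First I will check the hypotheses of that lemma. Since $B\in\{0,1\}^{4\times 4}$ and $\val(\widetilde B)=B$, every entry of $\widetilde B$ is nonzero with valuation $0$ or $1$. Hence, once a splitting is fixed, each leading residue $c_{ij}=\operatorname{lc}(\widetilde B_{ij})$ lies in $k^*$. For a permutation $\sigma$, the weight $w(\sigma)=\sum_i B_{i,\sigma(i)}$ is a nonnegative integer. The hypothesis $\operatorname{tropdet}(B)=0$ says that $m=\min_\sigma w(\sigma)=0$. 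The hypothesis that exactly three permutations attain this minimum says that $S_{\min}=\{\sigma_1,\sigma_2,\sigma_3\}$.

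Next I apply the expansion of \cref{lem:leading-term-minor-test} with $m=0$. This gives
\[
\det\widetilde B=\sum_{j=1}^3 C(\sigma_j)+(\text{terms of valuation}>0),
\]
where $C(\sigma_j)=\operatorname{sgn}(\sigma_j)\prod_i c_{i,\sigma_j(i)}$. This is exactly $U_{\sigma_j}$ as defined in the statement, since $t^{0}=1$ and every entry on a minimizing permutation has valuation $0$, so $\operatorname{lc}$ of such an entry is just its residue. The criterion in \cref{lem:leading-term-minor-test} then gives $\val(\det\widetilde B)>0$ if and only if $U_{\sigma_1}+U_{\sigma_2}+U_{\sigma_3}=0$ in $k$.

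For the ``equivalently'' clause, I identify the initial form at level $0$ with the residue of $t^{0}\det\widetilde B$ modulo $\mathfrak m_K$. The argument is the same as in the proof of \cref{lem:raise-valuation}: reducing the displayed expansion modulo $\mathfrak m_K$ kills every non-minimizing term and leaves the signed trinomial.

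I expect no real obstacle here. The only point needing care is a bookkeeping one: the three minimizing monomials involve only valuation-$0$ entries. This ensures their products are genuine residues and do not depend on the splitting, and it justifies writing $\operatorname{lc}$ rather than a $t$-normalized coefficient.
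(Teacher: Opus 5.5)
Your proposal is correct and is essentially the paper's argument. The paper expands $\det\widetilde B$ by Leibniz and applies \cref{lem:raise-valuation} directly, whereas you route through \cref{lem:leading-term-minor-test}; that lemma is itself exactly this expansion-plus-residue argument, here specialized to $m=0$ and $|S_{\min}|=3$. Your observation that the minimizing monomials use only valuation-$0$ entries, so that they do not depend on the splitting, is a small but correct extra point.
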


\begin{proof}
Expand the determinant as a signed sum over $S_4$. By assumption, exactly three summands have valuation $0$, and every other summand has
strictly larger valuation. Applying \cref{lem:raise-valuation} to that expansion yields the stated equivalence, and the displayed
initial-form identity is simply the sum of the three valuation-minimal signed leading terms.
\end{proof}

\begin{definition}[Shared binomial and provisional cancellation profile]
Suppose a three-minimum initial form can be rewritten as
\[
X-\Delta=0,
\qquad
\Delta:=Y-Z,
\]
with $X,Y,Z$ monomials in leading coefficients.
We call $\Delta$ a \emph{shared difference} when the same expression reappears in another overlapping determinant equation.
At the lift level, if $y,z\in K$ lift $Y,Z$ and satisfy $\val(y)=\val(z)=\alpha<\val(y-z)$, we heuristically record the
cancellation remainder by
\[
\operatorname{Prof}_{\Lambda}(\Delta):=(\Lam,\lambda),
\qquad
\lambda:=\val(y-z)-\alpha>0.
\]
This datum is not yet gauge-invariant and is used only as auxiliary bookkeeping for higher-order cancellation.
\end{definition}

\begin{lemma}[Overlap-elimination forces a $4$-cycle binomial]
\label{lem:overlap-elimination}
Consider two overlapping $4\times4$ tropical zero-one patterns with common columns $(0,1,2,4)$ and row sets
$(0,1,2,4)$ and $(1,2,4,5)$:
\[
B_A=
\begin{pmatrix}
0&1&0&1\\
1&0&0&1\\
0&0&0&1\\
1&1&1&0
\end{pmatrix},
\qquad
B_B=
\begin{pmatrix}
1&0&0&1\\
0&0&0&1\\
1&1&1&0\\
0&1&0&0
\end{pmatrix}.
\]
Assume that each minor has tropical determinant minimum $0$ attained by exactly three permutations and that the corresponding
determinant initial forms are
\[
\textup{(A)}\quad U_{0123}-U_{0213}-U_{2103}=0,
\qquad
\textup{(B)}\quad V_{1032}-V_{1230}+V_{2130}=0.
\]
Then the two equations force the binomial relation
\[
u_{0,2}u_{5,0}=u_{0,0}u_{5,2}.
\]
Equivalently, the $4$-cycle
\[
(0,0)\to(0,2)\to(5,2)\to(5,0)\to(0,0)
\]
has unsigned holonomy $1$ in the residue labels.
\end{lemma}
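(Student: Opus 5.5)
The plan is to translate both trinomial equations into the global residue labels $u_{ij}:=\operatorname{lc}(\widetilde B_{ij})$, which are nonzero on valuation-$0$ entries. Then I would observe that the two equations share a common $2\times2$ determinant $\Delta$, in the sense of the shared-difference definition, and eliminate $\Delta$.

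\textbf{Step 1 (decoding the subscripts).} I read $U_{abcd}$ as the leading monomial of the permutation sending local row $i$ to local column $\sigma(i)=abcd_i$. Local indices are converted to global ones via the row and column lists. For minor (A), rows and columns are both $(0,1,2,4)$, so local index $3$ is global index $4$. One checks from $B_A$ that $0123$, $0213$ and $2103$ pass only through zero entries, which is consistent with the three-minimum hypothesis and \cref{lem:three-min-initial-form}. The equation (A) then reads
\[
u_{00}u_{11}u_{22}u_{44}-u_{00}u_{12}u_{21}u_{44}-u_{02}u_{11}u_{20}u_{44}=0 .
\]
For (B), the rows are $(1,2,4,5)$ and the columns $(0,1,2,4)$. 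Reading $V_{1032}$, $V_{1230}$ and $V_{2130}$ against $B_B$ gives
\[
u_{11}u_{20}u_{44}u_{52}-u_{11}u_{22}u_{44}u_{50}+u_{12}u_{21}u_{44}u_{50}=0 .
\]
Before relying on these two expansions, I would verify that every entry used is a zero of the respective pattern.

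\textbf{Step 2 (factoring out the shared difference).} Both equations contain the common unit $u_{44}\in k^*$, which I cancel. Setting $\Delta:=u_{11}u_{22}-u_{12}u_{21}$, the two equations become
\[
u_{00}\,\Delta=u_{02}u_{11}u_{20},\qquad u_{50}\,\Delta=u_{11}u_{20}u_{52}.
\]

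\textbf{Step 3 (elimination).} I multiply the first relation by $u_{50}$ and the second by $u_{00}$; their left-hand sides are then both $u_{00}u_{50}\Delta$. Subtracting gives
\[
u_{11}u_{20}\bigl(u_{02}u_{50}-u_{00}u_{52}\bigr)=0 .
\]
Since $u_{11},u_{20}\in k^*$ are leading residues of valuation-$0$ entries, dividing by them yields $u_{02}u_{50}=u_{00}u_{52}$. This is exactly the claimed binomial. Equivalently, the alternating product around the $4$-cycle $(0,0)\to(0,2)\to(5,2)\to(5,0)$ equals $1$.

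The only real obstacle is Step 1. The index convention for $U_\sigma$ and $V_\sigma$, namely local versus global labels and row-to-column reading, must be fixed so that the stated signs match the determinant expansion and the three named permutations are precisely the zero-weight ones. Once that bookkeeping is checked against $B_A$ and $B_B$, the elimination is a two-line computation. It uses only that all residue labels on valuation-$0$ entries are units, so no assumption on the residue characteristic is needed.
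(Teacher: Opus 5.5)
Your proposal is correct and follows the paper's proof essentially verbatim: you read off the three minimizing monomials of each minor, cancel the unit $u_{4,4}$, factor out the shared difference $\Delta=u_{1,1}u_{2,2}-u_{1,2}u_{2,1}$, and then eliminate $\Delta$ and the nonzero factor $u_{1,1}u_{2,0}$. Your cross-multiply-and-subtract elimination is a minor variant of the paper's division of the two relations. It has the small advantage of not needing $\Delta\neq 0$, although that also follows from your first relation since its right-hand side is a product of units.
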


\begin{proof}
For minor $B_A$, the three minimizing monomials are
\[
U_{0123}=u_{0,0}u_{1,1}u_{2,2}u_{4,4},
\qquad
U_{0213}=u_{0,0}u_{1,2}u_{2,1}u_{4,4},
\qquad
U_{2103}=u_{0,2}u_{1,1}u_{2,0}u_{4,4}.
\]
Dividing \textup{(A)} by $u_{4,4}\neq 0$ gives
\[
u_{0,0}\bigl(u_{1,1}u_{2,2}-u_{1,2}u_{2,1}\bigr)=u_{0,2}(u_{1,1}u_{2,0}). \tag{A$'$}
\]
For minor $B_B$, the three minimizing monomials are
\[
V_{1032}=u_{1,1}u_{2,0}u_{4,4}u_{5,2},
\qquad
V_{1230}=u_{1,1}u_{2,2}u_{4,4}u_{5,0},
\qquad
V_{2130}=u_{1,2}u_{2,1}u_{4,4}u_{5,0}.
\]
Dividing \textup{(B)} by $u_{4,4}\neq 0$ gives
\[
(u_{1,1}u_{2,0})u_{5,2}=\bigl(u_{1,1}u_{2,2}-u_{1,2}u_{2,1}\bigr)u_{5,0}. \tag{B$'$}
\]
Set
\[
\Delta:=u_{1,1}u_{2,2}-u_{1,2}u_{2,1}.
\]
Equation \textup{(A$'$)} gives $u_{0,0}\Delta=u_{0,2}(u_{1,1}u_{2,0})$, while \textup{(B$'$)} gives
$(u_{1,1}u_{2,0})u_{5,2}=\Delta u_{5,0}$.
Eliminating $\Delta$ and the nonzero common factor $u_{1,1}u_{2,0}$ yields
\[
\frac{u_{0,2}}{u_{0,0}}=\frac{u_{5,2}}{u_{5,0}},
\]
hence $u_{0,2}u_{5,0}=u_{0,0}u_{5,2}$.
\end{proof}

\begin{definition}[Diamond pair and diamond remainder]
\label{def:diamond-pair}
Let $B\in\{0,1\}^{4\times4}$ have exactly three valuation-minimizing permutations
$\sigma_1,\sigma_2,\sigma_3$. We say that $\sigma_1,\sigma_2$ form a \emph{diamond pair} if there exist distinct rows $r,s$ and
distinct columns $a,b$ such that
\[
\sigma_1(r)=a,\quad \sigma_1(s)=b,\qquad
\sigma_2(r)=b,\quad \sigma_2(s)=a,
\]
and $\sigma_1(i)=\sigma_2(i)$ for every $i\notin\{r,s\}$.
The associated \emph{diamond remainder} is
\[
\Delta_{r,s}^{a,b}:=u_{r,a}u_{s,b}-u_{r,b}u_{s,a}\in k.
\]
\end{definition}

\begin{lemma}[General overlap-elimination from a shared diamond]
\label{lem:general-diamond-elim}
Let $\widetilde M$ be a valued-field lift of a tropical $0$--$1$ matrix, and suppose a $4\times4$ minor has tropical determinant minimum
$0$ attained by exactly three permutations $\sigma_1,\sigma_2,\sigma_3$. Assume $\sigma_1,\sigma_2$ form a diamond pair in the sense of
\cref{def:diamond-pair}, with rows $(r,s)$ and columns $(a,b)$. Then there exist monomials $P,Q$ in the remaining residue variables and a
sign $\varepsilon\in\{\pm1\}$ such that the initial-form determinant equation can be written as
\[
P\,\Delta_{r,s}^{a,b}=\varepsilon\,Q.
\]
Now suppose two such $(0,3)$-type minors $A$ and $B$ share the same diamond remainder $\Delta_{r,s}^{a,b}$ and produce equations
\[
P_A\,\Delta_{r,s}^{a,b}=\varepsilon_A\,Q_A,
\qquad
P_B\,\Delta_{r,s}^{a,b}=\varepsilon_B\,Q_B,
\]
with $P_A,P_B,Q_A,Q_B\in k^\times$ monomials. Then eliminating $\Delta_{r,s}^{a,b}$ yields the forced signed binomial relation
\[
P_AQ_B=(\varepsilon_A\varepsilon_B)\,P_BQ_A.
\]
In particular, after absorbing the signs into the chosen signed monomials, one obtains a holonomy-type binomial constraint.
\end{lemma}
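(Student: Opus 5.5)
We start from \cref{lem:three-min-initial-form}: the initial-form equation is $U_{\sigma_1}+U_{\sigma_2}+U_{\sigma_3}=0$ with $U_{\sigma_j}=\operatorname{sgn}(\sigma_j)\prod_i u_{i,\sigma_j(i)}$. Since $\sigma_1,\sigma_2$ form a diamond pair on rows $(r,s)$ and columns $(a,b)$, they agree off $\{r,s\}$. So the monomials factor as
\[
\prod_{i\notin\{r,s\}} u_{i,\sigma_1(i)}\cdot u_{r,a}u_{s,b}
\qquad\text{and}\qquad
\prod_{i\notin\{r,s\}} u_{i,\sigma_1(i)}\cdot u_{r,b}u_{s,a}.
\]
Also $\sigma_2=\sigma_1\circ(r\,s)$, so $\operatorname{sgn}(\sigma_2)=-\operatorname{sgn}(\sigma_1)$. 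Setting $P:=\prod_{i\notin\{r,s\}}u_{i,\sigma_1(i)}$, the sum of the first two terms equals $\operatorname{sgn}(\sigma_1)\,P\,\Delta_{r,s}^{a,b}$. Moving the third term across gives
\[
P\,\Delta_{r,s}^{a,b}=\varepsilon\,Q,
\qquad
Q:=\prod_i u_{i,\sigma_3(i)},
\qquad
\varepsilon:=-\operatorname{sgn}(\sigma_1)\operatorname{sgn}(\sigma_3).
\]
The factors of $P$ and $Q$ are leading coefficients of entries on minimizing permutations. These entries all have valuation $0$ and are nonzero, so $P,Q\in k^\times$. This proves the first assertion.

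For the elimination step, take the two equations $P_A\Delta=\varepsilon_AQ_A$ and $P_B\Delta=\varepsilon_BQ_B$, where $\Delta=\Delta_{r,s}^{a,b}$. Multiply the first by $P_B$ and the second by $P_A$. This gives $P_AP_B\Delta=\varepsilon_AP_BQ_A=\varepsilon_BP_AQ_B$. Since $\varepsilon_B^{-1}=\varepsilon_B$, we get $P_AQ_B=\varepsilon_A\varepsilon_B\,P_BQ_A$. The argument needs no division by $\Delta$, so it holds even if $\Delta=0$. In that case both $Q_A$ and $Q_B$ would have to vanish, which is impossible, so in fact $\Delta\neq 0$. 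The final sentence of the statement is just a relabeling: absorb the sign $\varepsilon_A\varepsilon_B$ into one of the signed monomials to obtain a binomial (holonomy-type) relation.

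I expect no real obstacle. The only points that need care are the sign bookkeeping for $\sigma_2$ relative to $\sigma_1$, and checking that $\Delta$ really is the shared object. Note that $\Delta_{r,s}^{a,b}$ is determined by its rows and columns only up to sign under swapping $a,b$, so if the two minors orient the diamond differently, any resulting sign change is absorbed into $\varepsilon$. \Cref{lem:overlap-elimination} is the instance $(r,s)=(1,2)$, $(a,b)=(1,2)$, and checking it against this argument confirms the signs.
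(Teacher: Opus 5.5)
Your proposal is correct and follows essentially the same route as the paper. You factor out the common outside monomial $P$ of the diamond pair, and you use that $\sigma_2=\sigma_1\circ(r\,s)$ has opposite sign to collapse the two terms to $\operatorname{sgn}(\sigma_1)P\,\Delta_{r,s}^{a,b}$; then you move the third minimizer across and eliminate $\Delta_{r,s}^{a,b}$ between the two equations. The paper solves for $\Delta_{r,s}^{a,b}$ by dividing by $P_A,P_B$, where you cross-multiply; your explicit sign $\varepsilon=-\operatorname{sgn}(\sigma_1)\operatorname{sgn}(\sigma_3)$ and your check against \cref{lem:overlap-elimination} are harmless refinements.
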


\begin{proof}
Because $\sigma_1,\sigma_2$ agree outside the two rows $(r,s)$, their determinant monomials share the same outside factor, say $P$.
Thus the signed contribution of those two minimal terms is
\[
\operatorname{sgn}(\sigma_1)P\,u_{r,a}u_{s,b}+\operatorname{sgn}(\sigma_2)P\,u_{r,b}u_{s,a}.
\]
Since $\sigma_1$ and $\sigma_2$ differ by a single transposition on the chosen rows and columns, their parities are opposite, so after
factoring out the common sign we obtain $P(u_{r,a}u_{s,b}-u_{r,b}u_{s,a})=P\,\Delta_{r,s}^{a,b}$. The third minimizing permutation
contributes one further signed monomial $\varepsilon Q$, proving the first claim. The second claim follows immediately by solving both
equations for $\Delta_{r,s}^{a,b}$ and equating the results:
\[
\varepsilon_B P_AQ_B=\varepsilon_A P_BQ_A.
\]
Since each $\varepsilon_i$ is $\pm1$, this is equivalent to the displayed signed binomial relation.
\end{proof}

\begin{proposition}[Computational classification of $(0,3)$-type $4\times4$ patterns]
\label{prop:03-classification}
Among the $2^{16}$ zero-one $4\times4$ patterns, exactly $4992$ have precisely three zero perfect matchings.
Modulo independent row and column permutations, these $4992$ patterns split into exactly $13$ orbit types under the
$S_4\times S_4$ action. Exactly $12$ of those $13$ orbit types contain a diamond pair, and hence admit the factorization of
\cref{lem:general-diamond-elim}. The unique exceptional orbit with no diamond pair is represented by
\[
\begin{pmatrix}
0&0&0&1\\
0&1&1&0\\
1&0&1&0\\
1&1&0&0
\end{pmatrix}.
\]
Equivalently, $4896$ of the $4992$ raw patterns are of diamond type, while only $96$ belong to the unique non-diamond orbit.
\end{proposition}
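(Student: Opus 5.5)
This statement is a finite computational claim, so the proof is an exhaustive enumeration organized to be checkable by hand where possible.

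\textbf{Enumerating the patterns.} For each $B\in\{0,1\}^{4\times4}$, the zero perfect matchings are the permutations $\sigma\in S_4$ with $B_{i,\sigma(i)}=0$ for all $i$. These are exactly the permutations of tropical weight $0$, so $\operatorname{tropdet}(B)=0$ with exactly three minimizers, which is the hypothesis of \cref{lem:three-min-initial-form}. Looping over all $2^{16}$ patterns and all $24$ permutations, and keeping those with exactly three zero matchings, should produce the count $4992$.

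To give a conceptual cross-check, I would reorganize the count by the set of three matchings. Choose an unordered triple $\{\sigma_1,\sigma_2,\sigma_3\}\subset S_4$ and let $Z$ be the union of the cells they use. Those cells are forced to be $0$. Any additional zero cell must not create a fourth zero matching inside the resulting zero set. Summing, over triples that are exactly the matching set of their own union $Z$, the number of admissible extensions should reproduce $4992$.

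\textbf{Counting orbits.} The group $S_4\times S_4$, of order $576$, acts by row and column permutations. This action preserves the number of zero matchings. It also preserves the diamond-pair property of \cref{def:diamond-pair}, because relabelling rows and columns preserves the condition that two permutations agree off two rows and swap two columns. I would compute a canonical form for each pattern (for instance, the lexicographic minimum over the $576$ images) and count the distinct canonical forms, which should give $13$. For each orbit representative I would then test all three pairs $\{\sigma_i,\sigma_j\}$ for the diamond condition. This amounts to checking that $\sigma_i^{-1}\sigma_j$, or equivalently $\sigma_j\sigma_i^{-1}$, is a transposition.

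\textbf{The exceptional orbit.} For the displayed representative, the zero matchings can be listed by hand. Row $1$ has zeros in columns $1,2,3$. Row $2$ has zeros in columns $1,4$, row $3$ in columns $2,4$, and row $4$ in columns $3,4$. The three matchings are therefore the cyclic ones $1\to1,2\to4,3\to2,4\to3$, then $1\to2,\dots$, and $1\to3,\dots$. Any two of them differ by a $3$-cycle, so no pair is a diamond pair. The orbit size is $576/|\mathrm{Stab}|$. The stabilizer is isomorphic to $S_3$ acting on rows and columns $2,3,4$ simultaneously, which gives $576/6=96$. Then $4992-96=4896$.

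\textbf{Main obstacle.} The difficult step is certifying the exact numbers $4992$ and $13$ without relying only on a computer run. I would attempt a hand classification by the cycle structure of the three pairwise quotients $\sigma_i^{-1}\sigma_j$, combined with a Burnside count for the orbit total. If that becomes unwieldy, I would present the result as a verified computation with the check described above.
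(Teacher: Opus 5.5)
Your plan is essentially the paper's own proof: an exhaustive enumeration of the $2^{16}$ patterns, reduction modulo the $S_4\times S_4$ action, and a pairwise test of whether two of the three zero matchings differ by a transposition. Your hand check of the exceptional orbit (pairwise $3$-cycle differences, a stabilizer of order $6$, orbit size $96$) is correct and goes slightly beyond the paper, but fix one slip: the stabilizer does not apply the same permutation to rows and columns $2,3,4$; it fixes row $1$ and column $4$ (the unique rows/columns with three zeros) and permutes rows $\{2,3,4\}$ and columns $\{1,2,3\}$ together via the pairing row $i+1\leftrightarrow$ column $i$.
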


\begin{proof}
This is a finite exhaustive computer check. Enumerate all $2^{16}$ zero-one $4\times4$ matrices and retain exactly those whose zero-edge
bipartite graph has precisely three perfect matchings. A direct computation gives $4992$ such patterns.
Then quotient by independent row and column permutations, i.e.\ by the $S_4\times S_4$ action, to obtain exactly $13$ orbit types.
For each orbit, inspect the three zero perfect matchings and ask whether some pair differs by a single swap on two rows and two columns.
The answer is positive for $12$ orbits and negative for exactly one orbit, represented by the displayed matrix.
\end{proof}

\begin{remark}[Why the classification matters]
The overlap-elimination mechanism is therefore generic inside the local $(0,3)$ world rather than exceptional: almost every raw
$(0,3)$ pattern admits a diamond remainder, and even at the orbit level only one isomorphism type fails to do so.
The unresolved issue is not the local algebra but the projective-plane abundance problem: do actual incidence matrices contain many such
diamond-type patterns with enough overlaps to propagate contradictions globally?
\end{remark}

\begin{remark}[Interpretation via cancellation mass]
The shared difference
\[
\Delta=u_{1,1}u_{2,2}-u_{1,2}u_{2,1}
\]
already indicates where a $\Lam$-remainder may appear: at the residue level it is an ordinary difference, whereas at the lift level it
can carry positive cancellation depth. Overlap-elimination forces the same $\Delta$ to be compatible in two distinct contexts, which makes it a useful carrier of higher-order compatibility constraints.
\end{remark}

\begin{proposition}[Uniform $\Omega(q^8)$ supply of a fixed $(0,3)$-diamond pattern]
\label{prop:bstar-abundance}
Let $\Pi$ be any projective plane of order $q\ge 3$, with incidence matrix $I(\Pi)\in\{0,1\}^{v\times v}$,
$v=q^2+q+1$, in the convention $1=\text{incident}$ and $0=\text{nonincident}$. Then $I(\Pi)$ contains at least
\[
\frac{(q^2+q+1)(q+1)q^3(q-1)(q-2)}{(4!)^2}
\]
distinct $4\times4$ submatrices whose zero-one pattern is exactly
\[
B_*=
\begin{pmatrix}
0&1&0&1\\
1&0&0&1\\
0&0&0&1\\
1&1&1&0
\end{pmatrix}.
\]
In particular, every projective plane of order $q\ge 3$ contains $\Omega(q^8)$ distinct $(0,3)$-type minors of diamond type.
\end{proposition}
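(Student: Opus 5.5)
The plan is to prove the count by an explicit ordered construction, then divide by the number of orderings. In $B_*$, rows are indexed by ordered points $(p_0,p_1,p_2,p_3)$ and columns by ordered lines $(\ell_0,\ell_1,\ell_2,\ell_3)$, with entry $1$ meaning incidence. Reading $B_*$ row by row, the required configuration is as follows:
\begin{itemize}[leftmargin=2em]
  \item $p_0,p_1,p_2$ lie on $\ell_3$ and $p_3$ does not;
  \item $\ell_0$ contains exactly $p_1,p_3$ among the four points;
  \item $\ell_1$ contains exactly $p_0,p_3$;
  \item $\ell_2$ contains $p_3$ but none of $p_0,p_1,p_2$.
\end{itemize}
I will count ordered tuples realizing this configuration.

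\textbf{Step 1 (ordered count).} Choose the data in this order:
\begin{itemize}[leftmargin=2em]
  \item $\ell_3$: $q^2+q+1$ ways;
  \item $p_3\notin\ell_3$: $q^2$ ways;
  \item distinct $p_0,p_1,p_2\in\ell_3$ in order: $(q+1)q(q-1)$ ways.
\end{itemize}
Then the axioms force $\ell_0=p_1p_3$ and $\ell_1=p_0p_3$. For $\ell_2$, the $q+1$ lines through $p_3$ each meet $\ell_3$ in exactly one point. Excluding the three lines $p_3p_0$, $p_3p_1$, $p_3p_2$ leaves exactly $q-2$ admissible choices, which is positive because $q\ge 3$.

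\textbf{Step 2 (check the zeros).} I must verify that every $0$ entry of $B_*$ holds automatically. A line through $p_3\notin\ell_3$ meets $\ell_3$ in a single point. Hence $\ell_0$ meets $\ell_3$ only at $p_1$, so it misses $p_0$ and $p_2$. Likewise $\ell_1$ meets $\ell_3$ only at $p_0$, so it misses $p_1$ and $p_2$. Finally $\ell_2$ misses $p_0,p_1,p_2$ by construction, and $p_3\notin\ell_3$ by choice. The four lines are pairwise distinct because they meet $\ell_3$ in different ways. This gives exactly
\[
(q^2+q+1)\,q^2\,(q+1)q(q-1)(q-2)=(q^2+q+1)(q+1)q^3(q-1)(q-2)
\]
ordered tuples whose submatrix is exactly $B_*$.

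\textbf{Step 3 (pass to distinct submatrices).} A $4\times4$ submatrix is determined by its row set and column set. Any such pair of sets arises from at most $(4!)^2$ orderings, so dividing by $(4!)^2$ gives the stated lower bound. This bound is a polynomial of degree $8$ in $q$ with positive leading coefficient, hence it is $\Omega(q^8)$.

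\textbf{Step 4 ($(0,3)$ diamond type).} The zero entries of $B_*$ are the positions of weight $0$. Row $3$ has its only zero in column $3$. The remaining zero pattern on rows and columns $0,1,2$ is $\{0,2\},\{1,2\},\{0,1,2\}$, which has exactly three perfect matchings: $012$, $021$ and $210$. So $\operatorname{tropdet}=0$ is attained by exactly three permutations. Moreover, $0123$ and $0213$ differ by a swap on rows $1,2$ and columns $1,2$, which is a diamond pair in the sense of \cref{def:diamond-pair}. This agrees with $B_*=B_A$ in \cref{lem:overlap-elimination}.

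The only delicate point is making the case $q-2$ and the distinctness of the lines airtight; everything else is routine counting.
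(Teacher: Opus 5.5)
Your proof is correct and is essentially the paper's argument: both count ordered realizations exactly, obtaining $N_{\mathrm{ord}}=(q^2+q+1)(q+1)q^3(q-1)(q-2)$, and then divide by $(4!)^2$. The only difference is the parametrization: you anchor at the line $\ell_3$ through three of the points ($p_3$ off it, $\ell_0,\ell_1$ forced, $q-2$ choices of $\ell_2$), whereas the paper anchors at the point $D$ on three of the lines ($L_3=\overline{BC}$ forced, $q-2$ choices of $A$), which is the point--line dual; your labelling also gives $B_*$ literally in the stated row and column order, while the paper's ordering $(A,B,C,D;L_0,L_1,L_2,L_3)$ gives it only up to a row/column permutation.
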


\begin{proof}
Choose a point $D$ ($v$ choices), an ordered triple of distinct lines $(L_1,L_2,L_0)$ through $D$
($ (q+1)q(q-1)$ choices), a point $B\in L_2\setminus\{D\}$ ($q$ choices), and a point $C\in L_1\setminus\{D\}$ ($q$ choices).
Let $L_3$ be the unique line through $B$ and $C$, and let $E:=L_0\cap L_3$. Because $B\notin L_1$, $C\notin L_2$, and
$L_0\cap L_1=L_0\cap L_2=D$, we have $L_3\neq L_1,L_2$ and $E\notin\{B,C\}$. Finally choose
$A\in L_3\setminus\{B,C,E\}$, which gives exactly $q-2$ choices since $|L_3|=q+1$.
With row set $(A,B,C,D)$ and column set $(L_0,L_1,L_2,L_3)$, the resulting submatrix of $I(\Pi)$ is exactly $B_*$: the last row has
zeros only in column $L_3$, the first three rows are incident with $L_3$, and the remaining incidences/nonincidences follow from the
unique-intersection axioms. In $B_*$ any zero perfect matching must send the last row to the last column, after which the upper-left
$3\times3$ block has exactly three zero bijections. Thus $B_*$ is $(0,3)$-type and contains the diamond on rows $(A,C)$ and columns
$(L_0,L_2)$. The ordered construction count is
\[
N_{\mathrm{ord}}=(q^2+q+1)(q+1)q(q-1)\cdot q\cdot q\cdot(q-2)
=(q^2+q+1)(q+1)q^3(q-1)(q-2).
\]
Each unordered choice of four rows and four columns is produced by at most $(4!)^2$ orderings, so the number of distinct minors is at
least $N_{\mathrm{ord}}/(4!)^2$.
\end{proof}

\begin{definition}[Cancellation depth of a binomial and diamond $\Lam$-depth proxy]
\label{def:diamond-lam-depth}
Let $(K,\val)$ be a non-archimedean valued field, and let $A,B\in K$ satisfy $\val(A)=\val(B)=v$. Define the
\emph{cancellation depth} of the binomial $A-B$ by
\[
d(A,B):=\val(A-B)-v\in \RR_{\ge 0}\cup\{\infty\}.
\]
Thus $d(A,B)=0$ means that no extra cancellation occurs, $d(A,B)>0$ means that the leading terms cancel and the valuation jumps, and
$d(A,B)=\infty$ if and only if $A=B$.

For a valued-field lift $\widetilde I$ of an incidence matrix with $\val(\widetilde I)=I(\Pi)$, and for a diamond given by two rows
$(p_1,p_2)$ and two columns $(\ell_1,\ell_2)$ with four zero entries in $I(\Pi)$, define its \emph{diamond $\Lam$-depth proxy} by
\[
d(p_1,p_2;\ell_1,\ell_2)
:=d\bigl(\widetilde i_{p_1,\ell_1}\widetilde i_{p_2,\ell_2},\ \widetilde i_{p_1,\ell_2}\widetilde i_{p_2,\ell_1}\bigr),
\]
whenever the two monomials have the same valuation. Equivalently,
\[
d(p_1,p_2;\ell_1,\ell_2)
:=\val\bigl(\widetilde i_{p_1,\ell_1}\widetilde i_{p_2,\ell_2}-\widetilde i_{p_1,\ell_2}\widetilde i_{p_2,\ell_1}\bigr)
-\val\bigl(\widetilde i_{p_1,\ell_1}\widetilde i_{p_2,\ell_2}\bigr)
\in \RR_{\ge 0}\cup\{\infty\}.
\]
This is the formal valuation-theoretic version of the earlier diamond-depth surrogate: it measures the extra valuation created by
cancellation between the two leading diamond monomials and provides a concrete scalar proxy for $\Lam$-depth.
\end{definition}

\begin{remark}[Why $\Omega(q^8)$ still does not finish the problem]
The new abundance theorem supplies many three-minimizer equations, but the construction is rigid: each instance of $B_*$ comes with its
own ambient diamond, and the proof does not yet show that many instances share a fixed diamond or form a connected overlap graph.
Thus the bottleneck shifts from the existence of local witnesses to their overlap geometry and the associated cancellation depth.
\end{remark}

\begin{remark}[Gauge caveat for the depth surrogate]
The scalar depth in \cref{def:diamond-lam-depth} is now formally defined, but it still depends on a normalization choice for the lift and
is not yet proved to be gauge-invariant or bracket-independent. It should therefore be read as a first carrier for ``cancellation mass,''
not yet as a canonical obstruction.
\end{remark}

\begin{proposition}[Fixed diamond gives $\Omega(q^3)$ many $(0,3)$ minors with one shared remainder]
\label{prop:fixed-diamond-many-completions}
Let $\Pi$ be a projective plane of order $q\ge 3$, and work with its combinatorial incidence matrix $I(\Pi)$ in the convention
$1=\text{incident}$ and $0=\text{nonincident}$. Fix two points $X,Y$ and two lines $m,n$ such that
\[
X\notin m,n,
\qquad
Y\notin m,n.
\]
Set $W:=m\cap n$, assume the general-position condition
\[
W\notin \overline{XY},
\]
and define $r:=\overline{WX}$. For any point $Z\in n\setminus\{W\}$ and any line $s$ with $W\notin s$, consider the $4\times4$ submatrix
with row set $(X,Y,Z,W)$ and column set $(m,n,r,s)$. Then:
\begin{enumerate}[leftmargin=2em]
  \item this $4\times4$ zero-one matrix is of $(0,3)$-type;
  \item for every valued-field lift $\widetilde I$ of $I(\Pi)$ whose classical rank is at most $3$, the vanishing of that minor yields
  \[
  u_{Z,r}\,\Delta_{XY}^{mn}=-u_{X,n}u_{Y,r}u_{Z,m},
  \qquad
  \Delta_{XY}^{mn}:=u_{X,m}u_{Y,n}-u_{X,n}u_{Y,m}.
  \]
\end{enumerate}
Consequently, one fixed diamond $(X,Y;m,n)$ in general position produces at least
\[
q\cdot q^2=\Omega(q^3)
\]
three-minimizer determinant equations with the same shared remainder $\Delta_{XY}^{mn}$.
\end{proposition}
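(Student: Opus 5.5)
The plan is to compute the $4\times4$ zero-one pattern explicitly from the incidence axioms, check that it has exactly three zero perfect matchings, and then read off the determinant equation via \cref{lem:three-min-initial-form}. The count at the end is a direct enumeration of the free choices of $Z$ and $s$.

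\emph{Step 1: the pattern.} I would determine each row of the submatrix with row set $(X,Y,Z,W)$ and column set $(m,n,r,s)$.
\begin{itemize}
\item Row $X$: $X\notin m,n$ by hypothesis and $X\in r$ by definition, so the row reads $(0,0,1,*)$.
\item Row $Y$: $Y\notin m,n$. Moreover $Y\notin r$, since $Y\in r=\overline{WX}$ would force $W\in\overline{XY}$, contrary to general position. So the row reads $(0,0,0,*)$.
\item Row $Z$: $Z\in n$. Also $Z\notin m$, because $m\cap n=W\neq Z$. Finally $r\neq n$ (as $X\in r$, $X\notin n$), so $r\cap n=\{W\}$ and $Z\notin r$. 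So the row reads $(0,1,0,*)$.
\item Row $W$: $W$ lies on $m$, $n$ and $r$ but not on $s$, so the row reads $(1,1,1,0)$.
\end{itemize}
The entries of column $s$ in rows $X,Y,Z$ are left unspecified, since they will turn out to be irrelevant.

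\emph{Step 2: $(0,3)$-type.} Any zero perfect matching must send $W\mapsto s$, because that is the only zero in row $W$. The remaining rows $X,Y,Z$ then use only columns $m,n,r$, so the $*$ entries never enter. In the $3\times3$ block, $X$ must go to $m$ or $n$, and $Z$ must go to $m$ or $r$. Checking cases gives exactly three zero bijections:
\[
\sigma_1=\{Xm,Yn,Zr\},\qquad \sigma_2=\{Xn,Ym,Zr\},\qquad \sigma_3=\{Xn,Yr,Zm\}.
\]
Each of these has tropical weight $0$, so the tropical determinant is $0$ and it is attained exactly three times. This proves (1). Note that $\sigma_1,\sigma_2$ form a diamond pair on rows $(X,Y)$ and columns $(m,n)$.

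\emph{Step 3: the equation.} If $\operatorname{rank}\widetilde I\le 3$, this minor vanishes, so in particular $\val(\det)>0$. \Cref{lem:three-min-initial-form} then gives $U_{\sigma_1}+U_{\sigma_2}+U_{\sigma_3}=0$. Ordering rows $X,Y,Z,W$ against columns $m,n,r,s$, the signs are as follows:
\begin{itemize}
\item $\sigma_1$ is the identity, with sign $+$;
\item $\sigma_2$ is a transposition, with sign $-$;
\item $\sigma_3$ is a $3$-cycle on $(m,n,r)$, with sign $+$.
\end{itemize}
Hence
\[
u_{W,s}\bigl(u_{X,m}u_{Y,n}u_{Z,r}-u_{X,n}u_{Y,m}u_{Z,r}+u_{X,n}u_{Y,r}u_{Z,m}\bigr)=0.
\]
Dividing by $u_{W,s}\neq0$ and grouping the diamond yields $u_{Z,r}\Delta_{XY}^{mn}=-u_{X,n}u_{Y,r}u_{Z,m}$, which is (2).

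\emph{Step 4: counting.} There are $q$ choices of $Z\in n\setminus\{W\}$. There are $q^2$ lines not through $W$, since $q+1$ of the $q^2+q+1$ lines pass through $W$. Different pairs $(Z,s)$ give different row/column sets, so they give distinct minors. The main point needing care is Step 1, specifically the nonincidences $Y\notin r$ and $Z\notin r$, which rest on the general-position hypothesis and on $r\neq n$. Everything after that is mechanical.
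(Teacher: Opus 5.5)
Your proof is correct and follows the paper's argument essentially step for step. Both read off the rows $W:(1,1,1,0)$, $X:(0,0,1,*)$, $Y:(0,0,0,*)$, $Z:(0,1,0,*)$ from the incidence axioms and general position, force $W\mapsto s$, list the three zero bijections of the residual $3\times3$ block, and regroup the signed trimonial $+,-,+$ after cancelling $u_{W,s}$. Your additional checks fill in points the paper leaves implicit: that $r\neq n$, so $r\cap n=\{W\}$, and that distinct pairs $(Z,s)$ give distinct minors.
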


\begin{proof}
For the chosen rows and columns, the incidence data force
\[
W:(1,1,1,0),\qquad X:(0,0,1,*),\qquad Y:(0,0,0,*),\qquad Z:(0,1,0,*).
\]
Indeed, $W\in m,n,r$ and $W\notin s$; $X\notin m,n$ but $X\in r$; $Y\notin m,n$ and the general-position hypothesis gives
$Y\notin r$; finally $Z\in n\setminus\{W\}$ implies $Z\notin m$ because $m\cap n=W$, and also $Z\notin r$ because $r\cap n=W$.
Since row $W$ has a unique zero in column $s$, every zero perfect matching must use $W\mapsto s$. The remaining $3\times3$ block on rows
$(X,Y,Z)$ and columns $(m,n,r)$ has zero sets
\[
X:\{m,n\},\qquad Y:\{m,n,r\},\qquad Z:\{m,r\},
\]
so it has exactly three zero bijections:
\[
(Z\mapsto r,\ X\mapsto m,\ Y\mapsto n),\qquad
(Z\mapsto r,\ X\mapsto n,\ Y\mapsto m),\qquad
(Z\mapsto m,\ X\mapsto n,\ Y\mapsto r).
\]
Hence the full $4\times4$ minor is $(0,3)$-type. In any rank-$\le 3$ lift, the classical determinant of this minor vanishes. The three
valuation-minimal determinant terms are
\[
+u_{X,m}u_{Y,n}u_{Z,r}u_{W,s},\qquad
-u_{X,n}u_{Y,m}u_{Z,r}u_{W,s},\qquad
+u_{X,n}u_{Y,r}u_{Z,m}u_{W,s}.
\]
Factoring out $u_{W,s}\neq 0$ and regrouping the first two terms gives
\[
u_{Z,r}\left(u_{X,m}u_{Y,n}-u_{X,n}u_{Y,m}\right)+u_{X,n}u_{Y,r}u_{Z,m}=0,
\]
which is exactly the displayed equation.
\end{proof}

\begin{corollary}[Ratio constancy along the line $n$]
\label{cor:fixed-diamond-ratio}
Under the hypotheses of \cref{prop:fixed-diamond-many-completions}, assume moreover that $\Delta_{XY}^{mn}\neq 0$ in the residue field.
Then for every $Z\in n\setminus\{W\}$ one has
\[
\frac{u_{Z,m}}{u_{Z,r}}=-\frac{\Delta_{XY}^{mn}}{u_{X,n}u_{Y,r}},
\]
so the ratio $u_{Z,m}/u_{Z,r}$ is constant along the whole line $n\setminus\{W\}$.
\end{corollary}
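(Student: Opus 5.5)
The plan is to read the ratio straight off the local equation in part (2) of \cref{prop:fixed-diamond-many-completions}, by dividing through by nonzero residues. Fix $Z\in n\setminus\{W\}$ and some line $s$ with $W\notin s$. Such an $s$ exists because $W$ lies on only $q+1$ of the $q^2+q+1$ lines. The proposition then applies to the row set $(X,Y,Z,W)$ and column set $(m,n,r,s)$. Since the lift has rank at most $3$, this minor vanishes, and in particular its valuation-$0$ initial form vanishes. So we get
\[
u_{Z,r}\,\Delta_{XY}^{mn}=-u_{X,n}u_{Y,r}u_{Z,m}.
\]

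Next I check which residues are units. In the proof of \cref{prop:fixed-diamond-many-completions} the entries $(X,n)$, $(Y,r)$, $(Z,m)$ and $(Z,r)$ all have valuation $0$: $X\notin n$, $Y\notin r$ by general position, and $Z\notin m,r$ because $m\cap n=r\cap n=W$. Hence their leading residues $u_{X,n},u_{Y,r},u_{Z,m},u_{Z,r}$ lie in $k^*$.

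We may therefore divide the equation by $u_{Z,r}\,u_{X,n}u_{Y,r}$, which gives
\[
\frac{u_{Z,m}}{u_{Z,r}}=-\frac{\Delta_{XY}^{mn}}{u_{X,n}u_{Y,r}}.
\]
The right-hand side involves only $X,Y,m,n,r$, and none of these depends on $Z$, since $r=\overline{WX}$ is determined by the fixed data. So the ratio is the same for every $Z\in n\setminus\{W\}$.

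The hypothesis $\Delta_{XY}^{mn}\neq0$ is not needed for the identity itself. It is in fact forced, because the left-hand side of the displayed equation is a nonzero product of units. Its role is only to make the common ratio a well-defined nonzero constant.

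The only step needing care is the check that the relevant residues are nonzero, that is, that the incidence facts from the proposition's proof really place these entries at valuation $0$. Everything else is direct division.
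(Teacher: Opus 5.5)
Your proposal is correct and takes the same approach as the paper, whose proof just solves the equation of part (2) of \cref{prop:fixed-diamond-many-completions} for $u_{Z,m}/u_{Z,r}$. Your added checks are all accurate: the existence of $s$, the nonvanishing of $u_{X,n},u_{Y,r},u_{Z,m},u_{Z,r}$, and the observation that $\Delta_{XY}^{mn}\neq 0$ is already forced by the equation itself.
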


\begin{proof}
Solve the equation in \cref{prop:fixed-diamond-many-completions} for $u_{Z,m}/u_{Z,r}$.
\end{proof}

\begin{lemma}[Even perfect matchings from identical zero-neighborhoods]
\label{lem:identical-neighborhood-even}
Let $G$ be a bipartite graph with row vertices $R=\{1,\dots,n\}$ and column vertices $C=\{1,\dots,n\}$. If two distinct rows
$r_1\neq r_2$ satisfy
\[
N(r_1)=N(r_2)\subseteq C,
\]
then the number of perfect matchings of $G$ is even.
\end{lemma}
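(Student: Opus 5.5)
The plan is a fixed-point-free involution on the set of perfect matchings, built from the transposition that swaps $r_1$ and $r_2$. Let $\mathcal{PM}(G)$ be the set of perfect matchings, and view each one as a bijection $\mu:R\to C$ with $\mu(r)\in N(r)$ for every $r$. Let $\tau=(r_1\,r_2)$ be the transposition of the two rows, and define
\[
\iota(\mu):=\mu\circ\tau .
\]
Concretely, $\iota(\mu)$ sends $r_1\mapsto\mu(r_2)$ and $r_2\mapsto\mu(r_1)$, and agrees with $\mu$ on every other row.

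The proof then has three steps.

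\emph{Step 1: $\iota(\mu)$ is again a perfect matching.} It is a bijection $R\to C$, being a bijection composed with a permutation. The edge condition at rows $r\neq r_1,r_2$ is unchanged. At $r_1$ we need $\mu(r_2)\in N(r_1)$; this holds because $\mu(r_2)\in N(r_2)=N(r_1)$. The condition at $r_2$ follows symmetrically.

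\emph{Step 2: $\iota$ is an involution.} Since $\tau^2=\mathrm{id}$, we get $\iota\circ\iota=\mathrm{id}$.

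\emph{Step 3: $\iota$ has no fixed points.} A fixed point would satisfy $\mu(r_1)=\mu(r_2)$, which contradicts injectivity of $\mu$ because $r_1\neq r_2$.

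Together these show that $\mathcal{PM}(G)$ splits into orbits of size exactly $2$, so $|\mathcal{PM}(G)|$ is even. This includes the case $|\mathcal{PM}(G)|=0$.

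I do not expect a real obstacle here. The only point that needs care is stating explicitly that the hypothesis $N(r_1)=N(r_2)$ is exactly what makes Step 1 work. A remark worth adding for later use: in determinant language, $\iota$ reverses the sign of the permutation. Over the residue field, the paired terms therefore group as $u_{r_1,a}u_{r_2,b}-u_{r_1,b}u_{r_2,a}$, which are precisely the diamond remainders of \cref{def:diamond-pair}. One consequence is that a $(0,3)$-type pattern, having an odd number of matchings, can never contain two rows with identical zero-neighborhoods.
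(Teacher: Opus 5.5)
Your proof is correct and is the same argument the paper gives: a fixed-point-free involution that swaps the columns matched to $r_1$ and $r_2$, well defined because $N(r_1)=N(r_2)$, which pairs off the perfect matchings. Your closing remark, that a $(0,3)$-type pattern cannot have two rows with identical zero-neighborhoods, is also correct, and it is exactly how the paper applies the lemma in its degenerate-diamond corollary.
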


\begin{proof}
Given a perfect matching $M$, let $c_1$ and $c_2$ be the columns matched to $r_1$ and $r_2$. Since $r_1\neq r_2$, we have
$c_1\neq c_2$. Swapping these two matched edges defines an involution
\[
\tau(M):=\bigl(M\setminus\{(r_1,c_1),(r_2,c_2)\}\bigr)\cup\{(r_1,c_2),(r_2,c_1)\}.
\]
Because $N(r_1)=N(r_2)$, both new edges are present, so $\tau(M)$ is again a perfect matching. Moreover $\tau(M)\neq M$ and
$\tau$ has no fixed points. Hence the perfect matchings split into disjoint pairs, and their total number is even.
\end{proof}

\begin{corollary}[Degenerate diamonds obstruct the forced-$W$ gadget]
\label{cor:degenerate-forcedW-parity}
Fix two points $X,Y$ and two lines $m,n$ with $X,Y\notin m,n$, let $W:=m\cap n$, and assume the degenerate condition
\[
W\in \overline{XY}.
\]
Suppose one tries to imitate \cref{prop:fixed-diamond-many-completions} by choosing a line $r$ through $W$ and using $W$ as the forced
row, so that the candidate three-minimizer structure is governed by the $3\times3$ zero-pattern on rows $(X,Y,Z)$ and columns $(m,n,r)$.
Then this $3\times3$ zero-pattern has an even number of zero perfect matchings, hence it cannot contribute exactly three tropical minimizers.
In particular, the forced-$W$ strategy cannot produce a $(0,3)$-type minor carrying the shared remainder $\Delta_{XY}^{mn}$ in the
degenerate regime.
\end{corollary}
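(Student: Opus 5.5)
The plan is to compute the zero-neighborhoods of $X$ and $Y$ directly and then invoke \cref{lem:identical-neighborhood-even}. In the convention $1=$ incident and $0=$ nonincident, a zero entry in row $P$ and column $\ell$ means $P\notin\ell$. By hypothesis $X\notin m,n$ and $Y\notin m,n$, so both $X$ and $Y$ have zeros in columns $m$ and $n$. The only column that could separate them is $r$.

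The key step is to show that $X$ and $Y$ agree on the column $r$. Since $r$ is a line through $W$, there are two cases.
\begin{enumerate}[leftmargin=2em]
\item If $r=\overline{WX}$, then $r=\overline{XY}$, because $W\in\overline{XY}$ and $W\neq X$ (as $X\notin m$). Hence both $X$ and $Y$ lie on $r$, so both have a $1$ there.
\item If $r\neq\overline{XY}$, then neither $X$ nor $Y$ lies on $r$. Indeed, if $X\in r$ then $r$ and $\overline{XY}$ both contain $W$ and $X$, and these are distinct points, so $r=\overline{XY}$. The same argument applies to $Y$. Hence both have a $0$ in column $r$.
\end{enumerate}
The choice $r\in\{m,n\}$ can be excluded, since the gadget uses three distinct columns. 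Even if $r=m$ or $r=n$ were allowed, the rows would still agree there.

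In either case, $N(X)=N(Y)$ within the columns $(m,n,r)$ of the zero-edge bipartite graph. This holds regardless of which row $Z$ is added. By \cref{lem:identical-neighborhood-even}, the $3\times3$ zero-pattern on rows $(X,Y,Z)$ and columns $(m,n,r)$ has an even number of zero perfect matchings, and in particular not exactly three.

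For the final clause, argue as in \cref{prop:fixed-diamond-many-completions}. Because row $W$ has its unique zero in the fourth column $s$ (here $W\in m,n,r$), every zero perfect matching of the $4\times4$ minor restricts to a zero perfect matching of this $3\times3$ block, and the restriction is a bijection. So the $4\times4$ minor also has an even number of tropical minimizers and is never of $(0,3)$-type.

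The main obstacle is the bookkeeping in the case split: one has to check that $W\neq X,Y$, which follows from $X,Y\notin m$, so that the lines $\overline{WX}$ and $\overline{XY}$ are well defined and coincide. The rest is a direct application of the parity lemma.
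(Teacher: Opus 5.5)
Your proof is correct and follows essentially the same route as the paper: the case split $r=\overline{XY}$ versus $r\neq\overline{XY}$ shows that $X$ and $Y$ have identical zero-neighborhoods on the columns $(m,n,r)$, and \cref{lem:identical-neighborhood-even} then gives the parity. Your extra step, which transfers the parity to the full $4\times4$ minor via the forced assignment $W\mapsto s$, is a correct explicit version of what the paper leaves implicit in the final clause.
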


\begin{proof}
Because $X,Y\notin m,n$, the rows $X$ and $Y$ both have zeros in columns $m$ and $n$. Since $r$ passes through $W$ and
$W\in\overline{XY}$, either $r=\overline{XY}$, in which case both $X$ and $Y$ are incident with $r$, or else $r$ meets
$\overline{XY}$ only at $W$, in which case neither $X$ nor $Y$ is incident with $r$. Thus $X$ and $Y$ have identical zero-neighborhoods in
columns $(m,n,r)$. Applying \cref{lem:identical-neighborhood-even} to the corresponding $3\times3$ zero-graph shows that the number of
zero perfect matchings is even, so it cannot be $3$.
\end{proof}

\begin{remark}[Remaining degenerate-diamond gap]
\cref{cor:degenerate-forcedW-parity} rules out only the forced-$W$ anchor used in the general-position family. It does not exclude other
degenerate-diamond gadgets, for instance constructions anchored away from $W$ or three-minimizer minors of positive minimal tropical
weight. A replacement fixed-diamond mechanism for $W\in \overline{XY}$ therefore remains open.
\end{remark}

\begin{remark}[Interpretation of the shared diamond remainder]
When $\Delta_{XY}^{mn}=0$ at residue level, the family in \cref{prop:fixed-diamond-many-completions} stops being an ordinary ratio
constraint and becomes a higher-order cancellation statement repeated across $\Omega(q^3)$ completions. The formally defined scalar depth
$d(\Delta_{XY}^{mn})$ from \cref{def:diamond-lam-depth} is the first valuation-theoretic proxy for that extra cancellation mass. This is
exactly the regime where classical valuation loses information and the intended $\Lam$-layer should begin to record a common cancellation
depth attached to one fixed remainder.
\end{remark}

\begin{lemma}[Two-binomial dominance]
\label{lem:two-binomial-dominance}
Let $(K,\val)$ be a non-archimedean valued field and let $U,V\in K$.
\begin{enumerate}[leftmargin=2em]
  \item If $\val(U)<\val(V)$, then
  \[
  \val(U+V)=\val(U).
  \]
  \item If $\val(U)=\val(V)$, then either $\val(U+V)=\val(U)$ or $\val(U+V)>\val(U)$; the latter occurs exactly when the leading coefficients of $U$ and $V$ cancel in the residue field.
\end{enumerate}
\end{lemma}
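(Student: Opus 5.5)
The argument reduces both parts to the elementary behaviour of the valuation on a sum of two terms, using the splitting $\phi$ fixed in the valued-field package. If $U=0$ or $V=0$, the statements are trivial under the convention $\val(0)=\infty$: for part (1) with $V=0$, $U+V=U$, and $U=0$ cannot occur since $\val(U)<\val(V)$. So we assume $U,V\neq 0$. We then write $U=t^{\alpha}u$ and $V=t^{\beta}v$ with $\alpha=\val(U)$, $\beta=\val(V)$, and $u,v$ units of $R_K$, so that $\operatorname{lc}(U)=\overline u$ and $\operatorname{lc}(V)=\overline v$ are nonzero in $k$.

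For part (1), with $\alpha<\beta$, we factor
\[
U+V=t^{\alpha}\bigl(u+t^{\beta-\alpha}v\bigr).
\]
The second summand inside the parentheses lies in $\mathfrak m_K$, so the parenthesized element has residue $\overline u\neq 0$. It is therefore a unit, and $\val(U+V)=\alpha$. Equivalently, we may invoke \cref{lem:raise-valuation} with $I=\{U\}$: the residue sum is $\operatorname{lc}(U)\neq0$, so the valuation does not rise above $\alpha$. Combined with the ultrametric inequality $\val(U+V)\ge\alpha$, this gives equality.

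For part (2), with $\alpha=\beta$, we have $U+V=t^{\alpha}(u+v)$ and $u+v\in R_K$, so $\val(U+V)\ge\alpha$. Hence exactly one of the two alternatives holds. \Cref{lem:raise-valuation}, applied with $I=\{U,V\}$, then says that $\val(U+V)>\alpha$ holds if and only if $\operatorname{lc}(U)+\operatorname{lc}(V)=\overline u+\overline v=0$ in $k$. This is precisely the statement that the leading coefficients cancel in the residue field. Note that if $U+V=0$, the valuation is $\infty>\alpha$ and $\overline u+\overline v=0$, so this case is included consistently.

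There is no genuine obstacle here. The only point needing care is bookkeeping: handling the zero cases, and making sure that ``leading coefficient'' is read via $\operatorname{lc}$ with respect to the fixed splitting. The cancellation criterion does not depend on the choice of splitting, because rescaling both $u$ and $v$ by the same unit preserves whether $\overline u+\overline v$ vanishes.
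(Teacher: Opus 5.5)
Your argument is correct and essentially matches the paper's: the paper just cites the ultrametric inequality with its equality/cancellation criterion, and you make this explicit by factoring out $t^{\val(U)}$ via the splitting and invoking \cref{lem:raise-valuation}. One small quibble: in part (2) the case $U=V=0$ is not ``trivial'' but should simply be excluded, since with the paper's convention $\operatorname{lc}(0)=0$ the leading coefficients ``cancel'' while $\val(U+V)=\infty$ is not $>\val(U)$, so the biconditional is only meaningful for $U,V\neq 0$.
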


\begin{proof}
This is the standard ultrametric inequality together with its equality criterion. In particular, cancellation can raise the valuation only when the two summands first tie in valuation.
\end{proof}

\begin{proposition}[Computational exclusion of degenerate swap-pair trinomials in $\mathrm{PG}(2,3)$]
\label{prop:pg23-degenerate-no-trinomial}
In the Desarguesian projective plane $\mathrm{PG}(2,3)$, there is no $4\times4$ submatrix of the combinatorial incidence matrix whose tropical determinant has exactly three minimizing permutations and whose minimizer set contains the swap pair of a degenerate diamond $(X,Y;m,n)$ with
\[
X,Y\notin m,n,
\qquad
W:=m\cap n\in \overline{XY}.
\]
Equivalently, the strategy of isolating one degenerate diamond remainder $\Delta_{XY}^{mn}$ by a single $(0,3)$-type trimonial does not occur in the smallest nontrivial Desarguesian case.
\end{proposition}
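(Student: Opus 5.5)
The statement is a finite claim about $\mathrm{PG}(2,3)$, so the proof will be an exhaustive check. I would, however, organise it so that the search is small and each step can be verified by hand. Throughout I use the convention $1=$ incident and $0=$ nonincident, and I count only zero perfect matchings. This is the setting of \cref{prop:03-classification} and \cref{prop:fixed-diamond-many-completions}, where the minimizing permutations of a $(0,3)$-type minor are exactly its zero perfect matchings.

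\textbf{Step 1: the minimum must be $0$.} If the tropical minimum were positive, every minimizer would use at least one incident entry. I would treat that case separately: either by a direct enumeration, or by noting that any swap pair of a diamond uses four zero entries, so the remaining two rows must carry the positive weight. I expect the positive-minimum case to be the most delicate part and would handle it by brute force rather than by argument.

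\textbf{Step 2: fix the degenerate diamond.} Let $X,Y\notin m,n$ with $W=m\cap n\in\overline{XY}$. By the transitivity of $\mathrm{PGL}(3,3)$ on such configurations (an anti-flag-type frame), I may fix one representative diamond. Then I enumerate the remaining two rows $P,Q$ and two columns $a,b$, which gives at most $\binom{11}{2}^2$ cases.

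\textbf{Step 3: reduce using the swap pair.} The swap pair gives two matchings that agree on $\{P,Q\}\to\{a,b\}$, via a fixed bijection $P\mapsto a$, $Q\mapsto b$ say. In the $0$-weight case this bijection requires $P\notin a$ and $Q\notin b$.

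\textbf{Step 4: look for the forced third matching.} Exactly one further zero matching $\sigma_3$ must exist. I would use the parity trick of \cref{lem:identical-neighborhood-even} to discard many cases: $X$ and $Y$ have identical zero neighbourhoods on $\{m,n\}$. They can differ on $\{a,b\}$ only if exactly one of $X,Y$ lies on $a$ (or on $b$).

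The degenerate condition $W\in\overline{XY}$ constrains this. A line $a$ through $W$ other than $\overline{XY}$ misses both $X$ and $Y$, so it cannot separate them. Hence the lines separating $X$ from $Y$ are exactly those not through $W$ and passing through exactly one of $X,Y$.

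\textbf{Step 5: count matchings and finish.} For each surviving case I would count the zero perfect matchings, hoping to show the count is always even or at least $4$. The main obstacle is that I do not see a clean structural reason for the count to be even. I therefore expect to close the argument by the finite computer enumeration over all $13^2\cdot$(choices) submatrices of the $13\times13$ incidence matrix, checking every $4\times4$ minor with exactly three minimizers for a degenerate swap pair. This is presumably what the paper's appendix records.
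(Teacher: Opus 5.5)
Your plan is correct and coincides with the paper's proof, which is simply the exhaustive enumeration of all $\binom{13}{4}^2=511{,}225$ minors of $I(\mathrm{PG}(2,3))$ recorded in \cref{thm:pg23-degenerate-counts}; your Steps~2--4 are valid but optional reductions. The structural reason you were missing does exist, and your Step~4 observations already give it: write the common part of the swap pair as $P\mapsto a$, $Q\mapsto b$; a positive-weight swap pair among exactly three minimizers forces weight $1$ and then identical rows $X,Y$, and in the weight-$0$ case a short split into $P=W$, $Q=W$, or $W\notin\{P,Q\}$ shows that having exactly one further zero matching forces some $d\in\{a,b\}$ to equal $\overline{XY}$, after which either $W$ would have to avoid $d$ or a point of $\overline{XY}\setminus\{W\}$ would have to lie on $m\cup n$, both impossible.
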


\begin{proof}
This follows from an exhaustive computer enumeration: every $4\times4$ minor in $\mathrm{PG}(2,3)$ was enumerated, and none of the exactly-three-minimum minors carries the swap pair of a degenerate diamond.
\end{proof}

\begin{conjecture}[Degenerate diamonds force at least four leading terms]
\label{conj:degenerate-no-trinomial}
For every projective plane of order $q\ge 3$, and at least for every Desarguesian plane $\mathrm{PG}(2,q)$, a degenerate diamond cannot appear inside a $4\times4$ determinant initial form with exactly three valuation-minimizing permutations if the minimizer set contains the associated swap pair. In other words, carrying a degenerate diamond remainder should require at least four valuation-minimal determinant terms.
\end{conjecture}

\begin{remark}[Why the degenerate regime is naturally four-term]
A concrete degenerate model shows that the valuation-minimal determinant part can factor as a sum of two binomials,
\[
\alpha\,\Delta_{XY}^{mn}+\beta\,\Delta_{X,Y}^{m,r}=0.
\]
Writing $U:=\alpha\,\Delta_{XY}^{mn}$ and $V:=\beta\,\Delta_{X,Y}^{m,r}$, \cref{lem:two-binomial-dominance} implies that vanishing requires a first valuation alignment $\val(U)=\val(V)$ and only then a residue-level cancellation. Thus the degenerate problem is no longer controlled by one isolated trimonial: it already lives in a genuine depth-alignment layer, exactly where one expects cancellation depth, rather than valuation data alone, to become decisive.
\end{remark}

\begin{theorem}[Computational $\mathrm{PG}(2,3)$ counts for the degenerate-trinomial regime]
\label{thm:pg23-degenerate-counts}
Let $I_3$ be the combinatorial incidence matrix of $\mathrm{PG}(2,3)$. Among its
\[
\binom{13}{4}^2=511{,}225
\]
submatrices of size $4\times4$:
\begin{enumerate}[leftmargin=2em]
  \item exactly $80{,}964$ have tropical determinant minimum $0$ attained by exactly three permutations;
  \item among those, exactly $50{,}544$ contain at least one degenerate diamond;
  \item among those, exactly $0$ have a degenerate diamond whose swap pair belongs to the three minimizing permutations.
\end{enumerate}
Thus degenerate diamonds appear abundantly inside $(0,3)$ minors of $\mathrm{PG}(2,3)$, but never in the swap-isolating trimonial configuration.
\end{theorem}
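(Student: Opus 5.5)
The statement is a finite computational claim about $\mathrm{PG}(2,3)$, so the proof I would give is an exhaustive enumeration, organized so that each of the three counts is produced by an explicit, checkable predicate.

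\textbf{Setup.} First construct $I_3$ concretely. Take the points of $\mathrm{PG}(2,3)$ to be the $13$ normalized nonzero vectors of $\mathbb F_3^3$, and the lines to be the $13$ normalized dual vectors. Set $(I_3)_{p,\ell}=1$ exactly when $\ell\cdot p=0$ in $\mathbb F_3$. Then sanity-check the result: every row and every column has four ones, and any two rows share exactly one common $1$.

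\textbf{Counts (1) and (2).} Loop over all pairs of $4$-subsets of rows and columns; there are $\binom{13}{4}^2=511{,}225$ of them. For each resulting $4\times 4$ zero-one matrix $B$, compute $w(\sigma)=\sum_i B_{i,\sigma(i)}$ for all $24$ permutations. Keep $B$ when $\min_\sigma w(\sigma)=0$ and exactly three permutations attain it. Equivalently, the zero bipartite graph of $B$ has exactly three perfect matchings, as in \cref{prop:03-classification}. Tallying these gives the count $80{,}964$ in item (1).

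For each retained minor, I would list every degenerate diamond. Such a diamond consists of an unordered pair of rows $\{X,Y\}$ and an unordered pair of columns $\{m,n\}$ inside the minor, subject to two conditions. First, all four entries $(X,m),(X,n),(Y,m),(Y,n)$ are $0$, so $X,Y\notin m,n$. Second, the point $W=m\cap n$, computed in the full plane via a cross product over $\mathbb F_3$, lies on the line $\overline{XY}$, also computed by a cross product. Note that $W$ and $\overline{XY}$ need not be indexed inside the chosen minor. Counting the minors with at least one such diamond gives $50{,}544$ for item (2).

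\textbf{Count (3).} For each degenerate diamond in a retained minor, test whether two of the three minimizing permutations form the swap pair of \cref{def:diamond-pair} on rows $(X,Y)$ and columns $(m,n)$. Concretely, the test asks whether some minimizer $\sigma_1$ has $\sigma_1(X)=m$ and $\sigma_1(Y)=n$, or has $\sigma_1(X)=n$ and $\sigma_1(Y)=m$, and whether the permutation obtained by swapping those two values is also among the minimizers. Item (3) asserts that this test never succeeds. This also yields \cref{prop:pg23-degenerate-no-trinomial} as a corollary.

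\textbf{Independent checks.} The computation is routine, so the main risk is an implementation or convention error. I would guard against it with the following cross-checks.
\begin{itemize}[leftmargin=2em]
  \item Recover the count of diamond-type minors independently, by enumerating ordered realizations of $B_*$ and comparing with the formula in \cref{prop:bstar-abundance}.
  \item Rerun the enumeration with rows and columns freshly relabeled, and check that all counts are invariant under the collineation group.
  \item Verify a theoretical consequence of \cref{cor:degenerate-forcedW-parity}. No retained minor should use the point $W$ as a forced row for a degenerate diamond, because the $3\times3$ residual block would then contain two identical rows and have an even number of matchings.
\end{itemize}

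\textbf{Expected difficulty.} The subtle point is item (3). A minimizer set containing a swap pair on $(X,Y;m,n)$ forces $X$ and $Y$ to be interchangeable in the residual structure. In the degenerate geometry I expect the same parity mechanism as \cref{lem:identical-neighborhood-even} to be at work, and I would like a conceptual explanation rather than only the computer output. As a partial explanation, I would inspect the zero-neighborhoods of $X$ and $Y$ in the other two columns of each candidate minor. The computer check, however, remains the actual proof.
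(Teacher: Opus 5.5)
Your proposal is correct and is essentially the paper's own proof. The paper also runs an exhaustive scan of all $\binom{13}{4}^2$ minors of $I_3$: it keeps those with exactly three zero perfect matchings, then tests every all-zero $2\times2$ block for $W=m\cap n\in\overline{XY}$ and checks whether the associated swap pair lies in the minimizer set.

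On the conceptual explanation you asked for, item (3) holds in every projective plane, but not via identical zero-neighborhoods. Split the zero matchings by where rows $X,Y$ go: they number $2\pi$ plus those in which $X$ or $Y$ uses a column outside $\{m,n\}$, where $\pi$ counts zero matchings of the other two rows into the other two columns. A swap pair among exactly three matchings therefore forces $\pi=1$ and exactly one remaining matching. A short case split on whether $W$ is one of the other two rows then shows that remaining count can never equal $1$; it uses $\overline{XY}\cap m=\overline{XY}\cap n=\{W\}$ together with the fact that the only line through both $X$ and $Y$ is $\overline{XY}$.
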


\begin{proof}
This is an exhaustive computer enumeration. Enumerate all $4\times4$ minors, compute the $24$ permutation weights of each,
filter the $(0,3)$ minors, then inspect every zero $2\times2$ block for the degenerate condition $W\in\overline{XY}$ and test whether the
associated swap pair belongs to the minimizer set. The resulting counts are exactly the displayed numbers.
\end{proof}

\begin{proposition}[Every degenerate diamond yields many $(0,4)$ square-minimizer minors]
\label{prop:degenerate-square-family}
Let $\Pi$ be a projective plane of order $q\ge 2$, work with its combinatorial incidence matrix $I(\Pi)$, and fix a degenerate diamond
$(X,Y;m,n)$ with
\[
X,Y\notin m,n,
\qquad
W:=m\cap n,\qquad \ell:=\overline{XY},\qquad W\in \ell.
\]
Choose a line $r$ through $W$ with $r\neq n,\ell$, a point $Z\in n\setminus\{W\}$, and a line $s$ with $W\notin s$. Form the $4\times4$
submatrix with rows $(X,Y,Z,W)$ and columns $(m,n,r,s)$. Then:
\begin{enumerate}[leftmargin=2em]
  \item its tropical determinant minimum is $0$ and is attained by exactly four permutations;
  \item two of those four minimizing permutations form the swap pair on rows $(X,Y)$ and columns $(m,n)$ with $Z\mapsto r$ fixed.
\end{enumerate}
Consequently, each fixed degenerate diamond supports at least
\[
(q-1)\cdot q\cdot q^2=q^3(q-1)
\]
distinct minors of this square-minimizer type.
\end{proposition}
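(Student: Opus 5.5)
The plan is to compute the $4\times4$ zero-one pattern directly from the incidence axioms, exactly as in the proof of \cref{prop:fixed-diamond-many-completions}, and then count zero perfect matchings.

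\textbf{Row patterns.} Order the columns as $(m,n,r,s)$.
\begin{itemize}[leftmargin=2em]
  \item Row $W$: since $W\in m,n,r$ and $W\notin s$, it reads $(1,1,1,0)$.
  \item Row $X$: by hypothesis $X\notin m,n$. For column $r$, note that $r$ passes through $W$ and $r\neq\ell$, so $r\cap\ell=\{W\}$. Since $X\in\ell$ and $X\neq W$ (because $W\in m$ while $X\notin m$), we get $X\notin r$.
  \item Row $Y$: the same argument applies. So rows $X$ and $Y$ both read $(0,0,0,*)$ and have identical zero-neighborhoods in columns $(m,n,r)$. This is the degenerate phenomenon of \cref{cor:degenerate-forcedW-parity}.
  \item Row $Z$: since $Z\in n\setminus\{W\}$ and $m\cap n=r\cap n=\{W\}$, we have $Z\notin m$ and $Z\notin r$. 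So row $Z$ reads $(0,1,0,*)$.
\end{itemize}

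\textbf{Minimizers.} Because row $W$ has a unique zero, in column $s$, any permutation not sending $W\mapsto s$ picks up weight at least $1$. It therefore suffices to show that the block on rows $(X,Y,Z)$ and columns $(m,n,r)$ has a zero bijection; then $\operatorname{tropdet}=0$ and the minimizers are exactly the zero bijections of that block extended by $W\mapsto s$. The starred entries in column $s$ are then irrelevant.

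In the block, the zero sets are $X,Y:\{m,n,r\}$ and $Z:\{m,r\}$. Splitting on the image of $Z$:
\begin{itemize}[leftmargin=2em]
  \item If $Z\mapsto m$, then $X,Y$ biject onto $\{n,r\}$, giving two bijections.
  \item If $Z\mapsto r$, then $X,Y$ biject onto $\{m,n\}$, giving two bijections.
\end{itemize}
This yields exactly four minimizers, which proves (1). The two minimizers with $Z\mapsto r$ differ by the transposition of $X,Y$ on columns $m,n$ and agree elsewhere. They are therefore the swap pair of the diamond $(X,Y;m,n)$ in the sense of \cref{def:diamond-pair}, which proves (2). This is also consistent with the parity statement \cref{lem:identical-neighborhood-even}.

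\textbf{Counting.}
\begin{itemize}[leftmargin=2em]
  \item There are $q$ choices of $Z\in n\setminus\{W\}$.
  \item There are $q^2$ choices of $s$: of the $q^2+q+1$ lines, $q+1$ pass through $W$.
  \item The lines through $W$ number $q+1$. Excluding $n$ and $\ell$, which are distinct since $X\in\ell\setminus n$, leaves $q-1$ choices of $r$.
\end{itemize}
Distinct triples $(Z,r,s)$ give distinct row/column sets, since the fixed rows $X,Y,W$ and columns $m,n$ are never hit.

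The one step I expect to need care is this count. If $r=m$ is allowed, the column set degenerates (a repeated column), so a genuine $4\times4$ minor really requires $r\neq m$ as well. This would reduce the bound to $q^3(q-2)$. I would first check whether the statement intends $r\neq m$ implicitly. If so, I would record the corrected count $q^3(q-2)$, which is still $\Omega(q^4)$ for $q\ge 3$, and otherwise flag the discrepancy with the displayed $q^3(q-1)$.
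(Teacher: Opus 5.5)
Your argument is correct and is essentially the paper's proof: read off the row patterns $W:(1,1,1,0)$, $X,Y:(0,0,0,*)$, $Z:(0,1,0,*)$, force $W\mapsto s$, split the four zero bijections of the $3\times3$ block according to the image of $Z$, and count the choices of $(r,Z,s)$. Your worry about $r=m$ points to a real flaw in the paper's count, not in your argument. The factor $q-1$ counts every line through $W$ other than $n$ and $\ell$, and this includes $m$. That choice gives the repeated column set $(m,n,m,s)$, for which the initial form of \cref{lem:degenerate-square-initial-form} collapses identically to $u_{W,s}u_{Z,m}\bigl(\Delta_{XY}^{mn}-\Delta_{XY}^{mn}\bigr)=0$. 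So the construction yields only $q^3(q-2)$ genuine minors, and none at all when $q=2$.
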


\begin{proof}
Because $W\in m,n,r$ and $W\notin s$, row $W$ has the form $(1,1,1,0)$, so every weight-$0$ minimizer must use $W\mapsto s$. Since
$r$ passes through $W$ but $r\neq \ell=\overline{XY}$, neither $X$ nor $Y$ lies on $r$, and the diamond condition gives
\[
X:(0,0,0,*),\qquad Y:(0,0,0,*).
\]
Also $Z\in n\setminus\{W\}$ implies $Z\notin m$ because $m\cap n=W$, and $Z\notin r$ because $r\cap n=W$, so
\[
Z:(0,1,0,*).
\]
Therefore the forced assignment $W\mapsto s$ leaves the $3\times3$ zero-pattern on rows $(X,Y,Z)$ and columns $(m,n,r)$ with zero sets
\[
X:\{m,n,r\},\qquad Y:\{m,n,r\},\qquad Z:\{m,r\}.
\]
Its perfect matchings are exactly:
\begin{enumerate}[leftmargin=2em,label=(\roman*)]
  \item $Z\mapsto r$ and $(X,Y)\mapsto(m,n)$ in two orders;
  \item $Z\mapsto m$ and $(X,Y)\mapsto(n,r)$ in two orders.
\end{enumerate}
Hence the full $4\times4$ minor has exactly four valuation-minimizing permutations, two of which are the swap pair on $(X,Y;m,n)$.
The counting formula comes from the choices of $r$, $Z$, and $s$.
\end{proof}

\begin{lemma}[Initial form for degenerate square-minimizer minors]
\label{lem:degenerate-square-initial-form}
Let $\widetilde I$ be a valued-field lift of $I(\Pi)$. For any minor from \cref{prop:degenerate-square-family}, the valuation-$0$ initial
form of its determinant is
\[
u_{W,s}
\Bigl(
 u_{Z,r}\,\Delta_{XY}^{mn}
+
 u_{Z,m}\,\Delta_{XY}^{nr}
\Bigr)=0,
\]
where
\[
\Delta_{XY}^{mn}:=u_{X,m}u_{Y,n}-u_{X,n}u_{Y,m},
\qquad
\Delta_{XY}^{nr}:=u_{X,n}u_{Y,r}-u_{X,r}u_{Y,n}.
\]
Equivalently, degenerate square-minimizer minors do not isolate a single diamond remainder by one trimonial; they force a sum of two binomial remainders.
\end{lemma}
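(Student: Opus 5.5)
The plan is to apply \cref{lem:leading-term-minor-test} directly to the $4\times4$ minor with rows $(X,Y,Z,W)$ and columns $(m,n,r,s)$. \Cref{prop:degenerate-square-family} already gives the minimal tropical weight $0$ and exactly four minimizing permutations. By \cref{lem:leading-term-minor-test}, the valuation-$0$ initial form is therefore the signed sum of the four corresponding leading monomials $C(\sigma)$. The work reduces to writing these four signed monomials and regrouping them.

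First, I would fix the column order $(m,n,r,s)\leftrightarrow(1,2,3,4)$ and the row order $(X,Y,Z,W)\leftrightarrow(1,2,3,4)$. Every minimizer sends $W\mapsto s$, i.e.\ $4\mapsto4$, so each monomial carries the common factor $u_{W,s}$. The signs are then those of the induced permutations of $\{1,2,3\}$. The four minimizers are:
\begin{itemize}
\item $(X,Y,Z)\mapsto(m,n,r)$: identity, sign $+$;
\item $(X,Y,Z)\mapsto(n,m,r)$: a transposition, sign $-$;
\item $(X,Y,Z)\mapsto(n,r,m)$: a $3$-cycle, sign $+$;
\item $(X,Y,Z)\mapsto(r,n,m)$: the transposition $(1\,3)$, sign $-$.
\end{itemize}
The signed sum is therefore
\[
u_{W,s}\bigl(u_{X,m}u_{Y,n}u_{Z,r}-u_{X,n}u_{Y,m}u_{Z,r}+u_{X,n}u_{Y,r}u_{Z,m}-u_{X,r}u_{Y,n}u_{Z,m}\bigr).
\]

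Next I would group the first two terms as $u_{Z,r}\Delta_{XY}^{mn}$ and the last two as $u_{Z,m}(u_{X,n}u_{Y,r}-u_{X,r}u_{Y,n})=u_{Z,m}\Delta_{XY}^{nr}$. This gives exactly the displayed expression. The vanishing statement ``$=0$'' should be read as the condition under which the minor's determinant has valuation $>0$. This holds in particular whenever the lift has rank $\le3$, since then the determinant is exactly $0$. By \cref{lem:leading-term-minor-test}, that condition is equivalent to the vanishing of this initial form. I would make this explicit, as in \cref{prop:fixed-diamond-many-completions}.

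The only delicate point is the sign bookkeeping: the relative sign between the two binomials must come out as $+$ with $\Delta^{nr}_{XY}$ defined in the stated orientation. I would check it by computing each permutation's parity explicitly, as above, rather than relying on the diamond-pair heuristic. The final ``equivalently'' sentence is interpretive. It follows because the minimizer set splits into two swap pairs, on $(X,Y;m,n)$ with $Z\mapsto r$ and on $(X,Y;n,r)$ with $Z\mapsto m$, and no single trimonial arises.
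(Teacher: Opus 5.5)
Your proposal is correct and follows the paper's own proof: you take the four minimizers from \cref{prop:degenerate-square-family}, factor out the common $u_{W,s}$, and regroup the signed sum by the image of $Z$ into $u_{Z,r}\Delta_{XY}^{mn}+u_{Z,m}\Delta_{XY}^{nr}$. Your explicit parity check (signs $+,-,+,-$) is correct, and so is your remark that the ``$=0$'' means $\val(\det)>0$. That condition holds for a rank-$\le 3$ lift but not for an arbitrary lift, as the statement literally says. Both points make explicit what the paper leaves implicit.
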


\begin{proof}
The four valuation-minimal determinant monomials are the four assignments described in \cref{prop:degenerate-square-family}. Factoring
out the common nonzero term $u_{W,s}$ and grouping by the image of $Z$ yields
\[
u_{W,s}
\Bigl(
 u_{X,m}u_{Y,n}u_{Z,r}
-
 u_{X,n}u_{Y,m}u_{Z,r}
+
 u_{X,n}u_{Y,r}u_{Z,m}
-
 u_{X,r}u_{Y,n}u_{Z,m}
\Bigr)=0.
\]
Regrouping the first two and last two terms gives the displayed factorization.
\end{proof}

\begin{remark}[Why the degenerate square family is the first precise $\Lam$-alignment interface]
Writing
\[
U:=u_{Z,r}\,\Delta_{XY}^{mn},
\qquad
V:=u_{Z,m}\,\Delta_{XY}^{nr},
\]
\cref{lem:degenerate-square-initial-form} becomes $U+V=0$. By \cref{lem:two-binomial-dominance}, vanishing first forces the valuation tie
$\val(U)=\val(V)$ and only then a residue-level cancellation. So the degenerate analysis no longer begins with one isolated trimonial; it begins
with a depth-alignment law between two reusable binomial remainders.
\end{remark}

\begin{lemma}[$\Delta$-ratio constancy along the pencil at $W$]
\label{lem:delta-ratio-constancy}
Work in the setting of \cref{prop:degenerate-square-family,lem:degenerate-square-initial-form}. Assume a rank-$\le 3$ valued-field lift exists and that for every allowed choice of
\[
r\ni W,\qquad r\neq n,\ell:=\overline{XY},\qquad Z\in n\setminus\{W\},\qquad W\notin s,
\]
the residue equation
\[u_{Z,r}\,\Delta_{XY}^{mn}+u_{Z,m}\,\Delta_{XY}^{nr}=0
\tag{15.0}
\]
holds in $k$.
Fix one point $Z\in n\setminus\{W\}$. Then for every line $r\ni W$ with $r\neq n,\ell$,
\[
\frac{\Delta_{XY}^{nr}}{u_{Z,r}}
=
-\frac{\Delta_{XY}^{mn}}{u_{Z,m}}
\in k
\tag{15.1}
\]
is independent of $r$. In particular, for any two such lines $r_1,r_2$,
\[u_{Z,r_1}\,\Delta_{XY}^{nr_2}=u_{Z,r_2}\,\Delta_{XY}^{nr_1}.
\tag{15.2}
\]
\end{lemma}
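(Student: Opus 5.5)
The argument is a direct rearrangement of the residue equation (15.0), carried out for a fixed point $Z$. All residues $u_{p,\ell}$ at valuation-$0$ positions lie in $k^*$. In particular $u_{Z,m}\neq 0$ and $u_{Z,r}\neq 0$, because $Z\notin m$ and $Z\notin r$: this was shown in the proof of \cref{prop:degenerate-square-family}, using $m\cap n=r\cap n=W$ and $Z\neq W$. Hence division by these residues is legitimate.

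\textbf{Step 1.} Fix $Z\in n\setminus\{W\}$ and any admissible $r\ni W$ with $r\neq n,\ell$. Choose some line $s$ with $W\notin s$; such a line exists since $q\ge 2$. The hypothesis then gives (15.0) for this triple $(r,Z,s)$. The choice of $s$ plays no role, since the factor $u_{W,s}$ was already removed in \cref{lem:degenerate-square-initial-form}.

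\textbf{Step 2.} Divide (15.0) by $u_{Z,r}u_{Z,m}\in k^*$ to obtain
\[
\frac{\Delta_{XY}^{mn}}{u_{Z,m}}+\frac{\Delta_{XY}^{nr}}{u_{Z,r}}=0 .
\]
This is exactly (15.1). Its right-hand side $-\Delta_{XY}^{mn}/u_{Z,m}$ involves only $X,Y,Z,m,n$. Since $m$ and $n$ are fixed by the diamond and $Z$ is fixed, this quantity does not depend on $r$, which gives the asserted independence.

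\textbf{Step 3.} For two admissible lines $r_1,r_2$, equate the two instances of (15.1):
\[
\frac{\Delta_{XY}^{nr_1}}{u_{Z,r_1}}=\frac{\Delta_{XY}^{nr_2}}{u_{Z,r_2}} .
\]
Clearing the nonzero denominators $u_{Z,r_1}u_{Z,r_2}$ then yields (15.2).

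The only point that needs care is the nonvanishing of $u_{Z,m}$ and $u_{Z,r}$, since the whole argument divides by them. This follows from the valuation-$0$ (nonincident) status of these entries, as recorded in the zero sets computed in \cref{prop:degenerate-square-family}. Everything else is formal algebra in $k$. Note also that if $\Delta_{XY}^{mn}=0$, the common value in (15.1) is $0$, and the statement remains valid.
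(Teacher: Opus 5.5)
Your proposal is correct and follows the paper's argument: $Z\notin m,r$ gives $u_{Z,m},u_{Z,r}\in k^*$, solving (15.0) yields (15.1) with a visibly $r$-independent right-hand side, and equating two instances gives (15.2). The only cosmetic difference is that you divide (15.0) by $u_{Z,r}u_{Z,m}$ in one step, whereas the paper first solves for $\Delta_{XY}^{nr}$ and then divides by $u_{Z,r}$.
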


\begin{proof}
By \cref{lem:degenerate-square-initial-form}, every allowed square-minimizer minor yields
\[u_{Z,r}\,\Delta_{XY}^{mn}+u_{Z,m}\,\Delta_{XY}^{nr}=0.
\]
Because $Z\notin m$ and $Z\notin r$, both $u_{Z,m}$ and $u_{Z,r}$ are nonzero residue labels. Solving for $\Delta_{XY}^{nr}$ gives
\[
\Delta_{XY}^{nr}=-\frac{u_{Z,r}}{u_{Z,m}}\,\Delta_{XY}^{mn},
\]
and division by $u_{Z,r}$ yields \textup{(15.1)}. Eliminating the common constant between $r_1$ and $r_2$ gives \textup{(15.2)}.
\end{proof}

\begin{proposition}[Residue rigidity versus $\Lam$-depth]
\label{prop:rigidity-vs-depth}
Assume the setting of \cref{lem:delta-ratio-constancy}. Fix one line $r\ni W$ with $r\neq n,\ell$ and take two distinct points
$Z_1,Z_2\in n\setminus\{W\}$. Then either
\[
\frac{u_{Z_1,r}}{u_{Z_1,m}}=\frac{u_{Z_2,r}}{u_{Z_2,m}}
\quad\text{in }k,
\tag{15.3}
\]
or
\[
\Delta_{XY}^{mn}=0
\quad\text{in }k.
\tag{15.4}
\]
Equivalently, if the ratio $u_{Z,r}/u_{Z,m}$ depends on $Z$, then the diamond remainder $\Delta_{XY}^{mn}$ vanishes in residue and any lift of the corresponding binomial must have positive cancellation depth.
\end{proposition}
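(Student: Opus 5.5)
We argue directly from the residue equation (15.0) of \cref{lem:delta-ratio-constancy}, applied twice with the same line $r$ and the same $s$, but with the two different points $Z_1,Z_2\in n\setminus\{W\}$. Both quadruples $(r,Z_i,s)$ are allowed choices in \cref{prop:degenerate-square-family}. Hence the hypothesis of \cref{lem:delta-ratio-constancy} gives
\[
u_{Z_1,r}\,\Delta_{XY}^{mn}+u_{Z_1,m}\,\Delta_{XY}^{nr}=0,
\qquad
u_{Z_2,r}\,\Delta_{XY}^{mn}+u_{Z_2,m}\,\Delta_{XY}^{nr}=0 .
\]
The key observation is that the remainders $\Delta_{XY}^{mn}$ and $\Delta_{XY}^{nr}$ involve only rows $X,Y$ and columns $m,n,r$. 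They are therefore the same in both equations; only the $Z$-labels change.

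We read these two equations as a homogeneous $2\times2$ linear system in the unknown vector $(\Delta_{XY}^{mn},\Delta_{XY}^{nr})\in k^2$, with coefficient matrix
\[
\begin{pmatrix} u_{Z_1,r} & u_{Z_1,m}\\ u_{Z_2,r} & u_{Z_2,m}\end{pmatrix}.
\]
We split into two cases according to its determinant $u_{Z_1,r}u_{Z_2,m}-u_{Z_2,r}u_{Z_1,m}$.

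If the determinant vanishes, we divide by the nonzero labels $u_{Z_1,m}u_{Z_2,m}$. These are nonzero because $Z_i\notin m$, as shown in the proof of \cref{prop:degenerate-square-family}. The result is exactly (15.3).

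If the determinant is nonzero, the system has only the trivial solution. In particular $\Delta_{XY}^{mn}=0$, which is (15.4).

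A shorter route to the same conclusion uses (15.1) directly. Since $\Delta_{XY}^{nr}$ does not depend on $Z$, (15.1) gives
\[
-\Delta_{XY}^{mn}\,\frac{u_{Z_1,r}}{u_{Z_1,m}}=\Delta_{XY}^{nr}=-\Delta_{XY}^{mn}\,\frac{u_{Z_2,r}}{u_{Z_2,m}} .
\]
Hence $\Delta_{XY}^{mn}$ times the difference of the two ratios is zero, and we conclude because $k$ is a field.

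For the equivalent reformulation, suppose the ratio $u_{Z,r}/u_{Z,m}$ depends on $Z$. Then (15.3) fails for some pair $Z_1,Z_2$, so (15.4) holds. The residue of $t^{-v}\bigl(\widetilde i_{X,m}\widetilde i_{Y,n}-\widetilde i_{X,n}\widetilde i_{Y,m}\bigr)$ then vanishes, where $v=0$ because all four entries are nonincident. By \cref{lem:raise-valuation}, or equivalently \cref{lem:two-binomial-dominance}(2), the valuation of the binomial is strictly positive. In the language of \cref{def:diamond-lam-depth}, $d(X,Y;m,n)>0$.

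The only point needing care is to confirm that $\Delta_{XY}^{nr}$ is genuinely independent of $Z$. It is, since it uses no $Z$-entries. We also need the denominators to be nonzero residue labels. They are, because $Z\notin m,r$ and all valuation-$0$ entries have $\operatorname{lc}\in k^*$.
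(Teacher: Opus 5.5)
Your proposal is correct and is essentially the paper's proof. The paper applies (15.1) at $Z_1$ and $Z_2$, eliminates the $Z$-independent remainder $\Delta_{XY}^{nr}$, and splits on whether $\Delta_{XY}^{mn}$ vanishes; this is exactly your ``shorter route'', and your determinant case split on the $2\times2$ system is the same elimination organized differently. Your appeal to \cref{lem:raise-valuation} for the positive-cancellation-depth clause correctly justifies a point that the paper's proof leaves implicit.
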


\begin{proof}
Apply \cref{lem:delta-ratio-constancy} to $Z_1$ and $Z_2$:
\[
\frac{\Delta_{XY}^{nr}}{u_{Z_1,r}}=-\frac{\Delta_{XY}^{mn}}{u_{Z_1,m}},
\qquad
\frac{\Delta_{XY}^{nr}}{u_{Z_2,r}}=-\frac{\Delta_{XY}^{mn}}{u_{Z_2,m}}.
\]
If $\Delta_{XY}^{mn}\neq 0$, eliminate $\Delta_{XY}^{nr}$ and divide by $\Delta_{XY}^{mn}$ to obtain \textup{(15.3)}. If that division is impossible, then $\Delta_{XY}^{mn}=0$, which is exactly \textup{(15.4)}.
\end{proof}

\begin{remark}[First forced-depth contact with the $\Lam$-layer]
\label{rem:forced-depth-contact}
The second alternative in \cref{prop:rigidity-vs-depth} is the first point where the degenerate square family necessarily enters the $\Lam$-layer. If $\Delta_{XY}^{mn}=0$ in residue, then for any lift
\[
\widetilde\Delta_{XY}^{mn}:=\widetilde u_{X,m}\widetilde u_{Y,n}-\widetilde u_{X,n}\widetilde u_{Y,m}
\]
the cancellation depth
\[
d_{XY}^{mn}:=\nu(\widetilde\Delta_{XY}^{mn})-\nu(\widetilde u_{X,m}\widetilde u_{Y,n})
\]
is positive. So the degenerate transport equations force a dichotomy: either the residue ratios are extremely rigid across $Z$, or one is compelled to push the shared diamond remainder into deeper cancellation.
\end{remark}

\begin{lemma}[Grid factorization from residue uniformity]
\label{lem:grid-factorization}
Assume the setting of \cref{prop:rigidity-vs-depth}. If
\[
\Delta_{XY}^{mn}\neq 0
\quad\text{in }k,
\]
then for every line $r\ni W$ with $r\neq n,\ell$ the ratio
\[
\phi_r(Z):=\frac{u_{Z,r}}{u_{Z,m}}
\]
is independent of $Z\in n\setminus\{W\}$. Equivalently, there exist scalars $\beta_r\in k^*$ such that
\[
u_{Z,r}=u_{Z,m}\,\beta_r
\qquad\forall Z\in n\setminus\{W\},\ \forall r\ni W,\ r\neq n,\ell.
\tag{16.1}
\]
Hence on the grid
\[
(n\setminus\{W\})\times(\{r\ni W\}\setminus\{n,\ell\})
\]
the residue labels factor as
\[u_{Z,r}=\alpha_Z\beta_r,
\qquad
\alpha_Z:=u_{Z,m}.
\tag{16.2}
\]
\end{lemma}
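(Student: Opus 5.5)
The plan is to read this off directly from \cref{prop:rigidity-vs-depth}. That proposition gives a dichotomy for each fixed line $r\ni W$ with $r\neq n,\ell$ and each pair of distinct points $Z_1,Z_2\in n\setminus\{W\}$: either \textup{(15.3)} holds or \textup{(15.4)} holds. Our hypothesis $\Delta_{XY}^{mn}\neq 0$ excludes \textup{(15.4)}. Hence \textup{(15.3)} holds for every such pair, and the ratio $u_{Z,r}/u_{Z,m}$ takes the same value at all points of $n\setminus\{W\}$.

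First we check that $\phi_r(Z)$ is well defined and nonzero. Since $m\cap n=W$ and $r\cap n=W$, a point $Z\in n\setminus\{W\}$ lies on neither $m$ nor $r$. So both entries $(Z,m)$ and $(Z,r)$ have valuation $0$ in $I(\Pi)$, and their residues $u_{Z,m},u_{Z,r}$ lie in $k^*$; this is the same observation used in the proof of \cref{lem:delta-ratio-constancy}. Next, for each admissible $r$ we fix any $Z_0\in n\setminus\{W\}$, which exists because $|n\setminus\{W\}|=q\ge 2$, and we set $\beta_r:=u_{Z_0,r}/u_{Z_0,m}\in k^*$. Pairwise constancy then gives $u_{Z,r}=u_{Z,m}\beta_r$ for all $Z$, which is \textup{(16.1)}. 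Setting $\alpha_Z:=u_{Z,m}$ turns this into \textup{(16.2)}.

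It is also worth recording why constancy holds, since this gives an explicit formula for $\beta_r$ and avoids invoking the dichotomy abstractly. Equation \textup{(15.0)} gives $u_{Z,r}\Delta_{XY}^{mn}=-u_{Z,m}\Delta_{XY}^{nr}$. Dividing by $u_{Z,m}\Delta_{XY}^{mn}\neq0$ yields $\phi_r(Z)=-\Delta_{XY}^{nr}/\Delta_{XY}^{mn}$. The right-hand side involves only the rows $X,Y$ and the lines $m,n,r$, so it does not depend on $Z$. We may therefore take $\beta_r:=-\Delta_{XY}^{nr}/\Delta_{XY}^{mn}$.

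There is no serious obstacle; the argument is bookkeeping. The one point needing care is that $\beta_r\neq0$. This follows from the identity $\beta_r=u_{Z,r}/u_{Z,m}$ with both residues in $k^*$, and as a consequence $\Delta_{XY}^{nr}\neq0$ as well. The other point is to state that the hypotheses of \cref{lem:delta-ratio-constancy}, namely \textup{(15.0)} holding for every admissible $(r,Z,s)$, are in force, so that the relation applies uniformly over the whole grid.
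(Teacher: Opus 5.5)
Your proposal is correct and follows essentially the same route as the paper. You invoke the dichotomy of \cref{prop:rigidity-vs-depth}, use $\Delta_{XY}^{mn}\neq 0$ to exclude \textup{(15.4)}, call the common ratio $\beta_r$, and set $\alpha_Z:=u_{Z,m}$. Your explicit formula $\beta_r=-\Delta_{XY}^{nr}/\Delta_{XY}^{mn}$, read off from \textup{(15.0)}, is just the content of that proposition's proof written out, and your checks that $u_{Z,m},u_{Z,r},\beta_r\in k^*$ supply details the paper leaves implicit.
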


\begin{proof}
Fix one line $r\ni W$ with $r\neq n,\ell$. Since $\Delta_{XY}^{mn}\neq 0$, \cref{prop:rigidity-vs-depth} implies that for any two points $Z_1,Z_2\in n\setminus\{W\}$ one has
\[
\frac{u_{Z_1,r}}{u_{Z_1,m}}=\frac{u_{Z_2,r}}{u_{Z_2,m}}.
\]
Thus $\phi_r(Z)$ is constant in $Z$; call the common value $\beta_r\in k^*$. Then \textup{(16.1)} follows immediately, and \textup{(16.2)} is obtained by setting $\alpha_Z:=u_{Z,m}$.
\end{proof}

\begin{lemma}[Two gauges imply proportional columns on a line]
\label{lem:two-gauges-proportional}
Fix one line $n$ and one point $W\in n$. Let $m_1,m_2$ be two distinct lines through $W$, both different from $n$. Suppose there exists a line $r\ni W$ and nonzero scalars $\beta^{(1)}_r,\beta^{(2)}_r\in k^*$ such that
\[
u_{Z,r}=u_{Z,m_1}\,\beta^{(1)}_r
\quad\text{and}\quad
u_{Z,r}=u_{Z,m_2}\,\beta^{(2)}_r
\qquad\forall Z\in n\setminus\{W\}.
\tag{16.3}
\]
Then the ratio $u_{Z,m_1}/u_{Z,m_2}$ is independent of $Z\in n\setminus\{W\}$. In particular, the two columns $m_1$ and $m_2$ are proportional on the entire set of rows $n\setminus\{W\}$:
\[
u_{Z,m_1}=c\,u_{Z,m_2}
\qquad\forall Z\in n\setminus\{W\},
\tag{16.4}
\]
where
\[
c:=\frac{\beta^{(2)}_r}{\beta^{(1)}_r}\in k^*.
\]
\end{lemma}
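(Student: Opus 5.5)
The plan is to equate the two expressions for the single column $r$ given in (16.3) and cancel. For each $Z\in n\setminus\{W\}$, both identities in (16.3) hold for the same residue label $u_{Z,r}$, so
\[
u_{Z,m_1}\,\beta^{(1)}_r=u_{Z,m_2}\,\beta^{(2)}_r .
\]
Before dividing, I will check that all the quantities involved are nonzero in $k$. The scalars $\beta^{(1)}_r,\beta^{(2)}_r$ lie in $k^*$ by hypothesis. The labels $u_{Z,m_1}$ and $u_{Z,m_2}$ are nonzero leading residues of valuation-$0$ entries. Indeed, $m_1$ and $m_2$ are lines through $W$ distinct from $n$, so $m_i\cap n=\{W\}$; hence a point $Z\in n\setminus\{W\}$ does not lie on $m_i$. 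In the paper's convention these positions are zero entries of $I(\Pi)$, and their residue labels lie in $k^*$, exactly as was used for $u_{Z,m}$ in \cref{lem:delta-ratio-constancy}.

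Dividing the displayed identity by $u_{Z,m_2}\beta^{(1)}_r\neq 0$ gives
\[
\frac{u_{Z,m_1}}{u_{Z,m_2}}=\frac{\beta^{(2)}_r}{\beta^{(1)}_r}=c .
\]
The right-hand side does not involve $Z$, so the ratio is independent of $Z$. Multiplying back by $u_{Z,m_2}$ yields (16.4) for every $Z\in n\setminus\{W\}$, and $c\in k^*$ because it is a quotient of two units.

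There is no real obstacle here. The only point needing care is the nonvanishing of $u_{Z,m_i}$, which is what makes the division legitimate and is also what allows (16.3) to be read as a genuine proportionality rather than a vacuous identity; this is the incidence argument given above. Notably, the lemma needs only one line $r$ satisfying (16.3) with both gauges, which is why the argument reduces to a single cancellation.
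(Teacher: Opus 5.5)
Your proposal is correct and follows the paper's proof exactly: equate the two expressions for $u_{Z,r}$ from (16.3) and divide to get the $Z$-independent ratio $\beta^{(2)}_r/\beta^{(1)}_r$. Your explicit check that $u_{Z,m_i}\neq 0$ (because $m_i\cap n=\{W\}$, so $Z\notin m_i$) is a worthwhile detail that the paper leaves implicit.
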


\begin{proof}
Pick any $Z\in n\setminus\{W\}$. From \textup{(16.3)} we get
\[
u_{Z,m_1}\,\beta^{(1)}_r=u_{Z,r}=u_{Z,m_2}\,\beta^{(2)}_r,
\]
so
\[
\frac{u_{Z,m_1}}{u_{Z,m_2}}=\frac{\beta^{(2)}_r}{\beta^{(1)}_r}=c,
\]
which does not depend on $Z$.
\end{proof}

\begin{remark}[Pure-gauge regime on a degenerate pencil]
\label{rem:degenerate-pure-gauge-branch}
Taken together, \cref{lem:grid-factorization,lem:two-gauges-proportional} identify the residue-uniformity branch of \cref{prop:rigidity-vs-depth} as a genuine pure-gauge regime. If a fixed degenerate diamond keeps $\Delta_{XY}^{mn}\neq 0$ in residue, then its entire square-minimizer family forces rank-$1$ factorization on the grid
\[
(n\setminus\{W\})\times(\{r\ni W\}\setminus\{n,\ell\}).
\]
If two such gauges coexist with different base columns $m_1,m_2$ and one shared line $r$, then the base columns become proportional along the whole line $n$. This provides a direct bridge from local degenerate two-binomial cancellations to low-dimensional global residue structure.
\end{remark}

\begin{corollary}[Rank-$1$ rectangle on a line--pencil grid]
\label{cor:rank-one-rectangle}
Assume the setting of \cref{lem:grid-factorization}. If
\[
\Delta_{XY}^{mn}\neq 0
\quad\text{in }k,
\]
then on the full rectangle
\[
\mathcal Z\times \mathcal R
:=(n\setminus\{W\})\times(\{r:r\ni W,\ r\neq n,\ell\})
\]
there exist scalars $\alpha_Z\in k^*$ and $\beta_r\in k^*$ such that
\[u_{Z,r}=\alpha_Z\beta_r
\qquad\forall (Z,r)\in \mathcal Z\times \mathcal R.
\tag{17.2}
\]
Equivalently, every $2\times2$ determinant on this rectangle vanishes:
\[u_{Z_1,r_1}u_{Z_2,r_2}-u_{Z_1,r_2}u_{Z_2,r_1}=0
\qquad\forall Z_1,Z_2\in\mathcal Z,\ \forall r_1,r_2\in\mathcal R.
\tag{17.3}
\]
\end{corollary}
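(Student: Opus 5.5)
The plan is to read this corollary off directly from \cref{lem:grid-factorization}; essentially no new input is needed. Under the standing hypothesis $\Delta_{XY}^{mn}\neq 0$ in $k$, that lemma gives, for every line $r\ni W$ with $r\neq n,\ell$, a scalar $\beta_r\in k^*$ such that $u_{Z,r}=u_{Z,m}\beta_r$ for all $Z\in n\setminus\{W\}$. I would set $\alpha_Z:=u_{Z,m}$.

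First I would check that $\alpha_Z\neq 0$. Since $Z\in n\setminus\{W\}$ and $m\cap n=W$, the point $Z$ is not on $m$. Hence $u_{Z,m}$ is a valuation-$0$ leading coefficient, so it lies in $k^*$. Similarly, $\beta_r\in k^*$ because $u_{Z,r}\neq 0$: here $Z\notin r$, since $r\cap n=W$. These two checks give the factorization \textup{(17.2)} on all of $\mathcal Z\times\mathcal R$.

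The equivalence with \textup{(17.3)} is standard linear algebra for matrices with all entries nonzero. In the forward direction, substituting \textup{(17.2)} gives
\[
\alpha_{Z_1}\beta_{r_1}\alpha_{Z_2}\beta_{r_2}-\alpha_{Z_1}\beta_{r_2}\alpha_{Z_2}\beta_{r_1}=0.
\]
In the reverse direction, fix a base column $r_0\in\mathcal R$, which is nonempty because $q\ge 2$ leaves $q-1\ge1$ lines. Set $\alpha_Z:=u_{Z,r_0}$ and $\beta_r:=u_{Z_0,r}/u_{Z_0,r_0}$ for a fixed $Z_0\in\mathcal Z$. The $2\times2$ identity with rows $Z,Z_0$ and columns $r,r_0$ then yields $u_{Z,r}=\alpha_Z\beta_r$; the divisions are legitimate because every entry is nonzero.

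I do not expect any real obstacle. The only point needing care is confirming that every entry of the grid is a nonincident (valuation-$0$) entry, so that the residues are units and the ratio and division steps are valid. That is exactly the incidence bookkeeping $Z\notin m$, $Z\notin r$ already carried out in the proof of \cref{prop:degenerate-square-family}.
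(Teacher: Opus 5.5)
Your proposal is correct and matches the paper's proof: set $\alpha_Z:=u_{Z,m}$, take $\beta_r$ from \cref{lem:grid-factorization}, and use the standard equivalence between multiplicative separability and the vanishing of all $2\times2$ determinants for arrays with no zero entries. Your base-row/base-column argument for the reverse direction and your nonincidence checks ($Z\notin m$, $Z\notin r$) just write out what the paper calls standard.
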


\begin{proof}
This is just \cref{lem:grid-factorization} rewritten in rank-$1$ language: take $\alpha_Z:=u_{Z,m}$ and $\beta_r:=u_{Z,r}/u_{Z,m}$, which is independent of $Z$ by \cref{lem:grid-factorization}. The vanishing of every $2\times2$ determinant is the standard equivalent formulation of multiplicative separability.
\end{proof}

\begin{lemma}[Strict $(0,3)$ minors forbid pair-only cancellation]
\label{lem:strict03-no-pair-only-cancel}
Let $B\in\{0,1\}^{4\times4}$ be a tropical matrix whose determinant minimum is $0$, attained by exactly three permutations $\sigma_1,\sigma_2,\sigma_3$, and let $\widetilde B$ be a lift over a valued field such that every valuation-$0$ entry has nonzero residue. Write
\[
c_i:=\operatorname{sgn}(\sigma_i)\prod_{j=1}^4 \operatorname{lc}(\widetilde B_{j,\sigma_i(j)})\in k^*
\qquad (i=1,2,3)
\]
for the three signed residue coefficients in the determinant initial form. If $\det(\widetilde B)=0$, then
\[
c_1+c_2+c_3=0
\quad\text{in }k.
\tag{17.4}
\]
In particular, if $c_1+c_2=0$ but $c_3\neq 0$, then $\det(\widetilde B)\neq 0$. Equivalently, in a strict $(0,3)$ minor a swap-pair binomial cannot vanish by itself while the third minimal monomial survives.
\end{lemma}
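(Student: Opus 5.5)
The argument runs directly through \cref{lem:three-min-initial-form} (equivalently \cref{lem:leading-term-minor-test}). First I would check that the hypotheses match. $B$ is a $0$--$1$ pattern with $\operatorname{tropdet}(B)=0$ attained by exactly $\sigma_1,\sigma_2,\sigma_3$. Every entry used by a minimizing permutation has valuation $0$, so by assumption it has a nonzero leading residue. Hence each $c_i$ is a well-defined element of $k^*$ and coincides with $U_{\sigma_i}$ in the notation of \cref{lem:three-min-initial-form}.

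Next I would expand $\det(\widetilde B)$ by Leibniz, as in \cref{lem:leading-term-minor-test}:
\[
\det(\widetilde B)=t^{0}\,(c_1+c_2+c_3)+(\text{terms of valuation}>0).
\]
If $\det(\widetilde B)=0$, then $\val(\det\widetilde B)=\infty>0$. \Cref{lem:three-min-initial-form} then gives $c_1+c_2+c_3=0$ in $k$, which is (17.4).

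For the ``in particular'' clause I would argue by contraposition. Suppose $c_1+c_2=0$ and $c_3\neq 0$. Then $c_1+c_2+c_3=c_3\neq 0$, so (17.4) fails and $\det(\widetilde B)\neq0$. In fact $\val(\det\widetilde B)=0$, because the valuation-$0$ coefficient is $c_3$. The final sentence is the same statement in swap-pair language: if $\sigma_1,\sigma_2$ form a diamond pair, then $c_1+c_2=\pm P\,\Delta$ by \cref{lem:general-diamond-elim}. Its vanishing alone cannot kill the minor, since the third minimizer contributes the nonzero residue $c_3$.

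I expect no real obstacle. The only point needing care is the first step: confirming that each minimizer uses only valuation-$0$ entries, so that the $c_i$ are nonzero. The statement assumes this explicitly. For the three $c_i$ to be the only contributions at level $0$, exactly three permutations must attain the minimum $0$, which is also part of the hypothesis.
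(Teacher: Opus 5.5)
Your proposal is correct and matches the paper's proof: (17.4) is read off directly from \cref{lem:three-min-initial-form}, and the ``in particular'' clause follows because the valuation-$0$ coefficient would then equal $c_3\neq 0$. Your extra checks are fine additions that do not change the argument: that minimizers use only valuation-$0$ entries, and the swap-pair reading via \cref{lem:general-diamond-elim}.
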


\begin{proof}
The displayed implication is exactly \cref{lem:three-min-initial-form}. If $c_1+c_2=0$ and $c_3\neq 0$, then the valuation-$0$ initial coefficient equals $c_3\neq 0$, so the determinant cannot vanish.
\end{proof}

\begin{proposition}[Pure gauge kills a strict $(0,3)$ minor in characteristic $\neq 2,3$]
\label{prop:pure-gauge-kills-strict03}
Assume $\operatorname{char}(k)\neq 2,3$. Let $B$ be a strict $(0,3)$ tropical $4\times4$ minor with minimizing permutations
$\sigma_1,\sigma_2,\sigma_3$, and let $\widetilde B$ be any lift such that every valuation-$0$ entry used by those three minimizers has
nonzero residue. Suppose moreover that on the valuation-$0$ support of these three minimizers one has a pure-gauge factorization
\[
u_{i,j}=\alpha_i\beta_j
\qquad (\alpha_i,\beta_j\in k^*).
\tag{18.3}
\]
Then the determinant initial form at valuation $0$ equals
\[
(\alpha_1\alpha_2\alpha_3\alpha_4)(\beta_1\beta_2\beta_3\beta_4)
\bigl(\operatorname{sgn}(\sigma_1)+\operatorname{sgn}(\sigma_2)+\operatorname{sgn}(\sigma_3)\bigr).
\tag{18.4}
\]
In particular, this initial form is nonzero, so $\det(\widetilde B)\neq 0$.
\end{proposition}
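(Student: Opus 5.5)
Under the factorization (18.3) every minimizing monomial collapses to the same product, so the initial form is that product times the sign sum $\operatorname{sgn}(\sigma_1)+\operatorname{sgn}(\sigma_2)+\operatorname{sgn}(\sigma_3)$. The proof then reduces to showing that this sign sum is nonzero in $k$.

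\textbf{Step 1 (monomial collapse).} By \cref{lem:three-min-initial-form}, the valuation-$0$ initial form is $\sum_{j=1}^3 \operatorname{sgn}(\sigma_j)\prod_{i=1}^4 u_{i,\sigma_j(i)}$. Each $\sigma_j$ is a bijection, and every entry it uses lies in the valuation-$0$ support where (18.3) holds. Hence, exactly as in the proof of \cref{lem:rank1-residues-identity-obstruction},
\[
\prod_{i=1}^4 u_{i,\sigma_j(i)}=\Bigl(\prod_{i}\alpha_i\Bigr)\Bigl(\prod_{i}\beta_{\sigma_j(i)}\Bigr)=\Bigl(\prod_i\alpha_i\Bigr)\Bigl(\prod_i\beta_i\Bigr),
\]
independently of $j$. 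This gives (18.4). The prefactor is nonzero because all $\alpha_i,\beta_j\in k^*$.

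\textbf{Step 2 (the sign sum).} The integer $s:=\operatorname{sgn}(\sigma_1)+\operatorname{sgn}(\sigma_2)+\operatorname{sgn}(\sigma_3)$ is a sum of three terms $\pm1$. It is therefore odd, so $s\in\{\pm1,\pm3\}$; in particular $s\neq 0$ and $|s|\le 3$. Its image in $k$ vanishes only if $\operatorname{char}(k)$ divides $s$, which can happen only for $\operatorname{char}(k)=3$ with $s=\pm3$. Since $\operatorname{char}(k)\neq 3$ by hypothesis, $s\neq 0$ in $k$. The assumption $\operatorname{char}(k)\neq 2$ is not actually needed for this step, though it is harmless. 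By \cref{lem:raise-valuation}, a nonzero initial form gives $\val(\det\widetilde B)=0$, so $\det\widetilde B\neq 0$.

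\textbf{Main obstacle.} The only delicate point is in Step 1. The factorization must hold on every entry used by all three minimizers, so that no $\alpha$ or $\beta$ factor is missing from any monomial. The statement assumes exactly this. We must also make sure that the rows and columns are indexed $1,\dots,4$ consistently, so that each $\sigma_j$ runs through all four $\beta_i$ exactly once.
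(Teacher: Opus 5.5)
Your proof is correct and follows the paper's argument: (18.3) makes each of the three minimizing monomials equal to $(\prod_i\alpha_i)(\prod_j\beta_j)$, and the sign sum lies in $\{\pm1,\pm3\}$, so it is nonzero in $k$. Your side remark is also right: only $\operatorname{char}(k)\neq 3$ is actually used, because $\pm1$ and $\pm3$ are nonzero in characteristic $2$. The hypothesis $\operatorname{char}(k)\neq 3$ is genuinely needed, since in the exceptional non-diamond orbit of \cref{prop:03-classification} all three minimizers are even and the sum is $3$.
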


\begin{proof}
Under \textup{(18.3)}, every valuation-$0$ determinant monomial coming from a minimizing permutation has the same residue
\[
\prod_{i=1}^4 u_{i,\sigma(i)}=\Bigl(\prod_{i=1}^4 \alpha_i\Bigr)\Bigl(\prod_{j=1}^4 \beta_j\Bigr),
\]
independent of $\sigma$. Hence the valuation-$0$ initial coefficient is exactly the signed sum in \textup{(18.4)}. A sum of three signs
lies in $\{3,1,-1,-3\}$, which is nonzero when $\operatorname{char}(k)\neq 2,3$.
\end{proof}

\begin{remark}[Pure gauge versus strict three-term cancellation]
\label{rem:puregauge-vs-strict03}
\Cref{prop:pure-gauge-kills-strict03} upgrades \cref{lem:strict03-no-pair-only-cancel} from one forbidden swap-pair scenario to a full
obstruction principle: once a strict $(0,3)$ minor lies entirely inside a pure-gauge region, all three minimal monomials collapse to the
same magnitude and only the sign sum remains. In characteristic $\neq 2,3$ that sign sum can never vanish, so any large pure-gauge chart
must break before it reaches the support of such a strict three-minimizer witness, or else pay positive cancellation depth elsewhere.
\end{remark}

\begin{definition}[Degenerate and skew rectangles in the nonincidence graph]
\label{def:degenerate-vs-skew-rectangles}
Let $\Pi$ be a projective plane with combinatorial incidence matrix $I(\Pi)$, and let $G_0(I(\Pi))$ be its nonincidence graph.
\begin{enumerate}[leftmargin=2em]
  \item If $n$ is a line and $W\in n$, define the \emph{degenerate rectangle}
  \[
  R(n,W):=(n\setminus\{W\})\times (\mathcal P(W)\setminus\{n\})\subseteq E(G_0(I(\Pi))),
  \]
  where $\mathcal P(W)$ is the pencil of lines through $W$.
  \item If $n$ is a line and $D\notin n$, define the \emph{skew rectangle}
  \[
  S(n;D):=\{(p,\ell): p\in n,\ \ell\in \mathcal P(D),\ p\notin \ell\}\subseteq E(G_0(I(\Pi))).
  \]
\end{enumerate}
In $R(n,W)$ the concurrency point lies on the supporting line $n$, whereas in $S(n;D)$ it lies off that line.
\end{definition}

\begin{lemma}[Degenerate rectangles miss the skew $4$-cycles used by $B_*$]
\label{lem:degenerate-rectangles-miss-skew}
Let $n$ be a line, let $A\neq B$ be points on $n$, and let $L_0\neq L_1$ be lines concurrent at a point
\[
D:=L_0\cap L_1,
\qquad D\notin n.
\]
Assume $A,B\notin L_0,L_1$, so that
\[
\{A,B\}\times\{L_0,L_1\}\subseteq E(G_0(I(\Pi))).
\]
Then this $4$-cycle is not contained in any degenerate rectangle $R(n',W)$.
\end{lemma}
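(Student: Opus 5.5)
We argue by contradiction: suppose the $4$-cycle $\{A,B\}\times\{L_0,L_1\}$ lies in some degenerate rectangle $R(n',W)=(n'\setminus\{W\})\times(\mathcal P(W)\setminus\{n'\})$. Containment means two things. On the row side, $A,B\in n'\setminus\{W\}$. On the column side, $L_0,L_1\in\mathcal P(W)\setminus\{n'\}$. We extract one consequence from each and show they are incompatible with $D\notin n$.

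\emph{Column side.} Both $L_0$ and $L_1$ pass through $W$. Two distinct lines meet in a unique point, so $W=L_0\cap L_1=D$.

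\emph{Row side.} The distinct points $A,B$ both lie on $n'$. They also both lie on $n$, by hypothesis. Since two distinct points lie on a unique line, $n'=\overline{AB}=n$.

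Combining the two steps gives $W\in n'=n$ and $W=D$, hence $D\in n$. This contradicts the hypothesis $D\notin n$, so no such degenerate rectangle exists.

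The only point needing care is that the definition of $R(n',W)$ requires $W\in n'$; this is exactly what transfers the concurrency point onto the supporting line. The extra exclusions $W\notin\{A,B\}$ and $L_i\neq n'$ are never used. The hypotheses $A,B\notin L_0,L_1$ serve only to make the four pairs genuine nonincidences, so that the $4$-cycle lives in $G_0(I(\Pi))$ and the statement is meaningful; they play no role in the contradiction. I expect no real obstacle here: the argument rests solely on the two uniqueness axioms, applied once on each side of the bipartite incidence.
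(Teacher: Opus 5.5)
Your proposal is correct and follows essentially the same argument as the paper. The column side gives $W=L_0\cap L_1=D$, the row side gives $n'=\overline{AB}=n$, and together they yield $D=W\in n'=n$, contradicting $D\notin n$.
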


\begin{proof}
If $\{A,B\}\subseteq n'$, then the uniqueness of the line through $A$ and $B$ gives $n'=\overline{AB}=n$. If
$\{L_0,L_1\}\subseteq \mathcal P(W)$, then the uniqueness of line intersection gives $W=L_0\cap L_1=D$. Hence any degenerate
rectangle containing all four edges would force $W=D\in n'=n$, contradicting $D\notin n$.
\end{proof}

\begin{proposition}[The strict $B_*$ witness isolates a skew determinant]
\label{prop:bstar-isolates-skew-delta}
Let $\Pi$ be a projective plane of order $q\ge 3$. Choose a point $D$, three distinct lines $L_0,L_1,L_2$ through $D$, a point
$B\in L_2\setminus\{D\}$, a point $C\in L_1\setminus\{D\}$, let $L_3:=\overline{BC}$, let $E:=L_0\cap L_3$, and choose
$A\in L_3\setminus\{B,C,E\}$. Then the $4\times4$ submatrix of $I(\Pi)$ with rows $(A,B,C,D)$ and columns $(L_0,L_1,L_2,L_3)$ is the
strict $(0,3)$ pattern $B_*$, with valuation-minimizing permutations
\[
\sigma_1=(0,1,2,3),\qquad \sigma_2=(1,0,2,3),\qquad \sigma_3=(2,1,0,3).
\]
For any rank-$\le 3$ valued-field lift $\widetilde I$ of $I(\Pi)$, the valuation-$0$ determinant initial form is equivalent to
\[
u_{C,L_2}\,\Delta_{AB}^{L_0L_1}=u_{A,L_2}u_{B,L_1}u_{C,L_0}
\qquad\text{in }k,
\tag{19.1}
\]
where
\[
\Delta_{AB}^{L_0L_1}:=u_{A,L_0}u_{B,L_1}-u_{A,L_1}u_{B,L_0}.
\tag{19.2}
\]
In particular, a strict $(0,3)$ witness of type $B_*$ isolates a \emph{skew} $2\times2$ determinant on the rectangle
$\{A,B\}\times\{L_0,L_1\}\subseteq S(L_3;D)$.
\end{proposition}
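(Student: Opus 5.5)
The plan reuses the incidence analysis from the proof of \cref{prop:bstar-abundance} and then follows the pattern of \cref{prop:fixed-diamond-many-completions}.

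\textbf{Step 1: the pattern is $B_*$.} First I will record the incidence row by row.
\begin{itemize}
\item Row $D$ is $(1,1,1,0)$: $D$ lies on $L_0,L_1,L_2$. It does not lie on $L_3$, since otherwise $L_3=\overline{DB}=L_2$.
\item Row $A$: $A\in L_3$, $A\notin L_0$ because $L_0\cap L_3=E\neq A$, and $A\notin L_1,L_2$ because those lines meet $L_3$ only in $C$ and $B$. This gives $(0,0,0,1)$.
\item Row $B$: $B\in L_2,L_3$ and $B\notin L_0,L_1$ since $B\neq D$. This gives $(0,0,1,1)$.
\item Row $C$: $C\in L_1,L_3$ only, giving $(0,1,0,1)$.
\end{itemize}
Reading the rows in the order $(A,B,C,D)$ should reproduce $B_*$ up to the labelling conventions in the statement. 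Since row $D$ has its unique zero in column $L_3$, every weight-$0$ permutation sends $D\mapsto L_3$.

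\textbf{Step 2: the three minimizers and their signs.} The remaining $3\times3$ zero sets are
\[
A:\{L_0,L_1,L_2\},\qquad B:\{L_0,L_1\},\qquad C:\{L_0,L_2\}.
\]
Enumerating bijections gives exactly three:
\begin{itemize}
\item $A\mapsto L_0,\ B\mapsto L_1,\ C\mapsto L_2$, which is the identity $\sigma_1$;
\item $A\mapsto L_1,\ B\mapsto L_0,\ C\mapsto L_2$, which is $\sigma_2$;
\item $A\mapsto L_2,\ B\mapsto L_1,\ C\mapsto L_0$, which is $\sigma_3$.
\end{itemize}
Both $\sigma_2$ and $\sigma_3$ are transpositions, so the signs are $+,-,-$. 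This confirms the strict $(0,3)$ type and the stated minimizer list.

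\textbf{Step 3: the initial form.} A rank-$\le 3$ lift has vanishing $4\times4$ minors, so \cref{lem:three-min-initial-form} gives
\[
u_{D,L_3}\bigl(u_{A,L_0}u_{B,L_1}u_{C,L_2}-u_{A,L_1}u_{B,L_0}u_{C,L_2}-u_{A,L_2}u_{B,L_1}u_{C,L_0}\bigr)=0.
\]
Every residue involved is nonzero, so I will divide by $u_{D,L_3}\neq 0$. Grouping the first two terms then gives (19.1) with $\Delta$ as in (19.2).

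\textbf{Step 4: the rectangle is skew.} The rectangle $\{A,B\}\times\{L_0,L_1\}$ consists of zero entries, because $A,B\notin L_0,L_1$. Both $A$ and $B$ lie on $L_3$, and both $L_0,L_1$ pass through $D\notin L_3$. Hence the rectangle lies in $S(L_3;D)$, and \cref{lem:degenerate-rectangles-miss-skew} shows it is not contained in any degenerate rectangle.

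\textbf{Main obstacle.} The only delicate point is bookkeeping. The column index order in the permutation notation must match the row and column ordering of $B_*$, so that the stated $\sigma_i$ and their signs come out exactly as displayed. I would fix that convention explicitly before writing out Step 2.
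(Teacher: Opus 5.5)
Your proof is correct and follows essentially the same route as the paper: the three minimizers with signs $+,-,-$, the three-minimum initial form, cancelling $u_{D,L_3}$, grouping the first two terms, and using $D\notin L_3$ for skewness. Your row-by-row incidence check is more explicit than the paper, which simply cites the construction in \cref{prop:bstar-abundance}; and on your bookkeeping worry, in the natural order (rows $(A,B,C,D)$, columns $(L_0,L_1,L_2,L_3)$) the displayed $\sigma_i$ come out exactly as stated, while the submatrix is not literally $B_*$ but becomes $B_*$ entry-for-entry after simultaneously swapping positions $0$ and $2$ (rows $(C,B,A,D)$, columns $(L_2,L_1,L_0,L_3)$), which changes neither the determinant nor your argument.
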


\begin{proof}
The existence of this $B_*$-configuration is exactly the construction already used in \cref{prop:bstar-abundance}. The three
valuation-minimal determinant monomials are
\[
M_1=u_{A,L_0}u_{B,L_1}u_{C,L_2}u_{D,L_3},
\quad
M_2=u_{A,L_1}u_{B,L_0}u_{C,L_2}u_{D,L_3},
\quad
M_3=u_{A,L_2}u_{B,L_1}u_{C,L_0}u_{D,L_3},
\]
with signs $+,-,-$, respectively. Since the lifted determinant vanishes, its initial form gives
\[
M_1-M_2-M_3=0.
\]
Factor the first two terms and cancel the nonzero common factor $u_{D,L_3}$:
\[
u_{C,L_2}u_{D,L_3}(u_{A,L_0}u_{B,L_1}-u_{A,L_1}u_{B,L_0})
=u_{A,L_2}u_{B,L_1}u_{C,L_0}u_{D,L_3},
\]
which is exactly \textup{(19.1)}. The points $A,B$ lie on $L_3$, the lines $L_0,L_1$ meet at $D$, and by construction $D\notin L_3$,
so the supporting $2\times2$ rectangle is skew rather than degenerate.
\end{proof}

\begin{remark}[Skew determinants and degenerate charts]
Equation \textup{(19.1)} has the form
\[
(\text{monomial})\cdot(\text{skew binomial})=(\text{monomial}).
\]
Degenerate square-minimizer gadgets control anchor rectangles of the form $R(n,W)$, whereas strict $B_*$ witnesses force control over a
skew determinant $\Delta_{AB}^{L_0L_1}$ on $S(L_3;D)$. Any global argument must therefore relate these two geometries and show how their interaction produces either positive cancellation depth or a nontrivial atlas holonomy defect.
\end{remark}

\begin{proposition}[A $\Lam$-inactive atlas covering a strict $(0,3)$ witness must fail]
\label{prop:atlas-forces-lam-activity}
Assume $\operatorname{char}(k)\notin\{2,3\}$. Let $M$ be a strict $(0,3)$ $4\times4$ minor of $I(\Pi)$, for example a $B_*$ witness, and let $E_0(M)$ be its valuation-$0$ support in $G_0(I(\Pi))$. Suppose there exists a $\Lam$-inactive rectangle atlas $\mathcal A$ such that:
\begin{enumerate}[leftmargin=2em,label=(\roman*)]
  \item $E_0(M)\subseteq \bigcup_{R\in\mathcal A} E(R)$;
  \item the chart adjacency graph of the subfamily used for this coverage is connected; and
  \item every chart cycle $\gamma$ has trivial holonomy $\operatorname{Hol}_{\mathcal A}(\gamma)=1$.
\end{enumerate}
Then the residues on $E_0(M)$ glue to one global factorization
\[u_{p,\ell}=\alpha_p\beta_\ell
\qquad\text{for all }(p,\ell)\in E_0(M),\]
so the valuation-$0$ initial form of $\det(\widetilde M)$ cannot vanish. Consequently, any such covering atlas must contain at least one chart that is \emph{$\Lam$-active}, i.e. not residue-pure-gauge.
\end{proposition}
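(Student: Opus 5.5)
The plan is to reduce the statement to \cref{prop:pure-gauge-kills-strict03} by a standard gluing argument. The terms ``$\Lam$-inactive rectangle atlas'', ``chart adjacency graph'' and ``holonomy'' are not formally defined before this point. I will read them as follows. Each chart $R\in\mathcal A$ is a rectangle $P_R\times L_R$ of edges of $G_0(I(\Pi))$, and $\Lam$-inactivity means the residues on $R$ are pure gauge, $u_{p,\ell}=\alpha^R_p\beta^R_\ell$ for $(p,\ell)\in R$, which is equivalent to \eqref{17.3}-type vanishing of all $2\times2$ determinants on $R$ as in \cref{cor:rank-one-rectangle}. Two charts are adjacent when they share at least one edge $(p,\ell)$. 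The transition datum on the overlap is the ratio $\alpha^R_p/\alpha^{R'}_p$, which equals $\beta^{R'}_\ell/\beta^R_\ell$. The holonomy of a chart cycle is the product of these transition scalars around the cycle, and it is well defined modulo the usual normalization.

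\textbf{Step 1 (transition functions are constants).} On an overlap $R\cap R'$ I will show that the factor $c_{RR'}:=\alpha^R_p/\alpha^{R'}_p$ does not depend on the shared edge. If $(p,\ell)$ and $(p,\ell')$ are both shared, the $\alpha$-ratio is the same because the row is the same. If $(p,\ell)$ and $(p',\ell)$ are both shared, the $\beta$-ratio is the same because the column is the same. Since $R\cap R'$ is itself a rectangle (intersection of products), any two shared edges are connected through such moves, so $c_{RR'}$ is well defined. This rescaling ambiguity, $(\alpha,\beta)\mapsto(c\alpha,c^{-1}\beta)$, is the only gauge freedom on a connected rectangle.

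\textbf{Step 2 (gluing).} Fix a spanning tree of the connected chart adjacency graph from hypothesis (ii), and a root chart $R_0$. I will rescale each chart's gauge along the tree path so that every transition scalar on a tree edge becomes $1$. A non-tree adjacency closes a chart cycle, and its transition scalar equals the holonomy of that cycle, which is $1$ by (iii). After rescaling, the local $\alpha^R_p$ agree on all overlaps, and so do the $\beta^R_\ell$, so they define global functions $\alpha_p$ and $\beta_\ell$ on the rows and columns appearing in the used charts. By (i), every $(p,\ell)\in E_0(M)$ lies in some chart, so $u_{p,\ell}=\alpha_p\beta_\ell$ there.

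I expect the main obstacle to be this step. Agreement on the overlaps of charts must also give consistency for a row $p$ that appears in two charts sharing no edge: the row can be common while the columns differ, so the charts need not be adjacent. I would handle this either by building the adjacency notion from shared rows and columns rather than edges, or by noting that only the products $\alpha_p\beta_\ell$ on covered edges matter. The second route works if I define $\alpha_p$ via one chart and check that the covered edges force the rest through the tree.

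\textbf{Step 3 (conclusion).} With the global factorization holding on $E_0(M)$, which contains the valuation-$0$ support of the three minimizers $\sigma_1,\sigma_2,\sigma_3$, \cref{prop:pure-gauge-kills-strict03} applies directly in $\operatorname{char}(k)\notin\{2,3\}$. It gives the initial form $\bigl(\prod\alpha_i\bigr)\bigl(\prod\beta_j\bigr)\bigl(\operatorname{sgn}\sigma_1+\operatorname{sgn}\sigma_2+\operatorname{sgn}\sigma_3\bigr)\neq0$. Hence $\det\widetilde M\neq0$, which contradicts rank $\le3$ via \cref{lem:strict03-no-pair-only-cancel}. So some chart used in any covering atlas must fail to be pure gauge, i.e. it must be $\Lam$-active.
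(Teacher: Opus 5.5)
Your plan is the paper's own argument. The paper gets the gluing by citing \cref{prop:atlas-trivial-holonomy}, whose proof is your spanning-tree rescaling with \cref{lem:atlas-transition-scalar} as your Step~1, and then concludes with \cref{prop:pure-gauge-kills-strict03}. But the obstacle you flag in Step~2 is a genuine gap. The paper's cited proposition passes over it the same way: it checks agreement only on edge-overlaps and then asserts a global pair $(\alpha_p,\beta_\ell)$.

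Neither of your remedies closes the gap. Route (b) fails because covered edges impose nothing on a row shared by two charts with no common column. Here is a counterexample built on a $B_*$ witness $(A,B,C,D;L_0,L_1,L_2,L_3)$, with $E=L_0\cap L_3$:
\begin{itemize}
\item The charts $R(L_3,E)$, $R(L_3,C)$, $R(L_3,B)$, $R(L_0,E)$ cover $E_0(M)$.
\item They pairwise share no edge. The first three share rows, but their pencils meet only in the excluded line $L_3$; and $R(L_0,E)$ has no row on $L_3$.
\item Take a chart $K=R(n,W)$ with $W\notin L_0\cup L_3$ and $n\ni W$ avoiding $B,C,E$. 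It meets each of the four charts in exactly one edge.
\item So the adjacency graph of these five charts is a star. Hypothesis (ii) holds and (iii) is vacuous.
\item The edge sets are disjoint apart from those four single edges. So you may prescribe arbitrary factorizations $\alpha^{(i)}\beta^{(i)}$ on $R(L_3,E)$, $R(L_3,C)$, $R(L_3,B)$ (indices $1,2,3$) and on $R(L_0,E)$, then rescale each chart's labels by one scalar to match $K$ on its shared edge.
\end{itemize}
This leaves
\[
\rho_{AB}^{L_0L_1}=\frac{\alpha^{(2)}_A\,\alpha^{(1)}_B}{\alpha^{(2)}_B\,\alpha^{(1)}_A},
\qquad
\sigma_{AC}^{L_0L_2}=\frac{\alpha^{(3)}_A\,\alpha^{(1)}_C}{\alpha^{(3)}_C\,\alpha^{(1)}_A}
\]
completely free. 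Taking $\rho=\sigma=1/2$ even makes the initial form $u_{D,L_3}u_{A,L_0}u_{B,L_1}u_{C,L_2}(1-\rho-\sigma)$ vanish. So (i)--(iii), with adjacency meaning a shared edge, do not imply the gluing, and the stated conclusion does not follow from them.

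Route (a) does not help either. Take two charts that share several rows but no column, such as $R(L_3,E)$ and $R(L_3,C)$. The would-be transition $\alpha^{(j)}_p/\alpha^{(i)}_p$ is independent of the shared row $p$ only if the two row gauges are proportional. That is exactly the $4$-cycle condition you are trying to derive, so this route builds the missing statement into the definition.

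What is needed is an extra hypothesis. One sufficient version: for every vertex $x$ of $E_0(M)$, any two used charts containing $x$ are joined in the adjacency graph by a chain of charts whose consecutive overlaps contain an edge at $x$. With that, your tree rescaling together with (iii) makes all rescaled gauges agree at every shared vertex. Then $(\alpha,\beta)$ is well defined on $E_0(M)$, and your Step~3 goes through as written.
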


\begin{proof}
By \cref{prop:atlas-trivial-holonomy}, hypotheses \textup{(i)}--\textup{(iii)} imply that the local chart factorizations glue to one global gauge on the union of the covering charts, hence in particular on $E_0(M)$. Therefore the valuation-$0$ residue labels on the strict $(0,3)$ support of $M$ satisfy the pure-gauge relation required by \cref{prop:pure-gauge-kills-strict03}. That proposition shows that, in characteristic $\neq2,3$, the initial form of $\det(\widetilde M)$ is then a nonzero sign sum and cannot vanish. This contradicts the existence of a rank-$\le3$ lift whose $4\times4$ minors all vanish. Hence at least one chart in every covering atlas must fail to be $\Lam$-inactive.
\end{proof}

\begin{remark}[Holonomy as an obstruction to global gluing]
\label{rem:atlas-holonomy-curvature}
The transition scalars from \cref{lem:atlas-transition-scalar} make local rank-$1$ rectangles behave like an atlas with multiplicative cocycle data. If the cocycle is holonomy-trivial, \cref{prop:atlas-forces-lam-activity} shows that some chart must become $\Lam$-active before a strict $(0,3)$ witness can be covered. If the cocycle is not holonomy-trivial, the obstruction is already visible as nontrivial chart holonomy. This is the natural holonomy obstruction in the rectangle-atlas formalism.
\end{remark}

\begin{proposition}[Holonomy detects the skew-binomial $\Lam$-event]
\label{prop:holonomy-detects-skew-delta}
Under the hypotheses of \cref{lem:bridge-cycle-holonomy-rho}, set
\[
\Delta_{AB}^{L_0L_1}:=u_{A,L_0}u_{B,L_1}-u_{A,L_1}u_{B,L_0}.
\]
Then
\[
\Delta_{AB}^{L_0L_1}
=u_{A,L_0}u_{B,L_1}\bigl(1-\operatorname{Hol}_{\mathcal A}(R_0R_1R_2R_3R_0)\bigr).
\tag{23.4}
\]
In particular,
\[
\operatorname{Hol}_{\mathcal A}(R_0R_1R_2R_3R_0)=1
\qquad\Longleftrightarrow\qquad
\Delta_{AB}^{L_0L_1}=0.
\tag{23.5}
\]
So the bridge-cycle holonomy equals $1$ exactly when the skew binomial cancels at residue level, i.e. exactly when this part of the lift pays a $\Lam$-depth event.
\end{proposition}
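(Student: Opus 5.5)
The plan is to reduce everything to the explicit formula for the bridge-cycle holonomy supplied by \cref{lem:bridge-cycle-holonomy-rho}. I expect that lemma to identify the holonomy around $R_0R_1R_2R_3R_0$ with the residue cross-ratio of the skew rectangle $\{A,B\}\times\{L_0,L_1\}$. Concretely, I expect it to say
\[
\operatorname{Hol}_{\mathcal A}(R_0R_1R_2R_3R_0)=\rho_{AB}^{L_0L_1}:=\frac{u_{A,L_1}u_{B,L_0}}{u_{A,L_0}u_{B,L_1}}.
\]
This should come from composing the transition scalars of \cref{lem:atlas-transition-scalar} along the four chart changes. The gauge factors $\alpha_p,\beta_\ell$ of intermediate charts telescope away, and only the four corner residues of the rectangle survive. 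So the first step is to quote that identity, recording which orientation of the cycle is used.

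The second step is pure algebra. All four entries $(A,L_0),(A,L_1),(B,L_0),(B,L_1)$ are nonincidences, because $A,B\notin L_0,L_1$. They therefore have valuation $0$ and nonzero residues, so the prefactor $u_{A,L_0}u_{B,L_1}\in k^*$ may be factored out of $\Delta_{AB}^{L_0L_1}$:
\[
\Delta_{AB}^{L_0L_1}=u_{A,L_0}u_{B,L_1}\Bigl(1-\frac{u_{A,L_1}u_{B,L_0}}{u_{A,L_0}u_{B,L_1}}\Bigr).
\]
Substituting the holonomy identity gives (23.4). Since the prefactor is a unit in $k$, the product vanishes exactly when $1-\operatorname{Hol}=0$, which is (23.5). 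Finally, \cref{lem:raise-valuation} applied to the binomial $\widetilde u_{A,L_0}\widetilde u_{B,L_1}-\widetilde u_{A,L_1}\widetilde u_{B,L_0}$ shows that residue vanishing of $\Delta$ is the same as positive cancellation depth $d(A,B;L_0,L_1)>0$ in the sense of \cref{def:diamond-lam-depth}. This justifies the closing interpretation as a $\Lam$-depth event.

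The main obstacle is bookkeeping in the first step: matching the orientation and normalization conventions for $\operatorname{Hol}_{\mathcal A}$. If the bridge lemma instead yields the reciprocal cross-ratio, the equivalence (23.5) is unaffected. Formula (23.4), however, would then hold with $A$ and $B$ interchanged, or equivalently with the cycle reversed. I would first check which corner pairs each transition scalar $R_i\to R_{i+1}$ compares, so that the telescoped product has $u_{A,L_1}u_{B,L_0}$ in the numerator for the stated traversal $R_0\to R_1\to R_2\to R_3\to R_0$. I would also confirm that each intermediate chart is rank-$1$ on its own support, so that the telescoping is legitimate.
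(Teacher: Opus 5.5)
Your proposal is correct and follows the paper's proof essentially verbatim. You quote $\operatorname{Hol}_{\mathcal A}(R_0R_1R_2R_3R_0)=\frac{u_{A,L_1}}{u_{A,L_0}}\cdot\frac{u_{B,L_0}}{u_{B,L_1}}$ from \cref{lem:bridge-cycle-holonomy-rho}, which has exactly the orientation you anticipated, factor out the unit $u_{A,L_0}u_{B,L_1}$, and read off \textup{(23.5)}; your extra appeal to \cref{lem:raise-valuation} to justify the closing $\Lam$-depth interpretation is a harmless addition that the paper leaves informal.
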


\begin{proof}
By \cref{lem:bridge-cycle-holonomy-rho},
\[
\operatorname{Hol}_{\mathcal A}(R_0R_1R_2R_3R_0)
=\Bigl(\frac{u_{A,L_1}}{u_{A,L_0}}\Bigr)
 \Bigl(\frac{u_{B,L_0}}{u_{B,L_1}}\Bigr).
\]
Multiplying by $u_{A,L_0}u_{B,L_1}$ gives
\[
u_{A,L_0}u_{B,L_1}\operatorname{Hol}_{\mathcal A}(R_0R_1R_2R_3R_0)=u_{A,L_1}u_{B,L_0},
\]
so
\[
u_{A,L_0}u_{B,L_1}\bigl(1-\operatorname{Hol}_{\mathcal A}(R_0R_1R_2R_3R_0)\bigr)
=u_{A,L_0}u_{B,L_1}-u_{A,L_1}u_{B,L_0}
=\Delta_{AB}^{L_0L_1}.
\]
Because $u_{A,L_0},u_{B,L_1}\in k^*$, the second displayed equivalence follows immediately.
\end{proof}

\begin{proposition}[The strict $B_*$ witness rewrites bridge holonomy affinely]
\label{prop:bstar-rewrites-bridge-holonomy}
Under the hypotheses of \cref{lem:bridge-cycle-holonomy-rho}, combine \cref{prop:bstar-isolates-skew-delta} with \cref{prop:holonomy-detects-skew-delta}. Then
\[
\operatorname{Hol}_{\mathcal A}(R_0R_1R_2R_3R_0)
=1-\Bigl(\frac{u_{C,L_0}}{u_{C,L_2}}\Bigr)
\Bigl(\frac{u_{A,L_2}}{u_{A,L_0}}\Bigr).
\tag{23.7}
\]
Equivalently, the observable holonomy of the bridge cycle is an affine function of the residue ratio $u_{A,L_2}/u_{A,L_0}$, with coefficient depending only on the fixed witness data $(C,L_0,L_2)$.
\end{proposition}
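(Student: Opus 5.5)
The identity (23.7) should follow by combining two earlier results. \cref{prop:holonomy-detects-skew-delta} (23.4) expresses the bridge holonomy through the skew binomial $\Delta_{AB}^{L_0L_1}$. \cref{prop:bstar-isolates-skew-delta} (19.1) expresses that same binomial as a monomial, because the $B_*$ determinant vanishes.

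\textbf{Step 1: solve (23.4) for the holonomy.} Since $u_{A,L_0}u_{B,L_1}\in k^*$, we can divide (23.4) by it and obtain
\[
\operatorname{Hol}_{\mathcal A}(R_0R_1R_2R_3R_0)=1-\frac{\Delta_{AB}^{L_0L_1}}{u_{A,L_0}u_{B,L_1}}.
\]

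\textbf{Step 2: substitute from (19.1).} The hypotheses of \cref{lem:bridge-cycle-holonomy-rho} include the $B_*$ configuration on rows $(A,B,C,D)$ and columns $(L_0,L_1,L_2,L_3)$, together with a rank-$\le3$ lift. Therefore (19.1) holds. We have $C\notin L_2$, because $C\in L_1\setminus\{D\}$ and $L_1\cap L_2=D$. Hence $u_{C,L_2}$ is a nonzero residue label, and (19.1) gives
\[
\Delta_{AB}^{L_0L_1}=\frac{u_{A,L_2}u_{B,L_1}u_{C,L_0}}{u_{C,L_2}}.
\]
Dividing by $u_{A,L_0}u_{B,L_1}$, the factor $u_{B,L_1}$ cancels, leaving
\[
\frac{u_{C,L_0}}{u_{C,L_2}}\cdot\frac{u_{A,L_2}}{u_{A,L_0}}.
\]
This is exactly (23.7). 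The quantity is affine in the ratio $u_{A,L_2}/u_{A,L_0}$, with slope $-u_{C,L_0}/u_{C,L_2}$, and that slope depends only on $(C,L_0,L_2)$. This is the stated equivalent form.

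\textbf{Main obstacle: checking the hypotheses match.} The algebra is immediate. The only care needed is to confirm that the labels $A,B,L_0,L_1$ in \cref{lem:bridge-cycle-holonomy-rho} are the same as those in the $B_*$ witness. One must also confirm that every entry divided by is a valuation-$0$ entry, so that its residue is nonzero:
\begin{itemize}[leftmargin=2em]
\item $A\notin L_0$, because $A\neq E=L_0\cap L_3$ and $A\in L_3$;
\item $A\notin L_1,L_2$ and $B\notin L_1$, since $D\notin L_3$ forces $L_3\cap L_1=C$ and $L_3\cap L_2=B$, with $A\neq B,C$;
\item $C\notin L_0,L_2$, since $C\in L_1\setminus\{D\}$ and $L_1$ meets $L_0$ and $L_2$ only at $D$.
\end{itemize}
These are the same zero entries of the pattern $B_*$ already used in \cref{prop:bstar-abundance}.
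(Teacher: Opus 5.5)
Your proof is correct and is essentially the paper's argument. The paper substitutes (23.4) into (19.1), cancels $u_{B,L_1}$, and solves for the holonomy; you solve (23.4) for the holonomy first and then substitute (19.1), which is the same two-line computation in a different order, and your explicit check that the labels you divide by ($u_{A,L_0}$, $u_{B,L_1}$, $u_{C,L_2}$) are nonzero valuation-$0$ residues is a worthwhile addition that the paper leaves implicit.
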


\begin{proof}
From \cref{prop:bstar-isolates-skew-delta} one has
\[
u_{C,L_2}\,\Delta_{AB}^{L_0L_1}=u_{A,L_2}u_{B,L_1}u_{C,L_0}.
\]
Substitute the identity from \cref{prop:holonomy-detects-skew-delta}:
\[
u_{C,L_2}\,u_{A,L_0}u_{B,L_1}\bigl(1-\operatorname{Hol}_{\mathcal A}(R_0R_1R_2R_3R_0)\bigr)
=u_{A,L_2}u_{B,L_1}u_{C,L_0}.
\]
Since $u_{B,L_1}\neq 0$, cancel it to obtain
\[
u_{C,L_2}u_{A,L_0}\bigl(1-\operatorname{Hol}_{\mathcal A}(R_0R_1R_2R_3R_0)\bigr)=u_{A,L_2}u_{C,L_0},
\]
and then solve for the holonomy.
\end{proof}

\begin{corollary}[Cross-ratio obstruction for a strict $B_*$ witness]
\label{cor:bstar-cross-ratio-obstruction}
Under the hypotheses of \cref{lem:bridge-cycle-holonomy-rho}, define
\[
\rho_{AB}^{L_0L_1}:=\frac{u_{A,L_1}}{u_{A,L_0}}\cdot\frac{u_{B,L_0}}{u_{B,L_1}}.
\]
Then
\[
\Delta_{AB}^{L_0L_1}=u_{A,L_0}u_{B,L_1}\bigl(1-\rho_{AB}^{L_0L_1}\bigr).
\tag{24.3}
\]
In particular, whenever the strict $(0,3)$ witness forces $\Delta_{AB}^{L_0L_1}\neq0$, one has
\[
\rho_{AB}^{L_0L_1}\neq1,
\]
so the zero rectangle $\{A,B\}\times\{L_0,L_1\}$ cannot be $\Lam$-inactive in residues.
\end{corollary}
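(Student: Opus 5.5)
The identity (24.3) is a purely algebraic manipulation, so the plan is to verify it directly from the definition of $\rho_{AB}^{L_0L_1}$ and then combine it with the $B_*$ initial-form equation to get the obstruction.

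\textbf{Step 1: the identity.} All four rectangle entries $(A,L_0)$, $(A,L_1)$, $(B,L_0)$, $(B,L_1)$ are nonincidences. Hence they have valuation $0$, and their residues lie in $k^*$, so $\rho_{AB}^{L_0L_1}$ is well defined. Multiplying gives
\[
u_{A,L_0}u_{B,L_1}\rho_{AB}^{L_0L_1}=u_{A,L_1}u_{B,L_0}.
\]
Subtracting this from $u_{A,L_0}u_{B,L_1}$ yields (24.3). This is the same computation as in \cref{prop:holonomy-detects-skew-delta}. Indeed, by \cref{lem:bridge-cycle-holonomy-rho} one has $\rho_{AB}^{L_0L_1}=\operatorname{Hol}_{\mathcal A}(R_0R_1R_2R_3R_0)$, so (24.3) is simply (23.4) rewritten in terms of $\rho$. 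I would cite that result and also give the one-line direct check.

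\textbf{Step 2: $\rho\neq 1$.} Since $u_{A,L_0}u_{B,L_1}\in k^*$, (24.3) shows that $\Delta_{AB}^{L_0L_1}=0$ if and only if $\rho_{AB}^{L_0L_1}=1$. So the hypothesis $\Delta\neq 0$ gives $\rho\neq1$ at once.

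The content worth stressing is that a $B_*$ witness in a rank-$\le3$ lift actually forces $\Delta\neq0$. By \cref{prop:bstar-isolates-skew-delta}, the vanishing minor gives
\[
u_{C,L_2}\Delta_{AB}^{L_0L_1}=u_{A,L_2}u_{B,L_1}u_{C,L_0}.
\]
The right-hand side is a product of residues of valuation-$0$ entries, so it is nonzero. This holds because $C\notin L_0,L_2$, $A\notin L_2$, and $B\notin L_1$. Since $u_{C,L_2}\neq0$, we get $\Delta\neq0$ and hence $\rho\neq1$.

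\textbf{Step 3: interpretation.} Finally I would translate $\rho\neq1$ into the chart language. A $\Lam$-inactive rectangle means a residue-pure-gauge one, $u_{p,\ell}=\alpha_p\beta_\ell$. For such a rectangle the cross-ratio is
\[
\rho=\frac{\alpha_A\beta_{L_1}\,\alpha_B\beta_{L_0}}{\alpha_A\beta_{L_0}\,\alpha_B\beta_{L_1}}=1.
\]
Conversely, $\rho=1$ is exactly the vanishing of the single $2\times2$ determinant on the rectangle. So $\rho\neq1$ rules out pure gauge on $\{A,B\}\times\{L_0,L_1\}$.

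\textbf{Main obstacle.} The only delicate point is confirming that each entry used is nonincident, so that its residue is nonzero. This has to be checked against the $B_*$ construction in \cref{prop:bstar-abundance}, using the pattern rows $A=(0,1,0,1)$, $B=(1,0,0,1)$, $C=(0,0,0,1)$. That check is routine.
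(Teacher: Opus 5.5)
Your proof is correct and follows essentially the paper's route. The paper obtains (24.3) by identifying $\rho_{AB}^{L_0L_1}$ with the bridge-cycle holonomy via \cref{lem:bridge-cycle-holonomy-rho} and rewriting \textup{(23.4)} from \cref{prop:holonomy-detects-skew-delta}. It then gets $\rho\neq1$ from $u_{A,L_0}u_{B,L_1}\in k^*$, and the final claim from \cref{lem:lam-inactive-implies-rho1}. Your extra argument that the witness actually forces $\Delta_{AB}^{L_0L_1}\neq0$ is the content of \cref{lem:bstar-witness-forces-rho-ne-1}, and it is not needed for this conditional statement.

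One correction to your ``main obstacle'' paragraph. The rows you quote, $A=(0,1,0,1)$, $B=(1,0,0,1)$, $C=(0,0,0,1)$, are the literal rows of the displayed $B_*$. That matrix matches the geometric configuration only after reordering: it is the submatrix on rows $(C,B,A,D)$ and columns $(L_2,L_1,L_0,L_3)$. Read as you state them, they would put $A\in L_1$ and $B\in L_0$, so $\{A,B\}\times\{L_0,L_1\}$ would not be an all-zero rectangle, contradicting your Step 1. From the construction ($A\in L_3$ only; $B\in L_2,L_3$; $C\in L_1,L_3$; $D\in L_0,L_1,L_2$), the actual rows in column order $(L_0,L_1,L_2,L_3)$ are $A=(0,0,0,1)$, $B=(0,0,1,1)$, $C=(0,1,0,1)$, $D=(1,1,1,0)$. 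These supply every nonincidence you use. In any case, $A,B\notin L_0,L_1$ is already a hypothesis of \cref{lem:bridge-cycle-holonomy-rho}, so no check is needed for Step 1.
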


\begin{proof}
By \cref{lem:bridge-cycle-holonomy-rho}, the displayed cross-ratio is exactly the bridge-cycle holonomy. Therefore \textup{(24.3)} is just \cref{prop:holonomy-detects-skew-delta} rewritten with $\rho_{AB}^{L_0L_1}$ in place of $\operatorname{Hol}_{\mathcal A}$. If $\Delta_{AB}^{L_0L_1}\neq0$, then \textup{(24.3)} implies $\rho_{AB}^{L_0L_1}\neq1$ because $u_{A,L_0}u_{B,L_1}\in k^*$. The final claim follows from \cref{lem:lam-inactive-implies-rho1}.
\end{proof}

\begin{lemma}[Each strict $B_*$ witness satisfies $\rho+\sigma=1$]
\label{lem:bstar-rho-plus-sigma}
Assume $\widetilde I$ is a rank-$\le 3$ valued-field lift of a projective-plane incidence matrix, and let
$(A,B,C,D;L_0,L_1,L_2,L_3)$ be a strict $B_*$ witness. Define
\[
\rho:=\rho_{AB}^{L_0L_1}=\frac{u_{A,L_1}}{u_{A,L_0}}\cdot\frac{u_{B,L_0}}{u_{B,L_1}},
\qquad
\sigma:=\sigma_{AC}^{L_0L_2}=\frac{u_{A,L_2}}{u_{A,L_0}}\cdot\frac{u_{C,L_0}}{u_{C,L_2}}.
\]
Then
\[
\rho+\sigma=1
\qquad\text{in }k.
\tag{26.1}
\]
In particular, $\rho\neq 1$ because $\sigma\in k^*$.
\end{lemma}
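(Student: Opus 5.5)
The plan is to derive (26.1) directly from the $B_*$ determinant equation (19.1) of \cref{prop:bstar-isolates-skew-delta}, after dividing by a suitable nonzero monomial. Since $\widetilde I$ has rank $\le 3$, the $4\times4$ minor on rows $(A,B,C,D)$ and columns $(L_0,L_1,L_2,L_3)$ vanishes. By \cref{lem:three-min-initial-form} its valuation-$0$ initial form $M_1-M_2-M_3$ must therefore vanish in $k$, and this is (19.1):
\[
u_{C,L_2}\bigl(u_{A,L_0}u_{B,L_1}-u_{A,L_1}u_{B,L_0}\bigr)=u_{A,L_2}u_{B,L_1}u_{C,L_0}.
\]
All residue labels occurring here sit at valuation-$0$ positions of the pattern $B_*$, i.e.\ at nonincident pairs. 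Concretely, $A\notin L_0,L_1,L_2$, $B\notin L_0,L_1$ and $C\notin L_0,L_2$, and these nonincidences were checked in \cref{prop:bstar-abundance}. Hence all these labels lie in $k^*$.

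Next I will divide both sides by the nonzero monomial $u_{A,L_0}u_{B,L_1}u_{C,L_2}$. The left side becomes
\[
1-\frac{u_{A,L_1}u_{B,L_0}}{u_{A,L_0}u_{B,L_1}}=1-\rho,
\]
which is the same as (24.3), now obtained without reference to any atlas. The right side becomes
\[
\frac{u_{A,L_2}u_{C,L_0}}{u_{A,L_0}u_{C,L_2}}=\sigma.
\]
So $1-\rho=\sigma$, which is (26.1). For the final claim, $\sigma$ is a product of nonzero residues and hence lies in $k^*$, so $\rho=1-\sigma\neq 1$.

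The only delicate point is bookkeeping: the signs $+,-,-$ of the three minimizing permutations $\sigma_1=(0,1,2,3)$, $\sigma_2=(1,0,2,3)$, $\sigma_3=(2,1,0,3)$, and the nonvanishing of the relevant $u$'s. The signs are fixed because $\sigma_2$ and $\sigma_3$ are transpositions of $\sigma_1$. Nonvanishing follows from the lift convention that every valuation-$0$ entry has a nonzero leading residue. I do not expect any genuine obstacle. Note also that no characteristic hypothesis is needed here, unlike in \cref{prop:pure-gauge-kills-strict03}.
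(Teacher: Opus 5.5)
Your proposal is correct and is the paper's proof: you take equation (19.1) from \cref{prop:bstar-isolates-skew-delta}, divide by the nonzero monomial $u_{A,L_0}u_{B,L_1}u_{C,L_2}$, and read off $1-\rho=\sigma$. Your sign bookkeeping and nonvanishing checks are sound, and you are right that no characteristic hypothesis is needed.
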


\begin{proof}
By \cref{prop:bstar-isolates-skew-delta},
\[
u_{C,L_2}\bigl(u_{A,L_0}u_{B,L_1}-u_{A,L_1}u_{B,L_0}\bigr)=u_{A,L_2}u_{B,L_1}u_{C,L_0}.
\]
Dividing by $u_{A,L_0}u_{B,L_1}u_{C,L_2}\in k^*$ gives
\[
1-\frac{u_{A,L_1}}{u_{A,L_0}}\frac{u_{B,L_0}}{u_{B,L_1}}
=
\frac{u_{A,L_2}}{u_{A,L_0}}\frac{u_{C,L_0}}{u_{C,L_2}},
\]
which is exactly $1-\rho=\sigma$.
\end{proof}

\begin{remark}[The affine relation $\rho+\sigma=1$]
\label{rem:rho-plus-sigma-hyperaddition}
Both $\rho$ and $\sigma$ are multiplicative gauge invariants built from valuation-$0$ residues, but \cref{lem:bstar-rho-plus-sigma} ties them together by one additive equation in $k$. This gives a concrete instance in which gauge-invariant multiplicative residue coordinates satisfy a nontrivial affine relation.
\end{remark}

\begin{remark}[Counting pivot]
\label{rem:counting-pivot-rho-ne-1}
The corollary packages a strict $B_*$ witness into one local obstruction statement: every witness with nonzero skew determinant produces one $2\times2$ zero rectangle whose cross-ratio is forced away from $1$. The next step is to count how many distinct rectangles with $\rho\neq1$ can be forced, and how many such rectangles one $\Lam$-active event can explain.
\end{remark}

\begin{definition}[$B_*$ witness data]
\label{def:bstar-witness-data}
Fix a point $D$ and an ordered triple of distinct lines $(L_0,L_1,L_2)$ through $D$. Choose
\begin{itemize}[leftmargin=2em]
  \item $B\in L_2\setminus\{D\}$,
  \item $C\in L_1\setminus\{D\}$,
\end{itemize}
set $L_3:=\overline{BC}$ and $E:=L_0\cap L_3$, and then choose
\begin{itemize}[leftmargin=2em]
  \item $A\in L_3\setminus\{B,C,E\}$.
\end{itemize}
The resulting ordered datum $(A,B,C,D;L_0,L_1,L_2,L_3)$ is called a \emph{$B_*$ witness datum}. Its associated $4\times4$ incidence submatrix is exactly the strict $(0,3)$ pattern already isolated in the baseline $B_*$ analysis.
\end{definition}

\begin{lemma}[$B_*$ witnesses are exactly skew $2\times2$ rectangles]
\label{lem:bstar-bijection-skew-rectangle}
Fix an ordered quadruple $(A,B;L_0,L_1)$ such that all four entries in
$\{A,B\}\times\{L_0,L_1\}$ are tropical $0$, and let
\[
D:=L_0\cap L_1.
\]
Assume additionally that
\[
D\notin \overline{AB}
\qquad\text{equivalently}\qquad
A\notin \overline{BD}.
\]
Then there is a unique $B_*$ witness extending this data, namely
\[
L_2:=\overline{BD},
\qquad
L_3:=\overline{AB},
\qquad
C:=L_1\cap L_3,
\qquad
E:=L_0\cap L_3.
\]
Conversely, every $B_*$ witness datum determines such an ordered skew rectangle $(A,B;L_0,L_1)$.
\end{lemma}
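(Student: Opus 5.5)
The plan is to prove both directions by incidence geometry alone, using only the two axioms of unique joining and unique meeting together with the $B_*$ witness datum of \cref{def:bstar-witness-data}.

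\emph{Forward direction: the datum determines the witness.} Start from $(A,B;L_0,L_1)$ with the four zero entries, meaning $A,B\notin L_0,L_1$, and with $D=L_0\cap L_1\notin\overline{AB}$. In any $B_*$ witness extending this data, the geometry is forced as follows.
\begin{itemize}
\item The point $D$ must equal $L_0\cap L_1$, since both lines pass through $D$ in the datum.
\item $L_2$ must pass through $D$ and $B$. Since $B\neq D$ (because $B\notin L_0$), this forces $L_2=\overline{BD}$.
\item $A$ and $B$ lie on $L_3$ and $A\neq B$, so $L_3=\overline{AB}$.
\item $C\in L_1$ and $C\in L_3=\overline{BC}$, so $C=L_1\cap L_3$.
\item $E=L_0\cap L_3$ by definition.
\end{itemize}
This gives uniqueness.

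For existence, check the nondegeneracy conditions of \cref{def:bstar-witness-data} for these choices.
\begin{itemize}
\item $L_2\neq L_0,L_1$. Otherwise $B$ would lie on $L_0$ or $L_1$, which is excluded.
\item $L_0,L_1,L_2$ are distinct lines through $D$.
\item $B\in L_2\setminus\{D\}$.
\item $C\neq D$, since $D\notin L_3$ but $C\in L_3$.
\item $L_3=\overline{BC}$. We need $C\neq B$, which holds because $B\notin L_1$.
\item $A\in L_3$, and $A\neq B$.
\item $A\neq C$, because $A\notin L_1$.
\item $A\neq E$, because $A\notin L_0$.
\end{itemize}
The equivalence $D\notin\overline{AB}\iff A\notin\overline{BD}$ holds because both statements say that $A,B,D$ are non-collinear, where $A,B,D$ are distinct.

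\emph{Converse direction: a witness gives a skew rectangle.} Given a witness datum, first show $A,B\notin L_0,L_1$.
\begin{itemize}
\item $B\in L_2\setminus\{D\}$ and the $L_i$ pairwise meet only at $D$, so $B\notin L_0,L_1$.
\item $A\in L_3$ with $A\neq C=L_1\cap L_3$ and $A\neq E=L_0\cap L_3$, so $A\notin L_0,L_1$. This requires $L_3\neq L_0,L_1$, which was noted in the proof of \cref{prop:bstar-abundance}.
\end{itemize}
Then $D=L_0\cap L_1$. Also $D\notin L_3=\overline{AB}$: if $D\in L_3$, then $L_3=\overline{DB}=L_2$, which would force $C\in L_1\cap L_2=\{D\}$, a contradiction.

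\emph{Main obstacle.} The only delicate part is this bookkeeping of distinctness conditions. In particular, one must confirm that $B\neq D$ follows from the zero entries, and that the map and the inverse construction are mutually inverse. The latter is immediate from the uniqueness argument in the forward direction.
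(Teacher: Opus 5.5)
Your proof is correct and takes essentially the same route as the paper. You get uniqueness because $D$, $L_2=\overline{BD}$, $L_3=\overline{AB}$, $C$ and $E$ are each forced by the join/meet axioms, and existence by direct verification; where the paper checks the resulting incidence pattern, you check the defining conditions of \cref{def:bstar-witness-data}, and you give an actual argument for the converse ($D\in L_3\Rightarrow L_3=L_2\Rightarrow C=D$) where the paper just calls it immediate. One small slip: the proof of \cref{prop:bstar-abundance} records $L_3\neq L_1,L_2$, not $L_3\neq L_0$, but $L_3\neq L_0$ follows at once from $B\in L_3\setminus L_0$, which your preceding bullet already establishes.
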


\begin{proof}
The point $D=L_0\cap L_1$ is forced by the projective-plane axioms. Then $L_2=\overline{BD}$ and
$L_3=\overline{AB}$ are uniquely determined. The points $C=L_1\cap L_3$ and $E=L_0\cap L_3$ are therefore also forced.
Because $D\notin L_3$, one has $L_2\neq L_3$, and the incidences/nonincidences are exactly those of the strict $B_*$ pattern:
$D\in L_0,L_1,L_2$ but $D\notin L_3$; $B\in L_2,L_3$ but $B\notin L_0,L_1$; $C\in L_1,L_3$ but $C\notin L_0,L_2$;
and $A\in L_3$ but $A\notin L_0,L_1,L_2$. So the completion is unique and produces a $B_*$ witness. The converse is immediate from the construction of $B_*$ witness data in \cref{def:bstar-witness-data}.
\end{proof}

\begin{remark}[Witness count equals skew-rectangle count]
\label{rem:bstar-count-skew-rectangles}
By \cref{lem:bstar-bijection-skew-rectangle,cor:bstar-total-count}, the exact $\Theta(q^8)$ count of ordered $B_*$ witnesses is equally a count of ordered skew all-zero $2\times2$ rectangles $(A,B;L_0,L_1)$ satisfying $D=L_0\cap L_1\notin\overline{AB}$.
\end{remark}

\begin{lemma}[Exact count of $B_*$ witnesses for fixed $(D;L_0,L_1,L_2)$]
\label{lem:bstar-count-fixed-triple}
Let $\Pi_q$ be a projective plane of order $q\ge 3$. Fix a point $D$ and an ordered triple of distinct lines $(L_0,L_1,L_2)$ through $D$. Then the number of choices $(B,C,A)$ producing a $B_*$ witness datum is
\[
q^2(q-2).
\]
\end{lemma}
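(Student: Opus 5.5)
The count is a direct sequential product over the construction in \cref{def:bstar-witness-data}. The plan is to show that each of the three choices has a fixed number of options, independent of the earlier ones.

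First choose $B\in L_2\setminus\{D\}$. Since $|L_2|=q+1$, there are exactly $q$ choices. Next choose $C\in L_1\setminus\{D\}$, again exactly $q$ choices. The choice of $C$ does not depend on $B$, because $L_1\neq L_2$ are both fixed lines through $D$.

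Now fix $B$ and $C$, and put $L_3:=\overline{BC}$ and $E:=L_0\cap L_3$. I would check that $B$, $C$, $E$ are three distinct points of $L_3$, repeating the argument in the proof of \cref{prop:bstar-abundance}:
\begin{itemize}
\item $B\neq C$, because $L_1\cap L_2=\{D\}$ and neither point is $D$.
\item $L_3$ does not pass through $D$. Otherwise $L_3=\overline{DB}=L_2$, and then $C\in L_2\cap L_1=\{D\}$, a contradiction.
\item $E\neq D$, since $E\in L_3$ and $D\notin L_3$.
\item $E\neq B$: if $E=B$, then $B\in L_0\cap L_2=\{D\}$, a contradiction.
\item $E\neq C$: if $E=C$, then $C\in L_0\cap L_1=\{D\}$, a contradiction.
\end{itemize}
Hence $L_3\setminus\{B,C,E\}$ has exactly $(q+1)-3=q-2$ points, which is nonnegative because $q\ge 3$. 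So there are exactly $q-2$ choices of $A$.

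Multiplying the three counts gives $q\cdot q\cdot(q-2)=q^2(q-2)$ ordered triples $(B,C,A)$. It remains to note that distinct triples give distinct witness data. This holds because the triple is recorded as part of the ordered datum $(A,B,C,D;L_0,L_1,L_2,L_3)$, while $L_3$ and $E$ are determined by $(B,C)$.

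The statement counts triples that \emph{produce a $B_*$ witness datum}, so I would also confirm that every such triple really does yield the $B_*$ pattern. This is exactly the incidence verification already carried out in \cref{prop:bstar-abundance} and \cref{lem:bstar-bijection-skew-rectangle}. The only additional point is that $A\in L_3\setminus\{B,C,E\}$ implies $A\notin L_0,L_1,L_2$: each $L_i$ meets $L_3$ in exactly one point, namely $E$, $C$, $B$ respectively.

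The only step needing real care is the distinctness of $B$, $C$, $E$ on $L_3$. It is what makes the count of $A$ exact rather than merely a lower bound, and it relies on $D\notin L_3$.
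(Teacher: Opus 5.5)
Your proposal is correct and follows the same argument as the paper's proof. You choose $q$ points for $B$ and $q$ for $C$, show that $B,C,E$ are pairwise distinct on $L_3$, and so get $q-2$ choices for $A$. You also make explicit two facts the paper uses only implicitly: $B\neq C$, and $D\notin L_3$, so that $L_3\neq L_0$ and $E$ is well defined.
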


\begin{proof}
There are $q$ choices for $B\in L_2\setminus\{D\}$ and $q$ choices for $C\in L_1\setminus\{D\}$. Once $B$ and $C$ are chosen, the line $L_3=\overline{BC}$ contains $q+1$ points. The points $B,C,E$ are pairwise distinct: $B\notin L_0$, $C\notin L_0$, and $L_0\cap L_1=L_0\cap L_2=D$. Hence there are $(q+1)-3=q-2$ valid choices of
\[
A\in L_3\setminus\{B,C,E\}.
\]
Multiplying gives $q\cdot q\cdot(q-2)=q^2(q-2)$.
\end{proof}

\begin{corollary}[Total number of $B_*$ witnesses is $\Theta(q^8)$]
\label{cor:bstar-total-count}
Let $\Pi_q$ be a projective plane of order $q\ge 3$ and write $v:=q^2+q+1$. Then the total number of ordered $B_*$ witness data is
\[
N_{B_*}(q)=v(q+1)q(q-1)\cdot q^2(q-2)
=(q^2+q+1)(q+1)q(q-1)q^2(q-2)=\Theta(q^8).
\]
\end{corollary}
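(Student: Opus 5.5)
The count is the product of the free choices in \cref{def:bstar-witness-data}, and the only thing to check is that these choices are independent. We sum over the outer data $(D;L_0,L_1,L_2)$ and use \cref{lem:bstar-count-fixed-triple} for the inner count. There are $v=q^2+q+1$ choices of the point $D$. Since exactly $q+1$ lines pass through $D$, the number of ordered triples of distinct lines $(L_0,L_1,L_2)$ through $D$ is $(q+1)q(q-1)$.

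For each such outer datum, \cref{lem:bstar-count-fixed-triple} gives exactly $q^2(q-2)$ completions $(B,C,A)$. The number of completions does not depend on the particular $(D;L_0,L_1,L_2)$, so the total count factors as
\[
N_{B_*}(q)=v\cdot(q+1)q(q-1)\cdot q^2(q-2).
\]
The lemma applies to every outer datum because its proof uses only the axioms of a projective plane of order $q$ and the hypothesis $q\ge 3$, which guarantees $q-2\ge 1$.

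Distinct choices give distinct ordered data. The witness datum records $D,L_0,L_1,L_2,B,C,A$ explicitly, and $L_3=\overline{BC}$ and $E=L_0\cap L_3$ are determined by them. So the count of ordered witness data equals the count of choice sequences, with no quotient or overcounting.

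For the asymptotics, the polynomial $(q^2+q+1)(q+1)q^3(q-1)(q-2)$ has degree $2+1+3+1+1=8$ and leading coefficient $1$. For $q\ge 3$ each factor lies between a positive constant multiple of its leading monomial and $3$ times that monomial; for instance $q-2\ge q/3$ and $q^2+q+1\le 3q^2$. Hence $N_{B_*}(q)=\Theta(q^8)$, with explicit constants.

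The only step that needs care is the $q^2(q-2)$ factor, namely that $B,C,E$ are pairwise distinct points of $L_3$. This is already established in \cref{lem:bstar-count-fixed-triple}, so no new obstacle arises here.
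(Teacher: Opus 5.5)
Your proof is correct and follows the paper's argument exactly: $v$ choices of $D$, then $(q+1)q(q-1)$ ordered triples of lines through $D$, then $q^2(q-2)$ completions $(B,C,A)$ from \cref{lem:bstar-count-fixed-triple}. Your extra remarks are also correct and spell out what the paper leaves implicit, namely that distinct choice sequences give distinct ordered data and that the product is $\Theta(q^8)$ with explicit constants for $q\ge 3$.
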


\begin{proof}
There are $v=q^2+q+1$ choices for the distinguished point $D$. Through each $D$ pass $q+1$ lines, so the number of ordered distinct triples $(L_0,L_1,L_2)$ through $D$ is
\[
(q+1)q(q-1).
\]
Now apply \cref{lem:bstar-count-fixed-triple}.
\end{proof}

\begin{lemma}[Each $B_*$ witness forces $\rho\neq 1$ in characteristic $\neq2$]
\label{lem:bstar-witness-forces-rho-ne-1}
Assume $\operatorname{char}(k)\neq 2$. Let $(A,B,C,D;L_0,L_1,L_2,L_3)$ be any $B_*$ witness datum arising from a rank-$\le 3$ lift of the incidence matrix. Then the $2\times 2$ all-zero rectangle $\{A,B\}\times\{L_0,L_1\}$ satisfies
\[
\rho_{AB}^{L_0L_1}\neq 1.
\]
\end{lemma}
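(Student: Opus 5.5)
The plan is to reduce the claim directly to \cref{lem:bstar-rho-plus-sigma}, which already gives $\rho+\sigma=1$ in $k$ for every strict $B_*$ witness in a rank-$\le 3$ lift. First I will check that a $B_*$ witness datum in the sense of \cref{def:bstar-witness-data} is a strict $B_*$ witness. By \cref{prop:bstar-isolates-skew-delta}, the associated $4\times4$ submatrix has the pattern $B_*$ with exactly three valuation-minimizing permutations. Since the lift has rank $\le 3$, its determinant vanishes. So \cref{lem:three-min-initial-form} applies and gives the initial-form equation \textup{(19.1)}.

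Second, I will verify that every residue used in the definitions of $\rho$ and $\sigma$ is an honest element of $k^*$. This amounts to checking that the entries $(A,L_0),(A,L_1),(A,L_2),(B,L_0),(B,L_1),(C,L_0),(C,L_2)$ are all nonincidences. That follows from the incidence pattern recorded in the proof of \cref{lem:bstar-bijection-skew-rectangle}: $A\notin L_0,L_1,L_2$, $B\notin L_0,L_1$, and $C\notin L_0,L_2$. These entries therefore have valuation $0$, so their leading coefficients are nonzero by the definition of $\operatorname{lc}$. With this in hand, dividing \textup{(19.1)} by $u_{A,L_0}u_{B,L_1}u_{C,L_2}$ is legitimate, and \cref{lem:bstar-rho-plus-sigma} yields $1-\rho=\sigma$.

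Third, since $\sigma$ is a product and quotient of elements of $k^*$, we have $\sigma\neq 0$, hence $\rho=1-\sigma\neq 1$. This argument never uses the hypothesis $\operatorname{char}(k)\neq 2$, so the lemma holds in any characteristic. I would state this explicitly and not invent a use for the hypothesis, noting that the statement as given is only weaker than what is proved.

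The only real obstacle is the bookkeeping in the second step: making sure the witness datum produces exactly the three minimizers $\sigma_1,\sigma_2,\sigma_3$ of \cref{prop:bstar-isolates-skew-delta}, with signs $+,-,-$, and that none of the seven residues vanishes. Both points reduce to the unique-intersection axioms: $E\neq B,C$ and $D\notin L_3$. These are already established in \cref{lem:bstar-count-fixed-triple} and \cref{lem:bstar-bijection-skew-rectangle}, so I expect to cite them rather than redo them.
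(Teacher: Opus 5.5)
Your proposal is correct and is essentially the paper's argument. The paper also starts from the witness equation (19.1) and notes that its right-hand side is a product of nonzero residues, so $\Delta_{AB}^{L_0L_1}\neq 0$; it then applies the identity $\Delta_{AB}^{L_0L_1}=u_{A,L_0}u_{B,L_1}(1-\rho_{AB}^{L_0L_1})$, which is the same division you perform when passing through $\rho+\sigma=1$. Routing through \cref{lem:bstar-rho-plus-sigma} even avoids the extraneous bridge-cycle hypotheses attached to \cref{cor:bstar-cross-ratio-obstruction}, and you are right that $\operatorname{char}(k)\neq 2$ is used in neither version.
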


\begin{proof}
The datum is, by definition, a strict $B_*$ witness. So \cref{prop:bstar-isolates-skew-delta} gives
\[
u_{C,L_2}\,\Delta_{AB}^{L_0L_1}=u_{A,L_2}u_{B,L_1}u_{C,L_0},
\]
where all three factors on the right are nonzero residues of tropical-$0$ entries. Hence $\Delta_{AB}^{L_0L_1}\neq 0$. Now apply \cref{cor:bstar-cross-ratio-obstruction}.
\end{proof}

\begin{remark}[Asymptotic supply of local obstructions]
\label{rem:bstar-theta-q8-obstruction-supply}
Taken together, \cref{cor:bstar-total-count,lem:bstar-witness-forces-rho-ne-1} show that, in characteristic $\neq2$, every rank-$\le3$ lift of the incidence matrix of a projective plane of order $q\ge 3$ must accommodate
\[
\Theta(q^8)
\]
ordered $B_*$ witness data, and every one of them forces one gauge-invariant local rectangle constraint of the form $\rho\neq 1$. This yields the first polynomial-scale family of gauge-invariant local residue obstructions established here.
\end{remark}

\begin{remark}[Closed algebraic loop: depth versus holonomy]
\label{rem:holonomy-delta-closed-loop}
The bridge cycle provides the first closed algebraic loop in the paper:
\[
\text{observable chart holonomy}
\Longleftrightarrow
\text{skew determinant }\Delta_{AB}^{L_0L_1}
\Longleftrightarrow
\text{strict }B_*\text{ witness equation}.
\]
If the bridge-cycle holonomy is $1$, then the skew determinant cancels and the lift pays a $\Lam$-depth event. If the holonomy is not $1$, then the same local witness detects genuine atlas curvature. This dichotomy makes the bridge cycle a particularly transparent local interface between depth and curvature in the present formalism.
\end{remark}

\begin{remark}[Modular holonomy and bracket sensitivity]
\label{rem:modular-holonomy-associator}
The M\"obius element $g_\gamma\in PGL_2(\mathbb Z)$ attached to a witness loop is the first bracket-sensitive invariant isolated in the residue formalism: different compositions of the same local moves can force nontrivial fixed-point equations.
\end{remark}

\begin{lemma}[Scaled modular shortcut gives a quadratic constraint]
\label{lem:scaled-modular-shortcut-quadratic}
Suppose two rectangle states $R,R'$ with cross-ratios $x:=x_R$ and $x':=x_{R'}$ are related in two independent ways:
\begin{enumerate}[leftmargin=2em]
  \item a strict-$B_*$ plus swap path giving
  \[
  x'=SI(x)=1-\frac{1}{x};
  \]
  \item a base-line transport path giving
  \[
  x'=M_\lambda(x)=\lambda x
  \qquad\text{for some }\lambda\in k^*.
  \]
\end{enumerate}
Then consistency forces
\[
1-\frac1x=\lambda x
\qquad\Longleftrightarrow\qquad
\lambda x^2-x+1=0.
\tag{29.2}
\]
\end{lemma}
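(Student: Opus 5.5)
The statement is a consistency check between two expressions for the same quantity, so the plan is to equate them and clear denominators. The hypotheses give two descriptions of the single residue cross-ratio $x'=x_{R'}\in k$. These are $x'=SI(x)=1-1/x$ and $x'=M_\lambda(x)=\lambda x$. Since $x'$ is one well-defined element of $k$ (the cross-ratio of the fixed rectangle $R'$, formed from valuation-$0$ residues of the fixed lift), the two right-hand sides are equal. This gives $1-\frac1x=\lambda x$ at once, which is the left half of \textup{(29.2)}.

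For the equivalence with the quadratic, first note that $x\in k^*$. The cross-ratio $x_R$ is a product and quotient of nonzero residues $u_{p,\ell}$ of tropical-$0$ entries, exactly as $\rho_{AB}^{L_0L_1}$ is in \cref{cor:bstar-cross-ratio-obstruction}. So multiplication by $x$ is invertible in $k$.

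\begin{itemize}[leftmargin=2em]
  \item Multiplying $1-\frac1x=\lambda x$ by $x$ gives $x-1=\lambda x^2$, that is, $\lambda x^2-x+1=0$.
  \item Conversely, if $\lambda x^2-x+1=0$ with $x\neq0$, dividing by $x$ recovers $\lambda x = 1-\frac1x$.
\end{itemize}

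This proves both directions of \textup{(29.2)}.

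The only point needing care is that the map $SI(x)=1-1/x$ is defined at $x$, i.e.\ $x\neq 0$. I would settle this by the nonvanishing of residue labels on valuation-$0$ entries, as used throughout, rather than by any further geometric input. No characteristic assumption is needed, and there is no real obstacle beyond stating this nonvanishing explicitly.
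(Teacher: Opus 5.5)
Your proposal is correct and matches the paper's proof: equate the two expressions for $x'$ and clear the denominator $x$. Your explicit remark that $x\in k^*$, since cross-ratios are built from nonzero valuation-$0$ residues, justifies the reverse direction of the equivalence, which the paper leaves implicit.
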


\begin{proof}
Equate the two expressions for $x'$ and clear denominators:
\[
1-\frac1x=\lambda x
\iff x-1=\lambda x^2
\iff \lambda x^2-x+1=0.
\]
\end{proof}

\begin{remark}[Field-dependent quadratic obstruction template]
\label{rem:quadratic-shortcut-template}
The equation
\[
\lambda x^2-x+1=0
\]
is the first residue-level obstruction in the baseline whose strength depends explicitly on the residue field. For example, over $k=\mathbb R$, positive scalings with $\lambda>1/4$ admit no real solution. Thus a geometrically realized shortcut of that type would force a $\Lam$-event rather than a residue-level solution. The remaining bottleneck is geometric: manufacture the transport edge $M_\lambda$ in a gauge-invariant way and at scale.
\end{remark}

\begin{definition}[Rectangle $\Lam$-depth scalar]
\label{def:rectangle-lam-depth-scalar}
For an all-zero $2\times2$ rectangle $\{A,B\}\times\{L_0,L_0'\}$, define the lifted determinant
\[
\Delta:=\widetilde u_{A,L_0}\widetilde u_{B,L_0'}-\widetilde u_{A,L_0'}\widetilde u_{B,L_0}.
\]
Its associated transport-depth scalar is
\[
\delta_{AB}^{L_0L_0'}:=\nu(\Delta)-\min\bigl(\nu(\widetilde u_{A,L_0}\widetilde u_{B,L_0'}),\nu(\widetilde u_{A,L_0'}\widetilde u_{B,L_0})\bigr)\ge 0.
\tag{30.5}
\]
When the valuation-minimal terms both have valuation $0$, this simplifies to $\delta_{AB}^{L_0L_0'}=\nu(\Delta)$. It is the lift-side analogue of stored cancellation mass: $\delta>0$ exactly when the transport rectangle is residue-flat and extra cancellation has been absorbed into the $\Lam$-layer.
\end{definition}

\begin{definition}[Tie depth for an equal-valuation binomial]
\label{def:tie-depth-binomial}
Let $(K,\nu)$ be a nonarchimedean valued field. For $x,y\in K^\times$ with
\[
\nu(x)=\nu(y)=\alpha,
\]
define the \emph{tie depth} of $x+y$ by
\[
\delta(x+y):=\nu(x+y)-\alpha\in \RR_{\ge 0}\cup\{\infty\}.
\]
Thus $\delta(x+y)>0$ records a valuation jump created by leading-term cancellation.
\end{definition}

\begin{lemma}[A $2\times2$ binomial is controlled by one cross-ratio and one tie depth]
\label{lem:2x2-crossratio-tiedepth}
Let $a,b,c,d\in K^\times$ satisfy
\[
\nu(a)=\nu(b)=\nu(c)=\nu(d)=0,
\qquad
\rho:=\frac{ad}{bc}\in K^\times.
\]
Then:
\begin{enumerate}[leftmargin=2em]
  \item \[\nu(ad-bc)=\nu(\rho-1).\]
  \item \[\nu(ad-bc)>0 \iff \overline\rho=1\text{ in }k^*,\]
  equivalently $\operatorname{lc}(a)\operatorname{lc}(d)=\operatorname{lc}(b)\operatorname{lc}(c)$ in the residue field.
  \item The tie depth of the binomial $ad-bc$ is exactly
  \[\delta(ad-bc)=\nu(\rho-1).\]
\end{enumerate}
\end{lemma}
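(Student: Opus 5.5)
The plan is to factor the binomial through the cross-ratio. Since $\nu(b)=\nu(c)=0$, the product $bc$ is a unit of the valuation ring $R_K$, and
\[
ad-bc=bc\Bigl(\frac{ad}{bc}-1\Bigr)=bc\,(\rho-1).
\]
Taking valuations and using $\nu(bc)=\nu(b)+\nu(c)=0$ gives $\nu(ad-bc)=\nu(\rho-1)$, which is item~(1). This identity also holds when $ad=bc$, since then both sides equal $\infty$.

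For item~(2), note that $\rho$ is itself a unit: $\nu(\rho)=\nu(a)+\nu(d)-\nu(b)-\nu(c)=0$. Hence $\overline\rho\in k^*$ is defined, and $\nu(\rho-1)>0$ holds precisely when $\rho-1\in\mathfrak m_K$, that is, when $\overline\rho=1$.

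To get the leading-coefficient formulation, observe that for a valuation-$0$ element the leading residue $\operatorname{lc}$ is just reduction modulo $\mathfrak m_K$, because $t^0=1$. Reduction $R_K\to k$ is a ring homomorphism, so
\[
\overline\rho=\frac{\operatorname{lc}(a)\operatorname{lc}(d)}{\operatorname{lc}(b)\operatorname{lc}(c)}.
\]
Clearing the nonzero denominators gives the equivalent condition $\operatorname{lc}(a)\operatorname{lc}(d)=\operatorname{lc}(b)\operatorname{lc}(c)$. Alternatively, this form follows directly from \cref{lem:raise-valuation} applied to $x_1=ad$ and $x_2=-bc$, both of which have valuation $0$.

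For item~(3), apply \cref{def:tie-depth-binomial} with $x=ad$ and $y=-bc$. Their common valuation is $\alpha=0$, so
\[
\delta(ad-bc)=\nu(ad-bc)-0=\nu(\rho-1)
\]
by item~(1). Both sides can be $\infty$, namely exactly when $\rho=1$ in $K$.

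The only step requiring care is the bookkeeping in the degenerate case $\rho=1$, where both valuations are $\infty$. The statement allows tie depth to take the value $\infty$, so this case is consistent. Beyond that the argument is a direct computation with no real obstacle.
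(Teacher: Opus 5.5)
Your proof is correct and takes essentially the same route as the paper. Both factor $ad-bc=bc(\rho-1)$, use $\nu(bc)=0$, note that $\rho$ is a unit so that $\nu(\rho-1)>0$ iff $\overline\rho=1$, and read off the tie depth from the definition with $x=ad$, $y=-bc$. Your added remarks on the $\infty$ case and on $\operatorname{lc}$ coinciding with reduction for valuation-$0$ elements are harmless refinements.
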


\begin{proof}
Since $\nu(bc)=0$, we can factor
\[
ad-bc=bc\Bigl(\frac{ad}{bc}-1\Bigr)=bc(\rho-1).
\]
Taking valuations gives
\[
\nu(ad-bc)=\nu(bc)+\nu(\rho-1)=\nu(\rho-1).
\]
Because $\rho\in R_K^\times$, the inequality $\nu(\rho-1)>0$ holds if and only if $\rho\equiv 1\pmod{\mathfrak m_K}$, i.e. if and only if $\overline\rho=1$ in $k^*$. The final statement is just the definition of tie depth from \cref{def:tie-depth-binomial} with $x=ad$ and $y=-bc$.
\end{proof}

\begin{remark}[A direct bridge to cancellation depth]
\label{rem:note31-cancellation-bridge}
The equality-valued binomial $ad-bc$ gives the simplest lift-theoretic model of two leading terms that tie and may or may not cancel. The residue condition $\overline\rho=1$ detects whether cancellation occurs at leading order, while the scalar $\nu(\rho-1)$ measures the resulting depth of that cancellation.
\end{remark}


\subsection{Gauge normalization and flat charts}
\begin{definition}[Gauge transformations]
A gauge transformation is multiplication of $F$ on the left/right by diagonal matrices of valuation $0$
(i.e.\ unit rescalings of rows/columns), preserving the valuation pattern.
\end{definition}

\begin{definition}[Flat rectangle / cross-ratio]
Given a $2\times2$ all-zero rectangle in the valuation pattern, define its residue cross-ratio
$\rho:=\frac{u_{11}u_{22}}{u_{12}u_{21}}\in k^\times$. We call the rectangle \emph{flat} if $\rho=1$.
\end{definition}

\begin{figure}[htbp]
\centering
\begin{tikzpicture}[font=\small]
  \matrix (m) [matrix of math nodes,
               row sep=1.8em,
               column sep=2.6em,
               left delimiter={(},
               right delimiter={)}] {
      u_{11} & u_{12} \\
      u_{21} & u_{22} \\
  };
  \draw[dashed] ($(m-1-1.north west)+(-3pt,3pt)$)
      rectangle
      ($(m-2-2.south east)+(3pt,-3pt)$);

  \node (rho) [below=1.4em of m] {$\rho=\dfrac{u_{11}u_{22}}{u_{12}u_{21}}$};
\end{tikzpicture}
\caption{A valuation-minimal $2\times2$ zero rectangle and its associated residue cross-ratio. 
The flatness condition is $u_{11}u_{22}=u_{12}u_{21}$, equivalently $\rho=1$.}
\label{fig:flat-rectangle}
\end{figure}



\subsection{Zero-graph holonomy and pure-gauge factorization}
\begin{definition}[Zero-graph / nonincidence graph]
Let $B\in\{0,1\}^{R\times C}$ be any $0$--$1$ matrix, viewed so that the entries equal to $0$ are the valuation-minimal ones.
Its \emph{zero-graph} $G_0(B)$ is the bipartite graph with row-vertex set $R$, column-vertex set $C$, and edge $(r,c)$ whenever
$B_{r,c}=0$. For a lift $\widetilde B$ with $\val(\widetilde B_{r,c})=B_{r,c}$, we write
\[
u_{r,c}:=\operatorname{lc}(\widetilde B_{r,c})\in k^*
\]
for every zero-edge $(r,c)\in E(G_0(B))$. In particular, for a projective plane we apply this to the combinatorial incidence matrix
$I(\Pi)$, so $G_0(I(\Pi))$ is exactly the nonincidence graph on points and lines.
\end{definition}

\begin{definition}[Alternating holonomy on an even cycle]
Let
\[
C: p_1-\ell_1-p_2-\ell_2-\cdots-p_m-\ell_m-p_1
\]
be an even cycle in a bipartite zero-graph, with indices read modulo $m$. Its alternating holonomy is
\[
\operatorname{Hol}_{u}(C):=\prod_{i=1}^m \frac{u_{p_i,\ell_i}}{u_{p_{i+1},\ell_i}}\in k^*.
\]
Equivalently, $\operatorname{Hol}_{u}(C)=1$ says that the product of every other edge label around $C$ equals the product of the
remaining alternating edge labels.
\end{definition}

\begin{definition}[Signed holonomy on an even cycle]
\label{def:signed-holonomy}
For an even cycle
\[
C: p_1-\ell_1-p_2-\ell_2-\cdots-p_m-\ell_m-p_1
\]
in a bipartite zero-graph, define its \emph{signed holonomy} by
\[
\operatorname{sHol}_{u}(C):=(-1)^m\,\operatorname{Hol}_{u}(C)\in k^*.
\]
Thus $\operatorname{sHol}_{u}(C)=1$ is equivalent to $\operatorname{Hol}_{u}(C)=(-1)^m$.
For the $4\times4$ two-minimum minors studied below, this is exactly the determinant-sign-corrected form of the first-layer
cancellation constraint.
\end{definition}

\begin{figure}[htbp]
\centering
\begin{tikzpicture}[font=\small,
  point/.style={circle,draw,inner sep=1.5pt,minimum size=18pt},
  linev/.style={rectangle,draw,rounded corners=1pt,inner sep=2pt,minimum height=16pt},
  every edge/.style={draw,-}]
  \node[point] (p1) at (0,1.2) {$p_1$};
  \node[linev] (l1) at (2.3,1.2) {$\ell_1$};
  \node[point] (p2) at (4.6,1.2) {$p_2$};
  \node[linev] (l2) at (2.3,-1.2) {$\ell_2$};
  \draw (p1) -- node[above] {$u_{p_1,\ell_1}$} (l1);
  \draw (l1) -- node[above] {$u_{p_2,\ell_1}$} (p2);
  \draw (p2) -- node[right] {$u_{p_2,\ell_2}$} (l2);
  \draw (l2) -- node[left] {$u_{p_1,\ell_2}$} (p1);
  \node[below=1.4em of l2, align=center] {$\operatorname{Hol}_u(C)=\dfrac{u_{p_1,\ell_1}}{u_{p_2,\ell_1}}\dfrac{u_{p_2,\ell_2}}{u_{p_1,\ell_2}}$};
\end{tikzpicture}
\caption{Alternating holonomy on the smallest even cycle of a bipartite zero-graph.}
\label{fig:holonomy-cycle}
\end{figure}

\begin{lemma}[Connectivity and diameter of the nonincidence graph]
\label{lem:g0-connected}
Let $\Pi_q$ be a projective plane of order $q\ge 2$. Then $G_0(I(\Pi_q))$ is connected. More precisely:
\begin{enumerate}[leftmargin=2em]
  \item any two distinct point-vertices are at distance $2$;
  \item any two distinct line-vertices are at distance $2$;
  \item any incident point-line pair is at distance $3$.
\end{enumerate}
In particular,
\[
\operatorname{diam}(G_0(I(\Pi_q)))=3.
\]
\end{lemma}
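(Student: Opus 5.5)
The plan is to verify the three distance claims directly from the projective-plane axioms, recalling that in $G_0(I(\Pi_q))$ a point $p$ is adjacent to a line $\ell$ exactly when $p\notin\ell$. Since the graph is bipartite, two vertices of the same type are at even distance and a point and a line are at odd distance. So it suffices to exhibit paths of length $2$ and $3$ where claimed, and to rule out length $1$ in the incident case. Connectivity and $\operatorname{diam}=3$ then follow, because nonincident point--line pairs are at distance $1$ and every other pair of distinct vertices is covered by (1)--(3).

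For (1), given distinct points $p_1,p_2$, I need a line avoiding both. The lines through $p_1$ or $p_2$ number $2(q+1)-1=2q+1$, since the line $\overline{p_1p_2}$ is counted twice. There are $q^2+q+1$ lines in total, and $q^2+q+1>2q+1$ holds because $q^2-q>0$ for $q\ge 2$. Hence some line $\ell$ misses both points, and $p_1-\ell-p_2$ is a path of length $2$; the distance is not $0$ since the points are distinct. Claim (2) is the dual statement and follows by the same count, applied to the points lying on $\ell_1\cup\ell_2$, of which there are $2q+1$.

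For (3), take $p\in\ell$. The pair is not adjacent, so the distance is odd and at least $3$. To build a path of length $3$, pick a line $m$ with $p\notin m$ and $m\neq\ell$; any line not through $p$ works, since $p\in\ell$ already forces $m\neq\ell$. Then choose a point $x\notin\ell\cup m$, which exists because $|\ell\cup m|=2q+1<q^2+q+1$. This gives the path $p-m-x-\ell$, each step being a nonincidence.

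The only point needing care is confirming the strict inequality $q^2+q+1>2q+1$ for $q\ge 2$, so there is no real obstacle; the whole argument is elementary counting.
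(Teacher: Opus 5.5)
Your proof is correct and follows the paper's argument essentially step for step. Both use the count of $2q+1$ lines through two distinct points, with duality for two lines. Both build the length-$3$ path by taking a line $m\not\ni p$ and a point off $\ell\cup m$, and both use bipartite parity to fix the exact distances and the diameter.
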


\begin{proof}
Let $p\neq p'$ be distinct points. Since exactly $q+1$ lines pass through each point and exactly one line passes through both,
the number of lines through $p$ or $p'$ is
\[
(q+1)+(q+1)-1=2q+1.
\]
As the total number of lines is $q^2+q+1$, the number of lines through neither point is
\[
(q^2+q+1)-(2q+1)=q(q-1)>0.
\]
Choosing such a line $\ell$ gives a path $p-\ell-p'$ in $G_0(I(\Pi_q))$, so every two distinct points are at distance $2$.
By point-line duality, the same holds for any two distinct lines.

Now let $p\in \ell$ be incident. Choose any line $m$ not through $p$; such lines exist because only $q+1$ of the $q^2+q+1$ lines
pass through $p$. The union $\ell\cup m$ contains $(q+1)+(q+1)-1=2q+1$ points, strictly fewer than the total
$q^2+q+1$, so there exists a point $p'$ on neither $\ell$ nor $m$. Then
\[
p-m-p'-\ell
\]
is a path of length $3$ in $G_0(I(\Pi_q))$. Because $G_0(I(\Pi_q))$ is bipartite and $p,\ell$ lie in opposite parts, their distance
must be odd; since they are incident, it is not $1$, hence it is exactly $3$. Therefore the graph is connected and has diameter $3$.
\end{proof}

\begin{lemma}[Holonomy-trivial $\Longleftrightarrow$ edge factorization on a connected bipartite graph]
\label{lem:holonomy-factorization}
Assume $G_0(B)$ is connected. Then the following are equivalent:
\begin{enumerate}[leftmargin=2em]
  \item For every even cycle $C\subseteq G_0(B)$, one has $\operatorname{Hol}_{u}(C)=1$.
  \item There exist functions $\alpha:R\to k^*$ and $\beta:C\to k^*$ such that
  \[
  u_{r,c}=\alpha_r\beta_c
  \]
  for every edge $(r,c)\in E(G_0(B))$.
\end{enumerate}
\end{lemma}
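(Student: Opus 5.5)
The plan is to prove the two implications separately, using a spanning tree of the connected graph $G_0(B)$ to construct the factorization.

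\emph{(2)$\Rightarrow$(1).} Assume $u_{r,c}=\alpha_r\beta_c$ on every edge. For an even cycle $C: p_1-\ell_1-\cdots-p_m-\ell_m-p_1$, each factor of the holonomy becomes
\[
\frac{u_{p_i,\ell_i}}{u_{p_{i+1},\ell_i}}=\frac{\alpha_{p_i}}{\alpha_{p_{i+1}}},
\]
since the column factor $\beta_{\ell_i}$ cancels. The product over $i$ then telescopes cyclically to $1$.

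\emph{(1)$\Rightarrow$(2).} Fix a spanning tree $T$ of $G_0(B)$, which exists because the graph is connected, and fix a root row vertex $r_0$. Set $\alpha_{r_0}:=1$ and extend along tree edges outward from the root. When a tree edge $(r,c)$ has $\alpha_r$ already defined, set $\beta_c:=u_{r,c}/\alpha_r$. When it has $\beta_c$ already defined, set $\alpha_r:=u_{r,c}/\beta_c$. All values stay in $k^*$ because every edge label is nonzero. By construction the identity $u_{r,c}=\alpha_r\beta_c$ holds on every edge of $T$.

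It remains to verify the identity on a non-tree edge $e=(r,c)$, and this is the only nontrivial step. Let $P$ be the unique path in $T$ from $c$ to $r$; together with $e$ it forms a cycle $C$. This cycle is even because the graph is bipartite, and it is simple, so hypothesis (1) applies to it. Orient $C$ so that it alternates row $\to$ column $\to$ row. Rewrite its holonomy as the ratio of the product of the "forward" edge labels to the product of the "backward" edge labels.

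Every tree edge on $C$ satisfies $u=\alpha\beta$, so we can substitute these factorizations. All $\alpha$'s and $\beta$'s of the interior vertices then cancel between the two alternating products. What survives is
\[
\operatorname{Hol}_u(C)=\frac{u_{r,c}}{\alpha_r\beta_c}
\]
or its inverse, depending on which alternating class contains $e$. Hypothesis (1) says this holonomy equals $1$, so $u_{r,c}=\alpha_r\beta_c$.

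The main obstacle is bookkeeping: the edge $e$ must sit in exactly one of the two alternating classes, so that the leftover is exactly $u_e/(\alpha_r\beta_c)$. I would handle this by writing the cycle starting at $r$ with $\ell_1=c$, so that $e$ is the first numerator edge. Each remaining factor then has the form $u_{p_{i+1},\ell_i}^{-1}u_{p_{i+1},\ell_{i+1}}$, which under the factorization equals $\beta_{\ell_{i+1}}/\beta_{\ell_i}$. This telescopes, leaving the claimed expression.
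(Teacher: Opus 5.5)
Your proof is correct and follows the paper's argument. For (2)$\Rightarrow$(1) you telescope, and for (1)$\Rightarrow$(2) you propagate $\alpha,\beta$ along a spanning tree from a root row vertex and check each off-tree edge on its fundamental cycle; your explicit telescoping spells out the consistency step the paper states in one line, and the only detail you leave implicit is the unpaired last factor $u_{p_1,\ell_m}^{-1}=(\alpha_r\beta_{\ell_m})^{-1}$, which combines with the telescoped $\beta_{\ell_m}/\beta_{\ell_1}$ to give exactly $u_{r,c}/(\alpha_r\beta_c)$.
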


\begin{proof}
If $u_{r,c}=\alpha_r\beta_c$, then each cycle factor simplifies to
\[
\frac{u_{p_i,\ell_i}}{u_{p_{i+1},\ell_i}}=\frac{\alpha_{p_i}\beta_{\ell_i}}{\alpha_{p_{i+1}}\beta_{\ell_i}}=\frac{\alpha_{p_i}}{\alpha_{p_{i+1}}},
\]
so the product telescopes around the cycle and gives $\operatorname{Hol}_{u}(C)=1$.

Conversely, choose a spanning tree $T$ of $G_0(B)$ and a root row-vertex $r_\star$. Set $\alpha_{r_\star}=1$. Traverse the tree as
follows: whenever $(r,c)\in T$ and $\alpha_r$ is known, define $\beta_c:=u_{r,c}/\alpha_r$; whenever $(r,c)\in T$ and $\beta_c$ is
known, define $\alpha_r:=u_{r,c}/\beta_c$. Because $T$ is connected and acyclic, this assigns $\alpha$ and $\beta$ uniquely on all
vertices of $T$.

Now take any off-tree edge $e=(r,c)\notin T$. Adding $e$ to $T$ creates a unique cycle $C_e$. The condition
$\operatorname{Hol}_{u}(C_e)=1$ is exactly the consistency statement that the tree-defined product along the path from $r$ to $c$
reconstructs the given label on $e$, i.e. that $u_{r,c}=\alpha_r\beta_c$. Hence the same factorization holds on every edge of
$G_0(B)$.
\end{proof}

\begin{remark}[Disconnected case]
If $G_0(B)$ has several connected components, the same argument applies on each component separately. Thus connectedness is only used to
obtain one global pair of potentials rather than a componentwise family.
\end{remark}

\begin{definition}[Square-connected subgraph]
\label{def:square-connected}
Let $H\subseteq G_0(B)$ be a connected bipartite subgraph. We call $H$ \emph{square-connected} if its cycle space is generated by the
$4$-cycles contained in $H$. Equivalently, every cycle-holonomy constraint on $H$ is already forced by the square relations coming from
those $4$-cycles.
\end{definition}

\begin{corollary}[Square relations force pure gauge on a square-connected region]
\label{cor:square-relations-pure-gauge}
Let $H\subseteq G_0(B)$ be square-connected, and let $u_e\in k^*$ label each edge $e\in E(H)$. Assume that every $4$-cycle
\[
C: p_1-\ell_1-p_2-\ell_2-p_1
\]
in $H$ satisfies
\[
u_{p_1,\ell_1}u_{p_2,\ell_2}=u_{p_1,\ell_2}u_{p_2,\ell_1}.
\tag{18.1}
\]
Then there exist scalars $\alpha_p,\beta_\ell\in k^*$ such that
\[
u_{p,\ell}=\alpha_p\beta_\ell
\qquad\text{for every edge }(p,\ell)\in E(H).
\tag{18.2}
\]
In other words, rank-$1$ rectangles glue to a pure-gauge chart on every square-connected covered region.
\end{corollary}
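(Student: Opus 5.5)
The idea is to reduce the statement to \cref{lem:holonomy-factorization}, applied to $H$ viewed as a connected bipartite graph in its own right. Its proof uses only connectivity of the underlying graph and the holonomy condition on cycles, so it applies verbatim to $H$ with the restricted labels $u_e$, $e\in E(H)$. It therefore suffices to show that every even cycle $C\subseteq H$ satisfies $\operatorname{Hol}_u(C)=1$. Given that, the spanning-tree construction produces $\alpha_p,\beta_\ell\in k^*$ with $u_{p,\ell}=\alpha_p\beta_\ell$ on all of $E(H)$, which is \textup{(18.2)}.

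\emph{Step 1: squares.} For a $4$-cycle $p_1-\ell_1-p_2-\ell_2-p_1$ the holonomy is
\[
\operatorname{Hol}_u(C)=\frac{u_{p_1,\ell_1}u_{p_2,\ell_2}}{u_{p_2,\ell_1}u_{p_1,\ell_2}},
\]
which equals $1$ exactly by \textup{(18.1)}.

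\emph{Step 2: a homomorphism on the cycle space.} To pass from squares to arbitrary cycles, orient every edge from its point-vertex to its line-vertex. For a closed walk $\gamma$, take the exponent of $u_e$ to be $+1$ when $\gamma$ traverses $e$ from point to line and $-1$ when it traverses $e$ from line to point. With this convention $\operatorname{Hol}_u(\gamma)=\prod_e u_e^{c_\gamma(e)}$, where $c_\gamma\in\mathbb Z^{E(H)}$ is the integer cycle vector of $\gamma$. Hence $\operatorname{Hol}_u$ extends to a group homomorphism from the integral cycle space $Z_1(H;\mathbb Z)$ to $k^*$. This is the point where care is needed: the holonomy must depend only on the cycle class and not on the starting vertex or direction. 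Reversing direction inverts $\operatorname{Hol}_u$, and cyclic shifts leave it unchanged.

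\emph{Step 3: conclusion.} By \cref{def:square-connected}, the classes of $4$-cycles generate the cycle space. A homomorphism that is trivial on a generating set is trivial, so $\operatorname{Hol}_u(C)=1$ for every cycle $C\subseteq H$, and \cref{lem:holonomy-factorization} finishes the proof.

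The main obstacle I expect is the coefficient ring in \cref{def:square-connected}. If ``generated'' is read over $\mathbb Z/2$, the multiplicative homomorphism into $k^*$ does not descend, because a $\mathbb Z/2$ relation only controls holonomies up to squares. I would resolve this by reading the definition over $\mathbb Z$. The parenthetical ``equivalently, every cycle-holonomy constraint on $H$ is already forced by the square relations'' justifies this reading, since that is precisely the integral generation property. If needed, I would remark that for a bipartite graph one may equivalently check that each fundamental cycle of a spanning tree is a $\mathbb Z$-combination of squares.
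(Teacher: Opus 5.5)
Your proof is correct and is essentially the paper's argument: (18.1) gives trivial holonomy on each $4$-cycle, square-connectivity propagates this to every cycle of $H$, and \cref{lem:holonomy-factorization} applied to the connected graph $H$ yields the factorization. You also do two things the paper's one-line proof leaves implicit: you write holonomy as a homomorphism on $Z_1(H;\mathbb{Z})$, and you note that generation must be read integrally rather than over $\mathbb{F}_2$.
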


\begin{proof}
Each square relation \textup{(18.1)} is exactly the condition $\operatorname{Hol}_{u}(C)=1$ on the corresponding $4$-cycle. Since $H$ is
square-connected, those $4$-cycles generate the cycle space of $H$, so every cycle in $H$ has trivial unsigned holonomy. Applying
\cref{lem:holonomy-factorization} on $H$ gives the desired factorization \textup{(18.2)}.
\end{proof}

\begin{remark}[Path independence versus local square gluing]
\label{rem:path-independence-square-gluing}
\Cref{cor:square-relations-pure-gauge} is the graph-theoretic propagation principle behind the degenerate rank-$1$ rectangles from
\cref{cor:rank-one-rectangle}. Once enough such rectangles overlap to generate a square-connected region, the local $2\times2$ vanishing
conditions stop being isolated and become one cocycle-triviality statement $u_{p,\ell}=\alpha_p\beta_\ell$ on the whole covered region.
\end{remark}

\begin{lemma}[Structure of $(0,2)$-type $4\times4$ zero patterns]
\label{lem:02-single-cycle}
Let $B\in\{0,1\}^{4\times4}$ be of type $(0,2)$, and let $\sigma,\tau\in S_4$ be its two weight-$0$ permutations. Then:
\begin{enumerate}[leftmargin=2em]
  \item the union of the two zero-matchings is a single even cycle of length $2k$ with $k\in\{2,3,4\}$;
  \item the permutation-sign ratio satisfies
  \[
  \frac{\operatorname{sgn}(\tau)}{\operatorname{sgn}(\sigma)}=(-1)^{k-1}.
  \]
\end{enumerate}
\end{lemma}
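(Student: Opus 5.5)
The plan is to view $\sigma$ and $\tau$ as two perfect matchings $M_\sigma=\{(i,\sigma(i))\}$ and $M_\tau=\{(i,\tau(i))\}$ in the zero-graph $G_0(B)$. Their symmetric difference $M_\sigma\triangle M_\tau$ is a disjoint union of even alternating cycles. It is nonempty because $\sigma\neq\tau$. Each cycle alternates between row and column vertices, so a cycle of length $2k$ uses $k$ rows and $k$ columns with $k\ge 2$. Since $B$ is $4\times4$, at most four rows are available. This already gives $k\in\{2,3,4\}$ once we know there is only one cycle.

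\textbf{Single cycle.} Suppose instead that $M_\sigma\triangle M_\tau$ had two or more cycles $C_1,\dots,C_j$ with $j\ge2$. Then we can build a third perfect matching of zero edges. Take $M_\sigma$, and on $C_1$ only replace the $\sigma$-edges by the $\tau$-edges. Every edge used is a zero edge, so the result is a perfect matching of weight $0$. It differs from $M_\sigma$ on $C_1$, and it differs from $M_\tau$ on $C_2$. This gives a third weight-$0$ permutation, contradicting the $(0,2)$ hypothesis. Hence the union of the two matchings, after discarding the shared edges where $\sigma(i)=\tau(i)$, is exactly one even cycle of length $2k$. With $4$ rows we get $k\le4$, and $k\ge2$ because the graph is bipartite and simple.

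\textbf{Sign ratio.} Consider $\pi:=\tau\sigma^{-1}$, or equivalently $\sigma^{-1}\tau$. It fixes every row outside the cycle. On the $k$ rows of the cycle, following the cycle row $\to$ its $\sigma$-column $\to$ the $\tau$-row of that column, and so on, traces a single $k$-cycle. So $\sigma^{-1}\tau$ is a $k$-cycle, with sign $(-1)^{k-1}$. Then
\[
\operatorname{sgn}(\tau)/\operatorname{sgn}(\sigma)=\operatorname{sgn}(\sigma^{-1}\tau)=(-1)^{k-1}.
\]

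\textbf{Main obstacle.} The only delicate point is checking that the alternating cycle really corresponds to one $k$-cycle of $\sigma^{-1}\tau$ acting on the row indices. To settle it, write the cycle explicitly as
\[
i_1,\ \sigma(i_1),\ i_2=\tau^{-1}\sigma(i_1),\ \dots
\]
Then $\tau^{-1}\sigma$ maps $i_1\mapsto i_2\mapsto\cdots\mapsto i_k\mapsto i_1$. The cycle closes after exactly $k$ steps because the alternating cycle has length $2k$. Inverting this permutation does not change its sign, so the conclusion holds.
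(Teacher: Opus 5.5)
Your proposal is correct and takes essentially the same approach as the paper. Both arguments take the symmetric difference of the two zero matchings and flip along one alternating cycle to obtain an extra weight-$0$ matching, contradicting the hypothesis of exactly two minimizers. Both then read off the sign ratio from the fact that $\sigma^{-1}\tau$ (conjugate to the paper's $\tau\sigma^{-1}$) is a single $k$-cycle. You add two small refinements: you produce one extra matching rather than the paper's four, and you spell out explicitly why the alternating cycle yields exactly one $k$-cycle.
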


\begin{proof}
The symmetric difference of two perfect matchings in a bipartite graph is always a disjoint union of even cycles. If there were at least
two such cycles, one could flip the two matchings independently on each component and obtain at least four distinct perfect matchings in
the zero pattern, contradicting the assumption that $B$ has exactly two weight-$0$ permutations. Hence the symmetric difference is a
single even cycle. In a $4\times4$ bipartite graph, its length can only be $4$, $6$, or $8$, so it is of the form $2k$ with
$k\in\{2,3,4\}$.

On the support of that cycle, the permutation $\tau\sigma^{-1}$ is a single $k$-cycle (with fixed points if $k<4$). A $k$-cycle has sign
$(-1)^{k-1}$, so
\[
\operatorname{sgn}(\tau)\operatorname{sgn}(\sigma)^{-1}=\operatorname{sgn}(\tau\sigma^{-1})=(-1)^{k-1}.
\]
\end{proof}

\begin{proposition}[Two-minimum $4\times4$ minors impose signed holonomy]
\label{prop:02-signed-holonomy}
Let $B\in\{0,1\}^{4\times4}$ be of type $(0,2)$, and let $\widetilde B$ be a valued-field lift with $\val(\widetilde B)=B$.
Let $\sigma,\tau$ be the two weight-$0$ permutations, let
\[
U_{\sigma}:=\prod_{i=1}^4 \operatorname{lc}(\widetilde B_{i,\sigma(i)}),
\qquad
U_{\tau}:=\prod_{i=1}^4 \operatorname{lc}(\widetilde B_{i,\tau(i)}),
\]
and let $2k$ be the length of the unique cycle from \cref{lem:02-single-cycle}. Then
\[
\val(\det \widetilde B)>0
\qquad\Longleftrightarrow\qquad
\frac{U_{\sigma}}{U_{\tau}}=(-1)^k.
\]
Equivalently, after ordering that cycle as
\[
C:p_1-\ell_1-p_2-\ell_2-\cdots-p_k-\ell_k-p_1
\]
so that one minimizing matching uses the edges $(p_i,\ell_i)$ and the other uses $(p_{i+1},\ell_i)$, one has
\[
\operatorname{Hol}_{u}(C)=(-1)^k
\qquad\Longleftrightarrow\qquad
\operatorname{sHol}_{u}(C)=1.
\]
\end{proposition}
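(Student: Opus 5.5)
The plan is to reduce the statement to the two-term cancellation criterion of \cref{lem:two-term-det} and then convert the resulting sign condition into holonomy language using \cref{lem:02-single-cycle}.

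\textbf{Step 1: the residue condition.} Since $B$ is of type $(0,2)$, the minimum weight $0$ is attained by exactly the two permutations $\sigma,\tau$. \Cref{lem:two-term-det} therefore gives
\[
\val(\det\widetilde B)>0 \iff \operatorname{sgn}(\sigma)U_\sigma+\operatorname{sgn}(\tau)U_\tau=0 .
\]
All four factors in $U_\sigma$ and $U_\tau$ are residues of valuation-$0$ entries, so both lie in $k^*$. Dividing by $\operatorname{sgn}(\sigma)U_\tau$ turns the condition into
\[
\frac{U_\sigma}{U_\tau}=-\frac{\operatorname{sgn}(\tau)}{\operatorname{sgn}(\sigma)} .
\]
By \cref{lem:02-single-cycle}(2) the right-hand side equals $-(-1)^{k-1}=(-1)^k$. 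This proves the first equivalence.

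\textbf{Step 2: the holonomy form.} Write $\Sigma=\{(i,\sigma(i))\}$ and $T=\{(i,\tau(i))\}$ for the two zero-matchings. By \cref{lem:02-single-cycle}(1), $\Sigma\triangle T$ is a single cycle of length $2k$.
\begin{itemize}[leftmargin=2em]
  \item Edges in $\Sigma\cap T$ (the rows fixed by $\tau\sigma^{-1}$) contribute identical factors to $U_\sigma$ and $U_\tau$, so they cancel in the ratio.
  \item Label the cycle $p_1-\ell_1-p_2-\cdots-p_k-\ell_k-p_1$ so that $\Sigma$ uses the edges $(p_i,\ell_i)$ and $T$ uses the edges $(p_{i+1},\ell_i)$. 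This labeling is possible because the two matchings alternate around the cycle.
\end{itemize}
After the cancellation,
\[
\frac{U_\sigma}{U_\tau}=\prod_{i=1}^k\frac{u_{p_i,\ell_i}}{u_{p_{i+1},\ell_i}}=\operatorname{Hol}_u(C).
\]
Hence the condition reads $\operatorname{Hol}_u(C)=(-1)^k$. By \cref{def:signed-holonomy} with $m=k$, this is the same as $\operatorname{sHol}_u(C)=(-1)^k\operatorname{Hol}_u(C)=1$.

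\textbf{Main obstacle.} The only delicate point is bookkeeping. One must check that the cycle orientation matches the convention in the definition of $\operatorname{Hol}_u$. If the roles of $\sigma$ and $\tau$ are swapped, the holonomy is inverted; this is harmless because $(-1)^k$ is its own inverse, so I will remark on it. One must also check that the sign ratio really is that of the $k$-cycle $\tau\sigma^{-1}$, which \cref{lem:02-single-cycle} already supplies.
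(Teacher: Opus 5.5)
Your proof is correct and follows the paper's argument: you apply \cref{lem:two-term-det}, divide by $\operatorname{sgn}(\sigma)U_\tau$, use the sign ratio $(-1)^{k-1}$ from \cref{lem:02-single-cycle}, and then read $U_\sigma/U_\tau$ as $\operatorname{Hol}_u(C)$ via the chosen cycle labeling. Your Step 2 simply makes explicit two points the paper leaves implicit: the edges common to both matchings cancel in the ratio, and swapping $\sigma$ and $\tau$ does not affect the conclusion.
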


\begin{proof}
By \cref{lem:two-term-det},
\[
\val(\det \widetilde B)>0
\qquad\Longleftrightarrow\qquad
\operatorname{sgn}(\sigma)U_{\sigma}+\operatorname{sgn}(\tau)U_{\tau}=0.
\]
Dividing by $\operatorname{sgn}(\sigma)U_{\tau}$ yields
\[
\frac{U_{\sigma}}{U_{\tau}}=-\frac{\operatorname{sgn}(\tau)}{\operatorname{sgn}(\sigma)}.
\]
Now apply \cref{lem:02-single-cycle} to replace the sign ratio by $(-1)^{k-1}$ and obtain
\[
\frac{U_{\sigma}}{U_{\tau}}=(-1)^k.
\]
With the displayed ordering of the cycle, the left-hand side is exactly $\operatorname{Hol}_{u}(C)$, so the signed-holonomy reformulation
follows directly from \cref{def:signed-holonomy}.
\end{proof}

\begin{remark}[Characteristic-$2$ shadow]
If $\operatorname{char}(k)=2$, then $+1=-1$ in the residue field, so the signed-holonomy distinction collapses and every $(0,2)$-type
minor again imposes the unsigned condition $\operatorname{Hol}_{u}(C)=1$. Thus the pure-gauge picture survives verbatim in
characteristic $2$.
\end{remark}

\begin{definition}[$\Lam$-inactive degenerate chart]
\label{def:lam-inactive-chart}
Let $\Pi$ be a projective plane. A degenerate rectangle $R(n,W)$ from \cref{def:degenerate-vs-skew-rectangles} is called \emph{$\Lam$-inactive} if its residue labels admit a factorization
\[
u_{Z,r}=\alpha_Z\beta_r
\qquad\text{for every edge }(Z,r)\in R(n,W),\]
with $\alpha_Z,\beta_r\in k^*$. Equivalently, every $2\times2$ residue determinant on $R(n,W)$ vanishes, so the chart is already pure gauge at residue level.
\end{definition}

\begin{lemma}[Transition scalar on a connected overlap]
\label{lem:atlas-transition-scalar}
Let $H\subseteq G_0(I(\Pi))$ be a connected subgraph with at least one edge, and suppose that on $E(H)$ one has two factorizations
\[u_{p,\ell}=\alpha_p\beta_\ell=\alpha'_p\beta'_\ell
\qquad\text{for all }(p,\ell)\in E(H),\]
with all $\alpha,\beta,\alpha',\beta'\in k^*$. Then there exists a scalar $c\in k^*$ such that
\[
\alpha'_p=c\alpha_p \quad (p\in P\cap V(H)),
\qquad
\beta'_\ell=c^{-1}\beta_\ell \quad (\ell\in L\cap V(H)).
\]
\end{lemma}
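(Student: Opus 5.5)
Fix one edge $(p_0,\ell_0)\in E(H)$, and set $c:=\alpha'_{p_0}/\alpha_{p_0}\in k^*$. Comparing the two factorizations at this edge gives $\alpha_{p_0}\beta_{\ell_0}=\alpha'_{p_0}\beta'_{\ell_0}$, so $\beta'_{\ell_0}=c^{-1}\beta_{\ell_0}$. The plan is to propagate both identities, $\alpha'_p=c\alpha_p$ and $\beta'_\ell=c^{-1}\beta_\ell$, from this base edge to every vertex of $H$. The mechanism is a walk along edges, mirroring the spanning-tree construction in the proof of \cref{lem:holonomy-factorization}.

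The propagation rests on a single local step, valid for any edge $(p,\ell)\in E(H)$. Since $\alpha_p\beta_\ell=\alpha'_p\beta'_\ell$ with all four factors in $k^*$, we have
\[
\frac{\alpha'_p}{\alpha_p}=\Bigl(\frac{\beta'_\ell}{\beta_\ell}\Bigr)^{-1}.
\]
Hence if $\alpha'_p=c\alpha_p$ holds at the point endpoint, then $\beta'_\ell=c^{-1}\beta_\ell$ holds at the line endpoint, and conversely. To organize the induction, define for each vertex $v$ of $H$ the ratio $\gamma_v:=\alpha'_v/\alpha_v$ if $v$ is a point and $\gamma_v:=\beta_v/\beta'_v$ if $v$ is a line. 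The local step then says exactly that $\gamma$ takes equal values at the two endpoints of every edge of $H$.

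Because $H$ is connected, every vertex of $H$ is joined to $p_0$ by a path in $H$. Induction along such a path, applying the local step at each edge, shows that $\gamma$ is constant on $V(H)$ and equal to $\gamma_{p_0}=c$. Unwinding the definition of $\gamma$ gives $\alpha'_p=c\alpha_p$ for every point of $H$ and $\beta'_\ell=c^{-1}\beta_\ell$ for every line of $H$, which is the claim.

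There is no real obstacle. The two points that need care are that every scalar involved lies in $k^*$, which is assumed, so all the divisions are legitimate, and that the hypothesis of at least one edge is what makes $c$ well-defined. The only conceptual input is that a locally constant function on a connected graph is constant.
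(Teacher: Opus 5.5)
Your proof is correct and takes essentially the same route as the paper: you fix one edge to define $c$, use the edgewise identity $\alpha'_p/\alpha_p=\beta_\ell/\beta'_\ell$ to carry the relation across each edge, and propagate it using the connectedness of $H$. Your function $\gamma$ simply packages the paper's alternating point/line propagation into a single locally constant function.
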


\begin{proof}
Choose one edge $(p_0,\ell_0)\in E(H)$ and define
\[
c:=\frac{\alpha'_{p_0}}{\alpha_{p_0}}=\frac{\beta_{\ell_0}}{\beta'_{\ell_0}},
\]
where the equality follows from $\alpha_{p_0}\beta_{\ell_0}=\alpha'_{p_0}\beta'_{\ell_0}$. If $(p_0,\ell)$ is any other edge of $H$, then
\[
\alpha_{p_0}\beta_{\ell}=\alpha'_{p_0}\beta'_{\ell}=c\alpha_{p_0}\beta'_{\ell},
\]
so $\beta'_{\ell}=c^{-1}\beta_{\ell}$. Similarly, from any edge $(p,\ell)$ for which $\beta'_{\ell}=c^{-1}\beta_{\ell}$ is already known, one gets
\[
\alpha_p\beta_{\ell}=\alpha'_p\beta'_{\ell}=\alpha'_p c^{-1}\beta_{\ell},
\]
so $\alpha'_p=c\alpha_p$. Since $H$ is connected, this propagates across all vertices of $H$.
\end{proof}

\begin{definition}[Rectangle atlas and chart holonomy]
\label{def:rectangle-atlas-holonomy}
A \emph{$\Lam$-inactive rectangle atlas} on a subgraph of $G_0(I(\Pi))$ is a finite family of $\Lam$-inactive degenerate charts
\[
\mathcal A=\{R_i\}_{i\in I},
\]
each equipped with one factorization $u=\alpha^{(i)}\beta^{(i)}$ on $R_i$, such that every nonempty overlap $R_i\cap R_j$ used by the atlas is connected. The \emph{chart adjacency graph} has vertex set $I$ and an edge $i\sim j$ whenever $R_i\cap R_j\neq\varnothing$.

On each nonempty overlap, \cref{lem:atlas-transition-scalar} gives a unique transition scalar $c_{ij}\in k^*$ with
\[
\alpha^{(j)}=c_{ij}\alpha^{(i)},
\qquad
\beta^{(j)}=c_{ij}^{-1}\beta^{(i)}
\]
on $R_i\cap R_j$. For a cycle of charts
\[
\gamma: i_0\to i_1\to\cdots\to i_m=i_0
\]
in the adjacency graph, define its \emph{chart holonomy} by
\[
\operatorname{Hol}_{\mathcal A}(\gamma):=\prod_{t=0}^{m-1} c_{i_t i_{t+1}}\in k^*.
\]
If the local factorizations glue to one global gauge on the union of the charts, then necessarily $\operatorname{Hol}_{\mathcal A}(\gamma)=1$ for every chart cycle $\gamma$.
\end{definition}

\begin{proposition}[Trivial atlas holonomy glues local gauges]
\label{prop:atlas-trivial-holonomy}
Let $\mathcal A=\{R_i\}_{i\in I}$ be a $\Lam$-inactive rectangle atlas whose chart adjacency graph is connected. Assume that every chart cycle $\gamma$ satisfies
\[
\operatorname{Hol}_{\mathcal A}(\gamma)=1.
\]
Then there exist scalars $\alpha_p,\beta_\ell\in k^*$ such that
\[u_{p,\ell}=\alpha_p\beta_\ell
\qquad\text{for every edge }(p,\ell)\text{ lying in the union of the charts.}
\]
\end{proposition}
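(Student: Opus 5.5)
The argument is the standard "trivial cocycle implies coboundary" construction. I will use \cref{lem:atlas-transition-scalar} to relate any two charts through a connected overlap, and then propagate a single normalization along the chart adjacency graph.

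First, fix a root chart $i_0\in I$. Choose a spanning tree $T$ of the (connected) chart adjacency graph. For each chart $i$ let $\gamma_i$ be the unique tree path $i_0=j_0\to j_1\to\cdots\to j_s=i$, and set
\[
c_i:=\prod_{t=0}^{s-1}c_{j_tj_{t+1}}\in k^*.
\]
Then define rescaled local gauges $\hat\alpha^{(i)}:=c_i^{-1}\alpha^{(i)}$ and $\hat\beta^{(i)}:=c_i\beta^{(i)}$. These still satisfy $u_{p,\ell}=\hat\alpha^{(i)}_p\hat\beta^{(i)}_\ell$ on $R_i$.

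Second, check that the rescaled gauges agree on every overlap. For adjacent charts $i\sim j$, the transition relation $\alpha^{(j)}=c_{ij}\alpha^{(i)}$ on $R_i\cap R_j$ turns the claim $\hat\alpha^{(i)}=\hat\alpha^{(j)}$ there into the identity $c_j=c_ic_{ij}$. The loop $\gamma_i$, followed by the edge $i\to j$, followed by $\gamma_j$ reversed, is a closed chart cycle. Its holonomy is $c_i\,c_{ij}\,c_j^{-1}$, and this equals $1$ by hypothesis, which is exactly the required identity. Here I need the convention $c_{ji}=c_{ij}^{-1}$ for reversed edges. This follows from the uniqueness part of \cref{lem:atlas-transition-scalar}, since the overlap is connected and nonempty. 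The same computation gives $\hat\beta^{(i)}=\hat\beta^{(j)}$ on overlaps.

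Third, define global potentials by $\alpha_p:=\hat\alpha^{(i)}_p$ for any chart $i$ whose vertex set contains $p$, and similarly $\beta_\ell:=\hat\beta^{(i)}_\ell$. Then $u_{p,\ell}=\alpha_p\beta_\ell$ holds on each chart, and hence on the whole union.

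The main obstacle is well-definedness at a vertex $p$ that lies in two charts $R_i,R_j$ whose \emph{edge} overlap is empty, or which are not adjacent. Agreement is only guaranteed on $R_i\cap R_j$ as edge sets, where \cref{lem:atlas-transition-scalar} applies. My first attempt will be to read the atlas definition so that adjacency and transition scalars refer to overlaps of the vertex-and-edge subgraphs. Then a shared vertex forces adjacency, and the holonomy argument above gives agreement at $p$.

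If only edge overlaps are allowed, I would instead bypass vertex-wise well-definedness. I would define $\alpha,\beta$ via a spanning tree of the union subgraph and verify $u=\alpha\beta$ chart by chart, using that each chart is connected. Any residual ambiguity at a shared vertex would then be absorbed by a cycle argument in the union graph, with trivial chart holonomy supplying the needed consistency.
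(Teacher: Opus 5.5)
Your first two steps are the paper's proof: fix a root chart, multiply transition scalars along paths (the paper uses arbitrary paths and trivial holonomy for path-independence, you use a spanning tree), rescale, and check agreement on every nonempty overlap. The gap is the gluing step, and you located it correctly. Charts are sets of edges, so \cref{lem:atlas-transition-scalar} only synchronizes $\hat\alpha^{(i)}$ and $\hat\alpha^{(j)}$ at vertices of a common edge. A point, however, can lie in two charts with $R_i\cap R_j=\varnothing$. The paper's proof passes over this in the sentence ``hence they glue''.

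Your fallback (b) fails. No cycle argument can absorb that ambiguity, because under the literal edge-set reading the proposition is false. Here is a counterexample.
\begin{itemize}
\item Take $q\ge 3$ and a line $n$ with distinct points $W_1,W_3,X,p$. Set $R_1:=R(n,W_1)$ and $R_3:=R(n,W_3)$. Since $\mathcal P(W_1)\cap\mathcal P(W_3)=\{n\}$, their line sets are disjoint, so $R_1\cap R_3=\varnothing$, although both contain the rows $p$ and $X$.
\item Pick a line $n_2\neq n$ through $X$, a point $W_2\in n_2\setminus\{X\}$, and set $R_2:=R(n_2,W_2)$. Then $R_1\cap R_2=\{(X,\overline{W_1W_2})\}$ and $R_2\cap R_3=\{(X,\overline{W_2W_3})\}$.
\item So the adjacency graph is the path $1\sim 2\sim 3$, every overlap is connected, and the holonomy hypothesis holds trivially.
\item Put $u\equiv 1$ on $R_1\cup R_2\cup R_3$, except $u_{p,r}:=c$ for $r\in\mathcal P(W_3)\setminus\{n\}$, with any $c\in k^*\setminus\{1\}$. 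Extend by $1$ elsewhere to get an honest lift.
\item Each chart is still rank one: $R_3$ has only row $p$ rescaled, $p\notin n_2$, and the modified edges are not in $R_1$.
\item Take $r_1\in\mathcal P(W_1)\setminus\{n\}$ and $r_3\in\mathcal P(W_3)\setminus\{n\}$. The $4$-cycle $p-r_1-X-r_3-p$ lies in $R_1\cup R_3$. A global factorization would force $u_{p,r_1}u_{X,r_3}=u_{p,r_3}u_{X,r_1}$, i.e.\ $1=c$.
\end{itemize}

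So your route (a) is not an optional reading but the necessary repair, and it is an added hypothesis, not an interpretation of the existing one. You must require two things: any two charts sharing a vertex are adjacent with connected vertex-overlap, and the holonomy condition ranges over this enlarged adjacency graph. For the product-shaped charts $R(n,W)$ and $R(n,W;\ell)$ this is easy to implement. A connected vertex-overlap with at least two vertices contains a point and a line, hence the whole nonempty edge overlap $(P_i\cap P_j)\times(L_i\cap L_j)$. The only new case is a one-vertex overlap $\{p\}$. There \cref{lem:atlas-transition-scalar} does not apply, since it needs an edge, and you set $c_{ij}:=\alpha^{(j)}_p/\alpha^{(i)}_p$ directly. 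With that hypothesis your three steps are complete. The same extra hypothesis is needed for the paper's own argument, and therefore for its use in \cref{prop:atlas-forces-lam-activity}.
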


\begin{proof}
Choose one root chart $R_{i_0}$ and keep its factorization $\alpha^{(i_0)},\beta^{(i_0)}$. For any other chart $R_j$, pick a path
\[
i_0\to i_1\to\cdots\to i_t=j
\]
in the chart adjacency graph and define
\[
\lambda_j:=\prod_{s=0}^{t-1} c_{i_s i_{s+1}}.
\]
Because every chart cycle has holonomy $1$, this scalar is independent of the chosen path. Rescale the factorization on $R_j$ by setting
\[
\widehat\alpha^{(j)}:=\lambda_j^{-1}\alpha^{(j)},
\qquad
\widehat\beta^{(j)}:=\lambda_j\beta^{(j)}.
\]
On an overlap $R_i\cap R_j$, the transition rule from \cref{lem:atlas-transition-scalar} gives
\[
\widehat\alpha^{(j)}=\widehat\alpha^{(i)},
\qquad
\widehat\beta^{(j)}=\widehat\beta^{(i)},
\]
so the rescaled local gauges agree on every overlap. Hence they glue to one global pair $(\alpha_p,\beta_\ell)$ on the union of the charts, and the displayed factorization follows.
\end{proof}

\begin{definition}[Trimmed degenerate chart]
\label{def:trimmed-degenerate-chart}
Let $n$ be a line, let $W\in n$, and let $\ell\in \mathcal P(W)\setminus\{n\}$. Define the \emph{trimmed degenerate chart}
\[
R(n,W;\ell):=(n\setminus\{W\})\times\bigl(\mathcal P(W)\setminus\{n,\ell\}\bigr)\subseteq R(n,W).
\]
Whenever the ambient degenerate rectangle $R(n,W)$ is $\Lam$-inactive, every trimmed chart $R(n,W;\ell)$ inherits the same residue
factorization and is therefore $\Lam$-inactive as well.
\end{definition}

\begin{lemma}[A concrete three-chart overlap cycle around a strict $B_*$ witness]
\label{lem:three-chart-cycle-bstar}
Assume $q\ge 6$, and fix one strict $B_*$ witness with rows $(A,B,C,D)$ and columns $(L_0,L_1,L_2,L_3)$ as in
\cref{prop:bstar-isolates-skew-delta}. Then one can choose points
\[
W_0\in L_0\setminus\{D\},\qquad W_1\in L_1\setminus\{D,C\},\qquad W_2\in L_2\setminus\{D,B\},
\]
and auxiliary lines $\ell_i\in \mathcal P(W_i)\setminus\{n_i,L_i\}$, where
\[
n_0:=\overline{AW_0},\qquad n_1:=\overline{BW_1},\qquad n_2:=\overline{CW_2},
\]
so that the trimmed charts
\[
R_0:=R(n_0,W_0;\ell_0),\qquad R_1:=R(n_1,W_1;\ell_1),\qquad R_2:=R(n_2,W_2;\ell_2)
\]
satisfy:
\begin{enumerate}[leftmargin=2em]
  \item $(A,L_0)\in R_0$, $(B,L_1)\in R_1$, and $(C,L_2)\in R_2$;
  \item each pairwise overlap $R_i\cap R_j$ is nonempty;
  \item the chart adjacency graph on $\{R_0,R_1,R_2\}$ is a $3$-cycle.
\end{enumerate}
\end{lemma}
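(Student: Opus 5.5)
The approach is to reduce all three conclusions to finitely many ``avoid one point on a line'' conditions and then pick $W_0,W_1,W_2$ greedily; the hypothesis $q\ge 6$ is exactly what makes room for the last choice. Write $A_0:=A$, $A_1:=B$, $A_2:=C$, so that $n_i=\overline{A_iW_i}$ and $A_i\notin L_i$ by the incidences of $B_*$ recorded in \cref{lem:bstar-bijection-skew-rectangle}.

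\emph{Step 1 (item (1)).} Since $A_i\notin L_i\ni W_i$, we have $A_i\neq W_i$ and $L_i\neq n_i$. Hence $A_i\in n_i\setminus\{W_i\}$ and $L_i\in\mathcal P(W_i)\setminus\{n_i\}$. Because $\ell_i\neq L_i$ by choice, $(A_i,L_i)\in R_i$, for every choice of $W_i$ and $\ell_i$.

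\emph{Step 2 (reducing item (2) to a genericity condition).} Fix $i\neq j$. The overlap $R_i\cap R_j$ has row set contained in $n_i\cap n_j$ and column set contained in $\mathcal P(W_i)\cap\mathcal P(W_j)$. The points $W_i,W_j$ are distinct, since they lie on distinct lines through $D$ and differ from $D$. I impose the condition
\[
(\ast)\qquad W_j\notin n_i\quad\text{for all } i\neq j.
\]
Under $(\ast)$ we get $n_i\neq n_j$, so $P_{ij}:=n_i\cap n_j$ is a single point with $P_{ij}\neq W_i,W_j$. We also get that $m_{ij}:=\overline{W_iW_j}$ differs from $n_i$ and from $n_j$.

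The candidate overlap edge is $(P_{ij},m_{ij})$. It is a genuine nonincidence: $m_{ij}\cap n_i=\{W_i\}$ and $P_{ij}\neq W_i$, so $P_{ij}\notin m_{ij}$. It remains to keep $m_{ij}$ out of the trimmed lines. To do this, choose
\[
\ell_i\in\mathcal P(W_i)\setminus\{n_i,L_i,m_{ij},m_{ik}\}.
\]
This is possible because the pencil $\mathcal P(W_i)$ has $q+1\ge 7>4$ lines. With these choices $(P_{ij},m_{ij})\in R_i\cap R_j$, so every pairwise overlap is nonempty. Item (3) then follows immediately: the chart adjacency graph on three vertices has all three edges, so it is a $3$-cycle.

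\emph{Step 3 (choosing the $W_i$ so that $(\ast)$ holds).} This is the only real counting step, and it is where $q\ge 6$ should enter. I proceed greedily.
\begin{itemize}[leftmargin=2em]
  \item Choose any $W_0\in L_0\setminus\{D\}$.
  \item Choose $W_1\in L_1\setminus\{D,C\}$ subject to two conditions. First, $W_1\notin n_0$, which excludes the single point $n_0\cap L_1$. Second, $W_0\notin n_1=\overline{BW_1}$, which is equivalent to $W_1\notin\overline{BW_0}$ and excludes the single point $\overline{BW_0}\cap L_1$. At most $4$ of the $q+1$ points are forbidden.
  \item Choose $W_2\in L_2\setminus\{D,B\}$ avoiding four further points: $n_0\cap L_2$ and $n_1\cap L_2$ (so that $W_2\notin n_0,n_1$), and $\overline{CW_0}\cap L_2$ and $\overline{CW_1}\cap L_2$ (so that $W_0,W_1\notin n_2$). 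At most $6$ points are forbidden, and $q+1\ge 7$ leaves a valid choice.
\end{itemize}
Each forbidden set is a single point because the relevant line differs from $L_1$ or $L_2$. This holds since $B,C,A$ and the previously chosen $W$'s lie off the target line $L_j$, apart from the incidences already excluded, namely $C\in L_1$ and $B\in L_2$.

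The main obstacle is this bookkeeping: verifying in each case that the line being intersected really is distinct from $L_j$, so that only one point is removed. Once that is checked, the bound $6<q+1$ closes the argument.
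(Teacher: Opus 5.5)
Your proof is correct and follows the paper's argument: choose $W_0$ arbitrarily, choose $W_1$ and then $W_2$ greedily by excluding finitely many points of $L_1$ and $L_2$, use the edge $(n_i\cap n_j,\overline{W_iW_j})$ as the overlap witness, and finally pick each $\ell_i$ to avoid the overlap lines. Your reduction of every overlap failure to the single condition $W_j\notin n_i$ $(i\neq j)$, together with the explicit check that each forbidden set on $L_1$ or $L_2$ is a single point, is a sharper version of the paper's somewhat redundant list of failure incidences and of its unquantified count for the choice of $W_2$.
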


\begin{proof}
Choose $W_0\in L_0\setminus\{D\}$ arbitrarily. Then $A\in n_0\setminus\{W_0\}$ and $L_0\in \mathcal P(W_0)\setminus\{n_0\}$,
so after later choosing $\ell_0\neq L_0$ we will have $(A,L_0)\in R_0$.

Next choose $W_1\in L_1\setminus\{D,C\}$ so that $R_0\cap R_1\neq\varnothing$. The failure of this overlap is caused only when one of a
finite list of incidences holds: $n_1=\overline{BW_1}$ coincides with $\overline{W_0W_1}$, or $n_0$ coincides with $\overline{W_0W_1}$, or the
intersection point $p_{01}:=n_0\cap n_1$ lies on $\overline{W_0W_1}$, or $p_{01}\in\{W_0,W_1\}$. Each condition cuts out at most one point of
$L_1$, so together with the mandatory exclusions $\{D,C\}$ they remove at most six points. Since $|L_1|=q+1\ge 7$, a valid choice of $W_1$
exists. For such a choice,
\[
p_{01}:=n_0\cap n_1,
\qquad
\ell_{01}:=\overline{W_0W_1}
\]
gives one nonincidence edge $(p_{01},\ell_{01})\in R_0\cap R_1$.

Now choose $W_2\in L_2\setminus\{D,B\}$ so that simultaneously $R_1\cap R_2\neq\varnothing$ and $R_2\cap R_0\neq\varnothing$. Each of these
two overlap requirements excludes only finitely many points of $L_2$, again by the same ``one forbidden incidence = one forbidden point''
argument, so for $q\ge 6$ at least one admissible choice remains. For that choice there are nonincidence edges in both $R_1\cap R_2$ and
$R_2\cap R_0$.

Finally pick $\ell_i\in \mathcal P(W_i)\setminus\{n_i,L_i\}$ avoiding the finitely many overlap lines already selected for the nonempty
intersections. Then the trimmed charts still contain $(A,L_0)$, $(B,L_1)$, $(C,L_2)$, and all three pairwise overlaps remain nonempty. Hence
$R_0,R_1,R_2$ form a $3$-cycle in the chart adjacency graph.
\end{proof}

\begin{remark}[A minimal holonomy test configuration near $B_*$]
\label{rem:three-chart-cycle-holonomy-lab}
Under the hypotheses of \cref{lem:three-chart-cycle-bstar}, if the three charts are all $\Lam$-inactive then their pairwise overlaps define
transition scalars $c_{01},c_{12},c_{20}\in k^*$ and hence a concrete chart holonomy
\[
\operatorname{Hol}_{\mathcal A}(R_0R_1R_2R_0)=c_{01}c_{12}c_{20}.
\]
This produces an explicit geometrically forced cycle of degenerate charts already touching one strict $B_*$ witness. The remaining task
is to rewrite that holonomy in directly observable residue ratios and couple it to the skew determinant isolated by
\cref{prop:bstar-isolates-skew-delta}.
\end{remark}

\begin{definition}[Canonical base-column gauge]
\label{def:canonical-base-column-gauge}
Let $R=R(n,W;\ell)$ be a $\Lam$-inactive trimmed degenerate chart, and choose a \emph{base column}
\[
m\in \mathcal P(W)\setminus\{n,\ell\}.
\]
We say that $R$ is \emph{canonically gauged by $m$} if there exist scalars $\beta_r\in k^*$ with $\beta_m=1$ such that
\[
u_{Z,r}=u_{Z,m}\,\beta_r
\qquad\text{for every }(Z,r)\in R.
\tag{22.1}
\]
This normalization always exists on a $\Lam$-inactive chart: if $u_{Z,r}=\alpha_Z\gamma_r$ is any residue factorization on $R$, then setting
$\beta_r:=\gamma_r/\gamma_m$ gives \textup{(22.1)}, and the condition $\beta_m=1$ makes the $\beta_r$ unique.
\end{definition}

\begin{lemma}[Transition scalar is observable under canonical gauges]
\label{lem:observable-transition-scalar}
Let $R_i=R(n_i,W_i;\ell_i)$ and $R_j=R(n_j,W_j;\ell_j)$ be two $\Lam$-inactive trimmed charts, canonically gauged by base columns
\[
m_i\in \mathcal P(W_i)\setminus\{n_i,\ell_i\},
\qquad
m_j\in \mathcal P(W_j)\setminus\{n_j,\ell_j\}.
\]
Assume that the row lines intersect at
\[
p:=n_i\cap n_j,
\qquad p\neq W_i,W_j,
\]
and that $p\notin m_i,m_j$, so that $u_{p,m_i},u_{p,m_j}\in k^*$. Then the transition scalar from chart $i$ to chart $j$ is
\[
c_{ij}=\frac{u_{p,m_j}}{u_{p,m_i}}\in k^*.
\tag{22.2}
\]
In particular, the transition is directly observable from residue labels and does not depend on hidden choices of $\alpha$- or $\beta$-factors.
\end{lemma}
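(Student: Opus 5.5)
The argument is a direct computation at the single point $p$, combining the two canonical normalizations with the transition rule of \cref{lem:atlas-transition-scalar}.

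\textbf{Step 1: both canonical factorizations hold at $p$.} First I check that $p$ is a legitimate row of both charts. Since $p\in n_i$ and $p\neq W_i$, the point $p$ lies in $n_i\setminus\{W_i\}$. The base column $m_i$ lies in $\mathcal P(W_i)\setminus\{n_i,\ell_i\}$, so $(p,m_i)\in R_i$. In the same way, $(p,m_j)\in R_j$. Because both charts are canonically gauged, \textup{(22.1)} supplies the factorizations
\[
u_{Z,r}=\alpha^{(i)}_Z\beta^{(i)}_r,\quad \alpha^{(i)}_Z:=u_{Z,m_i},\ \beta^{(i)}_{m_i}=1,
\]
and the analogous factorization for chart $j$. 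These are the factorizations used to define $c_{ij}$ through \cref{lem:atlas-transition-scalar}.

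\textbf{Step 2: evaluate the transition rule at $p$.} On the overlap $R_i\cap R_j$ the transition rule gives $\alpha^{(j)}_Z=c_{ij}\alpha^{(i)}_Z$ for every row vertex $Z$ of the overlap. I apply this at $Z=p$. The canonical normalization identifies $\alpha^{(i)}_p=u_{p,m_i}$ and $\alpha^{(j)}_p=u_{p,m_j}$. Both labels are nonzero residues because $p\notin m_i,m_j$. Dividing then gives \textup{(22.2)}.

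\textbf{Main obstacle: whether $p$ is a row vertex of the overlap.} The row sets of the two charts are $n_i\setminus\{W_i\}$ and $n_j\setminus\{W_j\}$, so $p$ is their common row. Membership in the overlap, however, also requires some column $r$ with $(p,r)\in R_i\cap R_j$. That would need a line through both $W_i$ and $W_j$ which avoids $p$ and is not one of $n_i,\ell_i,n_j,\ell_j$. The only line through both points is $\overline{W_iW_j}$, and it may fail these conditions.

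To avoid depending on this, I would argue more robustly. The scalar $\alpha^{(j)}_p/\alpha^{(i)}_p$ is the quantity that must appear as the transition factor whenever the two local gauges are compared along the common row $p$. Indeed, given any overlap edge $(p',r)$, the transition rule applies there, and I would propagate along overlap edges using the connectivity assumed in \cref{def:rectangle-atlas-holonomy}.

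If the overlap does not contain $p$, I would instead take the statement's implicit convention: the transition is defined through the common row point $p$. In that case I would state this explicitly, matching the phrase ``directly observable''. Either way, independence from hidden choices holds because the canonical gauge fixes $\beta_m=1$ uniquely, and therefore fixes $\alpha$ uniquely.
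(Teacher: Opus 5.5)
Your Steps 1 and 2 are exactly the paper's argument. In the canonical gauge the row factor at $p$ is $u_{p,m_i}$ in chart $i$ and $u_{p,m_j}$ in chart $j$. Then \cref{lem:atlas-transition-scalar} identifies $c_{ij}$ with the ratio of row factors at a vertex of the overlap.

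You are right that this requires $p$ to be a row vertex of $R_i\cap R_j$; the paper's proof simply asserts it. But neither of your workarounds closes the point. Propagating along overlap edges only fixes $\alpha^{(j)}/\alpha^{(i)}$ at vertices of the overlap, so it can never reach a row outside it. Asserting that this ratio ``must appear'' when the gauges are compared along $p$ is just the conclusion restated. Adopting the convention that $c_{ij}$ is defined through $p$ replaces the definition in \cref{def:rectangle-atlas-holonomy} with the formula to be proved, which makes the lemma tautological.

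The missing observation is that your worry cannot occur. Since $p$ is a point, $n_i\neq n_j$, and the hypothesis $p\neq W_i,W_j$ gives $(n_i\setminus\{W_i\})\cap(n_j\setminus\{W_j\})=\{p\}$. So every edge of $R_i\cap R_j$ has row vertex $p$.

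One can say more. $W_i\neq W_j$, since otherwise $W_i\in n_i\cap n_j=\{p\}$. Hence the column sets meet in at most $\overline{W_iW_j}$, and the overlap is either empty or the single edge $(p,\overline{W_iW_j})$.

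The transition scalar $c_{ij}$ is defined only when the overlap is nonempty, and then $p$ is automatically a vertex of it. So \cref{lem:atlas-transition-scalar} applies at $p$ and your Step 2 goes through unchanged. Replace your last two paragraphs with this remark and the proof is complete.
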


\begin{proof}
In the canonical gauge of $R_i$, the row factor at $p$ is exactly $u_{p,m_i}$, because \textup{(22.1)} with $r=m_i$ gives
$u_{p,m_i}=u_{p,m_i}\beta_{m_i}$ and $\beta_{m_i}=1$. Likewise the row factor at $p$ in chart $j$ is $u_{p,m_j}$. By
\cref{lem:atlas-transition-scalar}, the unique overlap scalar $c_{ij}$ is the ratio of the two row factors at any overlap point, hence
\[
c_{ij}=\frac{u_{p,m_j}}{u_{p,m_i}}.
\]
\end{proof}

\begin{corollary}[Chart holonomy is an observable product of residue ratios]
\label{cor:observable-chart-holonomy}
Consider a cycle of $\Lam$-inactive canonically gauged charts
\[
R_0\to R_1\to \cdots \to R_{t-1}\to R_0,
\]
where chart $R_i$ has base column $m_i$, and write
\[
p_i:=n_i\cap n_{i+1}
\qquad (i\bmod t)
\]
for the intersection point of consecutive row lines. Assume $p_i\notin m_i,m_{i+1}$ for every $i$, so that the residues
$u_{p_i,m_i},u_{p_i,m_{i+1}}\in k^*$ are defined. Then the chart holonomy is
\[
\operatorname{Hol}_{\mathcal A}(R_0R_1\cdots R_{t-1}R_0)
=\prod_{i=0}^{t-1}\frac{u_{p_i,m_{i+1}}}{u_{p_i,m_i}}\in k^*.
\tag{22.3}
\]
\end{corollary}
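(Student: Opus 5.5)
The holonomy formula (22.3) follows from \cref{lem:observable-transition-scalar} combined with the definition of chart holonomy in \cref{def:rectangle-atlas-holonomy}. By that definition,
\[
\operatorname{Hol}_{\mathcal A}(R_0R_1\cdots R_{t-1}R_0)=\prod_{i=0}^{t-1}c_{i,i+1},
\]
with indices read modulo $t$. So the task reduces to identifying each consecutive transition scalar $c_{i,i+1}$ with an observable residue ratio and then multiplying.

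For each $i$, I will check that the pair $(R_i,R_{i+1})$ satisfies the hypotheses of \cref{lem:observable-transition-scalar} at the point $p_i=n_i\cap n_{i+1}$.
\begin{itemize}
\item Both charts are $\Lam$-inactive and canonically gauged by $m_i$ and $m_{i+1}$, by assumption.
\item $p_i\notin m_i,m_{i+1}$ is assumed explicitly, so $u_{p_i,m_i}$ and $u_{p_i,m_{i+1}}$ are defined in $k^*$.
\item It remains to show $p_i\neq W_i,W_{i+1}$. Since $R_i$ and $R_{i+1}$ are consecutive in a chart cycle, they overlap. Any overlap edge $(Z,r)$ has $Z\in n_i\setminus\{W_i\}$ and $Z\in n_{i+1}\setminus\{W_{i+1}\}$. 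If $n_i\neq n_{i+1}$, the only such $Z$ is $n_i\cap n_{i+1}=p_i$, so $p_i$ differs from both $W$'s.
\end{itemize}
Once the hypotheses hold, \cref{lem:observable-transition-scalar} gives
\[
c_{i,i+1}=\frac{u_{p_i,m_{i+1}}}{u_{p_i,m_i}}.
\]

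The one delicate point is the transition scalar itself. On the overlap, the two factorizations differ by $c_{i,i+1}$ as in \cref{lem:atlas-transition-scalar}, and the row factor at $p_i$ in the canonical gauge of chart $j$ is $u_{p_i,m_j}$. This needs $(p_i,m_j)$ to lie in chart $j$, which holds because $m_j\in\mathcal P(W_j)\setminus\{n_j,\ell_j\}$ and $p_i\in n_j\setminus\{W_j\}$.

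The step most likely to need care is this nondegeneracy bookkeeping. In particular, the case $n_i=n_{i+1}$ would make $p_i$ ill-defined, so I would read the corollary as implicitly assuming distinct consecutive row lines, as its notation suggests. The orientation convention ($c_{ij}$ as target over source row factor) also has to match the one used in \cref{def:rectangle-atlas-holonomy}.

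With these checks done, the proof finishes by substituting each $c_{i,i+1}$ into the product defining $\operatorname{Hol}_{\mathcal A}$, which is exactly (22.3).
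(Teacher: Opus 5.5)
Your proposal is correct and follows the paper's own argument: apply \cref{lem:observable-transition-scalar} to each consecutive pair $(R_i,R_{i+1})$, then multiply the resulting scalars around the cycle as prescribed by \cref{def:rectangle-atlas-holonomy}. Your added observation fills in a hypothesis of that lemma which the corollary and the paper's one-line proof leave implicit. Since consecutive charts have a nonempty overlap and $n_i\neq n_{i+1}$, the point $p_i$ is the only row vertex of $R_i\cap R_{i+1}$, so $p_i\neq W_i,W_{i+1}$.
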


\begin{proof}
Apply \cref{lem:observable-transition-scalar} to each consecutive pair and multiply the resulting transition scalars around the cycle.
\end{proof}

\begin{remark}[Observable chart holonomy]
\label{rem:observable-holonomy-curvature}
The product in \textup{(22.3)} is an explicit curvature candidate that can be computed without hidden gauge variables:
it is a monomial ratio in actual residue entries of the lifted incidence matrix. If
\[
\operatorname{Hol}_{\mathcal A}(R_0R_1\cdots R_{t-1}R_0)\neq 1,
\]
then the canonical gauges fail to close around the chart cycle. This is the present residue-atlas analogue of a holonomy defect.
\end{remark}

\begin{lemma}[Bridge-cycle holonomy equals the governing cross-ratio]
\label{lem:bridge-cycle-holonomy-rho}
Assume one is in a strict $B_*$ setup as in \cref{prop:bstar-isolates-skew-delta}, so in particular
\[
A,B\notin L_0,L_1.
\]
Choose two bridge lines $t,s$ such that:
\begin{enumerate}[leftmargin=2em,label=(\roman*)]
  \item $t$ meets $L_0$ at $W_0\neq D$ and meets $L_1$ at $W_1\neq D$, with $A\notin t$;
  \item $s$ meets $L_1$ at $V_1\neq D$ and meets $L_0$ at $V_0\neq D$, with $B\notin s$.
\end{enumerate}
Define four trimmed charts with canonical base columns
\[
R_0:=R(n_0,W_0;\ell_0),\quad m_0:=L_0,\qquad n_0:=\overline{AW_0},
\]
\[
R_1:=R(n_1,W_1;\ell_1),\quad m_1:=t,\qquad n_1:=\overline{AW_1},
\]
\[
R_2:=R(n_2,V_1;\ell_2),\quad m_2:=L_1,\qquad n_2:=\overline{BV_1},
\]
\[
R_3:=R(n_3,V_0;\ell_3),\quad m_3:=s,\qquad n_3:=\overline{BV_0},
\]
and assume that each chart is $\Lam$-inactive and that the next base column is not excluded, i.e.
\[
t\neq \ell_0,\qquad L_1\neq \ell_1,\qquad s\neq \ell_2,\qquad L_0\neq \ell_3.
\]
Then the chart holonomy around the $4$-cycle
\[
R_0\to R_1\to R_2\to R_3\to R_0
\]
is the observable cross-ratio
\[
\operatorname{Hol}_{\mathcal A}(R_0R_1R_2R_3R_0)
=\Bigl(\frac{u_{A,L_1}}{u_{A,L_0}}\Bigr)
 \Bigl(\frac{u_{B,L_0}}{u_{B,L_1}}\Bigr)
=:\rho.
\]
\end{lemma}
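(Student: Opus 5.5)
The plan is to compute the chart holonomy with \cref{cor:observable-chart-holonomy}, and then use the $\Lam$-inactivity of the two ``bridge'' charts $R_1$ and $R_3$ to cancel the two intersection points that are not $A$ or $B$. First I identify the consecutive row-line intersections $p_i=n_i\cap n_{i+1}$. Since $n_0=\overline{AW_0}$ and $n_1=\overline{AW_1}$ both pass through $A$, we get $p_0=A$. Similarly $n_2=\overline{BV_1}$ and $n_3=\overline{BV_0}$ give $p_2=B$. The other two are new points,
\[
P:=n_1\cap n_2,\qquad Q:=n_3\cap n_0.
\]
Formula \textup{(22.3)} with base columns $m_0=L_0$, $m_1=t$, $m_2=L_1$, $m_3=s$ then yields
\[
\operatorname{Hol}_{\mathcal A}(R_0R_1R_2R_3R_0)
=\frac{u_{A,t}}{u_{A,L_0}}\cdot\frac{u_{P,L_1}}{u_{P,t}}\cdot\frac{u_{B,s}}{u_{B,L_1}}\cdot\frac{u_{Q,L_0}}{u_{Q,s}}.
\]

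Second, I eliminate $P$ and $Q$ using the canonical gauges. Chart $R_1=R(n_1,W_1;\ell_1)$ is canonically gauged by $t$. Both $A$ and $P$ lie on $n_1\setminus\{W_1\}$, and $L_1\in\mathcal P(W_1)\setminus\{n_1,\ell_1\}$ (here I use $L_1\neq\ell_1$, and $L_1\neq n_1$ because $A\notin L_1$). So \textup{(22.1)} gives
\[
\frac{u_{A,L_1}}{u_{A,t}}=\beta_{L_1}=\frac{u_{P,L_1}}{u_{P,t}}.
\]
In the same way, chart $R_3$ is gauged by $s$ and contains the rows $B,Q\in n_3\setminus\{V_0\}$ and the column $L_0$. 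This gives $u_{B,L_0}/u_{B,s}=u_{Q,L_0}/u_{Q,s}$. Substituting both identities, the factors involving $t$, $s$, $P$ and $Q$ cancel, and the product collapses to $(u_{A,L_1}/u_{A,L_0})(u_{B,L_0}/u_{B,L_1})=\rho$, as claimed.

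The main obstacle is verifying the side conditions of \cref{cor:observable-chart-holonomy} and \cref{lem:observable-transition-scalar}: each $p_i$ must differ from the relevant $W_i$ and $V_j$, and each $p_i$ must avoid both adjacent base columns. For $A$ and $B$ this follows from the hypotheses: $A\notin L_0,L_1,t$ and $B\notin L_0,L_1,s$. In particular $A\neq W_0,W_1$ and $B\neq V_0,V_1$, since those points lie on $L_0$ or $L_1$.

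For $P$, a point of $n_1$ lies on $t$ or on $L_1$ only if it equals $W_1$, because both lines meet $n_1$ exactly at $W_1$. So $P\notin t,L_1$ reduces to $P\neq W_1$. Likewise $P\neq V_1$, which is needed for $P$ to lie in the domain of $R_2$, reduces to $P\notin L_1,s$, and one gets the analogous conditions for $Q$ relative to $W_0$ and $V_0$. These are genuine nondegeneracy conditions: for instance, $W_1=V_1$ would force $P=W_1$. I would first try to derive them from the stated hypotheses, using $A\notin t$ and $B\notin s$ together with the chart membership implicit in the assumption that the four charts form an atlas cycle with nonempty overlaps at $p_i$. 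If that derivation fails, I would add them explicitly as the implicit overlap assumptions, namely $p_i\in R_i\cap R_{i+1}$, which the lemma's setup already presupposes. With these conditions in place, the computation above is purely formal.
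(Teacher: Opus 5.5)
Your proposal is correct and follows the paper's route: compute the four transition scalars via \cref{lem:observable-transition-scalar} and telescope. Your use of the canonical gauges of $R_1$ and $R_3$ to replace the actual overlap rows $P=n_1\cap n_2$ and $Q=n_3\cap n_0$ by $A$ and $B$ is exactly the justification the paper leaves implicit when it writes $c_{12}=u_{A,L_1}/u_{A,t}$ and $c_{30}=u_{B,L_0}/u_{B,s}$ ``likewise''. Equivalently, $c_{12}=\beta^{(1)}_{L_1}/\beta^{(2)}_{L_1}$ can be read off the shared column $L_1$. Your side conditions need nothing beyond the presupposed nonempty consecutive overlaps. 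Outside the coincident case $W_1=V_1=C$ (where $n_1=n_2=L_3$ and $A$ itself lies in $R_1\cap R_2$), nonemptiness of $R_1\cap R_2$ forces $W_1\neq V_1$. Then $A,B\notin L_1$ give $P\neq W_1,V_1$, and the same argument handles $Q$.
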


\begin{proof}
By construction, the overlap $R_0\cap R_1$ contains the row point $A$, and the consecutive base columns are $L_0$ and $t$. Hence
\cref{lem:observable-transition-scalar} gives
\[
c_{01}=\frac{u_{A,t}}{u_{A,L_0}}.
\]
Likewise,
\[
c_{12}=\frac{u_{A,L_1}}{u_{A,t}},
\qquad
c_{23}=\frac{u_{B,s}}{u_{B,L_1}},
\qquad
c_{30}=\frac{u_{B,L_0}}{u_{B,s}}.
\]
Multiplying the four transition scalars and telescoping the intermediate residues $u_{A,t}$ and $u_{B,s}$ yields
\[
\operatorname{Hol}_{\mathcal A}(R_0R_1R_2R_3R_0)
=c_{01}c_{12}c_{23}c_{30}
=\Bigl(\frac{u_{A,L_1}}{u_{A,L_0}}\Bigr)
 \Bigl(\frac{u_{B,L_0}}{u_{B,L_1}}\Bigr).
\]
\end{proof}

\begin{remark}[Holonomy coupled to the skew determinant]
The bridge-cycle holonomy is no longer merely an abstract atlas obstruction. By \cref{lem:bridge-cycle-holonomy-rho}, it is exactly the multiplicative defect that decides whether the skew determinant
\[
\Delta_{AB}^{L_0L_1}=u_{A,L_0}u_{B,L_1}-u_{A,L_1}u_{B,L_0}
\]
cancels. This is the point at which an observable chart holonomy becomes algebraically tied to the same residue bottleneck that appears in a strict $(0,3)$ witness.
\end{remark}

\begin{definition}[Cross-ratio on a $2\times2$ zero rectangle]
\label{def:cross-ratio-rectangle}
Let $p,q$ be two point-vertices and $\ell,m$ two line-vertices in a zero-graph, and assume that
\[(p,\ell),(p,m),(q,\ell),(q,m)\in E(G_0(B)).\]
Define the associated \emph{cross-ratio} by
\[
\rho_{pq}^{\ell m}:=\frac{u_{p,m}}{u_{p,\ell}}\cdot\frac{u_{q,\ell}}{u_{q,m}}\in k^*.
\tag{24.1}
\]
Under a row/column rescaling $u_{x,y}\mapsto \alpha_x\beta_y u_{x,y}$, the value $\rho_{pq}^{\ell m}$ is unchanged. Thus it is a gauge-invariant residue observable of the $2\times2$ zero rectangle.
\end{definition}

\begin{lemma}[$\Lam$-inactive rectangle implies trivial cross-ratio]
\label{lem:lam-inactive-implies-rho1}
Assume that on the row set $\{p,q\}$ and column set $\{\ell,m\}$ one has a rank-$1$ factorization
\[u_{x,y}=\alpha_x\beta_y
\qquad(x\in\{p,q\},\ y\in\{\ell,m\})\]
in the residue labels. Then
\[
\rho_{pq}^{\ell m}=1.
\tag{24.2}
\]
In particular, every $\Lam$-inactive $2\times2$ zero rectangle has trivial cross-ratio.
\end{lemma}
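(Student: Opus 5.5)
Substituting the factorization directly into the definition \textup{(24.1)} settles this in one line. For $x\in\{p,q\}$ and $y\in\{\ell,m\}$ all four labels $u_{x,y}=\alpha_x\beta_y$ lie in $k^*$, so each quotient below is defined, and
\[
\rho_{pq}^{\ell m}
=\frac{u_{p,m}}{u_{p,\ell}}\cdot\frac{u_{q,\ell}}{u_{q,m}}
=\frac{\alpha_p\beta_m}{\alpha_p\beta_\ell}\cdot\frac{\alpha_q\beta_\ell}{\alpha_q\beta_m}
=\frac{\beta_m}{\beta_\ell}\cdot\frac{\beta_\ell}{\beta_m}=1 .
\]
The row factors $\alpha_p,\alpha_q$ cancel inside their own fractions and the column factors then cancel crosswise. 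This gives \textup{(24.2)}.

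An equivalent route uses the gauge invariance already noted after \cref{def:cross-ratio-rectangle}. Rescaling rows by $\alpha_x^{-1}$ and columns by $\beta_y^{-1}$ turns all four labels into $1$ without changing $\rho$, so $\rho=1$. The direct computation is shorter, and I would write only that one.

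For the ``in particular'' clause, I read a $\Lam$-inactive $2\times2$ zero rectangle as one lying inside a $\Lam$-inactive chart, as in \cref{def:lam-inactive-chart} or a trimmed chart from \cref{def:trimmed-degenerate-chart}. Equivalently, it is a rectangle whose residue labels are pure gauge. The chart's factorization $u_{Z,r}=\alpha_Z\beta_r$ restricts to the four edges $\{p,q\}\times\{\ell,m\}$, which is exactly the hypothesis of the first part, so $\rho=1$ follows.

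Alternatively, vanishing of the $2\times 2$ residue determinant $u_{p,\ell}u_{q,m}-u_{p,m}u_{q,\ell}$ already forces $\rho=1$ after dividing by $u_{p,\ell}u_{q,m}\in k^*$, consistent with \cref{lem:2x2-crossratio-tiedepth}. There is no real obstacle here. The only point needing care is that all labels are nonzero, which holds because the rectangle consists of zero-graph edges and hence of valuation-$0$ entries with residues in $k^*$.
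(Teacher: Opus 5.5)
Your proof is correct and is the paper's argument: both substitute $u_{x,y}=\alpha_x\beta_y$ into \textup{(24.1)} and watch the row and column factors cancel to give $\rho_{pq}^{\ell m}=1$. Your treatment of the ``in particular'' clause, restricting a $\Lam$-inactive chart's factorization to the four edges, is also correct; the paper leaves that step implicit.
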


\begin{proof}
Substituting $u_{x,y}=\alpha_x\beta_y$ into \textup{(24.1)} gives
\[
\rho_{pq}^{\ell m}=\frac{\alpha_p\beta_m}{\alpha_p\beta_\ell}\cdot\frac{\alpha_q\beta_\ell}{\alpha_q\beta_m}=1.
\]
\end{proof}

\begin{remark}[Cross-ratios as local obstruction units]
\label{rem:cross-ratio-atomic-obstruction}
The quantity $\rho_{pq}^{\ell m}$ is the smallest gauge-invariant scalar that distinguishes ``already pure gauge'' from ``cancellation-sensitive'' behaviour on one valuation-$0$ rectangle. By \cref{lem:lam-inactive-implies-rho1}, any rectangle with $\rho\neq1$ cannot be explained by a local rank-$1$ residue chart. This provides a convenient countable local obstruction for the present analysis.
\end{remark}

\begin{lemma}[Inversion and complement moves on skew cross-ratios]
\label{lem:cross-ratio-inversion-complement}
Let $R=(p,q;\ell,m)$ be a skew all-zero $2\times2$ rectangle, and write
\[
\rho(R):=\rho_{pq}^{\ell m}=\frac{u_{p,m}}{u_{p,\ell}}\cdot\frac{u_{q,\ell}}{u_{q,m}}\in k^*.
\]
Then:
\begin{enumerate}[leftmargin=2em]
  \item swapping the two points or the two lines inverts the cross-ratio,
  \[
  \rho_{qp}^{\ell m}=\rho_{pq}^{m\ell}=\rho(R)^{-1};
  \]
  \item whenever $(A,B,C,D;L_0,L_1,L_2,L_3)$ is a strict $B_*$ witness, the two skew rectangles
  \[
  R_1:=(A,B;L_0,L_1),\qquad R_2:=(A,C;L_0,L_2)
  \]
  satisfy
  \[
  \rho(R_2)=1-\rho(R_1).
  \]
\end{enumerate}
Equivalently, the basic residue moves are the involutions
\[
I(x)=1/x,\qquad S(x)=1-x.
\]
\end{lemma}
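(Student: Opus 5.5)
Part (1) is a direct computation from the defining formula \textup{(24.1)}. Swapping the two points $p\leftrightarrow q$ gives
\[
\rho_{qp}^{\ell m}=\frac{u_{q,m}}{u_{q,\ell}}\cdot\frac{u_{p,\ell}}{u_{p,m}},
\]
which is termwise the reciprocal of $\rho_{pq}^{\ell m}$. Swapping the two lines $\ell\leftrightarrow m$ exchanges numerator and denominator in the same way, so $\rho_{pq}^{m\ell}=\rho(R)^{-1}$. All entries lie in $k^*$ because the rectangle is all-zero, so these inversions are legitimate. Nothing beyond \cref{def:cross-ratio-rectangle} is needed here.

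For part (2), I would reduce the claim to \cref{lem:bstar-rho-plus-sigma}. That lemma gives $\rho+\sigma=1$ for a strict $B_*$ witness, where
\[
\rho=\frac{u_{A,L_1}}{u_{A,L_0}}\cdot\frac{u_{B,L_0}}{u_{B,L_1}},\qquad
\sigma=\frac{u_{A,L_2}}{u_{A,L_0}}\cdot\frac{u_{C,L_0}}{u_{C,L_2}}.
\]
The first step is to check that these agree with $\rho(R_1)$ and $\rho(R_2)$ under the convention of \textup{(24.1)}. Setting $(p,q;\ell,m)=(A,B;L_0,L_1)$ gives
\[
\rho(R_1)=\frac{u_{A,L_1}}{u_{A,L_0}}\cdot\frac{u_{B,L_0}}{u_{B,L_1}}=\rho.
\]
Setting $(p,q;\ell,m)=(A,C;L_0,L_2)$ gives
\[
\rho(R_2)=\frac{u_{A,L_2}}{u_{A,L_0}}\cdot\frac{u_{C,L_0}}{u_{C,L_2}}=\sigma.
\]
Hence $\rho(R_2)=1-\rho(R_1)$ follows immediately.

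I would also confirm that $R_2$ is a genuine skew all-zero rectangle, so that $\rho(R_2)$ is defined and the statement is meaningful. From the $B_*$ incidences recorded in \cref{lem:bstar-bijection-skew-rectangle}:
\begin{enumerate}[leftmargin=2em]
  \item $A\notin L_0,L_2$;
  \item $C\notin L_0,L_2$;
  \item $L_0\cap L_2=D$;
  \item $D\notin L_3=\overline{AC}$.
\end{enumerate}
Together these show that $R_2$ is skew and all-zero. The same incidences show that $R_1$ is skew and all-zero.

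The final reformulation in terms of the involutions $I$ and $S$ is then just a restatement of (1) and (2).

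The only real point of care is tracking the implicit hypothesis that the witness comes from a rank-$\le 3$ lift. Part (2) uses the vanishing of the $B_*$ minor through \cref{prop:bstar-isolates-skew-delta}, so I would state that hypothesis explicitly, as in \cref{lem:bstar-rho-plus-sigma}. The other point of care is the orientation convention of the cross-ratio, which differs in form from the one in the flat-rectangle definition but is the one used in \textup{(24.1)}.
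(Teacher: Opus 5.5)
Your proposal is correct and follows the paper's proof. Part (1) is read off directly from \textup{(24.1)} by exchanging the two rows or the two columns, and part (2) is \cref{lem:bstar-rho-plus-sigma} once one identifies $\rho(R_1)=\rho$ and $\rho(R_2)=\sigma$ under the convention of \textup{(24.1)}, exactly as you check. Your extra verifications are accurate and only make the argument more complete: that $R_2$ is a genuine skew all-zero rectangle, and that part (2) tacitly requires the witness to come from a rank-$\le 3$ lift, a hypothesis the statement leaves implicit.
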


\begin{proof}
The inversion identities are immediate from \cref{def:cross-ratio-rectangle} by exchanging the roles of the two rows or the two columns.
For the complement identity, \cref{lem:bstar-rho-plus-sigma} gives
\[
\rho_{AB}^{L_0L_1}+\rho_{AC}^{L_0L_2}=1,
\]
which is exactly $\rho(R_2)=1-\rho(R_1)$.
\end{proof}



\section{Identity-pattern minors and local determinant identities}
\label{sec:gadgets}

\subsection{\texorpdfstring{Identity-pattern $4\times4$ minors: leading derangement equation}{Identity-pattern 4x4 minors: leading derangement equation}}

\begin{definition}[Identity valuation pattern for $4\times4$ blocks]
\label{def:identity4-pattern}
We use the $4\times4$ identity valuation pattern
\[
A^{(4)}=\begin{pmatrix}
1&0&0&0\\
0&1&0&0\\
0&0&1&0\\
0&0&0&1
\end{pmatrix},
\]
i.e.\ diagonal entries have valuation $1$ and off-diagonal entries have valuation $0$.
This is the $k=4$ instance of \cref{def:identity-valuation-pattern}.
\end{definition}

\begin{proposition}[Leading derangement equation for a $4\times4$ identity block]
\label{prop:identity4-theta-equation}
Let $F=(f_{ij})$ be a lift of $A^{(4)}$ over a valued field with residue field $k$, and write
\[
f_{ij}=u_{ij}+\text{(higher-valuation terms)}\qquad (i\neq j),
\]
while $\val(f_{ii})=1$ on the diagonal.
Define
\[
\Theta_4(U):=\sum_{\sigma\in D_4}\operatorname{sgn}(\sigma)\prod_{i=1}^4 u_{i,\sigma(i)},
\]
where $D_4\subset S_4$ is the set of derangements.
Then
\[
\det(F)=\Theta_4(U)+\text{(terms of valuation }>0),
\]
so
\[
\val(\det F)>0
\qquad\Longleftrightarrow\qquad
\Theta_4(U)=0.
\]
In particular, every rank-$\le 3$ lift must solve one $9$-term residue cancellation equation on each $4\times4$ identity-pattern minor.
\end{proposition}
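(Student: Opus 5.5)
The plan is to treat this as the $k=4$ specialization of the leading-term criterion \cref{lem:leading-term-minor-test}, applied with the tropical matrix $A^{(4)}$ and with the splitting normalized so that valuation-$0$ entries have leading residue $u_{ij}$. The proof has three short steps: identify the minimizing permutations, identify their signed leading coefficients, and count the terms.

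\textbf{Step 1 (minimizing permutations).} For $\sigma\in S_4$, the tropical weight is
\[
w(\sigma)=\sum_i A^{(4)}_{i,\sigma(i)}=\#\{i:\sigma(i)=i\},
\]
since each fixed point contributes a diagonal entry of valuation $1$ and every other entry has valuation $0$. Derangements exist for $k=4$, so the minimum weight is $m=0$. The minimizing set is $S_{\min}=D_4$, exactly as in the proof of \cref{lem:rank1-residues-identity-obstruction}.

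\textbf{Step 2 (leading coefficients).} For $\sigma\in D_4$, every factor $f_{i,\sigma(i)}$ is off-diagonal. Its leading coefficient at valuation $0$ is therefore $u_{i,\sigma(i)}$, and the signed product is $C(\sigma)=\operatorname{sgn}(\sigma)\prod_i u_{i,\sigma(i)}$. Summing over $D_4$ gives exactly $\Theta_4(U)$. \Cref{lem:leading-term-minor-test} then yields
\[
\det(F)=t^0\,\Theta_4(U)+(\text{valuation}>0),
\]
and applying \cref{lem:raise-valuation} gives the stated equivalence $\val(\det F)>0\iff\Theta_4(U)=0$. One small point deserves attention: the statement writes $\Theta_4(U)$ directly, not as a residue. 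The identity should be read with $\Theta_4(U)$ interpreted through lifts of the $u_{ij}$ to units, for instance the $f_{ij}$ themselves modulo $\mathfrak m_K$. Every non-derangement term carries a factor of valuation $\ge1$, so it is absorbed into the error term.

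\textbf{Step 3 (the final clause).} The number of terms is $|D_4|=!4=9$. This can be checked either by the derangement formula $4!(1-1+\tfrac12-\tfrac16+\tfrac1{24})=9$ or by listing the six $4$-cycles and three double transpositions. If a lift has rank $\le3$, every $4\times4$ minor vanishes. In particular $\det F=0$ for each identity-pattern minor, so $\val(\det F)=\infty>0$ and hence $\Theta_4(U)=0$.

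I do not expect a genuine obstacle. The only care needed is bookkeeping: keeping the residue-field identity separate from the lift identity, and confirming that the minimum weight $0$ is actually attained. The latter requires $D_4\neq\varnothing$, which holds.
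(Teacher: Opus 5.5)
Your proposal is correct and follows essentially the same route as the paper: specialize \cref{lem:leading-term-minor-test} to $A^{(4)}$, observe that the tropical weight counts fixed points so the minimizers are exactly the $9$ derangements, and read off the equivalence together with the vanishing of $4\times4$ minors for rank-$\le 3$ lifts. Your remark that $\Theta_4(U)$ must be read through unit lifts of the residues is a harmless clarification that the paper leaves implicit.
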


\begin{proof}
This is the $k=4$ specialization of \cref{lem:leading-term-minor-test}. For the valuation pattern $A^{(4)}$, each fixed point of a permutation contributes $1$ to the tropical weight and each moved point contributes $0$. Hence the valuation-minimal permutations are exactly the derangements, and there are $!4=9$ of them. The displayed initial form is therefore precisely the signed derangement sum over the off-diagonal residues. The equivalence
\[
\val(\det F)>0 \iff \Theta_4(U)=0
\]
follows immediately from \cref{lem:leading-term-minor-test}.
\end{proof}

\begin{remark}[Rank-$1$ residue charts are excluded]
\label{rem:identity4-rank1-excluded}
By \cref{cor:identity4-rank1-obstruction}, the equation $\Theta_4(U)=0$ cannot hold when the off-diagonal residue data factorizes rank $1$ and the residue characteristic is not $3$. Thus every vanished $4\times4$ identity block already forces non-factorized residue behavior.
\end{remark}

\begin{definition}[Admissible residue cross-ratios inside an identity block]
\label{def:identity4-admissible-crossratio}
Let $U=(u_{ij})_{i\neq j}$ be the off-diagonal residue data of a $4\times4$ identity-pattern block.
For distinct rows $i\neq j$ and distinct columns $a\neq b$ with
\[
\{i,j\}\cap\{a,b\}=\varnothing,
\]
we define the associated \emph{admissible cross-ratio}
\[
\rho_{ij}^{ab}:=\frac{u_{ia}u_{jb}}{u_{ib}u_{ja}}\in k^*.
\]
Equivalently, $\rho_{ij}^{ab}=1$ if and only if
\[
u_{ia}u_{jb}=u_{ib}u_{ja}.
\]
These are exactly the $2\times2$ off-diagonal rectangles whose four entries are defined inside the identity-pattern block.
\end{definition}

\begin{figure}[htbp]
\centering
\begin{tikzpicture}[font=\small]
  \matrix (m) [matrix of nodes,
               nodes={draw,minimum width=9mm,minimum height=8mm,anchor=center,font=\small},
               row sep=-\pgflinewidth,column sep=-\pgflinewidth] {
      1 & 0 & 0 & 0 \\
      0 & 1 & 0 & 0 \\
      0 & 0 & 1 & 0 \\
      0 & 0 & 0 & 1 \\
  };
  \draw[dashed,line width=.8pt] ($(m-1-3.north west)+(-2pt,2pt)$) rectangle ($(m-2-4.south east)+(2pt,-2pt)$);
  \node[above=0.8em of m] {$A^{(4)}$};
  \node[below=1.2em of m, align=center] {The dashed rectangle is an admissible off-diagonal $2\times2$ subblock\\ associated with $\rho_{12}^{34}=\dfrac{u_{13}u_{24}}{u_{14}u_{23}}$.};
\end{tikzpicture}
\caption{The $4\times4$ identity valuation pattern. Diagonal entries carry valuation $1$, while every off-diagonal entry has valuation $0$.}
\label{fig:identity-pattern}
\end{figure}

\begin{lemma}[Cross-ratio flatness implies rank-$1$ factorization off the diagonal]
\label{lem:identity-flatness-rank1}
Let $n\ge 4$, and let $(u_{ij})$ be nonzero scalars in a field $k$, defined for all ordered pairs $i\neq j$ in $\{1,\dots,n\}$.
Assume that for every distinct rows $i\neq j$ and distinct columns $a\neq b$ with $\{i,j\}\cap\{a,b\}=\varnothing$, one has
\[
 u_{ia}u_{jb}=u_{ib}u_{ja}.
\tag{4.2}
\]
Then there exist scalars $\alpha_1,\dots,\alpha_n\in k^*$ and $\beta_1,\dots,\beta_n\in k^*$ such that
\[
 u_{ij}=\alpha_i\beta_j
 \qquad\text{for all } i\neq j.
\tag{4.3}
\]
\end{lemma}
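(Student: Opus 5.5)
The plan is to build the factorization directly, rather than arguing through the cycle space of the off-diagonal zero-graph via \cref{cor:square-relations-pure-gauge}. We normalize with row $1$ as a reference row and use the flatness relations (4.2) to check that every potential we define is independent of the auxiliary choices made. Set
\[
\alpha_1:=1,\qquad \beta_j:=u_{1j}\quad(j\neq 1).
\]
Then (4.3) holds on row $1$ by construction.

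\emph{Step 1 (row potentials).} For $i\neq 1$ we define
\[
\alpha_i:=\frac{u_{ia}}{u_{1a}}\qquad\text{for any } a\notin\{1,i\}.
\]
This requires $n\ge 3$ for existence. For independence of $a$, take two distinct columns $a,b\notin\{1,i\}$; these exist because $n\ge 4$ leaves $n-2\ge 2$ choices. The rectangle with rows $\{1,i\}$ and columns $\{a,b\}$ is admissible, and (4.2) gives $u_{1a}u_{ib}=u_{1b}u_{ia}$, which is exactly $u_{ia}/u_{1a}=u_{ib}/u_{1b}$. So $\alpha_i$ is well defined. Taking $a=j$ for any $j\notin\{1,i\}$ then gives $\alpha_i\beta_j=u_{ij}$. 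This covers every off-diagonal entry outside column $1$.

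\emph{Step 2 (the column-$1$ potential).} Set $\beta_1:=u_{i1}/\alpha_i$ for any $i\neq 1$, and show this is independent of $i$. Let $i\neq j$ both be different from $1$, and choose $a\notin\{1,i,j\}$; again $n\ge 4$ makes this possible. Using the freedom from Step 1 to compute both $\alpha_i$ and $\alpha_j$ with this same column $a$, we get $\alpha_i/\alpha_j=u_{ia}/u_{ja}$. The rectangle with rows $\{i,j\}$ and columns $\{1,a\}$ is admissible, so (4.2) gives $u_{i1}u_{ja}=u_{ia}u_{j1}$. Combining the two yields $u_{i1}/\alpha_i=u_{j1}/\alpha_j$. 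Hence $\beta_1$ is well defined, nonzero, and satisfies $u_{i1}=\alpha_i\beta_1$ for all $i\neq 1$. This completes (4.3).

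The only delicate point is the well-definedness in Steps 1 and 2, and in both cases it reduces to having enough spare indices so that the rectangle used is admissible. That is exactly where the hypothesis $n\ge 4$ enters. In Step 2 we need one column $a$ avoiding three indices, and we must compute both $\alpha_i$ and $\alpha_j$ from that common column $a$ before invoking the rectangle relation. All quantities involved are ratios of elements of $k^*$, so every $\alpha_i,\beta_j$ lies in $k^*$.
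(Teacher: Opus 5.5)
Your proof is correct and follows essentially the same route as the paper's: normalize on row $1$ with $\beta_j:=u_{1j}$, define $\alpha_i:=u_{ia}/u_{1a}$, and show this is independent of $a$ using the admissible rectangle $\{1,i\}\times\{a,b\}$. Your Step 2 is not redundant, however. The paper never defines $\beta_1$ (or $\alpha_1$), and its closing rectangle $\{i,1\}\times\{j,a\}$ is admissible only when $i,j\neq 1$, so the column-$1$ entries $u_{i1}$ are handled only by your argument via the rectangle $\{i,j\}\times\{1,a\}$.
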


\begin{proof}
Fix row $1$, and set $\beta_a:=u_{1a}$ for each $a\neq 1$.
Now fix $i\neq 1$.
Because $n\ge 4$, there exist at least two columns $a,b$ with $a,b\notin\{1,i\}$ and $a\neq b$.
For any such $a$, define
\[
\alpha_i^{(a)}:=\frac{u_{ia}}{u_{1a}}.
\]
If $a,b\notin\{1,i\}$ are distinct, then \textup{(4.2)} applied to rows $(i,1)$ and columns $(a,b)$ gives
\[
 u_{ia}u_{1b}=u_{ib}u_{1a}.
\]
Dividing by $u_{1a}u_{1b}$ yields
\[
 \frac{u_{ia}}{u_{1a}}=\frac{u_{ib}}{u_{1b}},
\]
so $\alpha_i^{(a)}$ is independent of $a$.
We therefore write the common value as $\alpha_i$.

Finally, let $i\neq j$ be arbitrary.
Choose $a\notin\{1,i,j\}$, which is possible because $n\ge 4$.
Applying \textup{(4.2)} to rows $(i,1)$ and columns $(j,a)$ gives
\[
 u_{ij}u_{1a}=u_{ia}u_{1j}.
\]
Hence
\[
 u_{ij}=\frac{u_{ia}}{u_{1a}}\,u_{1j}=\alpha_i\beta_j.
\]
This proves \textup{(4.3)}.
\end{proof}

\begin{corollary}[Vanishing identity blocks force a cross-ratio defect]
\label{cor:identity4-vanishing-forces-curvature}
Let $F$ be a lift of a $4\times4$ identity-pattern block over a valued field with residue field $k$ of characteristic $\neq 3$, and let $U=(u_{ij})_{i\neq j}$ denote its off-diagonal residue data.
Assume that every admissible cross-ratio from Definition~\ref{def:identity4-admissible-crossratio} is equal to $1$.
Then $\det(F)\neq 0$.
Equivalently, if $\det(F)=0$, then there exists at least one admissible rectangle with
\[
\rho_{ij}^{ab}\neq 1.
\]
\end{corollary}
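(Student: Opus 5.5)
The plan is to combine \cref{lem:identity-flatness-rank1} with \cref{lem:rank1-residues-identity-obstruction}, and then take the contrapositive to get the equivalent formulation. Throughout, $n=4$, and the off-diagonal residues $u_{ij}$ ($i\neq j$) are nonzero because the corresponding entries of $F$ have valuation $0$.

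\emph{Step 1 (flatness gives rank $1$).} Suppose every admissible cross-ratio $\rho_{ij}^{ab}$ from Definition~\ref{def:identity4-admissible-crossratio} equals $1$. This is exactly hypothesis \textup{(4.2)} of \cref{lem:identity-flatness-rank1}: the condition ranges over distinct rows $i\neq j$ and distinct columns $a\neq b$ with $\{i,j\}\cap\{a,b\}=\varnothing$, and all four entries involved are off-diagonal. Since $n=4\ge 4$, that lemma produces $\alpha_i,\beta_j\in k^*$ with $u_{ij}=\alpha_i\beta_j$ for all $i\neq j$. In other words, the off-diagonal residue data satisfies the rank-$1$ ansatz of \cref{def:rank1-residue-ansatz}.

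One point needs a check. For $n=4$, the proof of \cref{lem:identity-flatness-rank1} only ever uses rectangles whose row pair and column pair are disjoint: the rows $(i,1)$ with columns $(a,b)$ satisfy $a,b\notin\{1,i\}$, and the rows $(i,1)$ with columns $(j,a)$ satisfy $j\neq 1,i$ and $a\notin\{1,i,j\}$. So every rectangle it uses is admissible in the sense of the definition, and the lemma applies as stated.

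\emph{Step 2 (rank $1$ gives a nonvanishing leading coefficient).} Apply \cref{lem:rank1-residues-identity-obstruction} with $k=4$. The residue characteristic must not divide $k-1=3$, which is exactly the assumption $\operatorname{char}\neq 3$. By \cref{cor:identity4-rank1-obstruction}, the leading coefficient is
\[
\Theta_4(U)=\Bigl(\prod_i\alpha_i\Bigr)\Bigl(\prod_j\beta_j\Bigr)(-3)\neq 0.
\]
Then \cref{prop:identity4-theta-equation} gives $\val(\det F)=0$, so $\det F\neq 0$.

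\emph{Step 3 (contrapositive).} Steps 1 and 2 show that if all admissible cross-ratios equal $1$, then $\det F\neq 0$. Equivalently, if $\det F=0$, some admissible $\rho_{ij}^{ab}\neq 1$.

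The only step needing care is the admissibility check in Step 1. It is needed because the lemma is stated for off-diagonal data with disjoint index pairs, and one must confirm that each chosen auxiliary column avoids both row indices. Everything else is direct citation of the earlier results.
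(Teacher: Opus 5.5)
Your proof is correct and is the paper's own argument. The flatness hypothesis is verbatim hypothesis (4.2) of \cref{lem:identity-flatness-rank1}, which makes the off-diagonal residues rank $1$. Then $\Theta_4(U)=-3\prod_i\alpha_i\prod_j\beta_j\neq 0$ in characteristic $\neq 3$, and \cref{prop:identity4-theta-equation} plus the contrapositive finish the proof.

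Because the lemma's hypothesis already coincides with the admissible-cross-ratio condition, your audit of the lemma's proof is unnecessary. It is also inaccurate. In the lemma's last step $j$ ranges over all indices $\neq i$, including $j=1$. For $j=1$ the rows $(i,1)$ and columns $(1,a)$ overlap, $u_{11}$ does not exist, and $\beta_1$ is never defined. So your assertion ``$j\neq 1$'' is false.

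Column $1$ is easy to repair. Set $\alpha_1:=1$ and $\beta_1:=u_{21}/\alpha_2$. The admissible rectangles $\{2,3\}\times\{1,4\}$ and $\{2,4\}\times\{1,3\}$ then give $u_{31}=\alpha_3\beta_1$ and $u_{41}=\alpha_4\beta_1$. So the lemma, and hence your proof, stand, but that claim in your check should be corrected.
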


\begin{proof}
If every admissible cross-ratio is $1$, then the hypotheses of \cref{lem:identity-flatness-rank1} hold with $n=4$, so the off-diagonal residues factor as $u_{ij}=\alpha_i\beta_j$ for all $i\neq j$.
By \cref{cor:identity4-rank1-obstruction}, such rank-$1$ residue data cannot satisfy $\Theta_4(U)=0$ in residue characteristic $\neq 3$.
Therefore \cref{prop:identity4-theta-equation} implies $\val(\det F)=0$, hence $\det(F)\neq 0$.
The contrapositive gives the final statement.
\end{proof}

\begin{remark}[Cross-ratio-defect formulation]
\label{rem:identity4-curvature-witness}
\Cref{cor:identity4-vanishing-forces-curvature} is the first local charging statement for the identity-block analysis: every vanished $4\times4$ identity-pattern minor must contain at least one residue rectangle with nontrivial cross-ratio.
So the obstruction is no longer only ``not rank $1$'' in the abstract; it is witnessed by a concrete cross-ratio defect $\rho\neq 1$ inside the block.
\end{remark}


\subsection{Residue-rank reduction and the first-order deformation bottleneck}

\begin{lemma}[Reduction does not increase rank]
\label{lem:reduction-does-not-increase-rank}
Let $F\in R_K^{m\times n}$, and let $\pi:R_K\to k=R_K/\mathfrak m_K$ be the residue map.
Then
\[
\rank_k(\pi(F))\le \rank_K(F).
\]
\end{lemma}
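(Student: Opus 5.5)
The plan is to characterize rank by minors and use that the residue map $\pi:R_K\to k$ is a ring homomorphism, so it commutes with determinants. Set $r:=\rank_K(F)$. If $r\ge\min(m,n)$ there is nothing to prove, because $\rank_k(\pi(F))\le\min(m,n)$ holds trivially. Otherwise, every $(r+1)\times(r+1)$ minor of $F$ vanishes in $K$. Each such minor is $\det(F_{I,J})$ for row and column index sets $I,J$ with $|I|=|J|=r+1$. Since all entries of $F$ lie in $R_K$, each of these minors is an element of $R_K$ that equals $0$.

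Next I would apply $\pi$. Because $\pi$ is a ring homomorphism $R_K\to k$, applying it to the Leibniz expansion gives
\[
\pi\bigl(\det F_{I,J}\bigr)=\sum_{\sigma}\operatorname{sgn}(\sigma)\prod_{i}\pi(f_{i,\sigma(i)})=\det\bigl(\pi(F)_{I,J}\bigr).
\]
The left-hand side is $\pi(0)=0$, so every $(r+1)\times(r+1)$ minor of $\pi(F)$ vanishes in $k$.

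Finally I would invoke the standard fact that, over a field, the rank of a matrix is the largest size of a nonvanishing minor. This gives $\rank_k(\pi(F))\le r=\rank_K(F)$.

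I do not expect a genuine obstacle. The only point needing care is the hypothesis $F\in R_K^{m\times n}$, which is what makes $\pi(F)$ meaningful and keeps every minor inside $R_K$. For a general lift one would first rescale rows or columns by powers of $t$, which does not change the rank. Here, however, the statement already assumes integrality, so no rescaling is needed.
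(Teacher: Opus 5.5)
Your proof is correct and follows the paper's argument: in both, the residue map commutes with the Leibniz expansion of each $(r+1)\times(r+1)$ minor, so the vanishing minors of $F$ reduce to vanishing minors of $\pi(F)$. Your separate treatment of the case $r\ge\min(m,n)$ is a small addition that the paper leaves implicit.
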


\begin{proof}
If $\rank_K(F)\le r$, then every $(r+1)\times(r+1)$ minor determinant of $F$ is zero.
Determinant is a polynomial in the matrix entries with integer coefficients, so reduction commutes with determinant evaluation:
\[
\det\bigl(\pi(F)_{I,J}\bigr)=\pi\bigl(\det(F_{I,J})\bigr)=\pi(0)=0
\]
for every choice of $(r+1)$ rows $I$ and $(r+1)$ columns $J$.
Hence every $(r+1)\times(r+1)$ minor of $\pi(F)$ vanishes, proving $\rank_k(\pi(F))\le r$.
\end{proof}

\begin{corollary}[Low Kapranov rank gives a rank-$3$ residue zero-pattern model]
\label{cor:kapr3-residue-zero-pattern}
Let $\Pi$ be a projective plane with incidence matrix $I(\Pi)$.
Suppose $F$ is a rank-$\le 3$ lift of $I(\Pi)$, so
\[
\val(F_{p\ell})=
\begin{cases}
1,& p\in \ell,\\
0,& p\notin \ell.
\end{cases}
\]
Then the residue matrix
\[
U:=\pi(F)\in k^{P\times L}
\]
satisfies
\[
\rank_k(U)\le 3,
\]
and
\[
U_{p\ell}=0 \iff p\in \ell,
\qquad
U_{p\ell}\neq 0 \iff p\notin \ell.
\]
\end{corollary}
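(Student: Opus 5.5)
The statement has two parts: a rank bound for the residue matrix, and a description of its zero pattern. We prove them in that order.

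\emph{Rank bound.} By the definition of a lift, every entry of $F$ has valuation $0$ or $1$, both nonnegative. Hence $F\in R_K^{P\times L}$, and the residue matrix $U=\pi(F)$ is well defined. Since $\rank_K(F)\le 3$ by hypothesis, \cref{lem:reduction-does-not-increase-rank} applies directly and gives $\rank_k(U)\le 3$. The only point to check is that $F$ has integral entries, which is exactly the observation just made.

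\emph{Zero pattern.} We argue entrywise, using the fact that $\pi(x)=0$ if and only if $x\in\mathfrak m_K$, that is, $\val(x)>0$.
\begin{itemize}[leftmargin=2em]
  \item If $p\in\ell$, then $\val(F_{p\ell})=1>0$. Hence $F_{p\ell}\in\mathfrak m_K$ and $U_{p\ell}=0$.
  \item If $p\notin\ell$, then $\val(F_{p\ell})=0$. Hence $F_{p\ell}$ is a unit of $R_K$, and $U_{p\ell}\neq 0$. With a splitting fixed, $U_{p\ell}$ is precisely $\operatorname{lc}(F_{p\ell})=u_{p,\ell}$, the leading residue used throughout the paper.
\end{itemize}
These two cases are complementary, so they give both biconditionals at once: $U_{p\ell}=0\iff p\in\ell$ and $U_{p\ell}\neq 0\iff p\notin\ell$.

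There is no real obstacle here. The one subtlety is that the incidence entries are required to have valuation exactly $1$ rather than $\infty$. This is harmless: any strictly positive valuation already puts the entry in the maximal ideal, so it reduces to $0$ in $k$.
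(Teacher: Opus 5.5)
Your proposal is correct and follows the paper's proof essentially verbatim. You use integrality of the entries (valuations $0$ or $1$) to invoke \cref{lem:reduction-does-not-increase-rank} for the rank bound, and the entrywise dichotomy (valuation $1$ puts the entry in $\mathfrak m_K$, so it reduces to $0$; valuation $0$ makes it a unit with nonzero residue) for the zero pattern.
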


\begin{proof}
All entries of $F$ lie in $R_K$ because their valuations are $0$ or $1$.
So the rank bound follows from \cref{lem:reduction-does-not-increase-rank}.
If $p\in\ell$, then $\val(F_{p\ell})=1$, hence $F_{p\ell}\in\mathfrak m_K$ and $U_{p\ell}=0$.
If $p\notin\ell$, then $\val(F_{p\ell})=0$, so $F_{p\ell}$ is a unit in $R_K$ and its residue is nonzero.
\end{proof}

\begin{proposition}[First-order deformation directions for a rank-$3$ residue chart]
\label{prop:first-order-tangent-directions}
Assume $U\in k^{m\times n}$ has rank exactly $3$ and factors as
\[
U=A_0B_0,
\qquad
A_0\in k^{m\times 3},\ B_0\in k^{3\times n}.
\]
Fix a uniformizer $t$ and consider first-order deformations
\[
A(t)=A_0+tA_1,
\qquad
B(t)=B_0+tB_1.
\]
Then
\[
F(t):=A(t)B(t)=U+tV+O(t^2),
\qquad
V=A_1B_0+A_0B_1.
\tag{4.4}
\]
In particular, the allowable first-order directions of rank-$3$ deformations of $U$ form the linear space
\[
T_U:=\{A_1B_0+A_0B_1:\ A_1\in k^{m\times 3},\ B_1\in k^{3\times n}\}.
\tag{4.5}
\]
\end{proposition}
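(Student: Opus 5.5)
The computation in (4.4) is a direct bilinear expansion, so I would start there. Over a valued field containing $k$ (or over $k[t]/(t^2)$, i.e.\ the truncated power series setting in which the expansion $F=U+tV+O(t^2)$ was introduced), I would multiply out
\[
A(t)B(t)=(A_0+tA_1)(B_0+tB_1)=A_0B_0+t\,(A_1B_0+A_0B_1)+t^2A_1B_1 .
\]
Since $A_0B_0=U$ and $t^2A_1B_1$ has all entries in $t^2R_K$, comparing coefficients gives $V=A_1B_0+A_0B_1$, which is exactly (4.4). I would also note that $F(t)$ has rank at most $3$ over $K$, because it factors through $K^3$. This is why (4.4) describes first-order directions of \emph{rank-$\le3$} deformations.

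For (4.5), the first task is to justify the word ``allowable'', meaning that every such $V$ arises and nothing else does. The inclusion ``every element of $T_U$ is realized'' is immediate from the computation above, by choosing $A_1,B_1$ arbitrarily. The space $T_U$ is linear because the map $(A_1,B_1)\mapsto A_1B_0+A_0B_1$ is $k$-linear, so its image is a subspace. For the converse I would take any rank-$\le3$ family $F(t)=U+tV+O(t^2)$ with entries in $R_K$. I would show it can be written as $A(t)B(t)$ with $A(0)=A_0$ and $B(0)=B_0$, and then read off $V$ from the $t$-linear term.

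This converse is the only real step. The plan is as follows. Because $U$ has rank exactly $3$, I would choose row indices $I$ and column indices $J$ with $|I|=|J|=3$ such that $U_{I,J}$ is invertible. Then $F_{I,J}$ is invertible over $R_K$, since its determinant reduces to a nonzero element of $k$. Since $F$ has rank $\le 3$ and contains an invertible $3\times3$ block, the standard skeleton identity $F=F_{\cdot,J}\,F_{I,J}^{-1}\,F_{I,\cdot}$ holds. I would set $A(t)=F_{\cdot,J}F_{I,J}^{-1}$ and $B(t)=F_{I,\cdot}$. Both have entries in $R_K$, and their reductions $\bar A,\bar B$ give another factorization of $U$. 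Any two rank-$3$ factorizations of $U$ differ by some $g\in GL_3(k)$; I would lift $g$ to a constant matrix and replace $A(t)$ by $A(t)g$ and $B(t)$ by $g^{-1}B(t)$, which makes $A(0)=A_0$ and $B(0)=B_0$. Expanding to first order then exhibits $V$ in the form $A_1B_0+A_0B_1$.

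I expect the hardest part to be bookkeeping rather than any conceptual issue. One must make sure that $A(t)$ and $B(t)$ have first-order expansions $A_0+tA_1+O(t^2)$ when the value group is not discrete. In that case I would simply interpret (4.5) at the level of $k[t]/(t^2)$, which is all the statement requires.
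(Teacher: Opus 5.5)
Your first step is the paper's entire proof. The paper multiplies out $(A_0+tA_1)(B_0+tB_1)$, reads off $V=A_1B_0+A_0B_1$, and then takes $T_U$ to be the set of first-order coefficients obtainable this way. So ``allowable'' there just means ``produced by deforming the fixed factorization'', and no converse is claimed or proved.

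Your proposal is correct and goes further by proving the converse: the first-order layer of \emph{any} integral rank-$\le 3$ family $F=U+tV+O(t^2)$ with $\rank_k U=3$ lies in $T_U$. The skeleton argument works as you describe:
\begin{itemize}
\item $\det F_{I,J}$ reduces to $\det U_{I,J}\neq 0$, so $F_{I,J}\in\mathrm{GL}_3(R_K)$;
\item the identity $F=F_{\cdot,J}F_{I,J}^{-1}F_{I,\cdot}$ gives integral factors;
\item $F_{I,J}^{-1}=U_{I,J}^{-1}-tU_{I,J}^{-1}V_{I,J}U_{I,J}^{-1}+O(t^2)$ shows these factors inherit first-order expansions from $F$.
\end{itemize}
The last point settles the bookkeeping you were worried about. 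Since a uniformizer is fixed, the value group is discrete anyway.

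The $\mathrm{GL}_3(k)$ realignment is valid but unnecessary. Replacing $(A_0,B_0)$ by $(A_0g,g^{-1}B_0)$ does not change the image of $(A_1,B_1)\mapsto A_1B_0+A_0B_1$, so $T_U$ depends only on $U$.

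Your extra work supplies a justification the paper later uses tacitly. \cref{rem:incidence-valuation-one-first-order} and \cref{conj:first-order-nonvanishing-obstruction} treat the first-order layer of an arbitrary rank-$\le 3$ lift as an element of $T_U$. The proof of \cref{cor:first-order-coefficient-rank-bound} also assumes a rank factorization with integral factors, which your construction provides whenever the residue matrix has rank exactly $3$. The paper's proof covers only the tautological half, which is all the proposition literally asserts.
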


\begin{proof}
Expand the product:
\[
(A_0+tA_1)(B_0+tB_1)=A_0B_0+t(A_1B_0+A_0B_1)+t^2A_1B_1.
\]
Since $A_0B_0=U$, the claimed formula follows immediately.
The description of $T_U$ is just the set of all possible first-order coefficients $V$ arising from such factorizations.
\end{proof}

\begin{remark}[Incidence valuation $1$ is a first-order nonvanishing condition]
\label{rem:incidence-valuation-one-first-order}
Under \cref{cor:kapr3-residue-zero-pattern}, any uniformizer expansion of a rank-$\le 3$ lift has the form
\[
F_{p\ell}(t)=U_{p\ell}+tV_{p\ell}+O(t^2).
\]
For an incidence entry one needs valuation exactly $1$, so necessarily
\[
U_{p\ell}=0,
\qquad
V_{p\ell}\neq 0
\qquad
\text{for every }p\in\ell.
\tag{4.6}
\]
Thus the hard part is not merely the existence of a rank-$3$ residue matrix with the projective-plane zero pattern, but the existence of one tangent direction $V\in T_U$ that is nonzero on all incidence positions.
\end{remark}

\begin{corollary}[Rank bound for first-order coefficients]
\label{cor:first-order-coefficient-rank-bound}
Let $F(t)\in K^{m\times n}$ have classical rank at most $3$, and write
\[
F(t)=U+tV+O(t^2)
\]
entrywise in one uniformizer $t$.
Then
\[
\rank_k(V)\le 6.
\]
\end{corollary}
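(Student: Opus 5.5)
The plan is to reduce to a normal form over the valuation ring, where rank $\le 3$ becomes a support condition that survives truncation modulo $t^2$. Throughout I assume, as in \cref{rem:incidence-valuation-one-first-order}, that the entries of $F$ lie in $R_K$. I also assume that $U$ and $V$ are the residues of $F$ and of $t^{-1}(F-\widetilde U)$ for a lift $\widetilde U$ of $U$, i.e.\ everything may be read in $R_K/t^2R_K$.

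\Cref{prop:first-order-tangent-directions} only covers the case $\rank U=3$, with a factorization adapted to $U$. In that case it gives $V\in T_U$. Since $A_1B_0$ and $A_0B_1$ each have rank $\le 3$, this yields $\rank V\le 6$ at once. It does not, however, show that an arbitrary rank-$\le 3$ $F$ factors as $(A_0+tA_1)(B_0+tB_1)$ with integral factors. So instead of using it, I argue directly.

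\textbf{Step 1 (normal form).} A valuation ring is an elementary divisor ring: any finitely generated ideal is principal, and the usual pivoting argument (choose an entry of minimal valuation, then clear its row and column) applies. Hence we can write
\[
F=PDQ
\]
with $P\in GL_m(R_K)$, $Q\in GL_n(R_K)$ and $D$ diagonal. Since $\rank_K F\le 3$, at most three diagonal entries of $D$ are nonzero, and after permuting we may assume they sit in positions $1,2,3$.

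\textbf{Step 2 (expansion).} Reduce modulo $t^2$ and write
\[
P\equiv P_0+tP_1,\qquad Q\equiv Q_0+tQ_1,\qquad D\equiv D_0+tD_1 .
\]
Both $D_0$ and $D_1$ are diagonal and supported in positions $\{1,2,3\}$. Moreover $D_0$ is nonzero exactly on the first $r=\rank U$ diagonal positions, after reordering. Expanding the product gives
\[
V=P_0\bigl(XD_0+D_1+D_0Y\bigr)Q_0,\qquad X:=P_0^{-1}P_1,\quad Y:=Q_1Q_0^{-1},
\]
where $P_0$ and $Q_0$ are invertible over $k$.

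\textbf{Step 3 (rank count).} The three pieces of the middle matrix have restricted support:
\begin{itemize}
\item $XD_0$ is supported in columns $\{1,2,3\}$;
\item $D_0Y$ is supported in rows $\{1,2,3\}$;
\item $D_1$ is supported in the top-left $3\times3$ block.
\end{itemize}
So the middle matrix is supported on (first three rows) $\cup$ (first three columns). Any such matrix is the sum of a matrix with at most three nonzero rows and a matrix with at most three nonzero columns, so its rank is at most $3+3=6$. Multiplying by the invertible matrices $P_0$ and $Q_0$ does not change rank, which gives $\rank_k V\le 6$.

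\textbf{Main obstacle.} The delicate step is Step 1. The normal form must exist over an arbitrary (possibly non-discrete) valuation ring, and the expansion $F=U+tV+O(t^2)$ must be compatible with it. I would handle this by working entirely in the ring $R_K/t^2R_K$, where only the images of $P$, $D$ and $Q$ matter. The naive estimate obtained by bounding each of the three terms of $V$ separately gives only $9$. The improvement to $6$ comes from observing that $D_1$ lives inside the same $3\times3$ corner that the other two terms already cover.
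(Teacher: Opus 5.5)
Your proof is correct and is essentially the paper's argument. Set $A:=P\,D_{[\,\cdot\,,\{1,2,3\}]}$ and $B:=Q_{[\{1,2,3\},\,\cdot\,]}$. Then your split of the middle matrix into a column-supported piece and a row-supported piece is exactly the paper's $V=A_1B_0+A_0B_1$. Your Smith-normal-form step is a real improvement: it justifies that the factors are integral, which the paper's ``choose a rank factorization and expand'' silently assumes.

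One thing to tighten is the setup. Like the paper, you need the $t$-expansion to be multiplicative modulo $t^2$, i.e.\ a coefficient field such as $F\in k[[t]]^{m\times n}$ with constant lifts. An arbitrary lift $\widetilde U$ of $U$ is not enough: for $F=0\in K^{7\times 7}$ and $\widetilde U=tI_7$ one gets $V=-I_7$, of rank $7$. Once a uniformizer is assumed, the non-discrete case you worry about does not arise.
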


\begin{proof}
Choose a rank factorization
\[
F(t)=A(t)B(t),
\qquad
A(t)\in K^{m\times 3},\ B(t)\in K^{3\times n}.
\]
Expanding
\[
A(t)=A_0+tA_1+O(t^2),
\qquad
B(t)=B_0+tB_1+O(t^2),
\]
we get
\[
V=A_1B_0+A_0B_1.
\]
Each summand has rank at most $3$, so $\rank_k(V)\le 6$.
\end{proof}

\begin{definition}[Monomial lift of the incidence matrix]
\label{def:monomial-lift-incidence}
Let $\Pi$ be a projective plane with incidence matrix $I(\Pi)$.
A rank-$\le 3$ lift of $I(\Pi)$ is called \emph{monomial} if, after choosing one uniformizer $t$, it has the form
\[
F_{p\ell}(t)=
\begin{cases}
 t a_{p\ell}, & p\in\ell,\\
 u_{p\ell}, & p\notin\ell,
\end{cases}
\qquad
 a_{p\ell},u_{p\ell}\in k^*.
\]
Equivalently,
\[
F(t)=U+tA,
\]
where $U_{p\ell}=0$ exactly on incidences and $A_{p\ell}=0$ exactly on nonincidences.
\end{definition}

\begin{definition}[First-order expansion data on one identity block]
\label{def:identity4-first-order-expansion}
Let $M(t)=(m_{ij}(t))$ be a $4\times4$ minor with identity valuation pattern.
We write
\[
 m_{ii}(t)=t a_i+O(t^2)
 \qquad (a_i\in k^*),
\]
and for $i\neq j$,
\[
 m_{ij}(t)=u_{ij}+t w_{ij}+O(t^2)
 \qquad (u_{ij}\in k^*,\; w_{ij}\in k).
\]
We denote the diagonal first coefficients by $A=(a_1,a_2,a_3,a_4)$, the off-diagonal residues by $U=(u_{ij})_{i\neq j}$, and the off-diagonal first corrections by $W=(w_{ij})_{i\neq j}$.
For each $i\in\{1,2,3,4\}$, if $\{j,k,\ell\}=\{1,2,3,4\}\setminus\{i\}$, define
\[
D_i(U):=u_{jk}u_{k\ell}u_{\ell j}+u_{j\ell}u_{\ell k}u_{kj}.
\]
Thus $D_i(U)$ is the contribution of the two $3$-cycles on the remaining three indices.
\end{definition}

\begin{lemma}[Determinant expansion to first order on an identity block]
\label{lem:identity4-determinant-first-order}
With the notation of Definition~\ref{def:identity4-first-order-expansion}, one has
\[
\det M(t)=\Theta_4(U)+t\,\Psi_4(U,A,W)+O(t^2),
\]
where
\[
\Psi_4(U,A,W)
:=
\sum_{i=1}^4 a_i D_i(U)
+
\sum_{\sigma\in D_4}\operatorname{sgn}(\sigma)
\Bigl(\prod_{r=1}^4 u_{r,\sigma(r)}\Bigr)
\Bigl(\sum_{r=1}^4 \frac{w_{r,\sigma(r)}}{u_{r,\sigma(r)}}\Bigr).
\]
\end{lemma}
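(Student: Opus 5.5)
The plan is to expand $\det M(t)$ by the Leibniz formula and collect terms by their contribution at order $t^0$ and $t^1$. We sort permutations $\sigma\in S_4$ by their number of fixed points $f(\sigma)$. Each fixed point $i$ contributes the factor $m_{ii}(t)=ta_i+O(t^2)$. Each moved point contributes $m_{r,\sigma(r)}(t)=u_{r,\sigma(r)}+tw_{r,\sigma(r)}+O(t^2)$. Hence the term for $\sigma$ lies in $t^{f(\sigma)}k[[t]]$, and only permutations with $f(\sigma)\le 1$ can contribute up to order $t^1$. In $S_4$ there is no permutation with exactly three fixed points. The permutations with exactly one fixed point $i$ are precisely the two $3$-cycles on $\{1,2,3,4\}\setminus\{i\}$. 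Everything with $f(\sigma)\ge 2$ is therefore absorbed into $O(t^2)$.

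For derangements ($f=0$), we expand the product of four factors $u+tw+O(t^2)$ to first order:
\[
\prod_{r}\bigl(u_{r,\sigma(r)}+tw_{r,\sigma(r)}\bigr)=\prod_r u_{r,\sigma(r)}+t\sum_{r}w_{r,\sigma(r)}\prod_{s\neq r}u_{s,\sigma(s)}+O(t^2).
\]
Since every $u_{r,\sigma(r)}\in k^*$, the inner sum can be rewritten as $\bigl(\prod_r u_{r,\sigma(r)}\bigr)\sum_r w_{r,\sigma(r)}/u_{r,\sigma(r)}$. Summing the constant terms with signs gives $\Theta_4(U)$, which matches \cref{prop:identity4-theta-equation}. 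Summing the linear terms gives the second sum in $\Psi_4$.

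For permutations with one fixed point $i$, the term is $\operatorname{sgn}(\sigma)\,(ta_i+O(t^2))\prod_{r\neq i}(u_{r,\sigma(r)}+O(t))$, which equals $t\,\operatorname{sgn}(\sigma)a_i\prod_{r\neq i}u_{r,\sigma(r)}+O(t^2)$. A $3$-cycle is even, so both such permutations have sign $+1$. For $\{j,k,\ell\}$ the complement of $i$, the two $3$-cycles $j\to k\to\ell\to j$ and $j\to\ell\to k\to j$ give the products $u_{jk}u_{k\ell}u_{\ell j}$ and $u_{j\ell}u_{\ell k}u_{kj}$, whose sum is exactly $D_i(U)$. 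Summing over $i$ produces $\sum_i a_iD_i(U)$.

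The only real care point is bookkeeping rather than an obstacle: one must confirm that the $O(t^2)$ remainders from higher-order terms of both the diagonal and off-diagonal entries never feed into the $t^1$ coefficient, and check the sign of the $3$-cycles. Both are immediate from the degree count above.
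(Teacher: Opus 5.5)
Your proposal is correct and follows the paper's proof essentially step for step. Both expand by Leibniz: the derangements supply $\Theta_4(U)$ and the $w$-linear terms, the two even $3$-cycles fixing $i$ supply $a_iD_i(U)$, and every permutation with at least two fixed points is absorbed into $O(t^2)$.
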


\begin{proof}
Expand $\det M(t)$ by the Leibniz formula.
The valuation-$0$ terms are exactly the derangements, since every fixed point contributes one diagonal factor of valuation $1$.
This gives the leading coefficient $\Theta_4(U)$.

For the valuation-$1$ coefficient there are two possibilities.
First, one may choose a permutation with exactly one fixed point $i$; then the remaining three indices are arranged by one of the two $3$-cycles, each of sign $+1$, producing the contribution $a_iD_i(U)$.
Second, one may choose a derangement and replace exactly one off-diagonal factor $u_{r,\sigma(r)}$ by its first-order correction $w_{r,\sigma(r)}$.
Summing these contributions over all derangements gives the displayed formula for $\Psi_4(U,A,W)$.
All other Leibniz terms have valuation at least $2$, so they are absorbed into $O(t^2)$.
\end{proof}

\begin{corollary}[Each vanished identity block forces two consecutive equations]
\label{cor:identity4-two-equation-package}
If $M(t)$ is a $4\times4$ identity-pattern minor of a rank-$\le 3$ lift, then $\det M(t)=0$ and therefore
\[
\Theta_4(U)=0,
\qquad
\Psi_4(U,A,W)=0.
\]
So every such block imposes not only the leading derangement cancellation equation, but also one first-order compatibility equation coupling the incidence coefficients $a_i$ and the off-incidence first corrections $w_{ij}$.
\end{corollary}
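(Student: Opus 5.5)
The plan is to read the statement off the first-order expansion of \cref{lem:identity4-determinant-first-order}. The only extra input is that a rank-$\le 3$ lift kills every $4\times4$ minor identically. If $F$ has classical rank at most $3$ over $K$, then every $4\times4$ minor vanishes, so $\det M(t)=0$ as an element of $K$. In particular this holds for the chosen identity-pattern minor $M(t)$, since it is a submatrix of $F$.

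Next I would substitute the expansion $\det M(t)=\Theta_4(U)+t\,\Psi_4(U,A,W)+O(t^2)$ from \cref{lem:identity4-determinant-first-order}. Here $\Theta_4(U)$ and $\Psi_4(U,A,W)$ are residue-field quantities, interpreted via the fixed splitting/uniformizer as coefficients of $t^0$ and $t^1$, and the remainder has valuation $\ge 2$.

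First I extract the constant term. Since $\det M(t)=0$ has valuation $\infty>0$, \cref{prop:identity4-theta-equation} gives $\Theta_4(U)=0$ in $k$.

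Then I divide by $t$. The element $t^{-1}\det M(t)$ is again $0$, and it equals $\Psi_4(U,A,W)+O(t)$, whose residue is $\Psi_4(U,A,W)$. Equivalently, I apply \cref{lem:raise-valuation} at level $1$: with $\Theta_4(U)=0$, the valuation-$1$ part has leading residue $\Psi_4$, and $\val(\det M)>1$ forces $\Psi_4(U,A,W)=0$.

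The only point needing care is making sense of "coefficients" in a general valued field. Following the paper's convention $F=U+tV+O(t^2)$, I would write each entry as a residue lift plus $t$ times a unit-or-zero lift plus an element of valuation $\ge 2$. Then the Leibniz expansion in the proof of \cref{lem:identity4-determinant-first-order} shows that $\det M(t)-\widehat\Theta-t\widehat\Psi$ has valuation $\ge 2$, where $\widehat\Theta$ and $\widehat\Psi$ are lifts of $\Theta_4(U)$ and $\Psi_4(U,A,W)$. Vanishing of $\det M(t)$ then gives $\widehat\Theta\in\mathfrak m_K$, so $\Theta_4(U)=0$. Choosing the residue lift so that $\widehat\Theta$ is exactly $0$ (or absorbing $\widehat\Theta$'s valuation-$\ge1$ part into the $t$-layer, as the paper's expansion convention does), I get $t\widehat\Psi\in t\,\mathfrak m_K$, hence $\Psi_4(U,A,W)=0$.

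I expect this bookkeeping to be the main obstacle, namely keeping the $t$-adic layers consistent. It is routine once one commits to the expansion convention of \cref{def:identity4-first-order-expansion}.
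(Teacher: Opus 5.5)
Your proposal is correct and follows the paper's own two-line argument: a rank-$\le 3$ lift kills every $4\times4$ minor, and comparing the $t^0$ and $t^1$ coefficients in the expansion of \cref{lem:identity4-determinant-first-order} gives $\Theta_4(U)=0$ and $\Psi_4(U,A,W)=0$. The extra bookkeeping is harmless but unnecessary, because the paper's convention treats the $u_{ij}\in k\subset K$ as literal constant coefficients, so $\Theta_4(U)=0$ holds exactly in $K$. One caveat: your parenthetical alternative of absorbing the valuation-$1$ part of $\widehat\Theta$ into the $t$-layer would give $\Psi_4+\overline{t^{-1}\widehat\Theta}=0$, not $\Psi_4=0$.
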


\begin{proof}
A rank-$\le 3$ lift has vanishing determinant on every $4\times4$ minor.
Applying Lemma~\ref{lem:identity4-determinant-first-order} and comparing the coefficients of $t^0$ and $t^1$ gives the claim.
\end{proof}

\begin{remark}[Monomial specialization of the first-order equation]
\label{rem:identity4-monomial-specialization}
If the off-diagonal entries of the chosen identity block carry no valuation-$1$ correction, i.e.
\[
 w_{ij}=0\qquad(i\neq j),
\]
then
\[
\Psi_4(U,A,0)=\sum_{i=1}^4 a_iD_i(U).
\]
Thus any such \emph{monomial} block must satisfy one linear relation among its four incidence first coefficients after the leading equation $\Theta_4(U)=0$ has already been enforced.
\end{remark}

\begin{remark}[Location of the first-order obstruction]
\label{rem:first-order-bottleneck}
The residue matrix $U$ already satisfies many polynomial constraints: each identity-pattern block forces one equation $\Theta_4(U)=0$, and globally any rank-$\le 3$ lift must support $\Omega(q^8)$ cross-ratio-defective rectangles. The next obstruction layer is the search for a tangent direction in $T_U$ that is nonzero on every incidence. The analysis above makes that layer algebraically explicit: on each identity block the same lift must satisfy both $\Theta_4(U)=0$ and the first-order equation $\Psi_4(U,A,W)=0$.
Any forced vanishing $V_{p\ell}=0$ at an incidence would raise that entry to valuation at least $2$, which is exactly the kind of extra stored cancellation depth encoded by the $\Lam$-layer.
\end{remark}


\subsection{Abundance of identity minors in projective-plane incidence matrices}

\begin{definition}[Ordered quadrangle]
\label{def:ordered-quadrangle}
Let $\Pi$ be a projective plane. An ordered quadruple $(P_1,P_2,P_3,P_4)$ of points is an \emph{ordered quadrangle} if no three of the four points are collinear.
\end{definition}

\begin{lemma}[Private-line identity blocks]
\label{lem:private-line-identity-minors}
Let $\Pi$ be a projective plane of order $q\ge 3$ with combinatorial incidence matrix $I(\Pi)$.
Fix an ordered quadrangle $(P_1,P_2,P_3,P_4)$.
Then for each $i\in\{1,2,3,4\}$ there are exactly $q-2$ lines $L_i$ such that
\[
P_i\in L_i,
\qquad
P_j\notin L_i\quad (j\neq i).
\]
Choosing one such $L_i$ for each $i$ produces a $4\times4$ submatrix of $I(\Pi)$, with rows $(P_1,\dots,P_4)$ and columns $(L_1,\dots,L_4)$, equal to $I_4$.
Equivalently, that block has valuation pattern $A^{(4)}$.
\end{lemma}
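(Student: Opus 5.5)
The argument is a direct incidence count on the lines through the fixed point $P_i$. Fix $i$ and write $\{j,k,\ell\}=\{1,2,3,4\}\setminus\{i\}$. There are exactly $q+1$ lines through $P_i$. A line through $P_i$ fails the requirement exactly when it also contains some $P_j$ with $j\neq i$. Since two distinct points determine a unique line, the offending lines are precisely $\overline{P_iP_j}$, $\overline{P_iP_k}$ and $\overline{P_iP_\ell}$.

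Next I show that these three lines are pairwise distinct. Suppose, say, $\overline{P_iP_j}=\overline{P_iP_k}$. Then $P_i,P_j,P_k$ would be collinear, contradicting the quadrangle hypothesis of \cref{def:ordered-quadrangle}. So exactly three lines through $P_i$ are excluded, and exactly $(q+1)-3=q-2$ admissible lines $L_i$ remain. This number is positive because $q\ge 3$, so a choice of $L_i$ always exists.

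For the block structure, choose one admissible $L_i$ for each $i$. The entry of $I(\Pi)$ in row $P_j$ and column $L_i$ is $1$ exactly when $P_j\in L_i$. By construction this happens if and only if $j=i$, so the $4\times4$ submatrix is $I_4$. I also need the four columns to be distinct lines, which is needed for this to be a genuine submatrix. If $i\neq i'$, then $P_i\in L_i$ but $P_i\notin L_{i'}$, so $L_i\neq L_{i'}$.

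Finally, the valuation pattern of this block has entry $1$ on the diagonal (incident pairs) and $0$ off the diagonal (nonincident pairs). This is exactly $A^{(4)}$ from \cref{def:identity4-pattern}. The only nontrivial step is the distinctness of the three excluded lines, and that is where the no-three-collinear hypothesis is used.
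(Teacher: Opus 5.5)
Your proof is correct and is the same as the paper's argument. You count the $q+1$ lines through $P_i$, remove the three lines $\overline{P_iP_j}$ ($j\neq i$), which are distinct by the no-three-collinear hypothesis, and read off the $I_4$ block; your extra check that the chosen columns $L_1,\dots,L_4$ are pairwise distinct is a small point the paper leaves implicit.
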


\begin{proof}
Fix $i$. Exactly $q+1$ lines pass through $P_i$.
For each $j\neq i$, there is a unique line through $P_i$ and $P_j$, namely $\overline{P_iP_j}$.
Because $(P_1,P_2,P_3,P_4)$ is a quadrangle, the three lines $\overline{P_iP_j}$ for $j\neq i$ are distinct.
Any line through $P_i$ that also contains some other $P_j$ must be one of those three forbidden lines.
Hence the number of lines through $P_i$ that avoid the other three points is
\[
(q+1)-3=q-2.
\]
If we choose one such line $L_i$ for each $i$, then $P_i\in L_i$ by construction, while $P_j\notin L_i$ for $j\neq i$.
So the induced $4\times4$ incidence submatrix is exactly $I_4$.
\end{proof}

\begin{proposition}[Quantitative lower bound for identity-pattern minors]
\label{prop:identity-minor-count-lower-bound}
Let $\Pi$ be a projective plane of order $q\ge 3$, and set $v:=q^2+q+1$.
Let $N_{\mathrm{id}}^{\mathrm{ord}}(\Pi)$ denote the number of ordered $4\times4$ submatrices of $I(\Pi)$ equal to $I_4$ and obtained from \cref{lem:private-line-identity-minors}.
Then
\[
N_{\mathrm{id}}^{\mathrm{ord}}(\Pi)\ge v(v-1)q^2(q-1)^2(q-2)^4.
\tag{4.1}
\]
In particular,
\[
N_{\mathrm{id}}^{\mathrm{ord}}(\Pi)=\Omega(q^{12}).
\]
After dividing by $(4!)^2$, the same construction still yields $\Omega(q^{12})$ distinct unordered $4\times4$ identity-pattern minors.
\end{proposition}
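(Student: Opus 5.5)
The count reduces to a lower bound on the number of ordered quadrangles, combined with the exact count of private lines from \cref{lem:private-line-identity-minors}. Each ordered quadrangle $(P_1,\dots,P_4)$ together with a choice of private lines $(L_1,\dots,L_4)$, with $q-2$ choices each, yields an ordered $4\times4$ submatrix equal to $I_4$. Distinct data give distinct ordered submatrices, since the data are exactly the ordered row and column labels. So
\[
N_{\mathrm{id}}^{\mathrm{ord}}(\Pi)=Q(\Pi)\,(q-2)^4,
\]
where $Q(\Pi)$ is the number of ordered quadrangles. It therefore suffices to show $Q(\Pi)\ge v(v-1)q^2(q-1)^2$.

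I would count ordered quadrangles sequentially, in four steps.
\begin{enumerate}[leftmargin=2em]
\item Choose $P_1$: there are $v$ choices.
\item Choose $P_2\neq P_1$: there are $v-1$ choices.
\item Choose $P_3$ off the line $\overline{P_1P_2}$: there are $v-(q+1)=q^2$ choices.
\item Choose $P_4$ off the three lines $\overline{P_1P_2}$, $\overline{P_1P_3}$, $\overline{P_2P_3}$.
\end{enumerate}
For the last step, the union of these three lines of the triangle has $3(q+1)-3=3q$ points by inclusion--exclusion. Any two of the lines meet in exactly one vertex, and no point lies on all three since they are non-concurrent. Hence $P_4$ has $v-3q=q^2-2q+1=(q-1)^2$ choices. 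Multiplying gives exactly
\[
Q(\Pi)=v(v-1)q^2(q-1)^2,
\]
so (4.1) holds, in fact with equality for this construction.

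For the asymptotics, $v(v-1)=\Theta(q^4)$, so the product is $\Theta(q^4\cdot q^2\cdot q^2\cdot q^4)=\Theta(q^{12})$, using $q\ge 3$ so that $q-2\ge 1$. For unordered minors, each unordered pair (row set, column set) arises from at most $(4!)^2$ orderings. Dividing by $(4!)^2$ therefore still leaves $\Omega(q^{12})$ distinct minors.

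The only step needing care is the inclusion--exclusion count for $P_4$, in particular checking that the three vertices are the only pairwise intersections of the triangle's sides. The remaining steps are routine bookkeeping.
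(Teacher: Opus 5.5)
Your proposal is correct and follows the paper's proof essentially verbatim. Like the paper, you choose an ordered quadrangle sequentially ($v$, $v-1$, $q^2$, $(q-1)^2$ choices), multiply by the $(q-2)^4$ private-line choices from \cref{lem:private-line-identity-minors}, and divide by $(4!)^2$ for the unordered count. Your remark that the $P_4$ count is exact, so that (4.1) is an equality for this construction, is a valid minor sharpening of the paper's ``at least''.
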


\begin{proof}
Choose $P_1$ arbitrarily: $v$ choices.
Choose $P_2\neq P_1$: $v-1$ choices.
Now choose $P_3$ off the unique line $\overline{P_1P_2}$.
Since each line has $q+1$ points, this gives
\[
v-(q+1)=q^2
\]
choices.

At this stage $P_1,P_2,P_3$ are noncollinear.
For $P_4$, exclude the three lines
\[
\overline{P_1P_2},\qquad \overline{P_1P_3},\qquad \overline{P_2P_3}.
\]
Each has $q+1$ points, and because $P_1,P_2,P_3$ are noncollinear, the three lines meet pairwise exactly at $P_1,P_2,P_3$.
So their union has size
\[
3(q+1)-3=3q.
\]
Hence the number of valid choices for $P_4$ is at least
\[
v-3q=(q^2+q+1)-3q=(q-1)^2.
\]
Any such choice makes $(P_1,P_2,P_3,P_4)$ an ordered quadrangle.

Finally, \cref{lem:private-line-identity-minors} gives exactly $q-2$ choices of $L_i$ for each $i=1,2,3,4$, contributing a factor $(q-2)^4$.
Multiplying the choices yields
\[
N_{\mathrm{id}}^{\mathrm{ord}}(\Pi)\ge v(v-1)\cdot q^2\cdot (q-1)^2\cdot (q-2)^4,
\]
which is \textup{(4.1)}.
Since $v=q^2+q+1=\Theta(q^2)$, the bound is $\Omega(q^{12})$.
The unordered estimate follows by dividing by at most $(4!)^2$ orderings of the rows and columns.
\end{proof}

\begin{proposition}[Abundant identity blocks force multi-term residue cancellation]
\label{prop:identity-minors-pressure}
Assume $\Kapr(I(\Pi))\le 3$ over a valued field whose residue field has characteristic $\neq 3$.
Then every ordered identity-pattern minor counted in \cref{prop:identity-minor-count-lower-bound} must satisfy one $9$-term leading cancellation equation
\[
\Theta_4(U)=0.
\]
Moreover, by \cref{cor:identity4-rank1-obstruction}, the off-diagonal residues on each such block cannot factorize rank $1$.
Consequently any rank-$\le 3$ lift of $I(\Pi)$ must accommodate $\Omega(q^{12})$ multi-term residue cancellations.
\end{proposition}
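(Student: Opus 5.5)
The proof is a direct assembly of three earlier results: the identity-minor count, the leading derangement equation, and the rank-$1$ obstruction. No new estimate is needed.

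\textbf{Step 1 (vanishing of minors).} Assume $\Kapr(I(\Pi))\le 3$. By the lift characterization of Kapranov rank (Remark~\ref{rem:kapr-bottleneck-minors}), there is a lift $F$ with $\val(F)=I(\Pi)$ entrywise and $\rank(F)\le 3$. Hence every $4\times4$ minor of $F$ vanishes. In particular this holds for every ordered block $(P_1,\dots,P_4;L_1,\dots,L_4)$ produced by \cref{lem:private-line-identity-minors}. That lemma shows the block has valuation pattern exactly $A^{(4)}$: the incident diagonal entries have valuation $1$ and the nonincident off-diagonal entries have valuation $0$.

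\textbf{Step 2 (the nine-term equation).} Restrict $F$ to such a block and apply \cref{prop:identity4-theta-equation}. The determinant is zero, so its valuation is $\infty>0$, and therefore the off-diagonal residues satisfy $\Theta_4(U)=0$. The sum defining $\Theta_4$ runs over the $!4=9$ derangements, so this is a nine-term cancellation. Every off-diagonal entry has valuation $0$ and hence a nonzero residue $u_{ij}\in k^*$, so each of the nine terms is nonzero. The equation is therefore a genuine multi-term cancellation and cannot hold trivially.

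\textbf{Step 3 (no rank-$1$ factorization).} Suppose the off-diagonal residues on some block factor as $u_{ij}=\alpha_i\beta_j$. Then \cref{cor:identity4-rank1-obstruction} gives $\Theta_4(U)=-3\prod_i\alpha_i\prod_j\beta_j$, which is nonzero because the residue characteristic is $\neq 3$. This contradicts Step~2, so no block admits a rank-$1$ factorization. The same conclusion also follows from \cref{cor:identity4-vanishing-forces-curvature}, which in addition produces an admissible rectangle with $\rho\neq1$.

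\textbf{Step 4 (counting).} The blocks counted in \cref{prop:identity-minor-count-lower-bound} number $\Omega(q^{12})$, even after passing to distinct unordered minors. Each one carries its own cancellation equation from Step~2. This gives the stated $\Omega(q^{12})$ multi-term residue cancellations.

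The only point needing care is the meaning of ``$\Omega(q^{12})$ cancellations.'' Distinct unordered blocks give distinct equations, but the equations may overlap in their variables. I would state the conclusion as a count of (block, equation) pairs, for which the lower bound holds directly from the count of distinct minors. I would not claim the equations are algebraically independent, since that would require a genuinely new argument.
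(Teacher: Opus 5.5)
Your proposal is correct and follows essentially the same route as the paper. A rank-$\le 3$ lift kills every $4\times4$ minor, \cref{prop:identity4-theta-equation} then gives $\Theta_4(U)=0$ on each private-line block, \cref{cor:identity4-rank1-obstruction} rules out rank-$1$ factorization, and \cref{prop:identity-minor-count-lower-bound} supplies the $\Omega(q^{12})$ blocks; your extra remarks (each of the nine terms is nonzero, and the count is of block-by-block equations rather than independent ones) are accurate refinements consistent with the paper's intent.
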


\begin{proof}
If $\Kapr(I(\Pi))\le 3$, there exists a lift of $I(\Pi)$ with classical rank at most $3$.
Hence every $4\times4$ minor of that lift vanishes.
For each identity-pattern block from \cref{prop:identity-minor-count-lower-bound}, \cref{prop:identity4-theta-equation} therefore forces the residue equation $\Theta_4(U)=0$.
The non-factorization claim follows from \cref{cor:identity4-rank1-obstruction}.
Finally, \cref{prop:identity-minor-count-lower-bound} provides the $\Omega(q^{12})$ supply of such blocks.
\end{proof}

\begin{remark}[Scope and next bottleneck]
\label{rem:identity-minors-scope-bottleneck}
The private-line construction starts only at $q\ge 3$; for $q=2$ one has $q-2=0$, so no such blocks exist.
Also, \cref{prop:identity-minor-count-lower-bound} is a supply statement rather than an independence statement: many of the counted minors share rows, columns, and residue variables.
The first charging step is to show that each fixed admissible $0$-rectangle lies in at most $O(q^4)$ ordered identity-pattern minors, so any rank-$\le 3$ lift in residue characteristic $\neq 3$ must support $\Omega(q^8)$ distinct cross-ratio defects.
The next bottleneck is therefore no longer to produce many witnesses, but to prove an upper bound on how many such cross-ratio defects a rank-$3$ lift can sustain, or else to show that forcing so many defects creates positive $\Lam$-depth on a large set of rectangles.
\end{remark}



\section{Obstruction consequences}
\label{sec:main}

\subsection{Counting cross-ratio-defective rectangles}

\begin{definition}[$0$-rectangles and non-flat residues]
\label{def:identity-zerorectangle-curvature-set}
Let $\Pi$ be a projective plane of order $q\ge 3$, with point set $P$, line set $L$, and incidence matrix $I(\Pi)$.
A \emph{$0$-rectangle} is a pair
\[
R=(\{p,q\}\times \{\ell,m\}),
\]
where $p\neq q$ are points, $\ell\neq m$ are lines, and
\[
I(\Pi)_{p\ell}=I(\Pi)_{pm}=I(\Pi)_{q\ell}=I(\Pi)_{qm}=0.
\]
Equivalently, $p$ and $q$ are both outside $\ell\cup m$.
For a lift $F$ of $I(\Pi)$, with off-incidence residues $u_{x,n}\in k^*$ at the $0$-entries, define
\[
\rho_F(R):=\frac{u_{p\ell}u_{qm}}{u_{pm}u_{q\ell}}\in k^*.
\]
The associated \emph{defect set} is
\[
\mathcal R_{\mathrm{def}}(F):=\{R:\rho_F(R)\neq 1\}.
\]
\end{definition}

\begin{lemma}[Multiplicity bound for one $0$-rectangle]
\label{lem:identity-rectangle-multiplicity}
Let $\Pi$ be a projective plane of order $q\ge 3$, and let
\[
R=(\{p,q\}\times \{\ell,m\})
\]
be a $0$-rectangle in $I(\Pi)$.
Let $\mathcal M_{\mathrm{id}}^{\mathrm{ord}}(\Pi)$ be the family of ordered identity-pattern minors counted in \cref{prop:identity-minor-count-lower-bound}.
Then
\[
\#\{M\in \mathcal M_{\mathrm{id}}^{\mathrm{ord}}(\Pi):R\subset M\}
\le C(q+1)^4
\tag{5.1}
\]
for an absolute constant $C$.
One may take, for example, $C=16(4!)^2$.
\end{lemma}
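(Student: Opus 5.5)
The plan is a direct double count: I fix the rectangle $R$ and bound how many ordered identity-pattern minors $M$, built from an ordered quadrangle $(P_1,\dots,P_4)$ and private lines $(L_1,\dots,L_4)$ as in \cref{lem:private-line-identity-minors}, contain all four entries of $R$. First I decide what $R\subset M$ means: the two points $p,q$ occur among the rows $P_1,\dots,P_4$ and the two lines $\ell,m$ occur among the columns $L_1,\dots,L_4$. Since the four entries of $R$ are $0$, they must be off-diagonal entries of the identity block, so this is automatically an admissible rectangle in the sense of \cref{def:identity4-admissible-crossratio}. I then record the positions $p=P_i$, $q=P_j$, $\ell=L_a$, $m=L_b$. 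This gives at most $4\cdot3\cdot4\cdot3\le(4!)^2$ position patterns, and it suffices to bound the count for each pattern separately.

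Next, with the pattern fixed, I count completions. Two rows and two columns are already prescribed; the admissibility condition $\{i,j\}\cap\{a,b\}=\varnothing$ forces $\{a,b\}$ to be the complement of $\{i,j\}$. Each of the two remaining rows $P_a,P_b$ must lie on its already-determined private line, namely $P_a\in L_a=\ell$ and $P_b\in L_b=m$. Hence there are at most $q+1$ choices for $P_a$ and at most $q+1$ choices for $P_b$. The two remaining columns $L_i,L_j$ must pass through $P_i=p$ and $P_j=q$ respectively, which gives at most $q+1$ choices each. Multiplying gives at most $(q+1)^4$ completions per pattern. The quadrangle and private-line conditions only cut this number down, so I do not need to check them.

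Summing over patterns gives at most $(4!)^2(q+1)^4$, which is within the claimed $C=16(4!)^2$. If "$R\subset M$" is instead read as allowing the rows and columns of $R$ to appear in either order, or allowing the rectangle to sit in either orientation, this only introduces a bounded factor from $\{p,q\}$ and $\{\ell,m\}$ orderings, at most $4$. Any such factor can be absorbed into the generous factor of $16$.

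The only real obstacle is being careful that every remaining index is forced through an incidence with an already-fixed object; this is what keeps the count at degree $4$ rather than higher. The identity pattern provides exactly that: each unfixed row is pinned to a fixed column line, and each unfixed column is pinned to a fixed row point.
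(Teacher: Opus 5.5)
Your proof is correct and follows essentially the same route as the paper: the zero entries of $R$ force $\ell,m$ to be the private lines of the two complementary rows, which pins those rows to $\ell$ and $m$ and the two remaining columns to the pencils through $p$ and $q$, giving at most $(q+1)^4$ completions per placement. Your explicit count of position patterns (at most $144$, in fact only $24$ once $\{a,b\}$ is forced to be the complement of $\{i,j\}$) is a slightly cleaner treatment of the ordering factor than the paper's absorbed constant $16(4!)^2$, and yields the same bound.
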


\begin{proof}
In any ordered $4\times4$ identity-pattern minor, the admissible $0$-rectangles are exactly the complement rectangles obtained by choosing two of the four point rows and the two line columns matched to the other two points. Hence a fixed rectangle $R$ can only occur when $\ell,m$ are the private lines of the two complementary vertices, say $r\in \ell$ and $s\in m$.
There are at most $(q+1)^2$ choices for $(r,s)$, since each of $\ell$ and $m$ contains exactly $q+1$ points.
Once $(p,q,r,s)$ are fixed, the private-line construction gives at most $(q+1)^2$ choices for the private lines assigned to $p$ and $q$.
Finally, only constantly many orderings of the four rows and four columns realize the prescribed rectangle as one of the six complement rectangles; this constant is absorbed into $C$.
Therefore the number of ordered identity-pattern minors containing $R$ is at most $C(q+1)^4$.
A detailed counting proof appears in \cref{lem:identity-rectangle-multiplicity-detailed}.
\end{proof}

\begin{theorem}[Rank-$3$ lifts force $\Omega(q^8)$ cross-ratio defects]
\label{thm:identity-global-curvature-pressure}
Let $\Pi$ be a projective plane of order $q\ge 3$, and assume the residue characteristic is not $3$.
If $\Kapr(I(\Pi))\le 3$, then every rank-$\le 3$ lift $F$ of $I(\Pi)$ satisfies
\[
|\mathcal R_{\mathrm{def}}(F)|
\ge
\frac{N_{\mathrm{id}}^{\mathrm{ord}}(\Pi)}{C(q+1)^4},
\tag{5.2}
\]
where $N_{\mathrm{id}}^{\mathrm{ord}}(\Pi)$ is the ordered identity-minor count from \cref{prop:identity-minor-count-lower-bound}.
In particular,
\[
|\mathcal R_{\mathrm{def}}(F)|=\Omega(q^8).
\]
\end{theorem}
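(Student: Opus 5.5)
This is a double-counting (charging) argument. Fix a rank-$\le 3$ lift $F$ of $I(\Pi)$ and consider the family $\mathcal M_{\mathrm{id}}^{\mathrm{ord}}(\Pi)$ of ordered identity-pattern minors from \cref{prop:identity-minor-count-lower-bound}. Each $M\in\mathcal M_{\mathrm{id}}^{\mathrm{ord}}(\Pi)$ is a $4\times4$ submatrix with valuation pattern $A^{(4)}$. Since $F$ has rank at most $3$, $\det F_M=0$. The residue characteristic is $\neq 3$, so \cref{cor:identity4-vanishing-forces-curvature} applies to $F_M$ and yields an admissible rectangle $\rho_{ij}^{ab}\neq 1$ inside $M$. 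For each $M$ I choose one such rectangle and call it $\phi(M)$.

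The first step is to check that $\phi(M)$ really lies in $\mathcal R_{\mathrm{def}}(F)$. Rows $i,j$ of $M$ are points $P_i,P_j$, and columns $a,b$ are private lines $L_a,L_b$ with $\{i,j\}\cap\{a,b\}=\varnothing$. The four entries are therefore off-diagonal zeros, so $P_i,P_j\notin L_a\cup L_b$, and $R=\{P_i,P_j\}\times\{L_a,L_b\}$ is a $0$-rectangle in the sense of \cref{def:identity-zerorectangle-curvature-set}. The residues $u_{ia}$ and so on are exactly the entries $u_{x,n}$ of $F$ at these positions. The block cross-ratio $\rho_{ij}^{ab}$ coincides with $\rho_F(R)$ up to the ordering convention, at worst an inversion, and inversion preserves the condition $\neq 1$. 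Hence $\phi(M)\in\mathcal R_{\mathrm{def}}(F)$, and $R\subset M$ as a set of entries.

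Next I bound fibres. By \cref{lem:identity-rectangle-multiplicity}, each rectangle $R$ is contained in at most $C(q+1)^4$ minors $M\in\mathcal M_{\mathrm{id}}^{\mathrm{ord}}(\Pi)$, so $|\phi^{-1}(R)|\le C(q+1)^4$. Summing over the image gives
\[
N_{\mathrm{id}}^{\mathrm{ord}}(\Pi)=\sum_{R\in\phi(\mathcal M)}|\phi^{-1}(R)|\le C(q+1)^4\,|\mathcal R_{\mathrm{def}}(F)|,
\]
which is (5.2). For the asymptotic claim, insert (4.1): $N_{\mathrm{id}}^{\mathrm{ord}}\ge v(v-1)q^2(q-1)^2(q-2)^4=\Omega(q^{12})$, while the denominator is $O(q^4)$, giving $\Omega(q^8)$. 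For $q\ge 3$ every factor is positive, so the bound is also nontrivial at small $q$.

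The main obstacle is the multiplicity bound, which is taken as given from \cref{lem:identity-rectangle-multiplicity}. On our side, the delicate point is the bookkeeping that $\rho_{ij}^{ab}$ and $\rho_F(R)$ describe the same unordered rectangle. The set $\mathcal R_{\mathrm{def}}(F)$ should be read with rectangles as unordered pairs $\{p,q\}\times\{\ell,m\}$. The row and column swaps that relate different ordered presentations only invert $\rho$, by \cref{lem:cross-ratio-inversion-complement}(1), so membership in the defect set is well defined and the charging map is legitimate.
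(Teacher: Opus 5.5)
Your proposal is correct and is essentially the paper's proof: the paper notes that $\mathcal R_{\mathrm{def}}(F)$ is a hitting set for $\mathcal M_{\mathrm{id}}^{\mathrm{ord}}(\Pi)$ by \cref{cor:identity4-vanishing-forces-curvature}, then double counts the pairs $(R,M)$ with $R\subset M$ using \cref{lem:identity-rectangle-multiplicity}, which is exactly your charging map $\phi$ with fibres bounded by $C(q+1)^4$. Your check that the block cross-ratio $\rho_{ij}^{ab}$ agrees with $\rho_F(R)$ is a bookkeeping point the paper leaves implicit; in fact the two are equal on the nose, with no inversion needed, once you set $p=P_i$, $q=P_j$, $\ell=L_a$, $m=L_b$.
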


\begin{proof}
Let $\mathcal M_{\mathrm{id}}^{\mathrm{ord}}(\Pi)$ denote the family of ordered identity-pattern minors supplied by \cref{prop:identity-minor-count-lower-bound}.
If $\Kapr(I(\Pi))\le 3$, then there exists a lift $F$ of rank at most $3$, so every $4\times4$ minor of $F$ vanishes.
In particular, every member of $\mathcal M_{\mathrm{id}}^{\mathrm{ord}}(\Pi)$ vanishes.
By \cref{cor:identity4-vanishing-forces-curvature}, each such vanished identity block contains at least one admissible rectangle with cross-ratio different from $1$.
Hence the defect set $\mathcal R_{\mathrm{def}}(F)$ is a hitting set for $\mathcal M_{\mathrm{id}}^{\mathrm{ord}}(\Pi)$.
By \cref{lem:identity-rectangle-multiplicity}, each rectangle in $\mathcal R_{\mathrm{def}}(F)$ belongs to at most $C(q+1)^4$ members of $\mathcal M_{\mathrm{id}}^{\mathrm{ord}}(\Pi)$.
Double counting the incidence relation
\[
\{(R,M):R\in \mathcal R_{\mathrm{def}}(F),\ M\in \mathcal M_{\mathrm{id}}^{\mathrm{ord}}(\Pi),\ R\subset M\}
\]
gives
\[
N_{\mathrm{id}}^{\mathrm{ord}}(\Pi)
\le
|\mathcal R_{\mathrm{def}}(F)|\,C(q+1)^4,
\]
which rearranges to \textup{(5.2)}.
Finally, \cref{prop:identity-minor-count-lower-bound} gives
\[
N_{\mathrm{id}}^{\mathrm{ord}}(\Pi)=\Omega(q^{12}),
\]
so dividing by $C(q+1)^4=O(q^4)$ yields $|\mathcal R_{\mathrm{def}}(F)|=\Omega(q^8)$.
\end{proof}

\begin{corollary}[Quantitative non-flatness requirement]
\label{cor:identity-global-curvature-pressure}
Under the hypotheses of \cref{thm:identity-global-curvature-pressure}, a rank-$\le 3$ lift of $I(\Pi)$ must be residue-nonflat on at least $\Omega(q^8)$ admissible $0$-rectangles.
Equivalently, low Kapranov rank forces a polynomial-size support of local non-rank-$1$ residue defects.
\end{corollary}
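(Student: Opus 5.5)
The corollary is a restatement of \cref{thm:identity-global-curvature-pressure}, so the plan is to derive it directly from that theorem. Then I will check that the two verbal formulations, ``residue-nonflat on $\Omega(q^8)$ admissible $0$-rectangles'' and ``polynomial-size support of local non-rank-$1$ residue defects'', match the defect set $\mathcal R_{\mathrm{def}}(F)$.

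\emph{Step 1.} Fix a rank-$\le 3$ lift $F$ of $I(\Pi)$. Such a lift exists by the hypothesis $\Kapr(I(\Pi))\le 3$, and the residue characteristic is $\neq 3$. By \cref{thm:identity-global-curvature-pressure},
\[
|\mathcal R_{\mathrm{def}}(F)|\ge N_{\mathrm{id}}^{\mathrm{ord}}(\Pi)/\bigl(C(q+1)^4\bigr)=\Omega(q^8).
\]
By \cref{def:identity-zerorectangle-curvature-set}, a rectangle $R$ lies in $\mathcal R_{\mathrm{def}}(F)$ exactly when $\rho_F(R)\neq 1$, i.e.\ when $R$ is not flat in the sense of the flat-rectangle definition. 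Each such $R$ is admissible in the relevant sense. Indeed, the proof of the theorem only charges rectangles that arise as admissible off-diagonal rectangles inside some identity block (\cref{def:identity4-admissible-crossratio}), and under the identification of rows and columns these are $0$-rectangles of $I(\Pi)$. So the first formulation follows. If one wants to count only rectangles admissible within some counted identity minor, restrict $\mathcal R_{\mathrm{def}}(F)$ to those; the hitting-set argument in the theorem's proof already uses only such rectangles, so the bound survives unchanged.

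\emph{Step 2.} For the ``equivalently'' clause, I will invoke \cref{lem:lam-inactive-implies-rho1}: a rectangle with rank-$1$ residue factorization has $\rho=1$. Conversely, for a $2\times 2$ block of nonzero entries, $\rho=1$ is equivalent to the vanishing of the $2\times2$ determinant, and hence to a local rank-$1$ factorization. Thus ``nonflat'' and ``local non-rank-$1$ residue defect'' describe the same set, and its size is $\Omega(q^8)$, which is polynomial in $q$.

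There is no real obstacle here. The only point needing care is that the asymptotic constant is uniform in $q\ge 3$. This follows from the explicit bound (4.1) divided by $C(q+1)^4$ with $C=16(4!)^2$: the quotient $v(v-1)q^2(q-1)^2(q-2)^4/(C(q+1)^4)$ is bounded below by a positive constant times $q^8$ for all $q\ge 3$, since $q-2\ge q/3$ there.
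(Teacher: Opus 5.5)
Your proposal is correct and follows the paper's route: the paper's proof is the one-line observation that the corollary restates \cref{thm:identity-global-curvature-pressure} in the language of flatness ($\rho=1$) versus cross-ratio defects, which is your Step~1 combined with the equivalence you spell out in Step~2. Your extra checks are accurate and only make explicit what the paper leaves implicit: restricting $\mathcal R_{\mathrm{def}}(F)$ to rectangles admissible in some counted identity minor preserves the hitting-set bound, and the implied constant is uniform for $q\ge 3$.
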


\begin{proof}
This is just a restatement of \cref{thm:identity-global-curvature-pressure} in the language of residue flatness versus cross-ratio defects.
\end{proof}


\subsection{Immediate local corollaries}
\begin{theorem}[Residue flatness is incompatible with a vanished identity block]
\label{thm:flat-chart-obstruction}
Let $F$ be a lift of a $4\times4$ identity-pattern block over a valued field with residue field of characteristic $\neq 3$.
If all admissible residue cross-ratios in this block are equal to $1$, then the off-diagonal residues factor rank $1$, the leading derangement equation $\Theta_4(U)=0$ fails, and therefore $\det(F)\neq 0$.
Equivalently, every vanished identity-pattern block contains at least one admissible rectangle with nontrivial cross-ratio.
\end{theorem}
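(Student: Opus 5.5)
The statement packages three earlier results into a single chain of implications, so the plan is to compose them. The chain runs from flatness of all admissible cross-ratios, to rank-$1$ factorization of the off-diagonal residues, to nonvanishing of $\Theta_4(U)$, to nonvanishing of $\det(F)$. The final equivalent formulation is then the contrapositive.

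First, assume every admissible cross-ratio $\rho_{ij}^{ab}$ of Definition~\ref{def:identity4-admissible-crossratio} equals $1$. That is, $u_{ia}u_{jb}=u_{ib}u_{ja}$ for all distinct rows $i\neq j$ and distinct columns $a\neq b$ with $\{i,j\}\cap\{a,b\}=\varnothing$. This is exactly hypothesis \textup{(4.2)} of \cref{lem:identity-flatness-rank1} with $n=4$. All $u_{ij}$ for $i\neq j$ lie in $k^*$, because the off-diagonal entries have valuation $0$. The lemma therefore yields $\alpha_i,\beta_j\in k^*$ with $u_{ij}=\alpha_i\beta_j$ for all $i\neq j$.

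The one point I would double-check is whether $n=4$ really suffices in that lemma. Each step needs a column outside $\{1,i\}$ or outside $\{1,i,j\}$. With four indices, at least one such column always remains, and for the $\alpha_i$-independence step exactly two remain. So the argument goes through.

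Second, under this rank-$1$ factorization, \cref{lem:rank1-residues-identity-obstruction} and \cref{cor:identity4-rank1-obstruction} give
\[
\Theta_4(U)=\Bigl(\prod_i\alpha_i\Bigr)\Bigl(\prod_j\beta_j\Bigr)\cdot(-3).
\]
This is nonzero because the residue characteristic is $\neq 3$. So the leading derangement equation fails.

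Third, \cref{prop:identity4-theta-equation} identifies $\Theta_4(U)$ with the valuation-$0$ coefficient of $\det(F)$. Hence $\val(\det F)=0$, and in particular $\det(F)\neq 0$.

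Taking the contrapositive, if $\det(F)=0$ then some admissible rectangle has $\rho_{ij}^{ab}\neq 1$. Essentially all of this is already recorded in \cref{cor:identity4-vanishing-forces-curvature}, so the proof can simply cite that corollary together with the two intermediate conclusions. There is no real obstacle; the only care needed is the characteristic hypothesis and the $n=4$ applicability check above.
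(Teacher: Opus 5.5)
Your proposal is correct and matches the paper's proof, which likewise just chains \cref{lem:identity-flatness-rank1} (with $n=4$) into \cref{cor:identity4-vanishing-forces-curvature}, itself the composition of \cref{cor:identity4-rank1-obstruction} with \cref{prop:identity4-theta-equation}. One small caveat on your side check of that lemma: the issue there is not column availability but that for column index $j=1$ the rectangle on rows $(i,1)$, columns $(1,a)$ is not admissible and $\beta_1$ is never defined. The lemma remains true, since one can set $\alpha_1:=1$ and $\beta_1:=u_{21}/\alpha_2$ and use the admissible rectangles on rows $\{2,i\}$, columns $\{1,a\}$ with $a\notin\{1,2,i\}$, so your citation stands.
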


\begin{proof}
Immediate from Lemma~\ref{lem:identity-flatness-rank1} and Corollary~\ref{cor:identity4-vanishing-forces-curvature}.
\end{proof}

\begin{remark}
This branch isolates where ``pure gauge'' explanations (rank-$1$ residue charts) are insufficient:
cancellation must involve more than a single binomial, and its first visible witness is already one local cross-ratio defect $\rho\neq 1$.
\end{remark}


\subsection{Residue-rank reduction and first-order deformations}

\begin{proposition}[Residue-level reformulation of the rank-$3$ problem]
\label{prop:main-residue-minrank-reformulation}
Assume $\Kapr(I(\Pi))\le 3$.
Then there exists a matrix $U\in k^{P\times L}$ such that
\[
\rank_k(U)\le 3,
\qquad
U_{p\ell}=0 \iff p\in\ell,
\qquad
U_{p\ell}\neq 0 \iff p\notin\ell.
\]
Equivalently, the bipartite complement of the Levi graph of $\Pi$ admits a rank-$3$ matrix representation over the residue field.
\end{proposition}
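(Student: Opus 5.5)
The plan is to derive the statement directly from \cref{cor:kapr3-residue-zero-pattern}, after first putting a rank-$\le 3$ lift into a normalized form.

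\textbf{Step 1: a lift with the right valuations.} Assume $\Kapr(I(\Pi))\le 3$. By the lift characterization of Kapranov rank there is $F\in K^{P\times L}$ with $\rank_K(F)\le 3$. Its entries have $\val(F_{p\ell})=I(\Pi)_{p\ell}$. This means valuation $1$ on incidences and valuation $0$ on nonincidences, which is exactly the convention fixed in \cref{cor:kapr3-residue-zero-pattern}.

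\textbf{Step 2: reduce to the residue field.} All entries of $F$ have valuation $\ge 0$, so $F\in R_K^{P\times L}$ and the residue map $\pi$ applies entrywise. Set $U:=\pi(F)$.
\begin{itemize}
\item[(a)] \cref{lem:reduction-does-not-increase-rank} gives $\rank_k(U)\le 3$.
\item[(b)] If $p\in\ell$, then $F_{p\ell}\in\mathfrak m_K$, so $U_{p\ell}=0$.
\item[(c)] If $p\notin\ell$, then $F_{p\ell}$ is a unit, so $U_{p\ell}\neq0$.
\end{itemize}
Together these give the biconditional $U_{p\ell}=0\iff p\in\ell$, and the nonzero statement is its negation. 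This is precisely \cref{cor:kapr3-residue-zero-pattern}, so the first claim is immediate.

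\textbf{Step 3: the ``equivalently'' clause.} I would read this as a translation of language rather than a new claim. The Levi graph of $\Pi$ is the bipartite graph on $P\sqcup L$ with edges given by incidences. Its bipartite complement is the nonincidence graph $G_0(I(\Pi))$. A rank-$3$ representation of it is a matrix whose support is exactly the edge set of $G_0(I(\Pi))$, and the matrix $U$ from Step 2 is such a matrix.

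\textbf{Possible obstacles.} The only delicate point is whether ``rank-$3$'' should mean rank exactly $3$. I would settle this with a standard fact. A rank-$\le 2$ matrix whose zero pattern is a projective-plane incidence is impossible: it would make the plane's lines correspond to points of a rank-$2$ (projective line) configuration, contradicting the quadrangle axiom. In any case, the tropical rank of the incidence matrix is at least $3$. The main step is therefore trivial once \cref{cor:kapr3-residue-zero-pattern} is available; the real obstacle, if any, is only in stating the equivalence convention precisely.
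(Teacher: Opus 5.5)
Your proposal is correct and follows the paper's proof, which is simply \cref{cor:kapr3-residue-zero-pattern} applied to a rank-$\le 3$ lift; your Step 2 just unpacks that corollary. One caution on your side remark: the tropical-rank bound only gives $\rank_K(F)\ge 3$, and this does not transfer to $U$ because reduction can only lower rank. If you want $\rank_k(U)=3$ exactly, use instead that for noncollinear $P_1,P_2,P_3$ the block of $U$ on rows $P_1,P_2,P_3$ and columns $\overline{P_2P_3},\overline{P_1P_3},\overline{P_1P_2}$ is diagonal with nonzero diagonal.
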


\begin{proof}
This is exactly \cref{cor:kapr3-residue-zero-pattern} applied to a rank-$\le 3$ lift of $I(\Pi)$.
\end{proof}

The unresolved globalization step is formulated in \cref{conj:first-order-nonvanishing-obstruction}, after the proof-driven part of the paper.

\begin{proposition}[First-order constraints on an identity block]
\label{prop:note38-first-order-constraints}
Assume $\Kapr(I(\Pi))\le 3$, and let
\[
F(t)=U+tV+O(t^2)
\]
be a local uniformizer expansion of a rank-$\le 3$ lift near one $4\times4$ identity-pattern minor.
Then the restricted data on that block satisfy
\[
\Theta_4(U)=0,
\qquad
\Psi_4(U,A,W)=0.
\]
In particular, if the off-incidence entries of that block are monomial to first order, then
\[
\sum_{i=1}^4 a_iD_i(U)=0.
\]
\end{proposition}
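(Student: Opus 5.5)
The statement follows by restricting the global lift to the chosen block and then applying \cref{cor:identity4-two-equation-package}.

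First, take a rank-$\le 3$ lift $F$ of $I(\Pi)$, which exists by the assumption $\Kapr(I(\Pi))\le 3$. Fix rows $(P_1,\dots,P_4)$ and columns $(L_1,\dots,L_4)$ whose submatrix of $I(\Pi)$ is $I_4$, as produced by \cref{lem:private-line-identity-minors}. The corresponding $4\times4$ submatrix $M(t)$ of $F$ has valuation pattern $A^{(4)}$, because incident entries have valuation $1$ and nonincident entries have valuation $0$. Expanding entrywise in the uniformizer $t$ puts $M(t)$ in the form of \cref{def:identity4-first-order-expansion}:
\begin{itemize}
\item the diagonal entries $F_{P_i,L_i}=tV_{P_i,L_i}+O(t^2)$ have $a_i:=V_{P_i,L_i}\in k^*$, which is nonzero by \textup{(4.6)} in \cref{rem:incidence-valuation-one-first-order};
\item the off-diagonal entries give $u_{ij}:=U_{P_i,L_j}\in k^*$ and $w_{ij}:=V_{P_i,L_j}\in k$.
\end{itemize}
Thus the block data $(U,A,W)$ is simply the restriction of the global expansion $(U,V)$ to these rows and columns.

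Second, since $F$ has classical rank at most $3$, every $4\times4$ minor vanishes, so in particular $\det M(t)=0$ identically. \Cref{lem:identity4-determinant-first-order} gives
\[
\det M(t)=\Theta_4(U)+t\,\Psi_4(U,A,W)+O(t^2).
\]
Comparing the $t^0$ and $t^1$ coefficients yields $\Theta_4(U)=0$ and $\Psi_4(U,A,W)=0$, exactly as in \cref{cor:identity4-two-equation-package}.

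Third, for the monomial specialization: if the off-incidence entries of the block carry no first-order correction, then $w_{ij}=0$ for all $i\ne j$. The entire second sum in $\Psi_4$ then vanishes, as noted in \cref{rem:identity4-monomial-specialization}, leaving $\sum_i a_iD_i(U)=0$.

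The only delicate point is the meaning of the uniformizer expansion. If the value group is not discrete, or the lift does not literally have the form $U+tV+O(t^2)$, the $t^1$ coefficient is only well defined when the relevant entries admit such an expansion, which is what the phrase ``local uniformizer expansion'' is taken to assume. I would state explicitly that under this hypothesis the Leibniz expansion in \cref{lem:identity4-determinant-first-order} is valid, because all discarded terms lie in $t^2R_K$. Given that, no further obstacle arises.
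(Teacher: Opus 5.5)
Your proof is correct and is essentially the paper's argument: the paper simply invokes \cref{cor:identity4-two-equation-package} together with \cref{rem:identity4-monomial-specialization} on the chosen block, which is exactly the restriction-and-coefficient-comparison you spell out. Your explicit identification of $(U,A,W)$ as the restriction of the global $(U,V)$ and your caveat about what ``uniformizer expansion'' must mean are welcome clarifications. Your argument also never uses the private-line origin of the block, so it applies to any $4\times4$ submatrix of $I(\Pi)$ equal to $I_4$.
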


\begin{proof}
This is exactly Corollary~\ref{cor:identity4-two-equation-package} together with Remark~\ref{rem:identity4-monomial-specialization}, applied to the chosen identity-pattern minor.
\end{proof}

\begin{lemma}[One line yields a $(q+1)\times(q+1)$ incidence-supported diagonal block]
\label{lem:one-line-incidence-diagonal-block}
Let $\Pi$ be a projective plane of order $q$, and fix one line $\ell^*$ with points
\[
P_1,\dots,P_{q+1}\in \ell^*.
\]
For each $i$, choose a line $L_i\neq \ell^*$ through $P_i$.
Then the incidence submatrix on rows $P_1,\dots,P_{q+1}$ and columns $L_1,\dots,L_{q+1}$ has diagonal entries equal to $1$ and all off-diagonal entries equal to $0$.
In particular, any matrix supported exactly on incidences and nonzero on every incidence contains a nonsingular diagonal $(q+1)\times(q+1)$ submatrix, hence has rank at least $q+1$.
\end{lemma}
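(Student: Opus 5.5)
The proof is a direct check of the incidences using the projective-plane axioms, followed by a standard observation about diagonal matrices.

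\emph{Diagonal entries.} By construction $P_i\in L_i$, so the $(P_i,L_i)$ entry of the incidence submatrix is $1$.

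\emph{Off-diagonal entries.} Take $i\neq j$ and suppose, for contradiction, that $P_j\in L_i$. Then $L_i$ contains the two distinct points $P_i$ and $P_j$. These two points also lie on $\ell^*$. By uniqueness of the line through two distinct points, $L_i=\ell^*$, which contradicts the choice $L_i\neq\ell^*$. Hence $P_j\notin L_i$ and the $(P_j,L_i)$ entry is $0$.

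The columns $L_1,\dots,L_{q+1}$ are pairwise distinct. If $L_i=L_j$ for some $i\neq j$, then $P_j\in L_i$, which was just excluded. So we really obtain a $(q+1)\times(q+1)$ submatrix equal to $I_{q+1}$. Such lines exist: each $P_i$ lies on $q+1\ge 2$ lines, so at least one of them differs from $\ell^*$.

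\emph{Rank consequence.} Let $N$ be any matrix whose support is exactly the set of incidences and which is nonzero on every incidence. Restricting $N$ to rows $P_1,\dots,P_{q+1}$ and columns $L_1,\dots,L_{q+1}$, the off-diagonal entries vanish because they are nonincidences. The diagonal entries are nonzero because they are incidences. So this block is diagonal with nonzero diagonal, its determinant is the product of the diagonal entries, and it is nonsingular. Since the rank of a matrix is at least the size of any nonsingular square submatrix, $\rank(N)\ge q+1$.

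No step is a real obstacle. The only point needing care is the off-diagonal check, which uses $L_i\neq\ell^*$ together with the unique-line axiom. That same check is also what gives the distinctness of the columns.
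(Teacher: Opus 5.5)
Your proof is correct and follows essentially the same argument as the paper. The diagonal entries are incidences by construction; an off-diagonal incidence $P_j\in L_i$ would force $L_i=\ell^*$ by the unique-line axiom; and a diagonal block with nonzero diagonal gives the rank bound. Your explicit checks that suitable lines $L_i$ exist and that the columns $L_1,\dots,L_{q+1}$ are distinct are points the paper leaves implicit, and you handle both correctly.
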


\begin{proof}
By construction $P_i\in L_i$, so the diagonal entries are incidences.
If $j\neq i$ and $P_j\in L_i$, then the two distinct points $P_i,P_j\in \ell^*$ would lie on both $L_i$ and $\ell^*$, contradicting the projective-plane axiom that two points determine a unique line.
Hence $P_j\notin L_i$ for every $j\neq i$, so all off-diagonal entries vanish.
The final rank claim follows because the resulting $(q+1)\times(q+1)$ submatrix is diagonal with nonzero diagonal entries.
\end{proof}

\begin{theorem}[No monomial rank-$3$ lift for $q\ge 6$]
\label{thm:no-monomial-rank3-lift}
Let $\Pi$ be a projective plane of order $q\ge 6$.
Then $I(\Pi)$ admits no monomial rank-$\le 3$ lift.
Equivalently, if $\Kapr(I(\Pi))\le 3$, then every rank-$\le 3$ lift of $I(\Pi)$ must contain some valuation-$0$ entry with a nontrivial first-order correction.
\end{theorem}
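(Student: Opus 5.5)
The argument is a rank count on the first-order coefficient. Suppose $F(t)=U+tA$ is a monomial rank-$\le 3$ lift as in \cref{def:monomial-lift-incidence}. It is a genuine valued-field matrix of rank at most $3$, and its entrywise uniformizer expansion is $F(t)=U+tV+O(t^2)$ with $V=A$ exactly, since there are no higher-order terms. By \cref{cor:first-order-coefficient-rank-bound} we then get $\rank_k(A)\le 6$.

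On the other hand, $A$ is supported exactly on the incidences and is nonzero on every incidence, because $A_{p\ell}=a_{p\ell}\in k^*$ for $p\in\ell$ and $A_{p\ell}=0$ otherwise. \Cref{lem:one-line-incidence-diagonal-block} applies directly to such a matrix. Fix a line $\ell^*$ with points $P_1,\dots,P_{q+1}$, and choose lines $L_i\neq\ell^*$ through $P_i$; these exist since $q+1\ge 2$. The resulting block of $A$ is diagonal with nonzero diagonal entries. Hence $\rank_k(A)\ge q+1\ge 7$ when $q\ge 6$, which contradicts $\rank_k(A)\le 6$. Therefore no monomial rank-$\le 3$ lift exists.

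For the equivalent formulation, suppose $\Kapr(I(\Pi))\le 3$ and take any rank-$\le 3$ lift $F=U+tV+O(t^2)$. The incidence entries have valuation exactly $1$, so $V_{p\ell}\neq0$ for every incidence (Remark~\ref{rem:incidence-valuation-one-first-order}). Suppose also that $V_{p\ell}=0$ at every nonincidence, i.e.\ no valuation-$0$ entry has a first-order correction. Then $V$ is again supported exactly on incidences and nonzero there. The same block argument gives $\rank V\ge q+1\ge 7$, while \cref{cor:first-order-coefficient-rank-bound} still gives $\rank V\le 6$, a contradiction. So some nonincident entry must carry a nonzero first-order term. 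This formulation is in fact stronger than the monomial statement, since it tolerates arbitrary $O(t^2)$ tails.

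The step needing most care is the justification of \cref{cor:first-order-coefficient-rank-bound}. It asks that a rank-$\le3$ matrix over $K$ factor as $A(t)B(t)$ with $A(t),B(t)$ having entries in $R_K$ that admit first-order expansions in the same uniformizer $t$. I would secure this as follows. Choose three rows of $F$ that are linearly independent over $K$ (fewer if the rank is smaller). After scaling rows and columns by units and powers of $t$, arrange that the row space has a basis with coefficients in $R_K$ whose reduction has the expected form. Alternatively, work with $2$-jets: the condition that all $7\times7$ minors of $V$ vanish can be derived from the vanishing of all $4\times4$ minors of $F$ modulo $t^2$, since $\det(U+tV)_{I,J}$ expanded to order $t$ involves only $U$ and $V$ and is controlled by $\rank U\le3$. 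If that direct route proves messy, the paper's stated corollary is taken as given, and the proof above is then immediate.
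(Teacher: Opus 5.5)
Your proposal is correct and is essentially the paper's proof: you play the bound $\rank_k(A)\le 6$ from \cref{cor:first-order-coefficient-rank-bound} against the nonsingular diagonal $(q+1)\times(q+1)$ block of \cref{lem:one-line-incidence-diagonal-block}, and your direct treatment of the second formulation (allowing arbitrary $O(t^2)$ tails) usefully proves what the paper only asserts as an equivalence. One small caution on your optional 2-jet justification: it needs $\rank_k U=3$ exactly, not just $\rank_k U\le 3$, because if $\rank_k U\le 2$ every $t$-coefficient of a $4\times4$ minor of $U+tV$ is a combination of $3\times3$ minors of $U$ and so vanishes identically; here $\rank_k U=3$ does hold, since with rank $\le 2$ any two collinear points would receive proportional rows of $U$ and would then lie on exactly the same lines.
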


\begin{proof}
Assume for contradiction that there exists a monomial rank-$\le 3$ lift
\[
F(t)=U+tA
\]
as in Definition~\ref{def:monomial-lift-incidence}.
Since $F(t)$ has rank at most $3$, \cref{cor:first-order-coefficient-rank-bound} gives
\[
\rank_k(A)\le 6.
\]
On the other hand, $A$ is supported exactly on incidences and every incidence entry of $A$ is nonzero.
Applying \cref{lem:one-line-incidence-diagonal-block} to any line $\ell^*$ of $\Pi$, we obtain a diagonal $(q+1)\times(q+1)$ submatrix of $A$ with nonzero diagonal entries.
Therefore
\[
\rank_k(A)\ge q+1.
\]
For $q\ge 6$ this gives $\rank_k(A)\ge 7$, contradicting $\rank_k(A)\le 6$.
So no such monomial rank-$\le 3$ lift exists.
\end{proof}

\begin{remark}[Implication for first-order analysis]
\label{rem:note39-lambda-necessity}
\Cref{thm:no-monomial-rank3-lift} shows that for $q\ge 6$ the first-order obstruction cannot be resolved by incidence coefficients alone.
Thus any hypothetical rank-$\le 3$ lift must involve non-monomial first-order corrections on some valuation-$0$ entries.
In particular, any successful continuation of the analysis must control nontrivial first-order data on the off-incidence part of the lift, not only the incidence-supported coefficients.
\end{remark}

\begin{theorem}[Quadratic lower bound for first-order off-incidence support on a canonical line-choice block]
\label{thm:lambda-support-density-rank3}
Let $\Pi$ be a projective plane of order $q$, and assume $I(\Pi)$ admits a rank-$\le 3$ lift
\[
F(t)=U+tV+O(t^2).
\]
Fix one line $\ell^*$ with points $P_1,\dots,P_{q+1}$, and for each $i$ choose a line $L_i\neq \ell^*$ through $P_i$.
Set $N:=q+1$, and let
\[
V^{\triangle}:=(V_{P_i,L_j})_{1\le i,j\le N}.
\]
Then:
\begin{enumerate}[label=(\arabic*),leftmargin=2em]
  \item the diagonal entries of $V^{\triangle}$ are nonzero, because $P_i\in L_i$ implies $V_{P_i,L_i}\neq 0$ by \cref{rem:incidence-valuation-one-first-order};
  \item $\rank_k(V^{\triangle})\le 6$ by \cref{cor:first-order-coefficient-rank-bound};
  \item if one defines a graph $G$ on $\{1,\dots,N\}$ by declaring $\{i,j\}$ an edge whenever at least one of $V_{P_i,L_j}$ or $V_{P_j,L_i}$ is nonzero, then
  \[
  |E(G)|\ge \frac{N(N-6)}{12}=\frac{(q+1)(q-5)}{12}.
  \]
\end{enumerate}
In particular, every rank-$\le 3$ lift must activate at least
\[
\frac{(q+1)(q-5)}{12}
\]
unordered off-diagonal pairs inside this canonical $(q+1)\times(q+1)$ block, and hence at least that many nonincidence first-order corrections are forced somewhere among the entries $V_{P_i,L_j}$ with $i\neq j$.
\end{theorem}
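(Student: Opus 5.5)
Parts (1) and (2) are immediate, and part (3) is the real content. For (1), $P_i\in L_i$, and \cref{rem:incidence-valuation-one-first-order} forces $U_{P_i,L_i}=0$ and $V_{P_i,L_i}\neq 0$. For (2), $V^{\triangle}$ is a submatrix of $V$, and \cref{cor:first-order-coefficient-rank-bound} gives $\rank_k(V)\le 6$; rank does not increase on passing to submatrices.

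For (3) I will first record where the nonzero entries can sit. By \cref{lem:one-line-incidence-diagonal-block}, every off-diagonal position $(P_i,L_j)$ with $i\neq j$ is a nonincidence. So its entry is a first-order correction on a valuation-$0$ entry, and the graph $G$ records exactly the symmetrized off-diagonal support of $V^{\triangle}$.

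The key combinatorial step is an independent-set argument. Let $S\subseteq\{1,\dots,N\}$ be an independent set of $G$. Then for $i\neq j$ in $S$ both $V_{P_i,L_j}$ and $V_{P_j,L_i}$ vanish. Hence the principal submatrix $V^{\triangle}_{S,S}$ is diagonal with nonzero diagonal entries by (1), so $|S|=\rank V^{\triangle}_{S,S}\le \rank V^{\triangle}\le 6$. This gives $\alpha(G)\le 6$.

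I then apply Turán's theorem in its complementary (Caro--Wei) form. A graph on $N$ vertices with $|E|$ edges satisfies
\[
\alpha(G)\ge \frac{N}{1+2|E|/N}=\frac{N^2}{N+2|E|}.
\]
Combined with $\alpha(G)\le 6$, this gives $N^2\le 6N+12|E|$, i.e.\ $|E(G)|\ge N(N-6)/12$. Substituting $N=q+1$ gives $(q+1)(q-5)/12$. Each edge $\{i,j\}$ certifies at least one nonzero entry among $V_{P_i,L_j},V_{P_j,L_i}$. Distinct unordered pairs give distinct positions, so the final count of forced nonincidence first-order corrections follows.

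The main obstacle is only making sure the Caro--Wei/Turán inequality is quoted in exactly the form producing the constant $12$. If needed, I will derive it directly from Jensen's inequality applied to $\alpha(G)\ge\sum_v 1/(d(v)+1)$, using $\sum_v d(v)=2|E|$. For $q\le 5$ the bound is nonpositive and trivially true, so no case split on $q$ is needed.
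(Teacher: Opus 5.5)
Your proof is correct and follows the paper's argument: an independent set $S$ of $G$ indexes a diagonal principal submatrix of $V^{\triangle}$ with nonzero diagonal, so $\alpha(G)\le 6$, and an extremal bound then gives $|E(G)|\ge N(N-6)/12$. The only cosmetic difference is the form of that bound: the paper applies Tur\'an's theorem to the $K_7$-free complement $\overline G$ (getting $|E(\overline G)|\le 5N^2/12$), whereas you use the equivalent Caro--Wei/Jensen inequality $\alpha(G)\ge N^2/(N+2|E|)$, which yields the same constant.
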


\begin{proof}
The first item follows from \cref{rem:incidence-valuation-one-first-order}, because each diagonal position $(P_i,L_i)$ is an incidence.
The second item follows since $V^{\triangle}$ is a submatrix of $V$.

For the third item, let $S\subseteq\{1,\dots,N\}$ be an independent set in $G$.
Then for every distinct $i,j\in S$ we have both $V_{P_i,L_j}=0$ and $V_{P_j,L_i}=0$.
So the principal submatrix of $V^{\triangle}$ indexed by $S$ is diagonal with nonzero diagonal entries, hence has rank $|S|$.
Because $\rank(V^{\triangle})\le 6$, this forces $|S|\le 6$.
Equivalently, the complement graph $\overline G$ is $K_7$-free.
By Tur\'an's theorem,
\[
|E(\overline G)|\le \Bigl(1-\frac16\Bigr)\frac{N^2}{2}=\frac{5N^2}{12}.
\]
Therefore
\[
|E(G)|=\binom{N}{2}-|E(\overline G)|
\ge \frac{N(N-1)}{2}-\frac{5N^2}{12}
=\frac{N(N-6)}{12}.
\]
Finally, each edge of $G$ certifies that at least one of the two off-diagonal nonincidence entries $(P_i,L_j)$ or $(P_j,L_i)$ carries a nonzero first-order correction, yielding the stated lower bound on forced $\Lam$-activity.
\end{proof}

\begin{remark}[A support lower bound, not yet a contradiction]
\label{rem:note40-support-vs-solvability}
\Cref{thm:lambda-support-density-rank3} is a necessary support condition, not yet a contradiction.
It proves that the non-monomial regime forced by \cref{thm:no-monomial-rank3-lift} is quantitatively large: one canonical line-choice block already needs $\Omega(q^2)$ activated nonincidence corrections.
The remaining task is to combine this quadratic $\Lam$-budget with the many local equations $\Psi_4(U,A,W)=0$ so that the same variables are simultaneously numerous, structured, and rank-constrained.
\end{remark}




\section{Discussion and outlook}
\label{sec:discussion}



\subsection{Toward the Guterman--Shitov asymptotic problem}
The results proved here sharpen the shape of the rank-$3$ obstruction problem without claiming a full asymptotic resolution. By \cref{lem:incidence-as-cocircuit,cor:kapr3-representability-gate}, non-representability already gives the uniform lower bound $\Kapr(I(\Pi))\ge 4$, but that representability gate is only the starting point of the Guterman--Shitov question. The main unconditional advances of the present paper are the following: vanished identity-pattern minors force local cross-ratio defects \cref{cor:identity4-vanishing-forces-curvature,thm:flat-chart-obstruction}; counting those minors yields the global lower bound of \cref{thm:identity-global-curvature-pressure}; and first-order analysis yields both the no-monomial-lift theorem \cref{thm:no-monomial-rank3-lift} and the quadratic support bound of \cref{thm:lambda-support-density-rank3}. In particular, any hypothetical rank-$\le 3$ lift must already carry a large family of residue defects together with nontrivial first-order activity.

The remaining obstacle is therefore global rather than local. One still needs an independence principle strong enough to prevent a single rank-$3$ residue realization and a single admissible first-order direction from absorbing all overlapping local constraints simultaneously. A natural next target is to globalize the first-order equations attached to identity-pattern minors. In the monomial specialization $w_{ij}=0$ on off-incidence positions, \cref{cor:identity4-two-equation-package} reduces the first-order condition to
\[
\sum_{i=1}^4 a_i D_i(U)=0.
\]
A contradiction for this reduced system on a sufficiently independent family of identity-pattern minors would already eliminate the simplest surviving rank-$3$ scenarios.

\begin{conjecture}[First-order nonvanishing obstruction]
\label{conj:first-order-nonvanishing-obstruction}
For sufficiently large $q$, no rank-$3$ residue zero-pattern model $U$ satisfying the identity-pattern minor constraints developed in Sections~\ref{sec:framework}--\ref{sec:gadgets} admits a tangent direction $V\in T_U$ with
\[
V_{p\ell}\neq 0
\qquad\text{for every incidence }p\in\ell.
\]
Equivalently, any putative rank-$\le 3$ lift of $I(\Pi)$ should force at least one incidence entry to acquire valuation at least $2$.
\end{conjecture}

This conjecture is stated here, rather than in Section~\ref{sec:main}, to emphasize that the core theorem package of the paper is complete at the local and counting level; what is missing is a globalization theorem, not an additional local computation.



\appendix

\section{Technical lemmas and normalization tools}
\label{sec:app:tech}

\subsection{Rank bounds for first-order layers (details)}
\begin{lemma}[First-order rank bound (detailed version)]
If $F\in K^{m\times n}$ has $\rank(F)\le r$ and $F=U+tV+O(t^2)$, then $\rank(V)\le 2r$.
\end{lemma}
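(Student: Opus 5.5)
The plan is to factor the lift and differentiate the factorization to first order, exactly as in \cref{cor:first-order-coefficient-rank-bound}, with $3$ replaced by $r$. The only care needed is to make the expansions legitimate over the valuation ring.

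\textbf{Step 1: find an integral factorization.} Since $\rank(F)\le r$, we can write $F=A(t)B(t)$ with $A(t)\in K^{m\times r}$ and $B(t)\in K^{r\times n}$. The expansion $F=U+tV+O(t^2)$ presupposes that $F$ has entries in $R_K$. We want $A,B$ to be integral too, and we do not want rescaling to push $t$-powers into one factor. To arrange this:
\begin{itemize}
\item Take the $R_K$-module spanned by the columns of $F$. It is a submodule of $R_K^m$ contained in a $K$-subspace of dimension $\le r$.
\item Over a valuation ring, a finitely generated torsion-free module is free. Its saturation in $R_K^m$ is a free direct summand of rank $\le r$.
\item Choose a basis of this saturation as the columns of $A(t)\in R_K^{m\times r}$. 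Then each column of $F$ is an $R_K$-combination of these columns, because the saturated summand is a direct summand.
\item Hence $B(t)\in R_K^{r\times n}$.
\end{itemize}

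\textbf{Step 2: expand to first order.} Write $A(t)=A_0+tA_1+O(t^2)$ and $B(t)=B_0+tB_1+O(t^2)$, with coefficients in $k$, using the same splitting as for $F$. Multiplying and comparing the $t^0$ and $t^1$ coefficients gives $U=A_0B_0$ and
\[
V=A_1B_0+A_0B_1.
\]
For a general value group, the expansion $O(t^2)$ should be read as "valuation $\ge 2$" truncation. If intermediate valuations in $(0,1)$ or $(1,2)$ occur, the coefficient of $t^1$ is still extracted as in \cref{prop:first-order-tangent-directions}. I will state that we work with a discretely valued (or $t$-adic) expansion, as the paper implicitly does.

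\textbf{Step 3: conclude.} Each summand $A_1B_0$ and $A_0B_1$ factors through $k^r$, so each has rank $\le r$. Subadditivity of rank then gives $\rank(V)\le 2r$.

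\textbf{Main obstacle.} The real work is Step 1, the integrality of the factorization. A naive factorization over $K$ could have poles, and then the first-order coefficient of $F$ would mix higher-order terms of $A$ and $B$. The saturation/direct-summand argument handles this. An alternative is to apply Smith normal form over a DVR: write $F=P\,\mathrm{diag}(t^{e_1},\dots,t^{e_r},0,\dots)\,Q$ with $P,Q$ invertible over $R_K$, then absorb the diagonal into $Q$.
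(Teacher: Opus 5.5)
Your argument is correct and follows the paper's proof: take a rank factorization $F=A(t)B(t)$, expand both factors to first order to get $V=A_1B_0+A_0B_1$, and conclude by subadditivity of rank. Your Step~1 fills in a step the paper takes for granted: the paper expands $A(t)$ and $B(t)$ as $A_0+tA_1+O(t^2)$ without checking that the factors can be chosen with entries in $R_K$, and your saturation (or Smith normal form) argument supplies that check. Like the paper, you restrict the coefficient comparison to a discretely valued, $t$-adic setting, which is the setting in which the argument is valid.
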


\begin{proof}
Choose a rank factorization
\[
F=A(t)B(t),
\qquad
A(t)\in K^{m\times r},\quad B(t)\in K^{r\times n}.
\]
Expand both factors to first order in the uniformizer $t$:
\[
A(t)=A_0+tA_1+O(t^2),
\qquad
B(t)=B_0+tB_1+O(t^2).
\]
Multiplying gives
\[
F=A_0B_0+t(A_1B_0+A_0B_1)+O(t^2).
\]
Comparing the coefficient of $t$ with the given expansion $F=U+tV+O(t^2)$, we obtain
\[
V=A_1B_0+A_0B_1.
\]
Now $A_1B_0$ has rank at most $r$, and $A_0B_1$ has rank at most $r$. Therefore
\[
\rank(V)\le \rank(A_1B_0)+\rank(A_0B_1)\le r+r=2r,
\]
as claimed.
\end{proof}

\subsection{Counting identity-pattern minors in the incidence matrix (details)}

\begin{lemma}[Detailed private-line count for identity-pattern minors]
\label{lem:identity-count-detailed}
Let $\Pi$ be a projective plane of order $q\ge 3$ with combinatorial incidence matrix $I(\Pi)$, and let $v:=q^2+q+1$.
The number of ordered $4\times4$ submatrices of $I(\Pi)$ equal to $I_4$ and obtained from ordered quadrangles with one private line through each vertex is at least
\[
v(v-1)q^2(q-1)^2(q-2)^4.
\tag{A.1}
\]
\end{lemma}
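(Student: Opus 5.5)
The count follows the argument already sketched in the proof of \cref{prop:identity-minor-count-lower-bound}. I will build an explicit injective family of ordered data $(P_1,P_2,P_3,P_4;L_1,L_2,L_3,L_4)$, each of which produces a distinct ordered $4\times4$ submatrix equal to $I_4$, and then multiply the numbers of choices at each stage. Since rows and columns are both ordered, distinct data give distinct ordered submatrices. So it suffices to count data tuples, and no division by symmetries is needed.

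\emph{Step 1: ordered quadrangles.} Choose $P_1$ in $v$ ways and $P_2\neq P_1$ in $v-1$ ways. Then choose $P_3\notin\overline{P_1P_2}$, which gives $v-(q+1)=q^2$ ways. For $P_4$, I exclude the union of the three lines $\overline{P_1P_2},\overline{P_1P_3},\overline{P_2P_3}$. These lines are pairwise distinct because $P_1,P_2,P_3$ are noncollinear. Each pair of them meets in exactly one of $P_1,P_2,P_3$, and no point lies on all three. By inclusion--exclusion the union has exactly $3(q+1)-3=3q$ points. This leaves $v-3q=(q-1)^2$ choices for $P_4$, and every such choice yields an ordered quadrangle. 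I will note explicitly that no three of $P_1,\dots,P_4$ are collinear: collinearity with $P_4$ would put $P_4$ on one of the excluded lines.

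\emph{Step 2: private lines.} For each $i$, apply \cref{lem:private-line-identity-minors}. The $q+1$ lines through $P_i$ include exactly three lines $\overline{P_iP_j}$ ($j\neq i$). These are distinct by the quadrangle property, so $q-2\ge 1$ private lines remain. The choices for the four indices are independent, which gives the factor $(q-2)^4$. The lemma also gives that the resulting submatrix equals $I_4$.

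\emph{Step 3: injectivity and assembly.} The map from data tuples to ordered submatrices (row sequence, column sequence) is injective by definition. I must also check that the $L_i$ are distinct columns, so that the submatrix really is $4\times4$. If $L_i=L_j$ with $i\neq j$, then $P_j\in L_i$, contradicting privacy. Multiplying the stage counts gives exactly $v(v-1)q^2(q-1)^2(q-2)^4$ tuples, which proves \textup{(A.1)}; the inequality is in fact an equality for this restricted family. The only delicate point is the inclusion--exclusion in Step 1: I must confirm that the three lines share no common point and that their pairwise intersections are exactly the chosen vertices.
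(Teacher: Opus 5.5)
Your proposal is correct and follows essentially the same route as the paper. You count ordered quadrangles as $v(v-1)q^2(q-1)^2$, using $|\overline{P_1P_2}\cup\overline{P_1P_3}\cup\overline{P_2P_3}|=3q$, and then multiply by $(q-2)^4$ private-line choices. Your extra checks are valid refinements that the paper leaves implicit: the $L_i$ are distinct, data tuples map injectively to ordered submatrices, and \textup{(A.1)} is in fact an equality for this restricted family.
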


\begin{proof}
Choose an ordered quadrangle $(P_1,P_2,P_3,P_4)$ first.

There are $v$ choices for $P_1$ and $v-1$ choices for $P_2\neq P_1$.
The third point $P_3$ must avoid the line $\overline{P_1P_2}$.
Because every line has $q+1$ points, the number of available choices is
\[
v-(q+1)=q^2.
\]
Thus $(P_1,P_2,P_3)$ is already noncollinear.

For $P_4$, exclude the three lines
\[
\overline{P_1P_2},\qquad \overline{P_1P_3},\qquad \overline{P_2P_3}.
\]
Each contains $q+1$ points.
Since $P_1,P_2,P_3$ are noncollinear, those lines are pairwise distinct and meet pairwise exactly at $P_1,P_2,P_3$.
Hence their union has size
\[
3(q+1)-3=3q.
\]
So the number of admissible choices for $P_4$ is at least
\[
v-3q=(q^2+q+1)-3q=(q-1)^2.
\]
Any such $P_4$ yields an ordered quadrangle.

Now fix such a quadrangle.
For each $i\in\{1,2,3,4\}$, exactly $q+1$ lines pass through $P_i$.
Among them, precisely three also pass through one of the other vertices, namely
\[
\overline{P_iP_j}\qquad (j\neq i).
\]
These three lines are distinct because no three vertices are collinear.
Therefore exactly
\[
(q+1)-3=q-2
\]
lines through $P_i$ avoid the other three vertices.
Choosing one such line $L_i$ for each $i$ produces a $4\times4$ incidence block with
\[
P_i\in L_i,\qquad P_j\notin L_i\quad (j\neq i),
\]
hence the block is exactly $I_4$.

Multiplying the four point-selection factors and the four private-line factors gives
\[
v(v-1)\cdot q^2\cdot (q-1)^2\cdot (q-2)^4,
\]
which is \textup{(A.1)}.
\end{proof}

\subsection{Multiplicity of one zero rectangle inside identity-pattern minors (details)}

\begin{lemma}[Detailed multiplicity bound for a fixed $0$-rectangle]
\label{lem:identity-rectangle-multiplicity-detailed}
Let $\Pi$ be a projective plane of order $q\ge 3$, and let
\[
R=(\{p,q\}\times \{\ell,m\})
\]
be a $0$-rectangle in $I(\Pi)$, so $p,q\notin \ell\cup m$ and $\ell\neq m$.
Then the number of ordered identity-pattern minors from the private-line construction of \cref{lem:private-line-identity-minors} that contain $R$ as one of their admissible complement rectangles is at most
\[
C(q+1)^4
\tag{A.2}
\]
for an absolute constant $C$.
One may take $C=16(4!)^2$.
\end{lemma}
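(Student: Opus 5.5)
The plan is to bound the ordered private-line minors containing $R$ by recording how $R$ can sit inside one of them. Such a minor is an ordered tuple $(P_1,\dots,P_4;L_1,\dots,L_4)$ with $P_i\in L_i$ and $P_j\notin L_i$ for $j\neq i$. By \cref{def:identity4-admissible-crossratio}, an admissible rectangle uses two rows $\{i,j\}$ and two columns $\{a,b\}$ with $\{i,j\}\cap\{a,b\}=\varnothing$. Since the indices lie in a $4$-element set, this forces $\{a,b\}=\{1,2,3,4\}\setminus\{i,j\}$, so the admissible rectangles are exactly the six complement rectangles. For the statement ``$R\subset M$'' I will therefore fix combinatorial placement data: an ordered choice of the two row positions occupied by $p$ and $q$, and an ordered choice of the two complementary column positions occupied by $\ell$ and $m$. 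There are at most $4\cdot 3\cdot 2\cdot 2\le(4!)^2$ such placements, so it suffices to bound, for each fixed placement, the number of minors realizing it by $16(q+1)^4$ (or any absolute multiple of $(q+1)^4$).

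Now fix a placement, say (after relabeling) $P_1=p$, $P_2=q$, $L_3=\ell$, $L_4=m$; the other placements are identical up to symbols. The remaining free data are $P_3,P_4,L_1,L_2$.

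\begin{itemize}
\item The private-line condition requires $P_3\in L_3=\ell$ and $P_4\in L_4=m$, which gives at most $(q+1)^2$ choices for the pair $(P_3,P_4)$.
\item Likewise $L_1$ must pass through $p$ and $L_2$ through $q$, and there are $q+1$ lines through each point. This gives at most $(q+1)^2$ choices for $(L_1,L_2)$.
\end{itemize}

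We do not even need to impose the remaining constraints (quadrangle condition, privacy of $L_1,L_2$), since we only need an upper bound. Hence each placement contributes at most $(q+1)^4$ minors. Summing over placements gives at most $24\cdot(q+1)^4\le 16(4!)^2(q+1)^4$, which proves (A.2) with $C=16(4!)^2$ (indeed with room to spare).

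The only delicate point is the first step: verifying that ``$R$ contained in $M$ as an admissible complement rectangle'' really pins $p,q$ to row positions and $\ell,m$ to the complementary column positions. One cannot, for instance, have $\ell$ as the private line of $p$ itself. That is ruled out both by $p\notin\ell$ (since $R$ is a $0$-rectangle) and by the disjointness condition $\{i,j\}\cap\{a,b\}=\varnothing$. I would state this explicitly, noting that the diagonal entries are incidences while $R$ consists of nonincidences. With that settled, the count is a direct product bound, consistent with the sketch in \cref{lem:identity-rectangle-multiplicity}.
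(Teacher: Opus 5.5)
Your proof is correct and follows the same route as the paper: you identify $R$ as a complement rectangle, bound the two remaining points (one on $\ell$, one on $m$) by $(q+1)^2$ and the private lines through $p$ and $q$ by $(q+1)^2$, and absorb the orderings into a constant. Fixing the placement first is a cleaner bookkeeping of the same count and gives the sharper constant $24$. One small slip: the number of placements is $4\cdot 3\cdot 2\cdot 1=24$, as you later use, not $4\cdot 3\cdot 2\cdot 2$. Either value is far below $16(4!)^2$, so the conclusion is unaffected.
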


\begin{proof}
Fix one such minor $M$ containing $R$.
Inside any ordered $4\times4$ identity-pattern block, the admissible $0$-rectangles are exactly the six complement rectangles obtained by choosing two point rows and the two line columns matched to the complementary two points.
Therefore, if $R$ occurs inside $M$, then there exist two further points $r,s$ in the block such that $\ell$ and $m$ are precisely the private lines matched to $r$ and $s$.
In particular,
\[
r\in \ell\setminus\{\ell\cap m\},
\qquad
s\in m\setminus\{\ell\cap m\}.
\]
Each of the lines $\ell,m$ contains exactly $q+1$ points, so there are at most $(q+1)^2$ choices for the ordered pair $(r,s)$.
(Indeed, one can sharpen this to $q^2$, but the coarser bound is sufficient.)

Once $(p,q,r,s)$ are fixed, any compatible ordered identity minor must assign private lines to $p$ and $q$ as well.
For each of those two points there are at most $q+1$ possible lines through the point, hence at most $(q+1)^2$ choices for the ordered pair of private lines attached to $p$ and $q$.
Some of these choices fail to produce an actual identity-pattern minor because they may meet one of the other three points, but counting them only enlarges the bound.

Finally, after the underlying four points and four lines are chosen, only constantly many orderings of the rows and columns realize the prescribed rectangle $R$ as one of the six complement rectangles of the block.
A crude bound is obtained by freely ordering the four point rows and four line columns and then multiplying by an extra factor $16$ for the possible identifications of the two rows and two columns appearing in $R$; this gives the explicit constant $16(4!)^2$.
Absorbing all such ordering choices into one constant $C$ yields the claimed upper bound \textup{(A.2)}.
\end{proof}


\section{Supplementary computational checks}
\label{sec:app:computations}


\subsection{Recurring local configurations (supplementary)}

This subsection records the three local configurations that recur most persistently in the later arguments.

\begin{example}[$4\times4$ identity pattern]
For the $4\times4$ identity valuation pattern, the valuation-minimal permutations are exactly the nine derangements of $\{1,2,3,4\}$. Thus a vanished identity-pattern minor forces a multi-term residue cancellation rather than a binomial relation. This is the local source of the derangement equation from Section~\ref{sec:gadgets}.
\end{example}

\begin{example}[Degenerate square-minimizer pattern]
A degenerate diamond may force four valuation-minimal determinant terms rather than three. In that regime the initial form factors as a sum of two binomial remainders, which is why the degenerate analysis naturally leads to transport laws and cancellation-depth considerations rather than to a single trimonial identity.
\end{example}

\begin{example}[Bridge-cycle holonomy]
For the skew rectangle $(A,B;L_0,L_1)$ extracted from a strict $B_*$ configuration, the bridge-cycle holonomy is exactly the residue cross-ratio
\[
\rho_{AB}^{L_0L_1}=\frac{u_{A,L_1}}{u_{A,L_0}}\frac{u_{B,L_0}}{u_{B,L_1}}.
\]
Equivalently, the associated skew determinant can be written as
\[
\Delta_{AB}^{L_0L_1}=u_{A,L_0}u_{B,L_1}\bigl(1-\rho_{AB}^{L_0L_1}\bigr),
\]
so the same local datum can be read either as a cross-ratio defect or as a holonomy defect.
\end{example}

\subsection{\texorpdfstring{Small-$q$ incidence data and computational checks}{Small-q incidence data and computational checks}}
The statements in this subsection are finite computational observations for $\mathrm{PG}(2,2)$ and $\mathrm{PG}(2,3)$. They are included as evidence about the first-layer cycle constraints, not as substitutes for the general argument.
The finite claims below come from exhaustive enumeration of $4\times4$ submatrices of the combinatorial incidence matrices of $\mathrm{PG}(2,2)$ and $\mathrm{PG}(2,3)$, followed by direct computation of the minimizing permutations and the associated cycle vectors.

\begin{proposition}[Computational cycle-space generation by $(0,2)$-type $4\times4$ minors for $q=2,3$]
\label{prop:q23-cycle-generation}
Let $\Pi_q$ be the Desarguesian projective plane of order $q\in\{2,3\}$, with combinatorial incidence matrix $I_q$ and nonincidence
graph $G_0(I_q)$. Let $\mathcal S_{0,2}$ be the set of all $4\times4$ submatrices $B$ of $I_q$ such that:
\begin{enumerate}[leftmargin=2em]
  \item $\min_{\sigma\in S_4} \sum_{i=1}^4 B_{i,\sigma(i)} = 0$;
  \item exactly two permutations attain that minimum.
\end{enumerate}
For each $B\in \mathcal S_{0,2}$, let $c(B)\in Z_1(G_0(I_q);\mathbb F_2)$ be the cycle-vector of the symmetric difference of its two
weight-$0$ perfect matchings. Then
\[
\operatorname{span}_{\mathbb F_2}\{c(B): B\in\mathcal S_{0,2}\}
=
Z_1(G_0(I_q);\mathbb F_2).
\]
Numerically,
\[
\dim Z_1(G_0(I_2);\mathbb F_2)=15,
\qquad
\dim Z_1(G_0(I_3);\mathbb F_2)=92,
\]
and exhaustive enumeration of all $(0,2)$-type $4\times4$ minors reaches exactly those dimensions.
\end{proposition}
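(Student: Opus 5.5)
This is a finite computation, so the plan is to describe the enumeration exactly and fix the bookkeeping that lets the final check be a rank computation over $\mathbb F_2$.

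\emph{Setup.} First build $\mathrm{PG}(2,q)$ for $q=2,3$ explicitly. Take the points to be the normalized nonzero vectors of $\mathbb F_q^3$, the lines to be the normalized dual vectors, and declare incidence when the dot product vanishes. This gives $I_q$, which is $7\times 7$ or $13\times 13$. Next form $G_0(I_q)$. It has $v=q^2+q+1$ vertices on each side, and each point has $v-(q+1)=q^2$ nonincident lines, so $|E|=vq^2$. By \cref{lem:g0-connected} the graph is connected, so
\[
\dim Z_1=|E|-|V|+1=vq^2-2v+1 .
\]
This gives $28-14+1=15$ for $q=2$ and $117-26+1=92$ for $q=3$, which proves the numerical claims. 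Index a basis of $\mathbb F_2^{E}$ by the edges.

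\emph{Enumeration.} Loop over all $\binom{v}{4}^2$ choices of row and column sets. For each, compute the $24$ permutation weights and keep the submatrices whose minimum is $0$ and is attained exactly twice. For each kept $B$, record $c(B)$, the indicator vector of the symmetric difference of the two zero matchings. By \cref{lem:02-single-cycle} this is a single even cycle of $G_0(I_q)$, so $c(B)$ does lie in $Z_1$. Finally, compute the $\mathbb F_2$-rank of the matrix whose rows are the vectors $c(B)$, using Gaussian elimination on bit vectors.

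\emph{Conclusion.} Since every $c(B)$ lies in $Z_1$, their span is contained in $Z_1$. If the computed rank equals $\dim Z_1$ (that is, $15$ or $92$), the span equals $Z_1$, which is the claimed statement.

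\emph{Main obstacle.} The only real risk is in the bookkeeping. Edges must be indexed consistently with the convention that the valuation-$0$ entries are the nonincidences. One must also check that there are exactly two minimizers, not at least two. The computation itself is small: about $511{,}225$ minors times $24$ permutations for $q=3$. As a sanity check, I would confirm that the $(0,3)$ count agrees with \cref{thm:pg23-degenerate-counts}.
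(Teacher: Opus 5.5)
Your proposal is correct and follows the paper's proof: build $\mathrm{PG}(2,q)$ from $\mathbb F_q^3$, scan all $\binom{v}{4}^2$ minors for weight-$0$ minima attained exactly twice, and compare the $\mathbb F_2$-rank of the resulting cycle vectors with $|E|-|V|+1$ (that is, $15$ and $92$). The paper additionally reports the actual outcome of the scan ($777$ minors reaching rank $15$ for $q=2$, and $63{,}414$ minors reaching rank $92$ for $q=3$), which your conditional conclusion still has to supply; your explicit justification of the dimension formula via \cref{lem:g0-connected}, and of the inclusion of the span in $Z_1$, is a small improvement on the paper's implicit use of both.
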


\begin{proof}
We generated $I_q$ explicitly for $q=2,3$ using the standard Desarguesian model $\mathrm{PG}(2,q)$, with points and lines represented by
$1$-dimensional subspaces of $\mathbb F_q^3$ and incidence given by the projective dot-product relation. For each choice of
$4$ points and $4$ lines, we evaluated all $24$ permutations, retained exactly those submatrices with tropical minimum $0$ attained by
exactly two permutations, formed the symmetric-difference cycle-vector of the two corresponding zero-matchings, and computed the
$\mathbb F_2$-span of those vectors by Gaussian elimination.

For $q=2$, the nonincidence graph has $|V|=14$ and $|E|=28$, so
\[
\dim Z_1(G_0(I_2);\mathbb F_2)=|E|-|V|+1=15.
\]
The exhaustive scan finds $777$ minors of type $(0,2)$, and their cycle-vectors span dimension $15$.

For $q=3$, the nonincidence graph has $|V|=26$ and $|E|=117$, so
\[
\dim Z_1(G_0(I_3);\mathbb F_2)=|E|-|V|+1=92.
\]
The exhaustive scan finds $63{,}414$ minors of type $(0,2)$, and their cycle-vectors span dimension $92$.
Therefore the displayed equality holds in both cases.
\end{proof}

\begin{proposition}[Computational cycle-type counts for $(0,2)$-type $4\times4$ minors when $q=2,3$]
\label{prop:q23-cycle-type-counts}
For $\Pi_q=\mathrm{PG}(2,q)$ with $q\in\{2,3\}$, every $(0,2)$-type $4\times4$ minor has a unique cycle of length $4$, $6$, or $8$.
The exhaustive counts are:
\[
q=2:\qquad \#C_4=336,\quad \#C_6=420,\quad \#C_8=21,
\]
and
\[
q=3:\qquad \#C_4=37{,}908,\quad \#C_6=24{,}804,\quad \#C_8=702.
\]
In particular, the $6$-cycle case occurs abundantly in both base planes.
\end{proposition}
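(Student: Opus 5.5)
The statement has two parts: a structural claim that holds for every $(0,2)$-type minor, and a finite numerical claim about $\mathrm{PG}(2,2)$ and $\mathrm{PG}(2,3)$. I would handle them separately.

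\textbf{Structural part.} The claim that every $(0,2)$-type $4\times4$ minor has a unique cycle of length $4$, $6$, or $8$ needs no computation. It is exactly \cref{lem:02-single-cycle}(1), applied to each submatrix $B\in\mathcal S_{0,2}$. The symmetric difference of the two zero perfect matchings is a single even cycle. If there were two cycles, flipping them independently would give at least four matchings, contradicting exactness of the count two. Inside a $4\times4$ bipartite graph such a cycle has length $2k$ with $k\in\{2,3,4\}$. So the cycle length is a well-defined invariant of each minor, and the only remaining task is to tally it.

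\textbf{Numerical part.} I would reuse the enumeration from \cref{prop:q23-cycle-generation}:
\begin{enumerate}[leftmargin=2em]
  \item Build $I_q$ from the standard model of $\mathrm{PG}(2,q)$, with points and lines the $1$-dimensional subspaces of $\mathbb F_q^3$ and incidence given by vanishing dot product.
  \item Run over all $\binom{v}{4}^2$ choices of four rows and four columns, where $v=q^2+q+1$.
  \item For each choice, evaluate the $24$ permutation weights and keep the submatrix exactly when the minimum is $0$ and is attained by exactly two permutations $\sigma,\tau$.
  \item For each kept minor, read off the cycle length as twice the size of the support of $\tau\sigma^{-1}$. Since $\tau\sigma^{-1}$ is a single $k$-cycle, this equals the number of rows on which $\sigma$ and $\tau$ differ, doubled.
  \item Increment the counter for $C_{2k}$.
\end{enumerate}

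\textbf{Consistency checks.} As an internal check, the three counts must add up to the totals already reported in \cref{prop:q23-cycle-generation}. Indeed $336+420+21=777$ and $37{,}908+24{,}804+702=63{,}414$, so the new tallies refine that enumeration rather than being an independent computation. As a further sanity check I would compute $C_4$ by hand. A $(0,2)$ minor with a $4$-cycle is an all-zero $2\times2$ rectangle on two rows, together with a complementary $2\times2$ block whose zero pattern has exactly one perfect matching and which forces the rest. These configurations can in principle be counted from the projective-plane axioms, and the result can be compared with $336$ for $q=2$.

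\textbf{Main obstacle.} The step I expect to need the most care is the bookkeeping of the enumeration itself. Submatrices must be counted as unordered row and column sets, consistent with \cref{prop:q23-cycle-generation}, and the filter must require exactly two zero perfect matchings rather than at least two. A mistake in either convention would shift all three counts. Because the totals are cross-checked against the earlier proposition and the structural classification is guaranteed by \cref{lem:02-single-cycle}, the argument reduces to this finite, reproducible tally.
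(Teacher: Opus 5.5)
Your proposal is essentially the paper's proof. The paper likewise invokes \cref{lem:02-single-cycle} to guarantee a single cycle of length $4$, $6$, or $8$, and then records that length during the same exhaustive scan used for \cref{prop:q23-cycle-generation}; your check that the tallies sum to $777$ and $63{,}414$ is a sensible extra consistency test. For the hand check you suggest, the counts in fact have closed forms $\#C_4=v(q+1)\binom{q}{2}^2q^4$, $\#C_6=v\binom{q+1}{3}q^2(2q^3-1)$ and $\#C_8=v(v-1)q^2(q-1)^2/8$ with $v=q^2+q+1$, which reproduce all six displayed numbers.
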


\begin{proof}
The same exhaustive scan used in \cref{prop:q23-cycle-generation} records, for each $(0,2)$-type minor, the cycle length of the
symmetric difference of its two zero-matchings. By \cref{lem:02-single-cycle}, only the lengths $4$, $6$, and $8$ can occur, and the
listed counts are exactly what the computation returns.
\end{proof}

\begin{corollary}[Characteristic-$2$ specialization of the first layer for $q=2,3$]
\label{cor:q23-gauge}
Let $q\in\{2,3\}$, and let $u_{p,\ell}\in k^*$ be the leading coefficients on the nonincidence edges of a lift of $I_q$.
Assume that $\operatorname{char}(k)=2$ and that every $(0,2)$-type $4\times4$ minor satisfies its induced determinant cancellation
constraint. Then there exist functions
\[
\alpha:\mathcal P\to k^*, \qquad \beta:\mathcal L\to k^*
\]
such that
\[
u_{p,\ell}=\alpha_p\beta_\ell
\]
for every nonincidence edge $(p,\ell)$.
\end{corollary}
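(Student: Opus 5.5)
The plan is to reduce the statement to \cref{lem:holonomy-factorization} applied to the whole nonincidence graph $G_0(I_q)$, which is connected by \cref{lem:g0-connected}. It therefore suffices to show that $\operatorname{Hol}_u(C)=1$ for every even cycle $C\subseteq G_0(I_q)$.

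First I would record what each $(0,2)$-type minor gives. By \cref{prop:02-signed-holonomy}, the cancellation constraint of a $(0,2)$-type minor $B$ is equivalent to $\operatorname{Hol}_u(C_B)=(-1)^k$. Here $C_B$ is the unique $2k$-cycle formed by the symmetric difference of its two zero matchings, which is unique by \cref{lem:02-single-cycle}. Since $\operatorname{char}(k)=2$, we have $(-1)^k=1$ in $k$, so every such cycle has trivial unsigned holonomy.

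Next I would extend triviality from the cycles $C_B$ to all cycles. Holonomy is multiplicative: orient each cycle point-to-line and regard it as an integral $1$-cycle. Then $\operatorname{Hol}_u$ extends to a group homomorphism $h\colon Z_1(G_0;\mathbb Z)\to k^*$, given by sending an edge $(p,\ell)$ traversed in direction $\pm$ to $u_{p,\ell}^{\pm1}$. The previous step says $h$ vanishes on the subgroup $H$ generated by the $C_B$. \Cref{prop:q23-cycle-generation} gives $H+2Z_1=Z_1$, i.e.\ the $C_B$ span $Z_1\otimes\mathbb F_2$.

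This step is the main obstacle, because $\mathbb F_2$-spanning alone only shows that $Z_1/H$ is a finite group of odd order, since a finitely generated $2$-divisible group is finite of odd order. In characteristic $2$ the group $k^*$ has no $2$-torsion, since $x^2=1$ forces $x=1$. However, $k^*$ may contain odd-order roots of unity, so a homomorphism $Z_1/H\to k^*$ need not be trivial.

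I would close the gap by strengthening the finite computation to an integral one. Fix a spanning tree $T$ of $G_0(I_q)$ and write each $c(B)$ as an integer vector in the basis of fundamental cycles; this has $15$ coordinates for $q=2$ and $92$ for $q=3$. Then I would check, by Smith normal form or Hermite reduction over $\mathbb Z$, that these vectors generate all of $\mathbb Z^{\dim Z_1}$. Equivalently, every fundamental cycle is an integer combination of the $C_B$. This is a routine extension of the enumeration already performed and would give $H=Z_1(G_0;\mathbb Z)$.

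As a cheaper alternative, I would first try to show that the $4$-cycles of type $(0,2)$ alone generate $Z_1$ over $\mathbb Z$, using the diameter-$3$ structure from \cref{lem:g0-connected}. The aim would be to triangulate every fundamental cycle into such squares.

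Finally, with $h\equiv 1$ on all of $Z_1$, every even cycle has $\operatorname{Hol}_u(C)=1$. The spanning-tree construction in \cref{lem:holonomy-factorization} then produces $\alpha\colon\mathcal P\to k^*$ and $\beta\colon\mathcal L\to k^*$ with $u_{p,\ell}=\alpha_p\beta_\ell$ on every nonincidence edge, as required.
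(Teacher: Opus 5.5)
Your reduction is the paper's proof. \cref{prop:02-signed-holonomy} with $\operatorname{char}(k)=2$ gives $\operatorname{Hol}_u(C_B)=1$, and the paper then simply combines \cref{prop:q23-cycle-generation} with \cref{lem:g0-connected,lem:holonomy-factorization}.

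\textbf{Your objection is correct, and the paper does not address it.} $\operatorname{Hol}_u$ is a character of $Z_1(G_0;\mathbb Z)$. The $\mathbb F_2$ computation only shows that $Z_1/H$ is finite of odd order, and in characteristic $2$ odd torsion is exactly what can survive in $k^*$. For $k=\overline{\mathbb F}_2$ the corollary is equivalent to $Z_1/H$ being a $2$-group: a nontrivial character of $Z_1/H$ extends to edge labels that satisfy every $(0,2)$ constraint but do not factor. Apart from full rank, the $\mathbb F_2$ data controls only the $2$-primary part, which is harmless anyway.

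\textbf{Your proposal does not close the gap either.} The Smith-normal-form check is announced, not performed, and its outcome is presumed, so the argument is conditional.

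Your fallback fails for $q=3$. An alternating-cycle argument shows that a skew rectangle ($m\cap n\notin\overline{XY}$) is never the symmetric difference of the two zero matchings of a $(0,2)$ minor. So only degenerate rectangles occur; consistently, $\#C_4=37{,}908=468\cdot 81$ is exactly the number of forced-$W$ completions of the $468$ degenerate rectangles of $\mathrm{PG}(2,3)$. Now take $u_{x,y}=x\cdot y\in\mathbb F_3^*$.
\begin{itemize}
\item It is flat on every degenerate rectangle: if $W=y\cap y'$ lies on $\overline{xx'}$, then $x'=\alpha x+\beta W$, so $x'\cdot y=\alpha\,x\cdot y$ and $x'\cdot y'=\alpha\,x\cdot y'$.
\item It has cross-ratio $-1$ on the skew rectangle $x=(1,1,0)$, $x'=(1,-1,0)$, $y=(1,0,0)$, $y'=(0,1,0)$.
\end{itemize}
Hence the $(0,2)$ squares do not span $Z_1$, not even modulo $2$.

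\textbf{The integral claim is nevertheless true and provable by hand.}

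For $q=2$, proceed as follows.
\begin{itemize}
\item Every $4$-cycle $\{X,Y\}\times\{m,n\}$ is degenerate. It equals $C_B$ for rows $(X,Y,W,y)$ and columns $(m,n,\overline{XY},s)$, with $y\notin\overline{XY}$ and $W\notin s$.
\item Every chordless $6$-cycle is some $C_B$. If its points are collinear, use their line as the forced column. Otherwise its lines are the concurrent cevians of a triangle, and you use their common point as the forced row.
\item Chorded $6$-cycles are sums of two squares.
\item By \cref{lem:g0-connected}, a BFS tree rooted at a point has fundamental cycles of length at most $6$. Hence $H=Z_1$ over $\mathbb Z$.
\end{itemize}

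For $q=3$ it suffices to show that $Z_1/H$ is a $2$-group, so you may test with additive labels in a group where $2$ is invertible.
\begin{itemize}
\item Fix a point $D$ with pencil $n_0,\dots,n_3$. Degenerate squares give $u_{x,n_j}=a_x+B_{ij}$ for $x\in n_i\setminus\{D\}$.
\item Concurrent chordless $6$-cycles (forced row $D$) make $B_{ij}-B_{ji}$ a gradient on $K_4$.
\item Hence every skew square with $m\cap n=D$ has holonomy $\tfrac12\bigl(\sigma^D(\mu)-\sigma^D(\mu')\bigr)$. Here $\sigma^D(\mu)=\sum_{\{i,j\}\in\mu}(B_{ij}+B_{ji})$ is a function on the three pairings of $\mathcal P(D)$.
\item Dually, collinear chordless $6$-cycles give a function $\tau^L$ on the pairings of the points of each line $L$.
\item The six skew squares with given $(D,L)$, $D\notin L$, identify $\sigma^D$ with $\tau^L$ modulo constants through the perspectivity $L\to\mathcal P(D)$.
\item Chaining perspectivities makes $\tau^L$ invariant modulo constants under $PGL(2,3)\cong S_4$. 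This group acts on the pairings as $S_3$, so $\tau^L$ is constant and every square has trivial holonomy.
\item Non-collinear, non-concurrent chordless $6$-cycles are sums of three squares through a point off their three lines. The BFS argument then finishes.
\end{itemize}

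So your Smith-normal-form check would succeed. Moreover, neither argument above uses \cref{prop:q23-cycle-generation}: the $\mathbb F_2$ computation the paper cites is not what this corollary actually needs.
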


\begin{proof}
By \cref{prop:02-signed-holonomy}, each $(0,2)$-type minor imposes $\operatorname{Hol}_{u}(C)=(-1)^k$ on its associated cycle.
In characteristic $2$, one has $(-1)^k=1$ for every $k$, so every such constraint becomes ordinary holonomy-$1$.
Now combine \cref{prop:q23-cycle-generation} with \cref{lem:g0-connected,lem:holonomy-factorization}.
\end{proof}

\begin{remark}[Interpretation of the finite enumerations]
For $q=2,3$, the $(0,2)$-type minors generate the full $\mathbb F_2$-cycle space, but the induced determinant constraints are sign-sensitive: $4$- and $8$-cycles impose holonomy $+1$, whereas $6$-cycles impose holonomy $-1$. The finite calculations therefore support the view that the correct first-layer object is a signed-holonomy system rather than an unsigned cycle condition. What remains open is the general-$q$ counterpart of this signed generation phenomenon and its interaction with the genuinely multi-term cancellations treated earlier in the paper.
\end{remark}


\bibliographystyle{amsplain}
\bibliography{refs}

\end{document}